\newtheorem*{theorem*}{Theorem}
\newtheorem{theorem}{Theorem}[section]
\newtheorem{lemma}[theorem]{Lemma}
\newtheorem{proposition}[theorem]{Proposition}
\newtheorem{corollary}[theorem]{Corollary}
\theoremstyle{definition}
\newtheorem{definition}[theorem]{Definition}
\newtheorem{example}[theorem]{Example}
\theoremstyle{remark}
\newtheorem{remark}[theorem]{Remark}
\theoremstyle{observation}
\numberwithin{equation}{section}
\newcommand{\gl}{\ensuremath\mathfrak{gl}}
\newcommand{\coinv}{\ensuremath\mathrm{coinv}}
\newcommand{\maj}{\ensuremath\mathrm{maj}}
\newcommand{\wt}{\ensuremath\mathrm{wt}}
\newcommand{\SSYT}{\ensuremath\mathrm{SSYT}}
\newcommand{\SSKD}{\ensuremath\mathrm{SSKD}}
\newcommand{\SSKT}{\ensuremath\mathrm{SSKT}}
\newcommand{\KD}{\ensuremath\mathrm{KD}}
\newcommand{\D}{\ensuremath\mathbb{D}}
\newcommand{\T}{\ensuremath\mathbb{T}}
\newcommand{\key}{\ensuremath\kappa}
\newcommand{\rectify}{\ensuremath\mathrm{rect}}
\newcommand{\embed}{\Aries}
\newcommand{\De}{\tilde{e}} 
\newcommand{\Df}{\tilde{f}}
\newcommand{\Ye}{\hat{e}} 
\newcommand{\Yf}{\hat{f}}
\newcommand{\E}{\reflectbox{e}} 
\newcommand{\B}{\ensuremath\mathcal{B}}
\newcommand{\newword}[1]{\emph{\textbf{#1}}}
\newlength\cellsize \setlength\cellsize{10\unitlength}
\newcommand\cellify[1]{\def\thearg{#1}\def\nothing{}%
\ifx\thearg\nothing\vrule width0pt height\cellsize depth0pt%
  \else\hbox to 0pt{\usebox2\hss}\fi%
  \vbox to 10\unitlength{\vss\hbox to 10\unitlength{\hss$#1$\hss}\vss}}
\newcommand\tableau[1]{\vtop{\let\\=\cr
\setlength\baselineskip{-10000pt}
\setlength\lineskiplimit{10000pt}
\setlength\lineskip{0pt}
\halign{&\cellify{##}\cr#1\crcr}}}
\newcommand{\cirfy}[1]{\def\thearg{#1}\def\nothing{}%
\ifx\thearg\nothing\vrule width0pt height\cellsize depth0pt%
  \else\hbox to 0pt{\usebox7\hss}\fi%
  \vbox to 10\unitlength{\vss\hbox to 10\unitlength{\hss$#1$\hss}\vss}}
\newcommand\cirtab[1]{\vtop{\let\\=\cr
\setlength\baselineskip{-10000pt}
\setlength\lineskiplimit{10000pt}
\setlength\lineskip{0pt}
\halign{&\cirfy{##}\cr#1\crcr}}}
\newcommand{\bball}{%
  \begin{tikzpicture}
    \filldraw[fill=blue!30,draw=black] circle (4.7pt);
  \end{tikzpicture}
}
\newcommand{\rball}{%
  \begin{tikzpicture}
    \filldraw[fill=red!30,draw=black] circle (4.7pt);
  \end{tikzpicture}
}
\newcommand{\vball}{%
  \begin{tikzpicture}
    \filldraw[fill=violet!30,draw=black] circle (4.7pt);
  \end{tikzpicture}
}
\newcommand{\gball}{%
  \begin{tikzpicture}
    \filldraw[fill=green!30,draw=black] circle (4.7pt);
  \end{tikzpicture}
}
\newcommand{\yball}{%
  \begin{tikzpicture}
    \filldraw[fill=orange!30,draw=black] circle (4.7pt);
  \end{tikzpicture}
}
\newcommand{\lball}{%
  \begin{tikzpicture}
   \filldraw[fill=gray!20, draw=gray!10] circle (4.9pt);
  \end{tikzpicture}
}
\newcommand{\csquare}[1]{ 
\begin{scope}[shift={#1}]
\draw (-.5,-.5) rectangle (.5,.5);
\end{scope}
 }
\tikzset{
        hatch distance/.store in=\hatchdistance,
        hatch distance=5pt,
        hatch thickness/.store in=\hatchthickness,
        hatch thickness=5pt
        }
\pgfqpoint{\hatchdistance}{\hatchdistance}}
\newcommand{\sball}{%
  \begin{tikzpicture}
  [Pattern/.style={pattern=north east hatch, pattern color=red!50, hatch distance=7pt, hatch thickness=2pt}]

    \filldraw[preaction={fill=blue!40}, Pattern,draw=black] circle (4.7pt);
  \end{tikzpicture}
}
\newcommand{\hackcenter}[1]{
 \xy (0,0)*{#1}; \endxy}
\tikzstyle directed=[postaction={decorate,decoration={markings,
    mark=at position #1 with {\arrow{>}}}}]
\tikzstyle rdirected=[postaction={decorate,decoration={markings,
    mark=at position #1 with {\arrow{<}}}}]
\tikzset{fontscale/.style = {font=\relsize{#1}}
    }
\begin{document}


\title[Demazure crystals for Macdonald polynomials]{Demazure crystals for specialized nonsymmetric Macdonald polynomials}

\author[Assaf]{Sami Assaf}
\address{Department of Mathematics, University of Southern California, 3620 S. Vermont Ave., Los Angeles, CA 90089-2532, U.S.A.}
\email{shassaf@usc.edu}
\thanks{S.A. supported in part by NSF DMS-1763336. N.G. was supported in part by NSF grants DMS-1255334 and DMS-1664240.}

\author[Gonz\'{a}lez]{Nicolle E. S. Gonz\'{a}lez}
\address{Department of Mathematics, University of Southern California, 3620 S. Vermont Ave., Los Angeles, CA 90089-2532, U.S.A.}
\email{nesandov@usc.edu}

\subjclass[2010]{Primary ; Secondary }



\keywords{Demazure crystal, Demazure character, nonsymmetric Macdonald polynomial, Hall--Littlewood polynomial, Kostka--Foulkes polynomial}

\begin{abstract}
  We give an explicit, nonnegative formula for the expansion of nonsymmetric Macdonald polynomials specialized at $t=0$ in terms of Demazure characters. Our formula results from constructing Demazure crystals whose characters are the nonsymmetric Macdonald polynomials, which also gives a new proof that these specialized nonsymmetric Macdonald polynomials are positive graded sums of Demazure characters. Demazure crystals are certain truncations of classical crystals that give a combinatorial skeleton for Demazure modules. To prove our construction, we develop further properties of Demazure crystals, including an efficient algorithm for computing their characters from highest weight elements. As a corollary, we obtain a new formula for the Schur expansion of Hall--Littlewood polynomials in terms of a simple statistic on highest weight elements of our crystals.
\end{abstract}

\maketitle
\tableofcontents

%
\section{Introduction}
%
\label{sec:introduction}

Macdonald \cite{Mac88} defined symmetric functions $P_{\mu}(X;q,t)$ with two parameters $q,t$ indexed by partitions as the unique symmetric function basis satisfying certain triangularity (with respect to monomials in infinitely many variables $X=x_1,x_2,\ldots$) and orthogonality (with respect to a generalized Hall inner product) conditions. The \newword{Macdonald symmetric functions} give a simultaneous generalization of Hall--Littlewood symmetric functions $P_{\lambda}(X;0,t)$ and Jack symmetric functions $\lim_{t\rightarrow 1}P_{\lambda}(X;t^{\alpha},t)$. 

The coefficients of $P_{\mu}(X;q,t)$ when written as a sum of monomials are rational functions in the parameters $q$ and $t$. Macdonald conjectured that the \newword{Kostka--Macdonald coefficients} $K_{\lambda,\mu}(q,t)$ defined by expanding the \emph{integral form} $J_{\mu}(X;q,t)$, a scalar multiple of the original $P_{\mu}(X;q,t)$, into the \newword{plethystic Schur basis}, 
\[ J_{\mu}(X;q,t) = \sum_{\lambda} K_{\lambda,\mu}(q,t) s_{\lambda}[X(1-t)], \]
are polynomials in $q$ and $t$ with nonnegative integer coefficients. Here the square brackets denote \newword{plethystic substitution}. In short, $s_{\lambda}[A]$ means $s_{\lambda}$ applied as a $\Lambda$-ring operator to the expression $A$, where $\Lambda$ is the ring of symmetric functions. For details, see \cite{Mac95}(I.8).

Inspired by work of Garsia and Procesi \cite{GP92} on Hall--Littlewood symmetric functions, Garsia and Haiman~\cite{GH93} constructed a bi-graded module for the symmetric group and conjectured that the Frobenius character is
\[ H_{\mu}(X;q,t) = J_{\mu}[X/(1-t); q, t]. \]
Thus, the Kostka--Macdonald coefficients give the Schur function expansion of $H_{\mu}(X;q,t)$. This conjecture gives a representation theoretic interpretation for the Kostka-Macdonald polynomials as the graded coefficients of the irreducible decomposition of these modules. Haiman \cite{Hai01} resolved both conjectures by analyzing the isospectral Hilbert scheme of points in a plane, ultimately showing that it is Cohen-Macaulay.
\smallskip

The \newword{nonsymmetric Macdonald polynomials} $E_{a}(X;q,t)$ are indexed by weak compositions and form a basis for the full polynomial ring. They generalize Macdonald polynomials in the sense that
\begin{eqnarray*}
  E_{0^m \times a}(x_1,\ldots,x_m,0,\ldots,0;q,t) & = & P_{\mathrm{sort}(a)}(x_1,\ldots,x_m;q,t),
\end{eqnarray*}
where $0^m \times a$ denotes the composition obtained by prepending $m$ $0$'s to $a$. The shift to the full polynomial ring begun by Opdam \cite{Opd95}, continued by Macdonald \cite{Mac96}, and generalized by Cherednik \cite{Che95} broadened the existing theory in the hopes that the additional structure of the polynomial ring would shed more light on these important functions. Work of Knop and Sahi \cite{KS97} on Jack polynomials helped to validate this approach, and their independently derived recurrences \cite{Kno97,Sah96} ultimately inspired the combinatorial formula for nonsymmetric Macdonald polynomials of Haglund, Haiman, and Loehr \cite{HHL08}.

Generalizing Haglund's elegant combinatorial formula for $H_{\mu}(X;q,t)$ \cite{Hag04,HHL05}, Haglund, Haiman and Loehr~\cite{HHL08} gave a combinatorial formula for $E_{a}(X;q,t)$ as
\[  E_{a}(X;q,t) = \sum_{\substack{T:a\rightarrow [n] \\ \mathrm{non-attacking}}} q^{\maj(T)} t^{\coinv(T)} X^{\wt(T)}
  \prod_{c \neq \mathrm{left}(c)} \frac{1-t}{1 - q^{\mathrm{leg}(c)+1} t^{\mathrm{arm}(c)+1}}, \]
where the sum is over certain positive integer fillings $T$ of the diagram of the composition $a$ and $\coinv$ and $\maj$ are nonnegative integer statistics. In stark contrast with the symmetric case, there are no known (nor even conjectured) positivity results for the nonsymmetric Macdonald polynomials.
\smallskip

Demazure \cite{Dem74} generalized the Weyl character formula to certain submodules, eponymously named \newword{Demazure modules}, which are generated by extremal weight spaces under the action of a Borel subalgebra of a Lie algebra. The resulting \newword{Demazure characters} $\key_a$, where $a = w \cdot \lambda$, for $w$ a Weyl group element acting on a highest weight $\lambda$, arose in connection with Schubert calculus \cite{Dem74a}, and, in type A, also form a basis of the polynomial ring. Recent work of Assaf and Searles \cite{AS18} indicates that the type A Demazure characters are the most natural pull backs of Schur functions to the polynomial ring. That is to say, the combinatorics of the former stabilizes to that of the latter,
\begin{eqnarray*}
  \key_{0^m \times a}(x_1,\ldots,x_m,0,\ldots,0) & = & s_{\mathrm{sort}(a)}(x_1,\ldots,x_m).
\end{eqnarray*}
Therefore, in the search for polynomial analogs of Schur positivity statements for nonsymmetric Macdonald polynomials, the natural basis for comparison is the basis of Demazure characters.

Sanderson \cite{San00} first made the connection between specializations of Macdonald polynomials and Demazure characters by using the theory of nonsymmetric Macdonald polynomials in type A to construct an \emph{affine} Demazure module with graded character $P_{\mu}(X;q,0)$, parallel to the construction of Garsia and Procesi \cite{GP92} for Hall-Littlewood symmetric functions $H_{\mu}(X;0,t)$. Ion \cite{Ion03} generalized this result to nonsymmetric Macdonald polynomials in general type using the method of intertwiners in double affine Hecke algebras to realize $E_{a}(X;q,0)$ as a single \emph{affine} Demazure character. Inspired by this, Lenart, Naito, Sagaki, Schilling and Shimozono \cite{LNSS17} constructed a connected Kirillov--Reshetikhin crystal to give a combinatorial proof of the coincidence with affine Demazure characters using similar methods. 

Recently, Assaf \cite{Ass18} proved the specialization $E_{a}(X;q,0)$ is a nonnegative, graded sum of \emph{finite} Demazure characters. The proof utilizes the machinery of weak dual equivalence \cite{Ass-W}. Hence, the resulting formula is difficult to work with and, in practice, requires computing the full fundamental slide polynomial \cite{AS17} expansion of $E_{a}(X;q,0)$. In order to have a better understanding of this nonnegativity and to have a deeper connection with the underlying representation theory of Demazure modules, we use crystal theory to give a new proof of this graded nonnegativity for \emph{finite} Demazure characters from which we extract an explicit formula for the Demazure expansion. The immediate benefit of our new approach is two-fold. On the one hand, our method yields a formula which is very easily computable. On the other,  weak dual equivalence exists only for the general linear group whereas the crystal theory used in our new approach extends to all types, thus our results give hope that these new methods might be a key to a result in general type. 

Kashiwara \cite{Kas91} introduced the notion of \newword{crystal bases} in his study of the representation theory of quantized universal enveloping algebras $U_q(\mathfrak{g})$ for complex, semi-simple Lie algebra $\mathfrak{g}$ at $q=0$. The theory of \newword{canonical bases}, developed earlier by Lusztig \cite{Lus90}, studies the same problem from a more geometric viewpoint, though many of the main ideas from \cite{Lus90} carry over to \cite{Kas91}. A \newword{crystal base} is a basis of a representation for $U_q(\mathfrak{g})$ on which the Chevalley generators have a relatively simple action. Combinatorially, a \newword{crystal} is a directed, colored graph with vertex set given by the crystal base and directed edges given by deformations of the Chevalley generators. By constructing a $\mathfrak{gl}_n$ crystal for a set of combinatorial objects, we create a combinatorial skeleton of the corresponding $\mathfrak{gl}_n$ modules whose character is the generating function of those objects. In particular, the generating function is Schur positive. Moreover, crystal theory provides unique highest weight elements, from which tractable formulas can be derived. Stembridge \cite{Ste03} gave a local characterization of simply-laced crystals that allows one to prove that a given construction is indeed a crystal by analyzing local properties of the raising and lowering operators which give rise to the edges of the graph. 

\newword{Demazure crystals}, whose structure was conjectured by Littlemann \cite{Lit95} and proved by Kashiwara \cite{Kas93}, are certain truncations of classical crystals that give a combinatorial skeleton for Demazure modules. Unlike full crystals, Demazure crystals are not uniquely characterized by their highest weight elements. Further complicating matters, in the Demazure case the crystals are truncated so Stembridge's methods are not immediately applicable.

In this paper, we remedy this impediment and develop a new local characterization of Demazure crystals. These tools allow us to overcome the apparent limitations of Stembridge's axioms and readily surpass the difficulties mentioned above. In particular, in \S\ref{sec:demazure} we consider different families of subsets of crystals with certain nice properties. This leads us to Definition~\ref{def:demazure}, where we present six local axioms for a subset of a normal $\mathfrak{gl}_n$ crystal to be considered a \newword{Demazure subset}. Our first principal result, stated precisely in Theorems~\ref{thm:dem-well-def} and \ref{thm:main-dem}, is the following:
\begin{theorem*}
  Every Demazure $\mathfrak{gl}_n$ crystal is a Demazure subset of a normal $\mathfrak{gl}_n$ crystal, and every Demazure subset of a normal $\mathfrak{gl}_n$ crystal is a Demazure $\mathfrak{gl}_n$ crystal.
\end{theorem*}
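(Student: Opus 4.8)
The plan is to prove the two implications separately, in both cases organizing the induction around the length $\ell(w)$ of the Weyl group element attached to a Demazure crystal $B_w(\lambda)$. Recall that $B_w(\lambda)$ is built from the singleton $\{u_\lambda\}$ by iterating the Demazure operators $\mathfrak{D}_i$ along a reduced word $s_{i_1}\cdots s_{i_\ell}$ for $w$, where $\mathfrak{D}_i(X)$ completes each $i$-string meeting $X$ downward from its $i$-highest element. I will treat the two halves of the statement as the content of Theorems~\ref{thm:dem-well-def} and \ref{thm:main-dem} respectively.

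First I would show that every Demazure crystal is a Demazure subset, by induction on $\ell(w)$. The base case $w = e$ gives $\{u_\lambda\}$, a single highest weight vertex, which satisfies the six local axioms of Definition~\ref{def:demazure} trivially. For the inductive step I assume $X$ is a Demazure subset and verify that $\mathfrak{D}_i(X)$ again satisfies each of the six axioms. Because the axioms are \emph{local} — they constrain only how $i$-strings and $j$-strings abut one another — the real computation is to track what completing every $i$-string does to the adjacent $j$-strings for $j \neq i$, which I would control using the standard Stembridge-type commutation relations between the lowering operators $f_i$ and $f_j$ in the ambient normal crystal. Establishing stability of the axioms under $\mathfrak{D}_i$ simultaneously gives the well-definedness asserted in Theorem~\ref{thm:dem-well-def}: the family of Demazure subsets is closed under the Demazure operators, so the construction is consistent and, invoking Kashiwara's reduced-word independence, carries a well-defined character.

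Second, and this is the harder direction, I would show that every subset $S$ of a normal $\mathfrak{gl}_n$ crystal satisfying the six axioms is genuinely a Demazure crystal. The strategy is a recognition/peeling argument. Using the local axioms I would locate an index $i$ for which $S$ is saturated in the $i$-direction and then exhibit a strictly smaller subset $S' \subsetneq S$, still satisfying all six axioms, with $S = \mathfrak{D}_i(S')$; this $i$ plays the role of a leftmost letter $i_1$ in a reduced word, mirroring the module identity $B_w(\lambda) = \mathfrak{D}_{i_1}(B_{w'}(\lambda))$ when $w = s_{i_1}w'$ with $\ell(w) = \ell(w')+1$. Iterating peels off one Demazure operator at a time; since $|S'| < |S|$ the recursion terminates at the singleton $\{u_\lambda\}$, and reading the indices in reverse yields a word whose associated $w$ satisfies $S = B_w(\lambda)$.

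The main obstacle is this second direction: I expect the essential difficulty to be proving that a valid peeling index $i$ always exists and that the peeled set $S'$ inherits all six axioms. Extracting a single global Demazure operator from purely local data is precisely the gap that Stembridge's ambient-crystal axioms do not bridge, so the argument must exploit the specific interplay of the six conditions — in particular whichever axiom governs how a truncated string of one color meets a complete string of another — both to produce the index $i$ and to guarantee that $S'$ remains a Demazure subset. Once existence and stability of the peeling are in hand, the induction on $|S|$ closes immediately and delivers Theorem~\ref{thm:main-dem}.
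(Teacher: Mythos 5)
Your first half is essentially the paper's argument, lightly reorganized: the paper proves extremality of $\B_w(\lambda)$ by induction on $\ell(w)$ (Proposition~\ref{prop:dem-crystal is extremal}) and then verifies axioms (4)--(6) of Definition~\ref{def:demazure} directly from reduced-expression decompositions $\B_w(\lambda)=\mathfrak{D}_{w''}\mathfrak{D}_i\mathfrak{D}_j\mathfrak{D}_{w'}(u_\lambda)$ together with Stembridge's (P5)/(P6), whereas you propose to carry all six axioms through the induction at once. That difference is cosmetic and your version of Theorem~\ref{thm:dem-well-def} should go through.

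The second half has a genuine gap, and it sits exactly where the theorem's content lies. Your peeling step requires: given a Demazure subset $S\subseteq\B(\lambda)$, produce an index $i$ and a \emph{strictly smaller Demazure subset} $S'$ with $S=\mathfrak{D}_i(S')$. You name this as the main obstacle but supply no argument, and it cannot be waved through. Note first that the peeled set is forced to be non-canonical: the only candidate determined by local data, the set of $i$-string tops $S'=\{b\in S \mid \varepsilon_i(b)=0\}$, does satisfy $\mathfrak{D}_i(S')=S$ whenever $S$ is $i$-saturated, but it is in general not even extremal. Already for $S=\B_{s_1s_2s_1}(\lambda)$ in $\gl_3$ with $\lambda=(2,1,0)$, the element $f_2f_1(u_\lambda)$ is a $1$-string top and lies in $S'$, yet $e_2\bigl(f_2f_1(u_\lambda)\bigr)=f_1(u_\lambda)$ is not a $1$-string top, so $S'$ violates condition (2) of Definition~\ref{def:extremal}; the correct peeled set here is $\B_{s_2s_1}(\lambda)$, which contains non-top elements of $1$-strings and is not recoverable from the $i$-string structure of $S$ alone. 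So your recursion needs an existence proof for a valid $S'$ extracted from the interplay of axioms (4)--(6), and that proof is the whole theorem. The paper avoids peeling entirely: Lemma~\ref{lem:Dem-UniqueLowestWeight} shows (via the long case analysis on joins of extremal lowest weight elements, using axioms (4)--(6) and Stembridge's axioms) that a Demazure subset has a unique global lowest weight element $Z$, necessarily extremal; Theorem~\ref{thm:main-dem} then reads off $w$ from $\wt(Z)=w\cdot\lambda$ and identifies $X=\B_w(\lambda)$ using Bruhat comparability of all extremal weights with $\wt(Z)$ and closure under raising operators. To complete your route you would have to prove a peeling lemma of comparable depth; as written, the proposal establishes only the easy direction.
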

This provides a universal method for proving that a given subset of a crystal is a Demazure crystal. 
\smallskip

Furthermore, since the characters for Demazure crystals depend on the highest weight \emph{and} an element of the Weyl group, the existence of an explicit Demazure crystal does not immediately yield a formula for the character. Instead, the Demazure character is determined by a specific \emph{lowest} weight, but since lowest weights are not unique, this requires inspecting the entire crystal to determine the \emph{global} lowest weight. To overcome this obstacle, we present an algorithm in Definition~\ref{def:dem-low} that deterministically computes the global lowest weight beginning with the unique highest weight. That is, from Theorem~\ref{thm:dem-low}, we obtain a formula:
\begin{theorem*}
  If $D$ is a Demazure $\mathfrak{gl}_n$ crystal, then its character is
  \[ \mathrm{ch}(D) = \sum_{b\in D} x_1^{\wt(b)_1} x_2^{\wt(b)_2} \cdots x_{n}^{\wt(b)_{n}} = \sum_{\substack{b\in D \\ b \ \text{highest weight}}} \key_{\wt(Z(b))}, \]
  where the latter sum is over highest weight elements, $Z(b)$ is the result of applying Definition~\ref{def:dem-low} to $b$, and $\key_a$ denotes the Demazure character.
\end{theorem*}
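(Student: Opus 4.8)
The plan is to prove the two displayed equalities in turn. The first is merely the definition of the character of a $\gl_n$ crystal as the weight-generating function over its elements, so all of the content lies in the second equality, which rewrites $\mathrm{ch}(D)$ as a positive sum of Demazure characters $\key_{\wt(Z(b))}$ indexed by the highest weight elements $b$ of $D$.

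First I would reduce to the connected case. By the structure theorem (Theorems~\ref{thm:dem-well-def} and \ref{thm:main-dem}), $D$ is a Demazure $\gl_n$ crystal, so as a subset of its ambient normal crystal $B = \bigsqcup_i B(\lambda_i)$ it meets each connected component in a (possibly empty) Demazure crystal; by Kashiwara's classification \cite{Kas93} the restriction $D_i := D \cap B(\lambda_i)$ is of the form $B_{w_i}(\lambda_i)$ for some Weyl group element $w_i$. Since a normal crystal has exactly one highest weight element per component, the highest weight elements of $D$ are precisely the highest weight elements $b_i$ of the nonempty pieces $D_i$, with $\wt(b_i) = \lambda_i$. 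Because weights, the operator $Z$, and the character are all computed componentwise and the character is additive over disjoint unions, it suffices to establish $\mathrm{ch}(D_i) = \key_{\wt(Z(b_i))}$ for each connected piece and then sum.

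Next I would invoke the defining property of Demazure crystals proved by Kashiwara \cite{Kas93} (conjectured in \cite{Lit95}): for the Weyl group element $w_i$ attached to $D_i = B_{w_i}(\lambda_i)$, one has $\mathrm{ch}(D_i) = \key_{w_i\lambda_i}$. This already yields the Demazure-positive expansion; what then remains is purely an identification of indices, namely to show that the extremal weight $w_i\lambda_i$, which is the global lowest weight occurring in $D_i$, coincides with $\wt(Z(b_i))$.

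The heart of the argument, and the step I expect to be the main obstacle, is exactly this index identification: the algorithm $Z$ of Definition~\ref{def:dem-low}, launched from the unique highest weight element $b_i$, must deterministically terminate at the element of $D_i$ of extremal weight $w_i\lambda_i$. Here I would appeal to Theorem~\ref{thm:dem-low}, whose role is to prove that $Z$ follows a path of lowering operators remaining inside the Demazure subset and arriving at the unique extremal element. The subtlety that makes this nontrivial, and the reason that a naive ``lower while you stay in $D$'' does not work, is that a truncated Demazure crystal typically has several locally lowest elements, that is, elements admitting no lowering operator inside $D$, yet only one of these carries the extremal weight $w_i\lambda_i$. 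I would therefore lean on the local Demazure axioms of Definition~\ref{def:demazure} to guarantee that at each stage $Z$ selects the correct lowering direction, greedily maximizing the relevant string in the prescribed order, so that the reduced word it implicitly reconstructs agrees with $w_i$ and hence $\wt(Z(b_i)) = w_i\lambda_i$. Combining this identification with the Kashiwara character formula and summing over the components of $D$ then gives $\mathrm{ch}(D) = \sum_{b\ \text{highest weight}} \key_{\wt(Z(b))}$, as claimed.
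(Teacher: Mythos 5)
Your proposal is correct and follows essentially the paper's own route: the paper obtains this formula in exactly the same way, by summing Kashiwara's character identity $\mathrm{ch}(\B_w(\lambda)) = \key_{w\cdot\lambda}$ over the connected components and identifying $w\cdot\lambda$ with $\wt(Z(b))$ via Proposition~\ref{prop:Dlw} and Theorem~\ref{thm:dem-low}. One small correction to your side remark: the paper's proof of Theorem~\ref{thm:dem-low} does not use the local axioms of Definition~\ref{def:demazure} at all, but instead inducts on the increasing intervals of the super-Yamanouchi reduced word for $w$ (using Lemma~\ref{lem:faithful} and Proposition~\ref{prop:contain}), showing that the greedy choices in Definition~\ref{def:dem-low} reconstruct precisely that reduced word.
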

Thus, we have an efficient formula for characters of Demazure crystals.
\smallskip

Our motivation for deriving the results in \S\ref{sec:demazure} provides our immediate application, which is to construct Demazure crystals whose characters are the nonsymmetric Macdonald polynomials specialized at $t=0$. This we do in \S\ref{sec:tabloid} Definition~\ref{def:raise-tabloid}, in which we define explicit raising and lowering operators on \newword{semistandard key tabloids}, the combinatorial objects for which the specialized nonsymmetric Macdonald polynomials are the generating functions. We use Kohnert's paradigm for Demazure characters to define an explicit map $\embed$ that embeds our structure into the normal $\mathfrak{gl}_n$ crystals on semistandard Young tableaux, giving Theorem~\ref{thm:commute}:
\begin{theorem*}
  The map $\embed$ from semistandard key tabloids to semistandard Young tableaux is a weight-preserving injective map that intertwines the crystal operators. In particular, the image of $\embed$ is a subset of a normal $\mathfrak{gl}_n$ crystal.
\end{theorem*}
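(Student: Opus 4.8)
The plan is to verify the three asserted properties of $\embed$ in turn---that it is weight-preserving, that it is injective, and that it intertwines the crystal operators---with the last being the substantive point, and then to read off the closing ``in particular'' at once. Since the semistandard Young tableaux on the alphabet $\{1,\dots,n\}$, equipped with the classical operators $\Ye_i,\Yf_i$, form a normal $\gl_n$ crystal (a disjoint union of highest-weight crystals), every subset of $\SSYT$ is automatically a subset of a normal crystal. Thus once $\embed$ is known to be well defined (landing in $\SSYT$) and to intertwine the operators, the final sentence is immediate and requires no separate argument.

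Weight-preservation I expect to be the most transparent step. By construction through Kohnert's paradigm, $\embed$ only repositions the boxes of a semistandard key tabloid into the straight shape of a semistandard Young tableau, neither creating nor deleting any entry; I would check directly from the setup of Definition~\ref{def:raise-tabloid} that each elementary box-move of $\embed$ preserves the multiset of entry values, whence $\wt(\embed(T)) = \wt(T)$. For injectivity, I would produce a left inverse on the image: by recording, through each step of $\embed$, the origin of every box, one reverses the box-moving procedure and recovers the original tabloid uniquely. Equivalently, it suffices to show that the composition shape and the row contents of $T$ can be read back off $\embed(T)$. Either formulation reduces injectivity to a bookkeeping argument on the definition of $\embed$.

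The heart of the proof is the intertwining claim, $\embed \circ \Df_i = \Yf_i \circ \embed$ and $\embed \circ \De_i = \Ye_i \circ \embed$, where $\Df_i,\De_i$ are the tabloid operators of Definition~\ref{def:raise-tabloid} and $\Yf_i,\Ye_i$ are the classical operators on $\SSYT$, interpreted so that one side is undefined (equal to $0$) exactly when the other is. It is enough to treat the lowering operators, since on each side the raising operator is the partial inverse of the lowering operator, so the raising identity follows formally from the lowering one. Both $\Yf_i$ and $\Df_i$ are governed by an $i$-bracketing rule: one reads the entries equal to $i$ or $i+1$ in a prescribed order, cancels each $i+1$ with a preceding $i$, and changes a distinguished surviving $i$ into an $i+1$. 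The decisive lemma I would isolate is that $\embed$ preserves the relevant $i$-signature---that the subword of $i$'s and $(i+1)$'s extracted from $\embed(T)$ pairs up in the same way as the one extracted from $T$, and that the unbracketed $i$ acted upon corresponds under $\embed$ to the box that $\Df_i$ modifies in $T$. Granting this lemma, the two operators act on corresponding boxes, the modified objects again correspond under $\embed$, and the domains of definition agree because an unbracketed $i$ exists in one reading word if and only if it exists in the other.

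Thus the main obstacle is precisely this signature-preservation lemma: the operators $\De_i,\Df_i$ are defined intrinsically on the tabloid/Kohnert combinatorics, whereas $\Ye_i,\Yf_i$ follow the standard tableau signature rule, and one must match these through the box-moving map $\embed$. I anticipate a careful, case-based analysis tracking how $\embed$ reorders the $i$- and $(i+1)$-entries---in particular, verifying that no box-moving step alters the cancellation pattern among them---which is where the genuine combinatorial content lies. Once the lemma is in hand, weight-preservation, injectivity, and intertwining together show that $\embed$ realizes the semistandard key tabloids as a subset of the normal $\gl_n$ crystal on $\SSYT$, completing the theorem.
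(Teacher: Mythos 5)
Your plan founders on two points that are not technicalities but the substance of this theorem. First, the intertwining you propose for the lowering operators, $\embed \circ f_i = \Yf_i \circ \embed$ ``interpreted so that one side is undefined exactly when the other is,'' is false. The tabloid lowering operator of Definition~\ref{def:lower-key} is subject to \emph{Demazure death}: there are $T\in\SSKD(a)$ with an unpaired $i$ for which $f_i(T)=0$, while $\embed(T)$ still has an unpaired $i$ (the pairing data is preserved through the maps, cf.\ Lemma~\ref{lem:PairingPreserve}), so $\Yf_i(\embed(T))\neq 0$. This asymmetry is precisely why the image of a connected component is a \emph{Demazure} subcrystal, truncated from below, rather than a full subcrystal; if the domains of the lowering operators matched exactly, every component would be a normal crystal and $E_b(X_n;q,0)$ would always be a sum of Schur polynomials, contradicting Example~\ref{ex:0302}. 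Your reduction is therefore backwards: the paper proves the intertwining for the \emph{raising} operators, where the zero/nonzero correspondence does hold, and it does so by factoring $\embed = \T\circ\rectify\circ\D$ and handling each factor separately --- Proposition~\ref{prop:commute} for $\D$ (the two pairing rules literally correspond), Lemma~\ref{lem:PairingPreserve} together with the local commutation Theorem~\ref{thm:commute} and Corollary~\ref{cor:commute} for $\rectify$, and Proposition~\ref{prop:tableau-map} (via the Assaf--Schilling column sorting map and the crystal flip) for $\T$. The lowering identity is then only ever asserted where $f_i(T)\neq 0$.

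Second, your injectivity argument cannot work because $\embed$ is not globally injective, so no left inverse exists and the filling of $T$ cannot be read back off $\embed(T)$. For instance, the two highest weight tabloids of shape $(0,3,0,2)$ with weight $(2,2,1,0)$ in Fig.~\ref{fig:highest-Dem} both rectify, by Lemma~\ref{lem:hwPartition}, to the key diagram of $(2,2,1,0)$ and hence have the same image; rectification (not $\D$, which is injective by the column-sets proposition) is where information is lost. The correct assertion is injectivity on each connected component, and its proof is crystal-theoretic rather than bookkeeping: string lengths are preserved by $\embed$, each component has a unique highest weight (Lemma~\ref{lem:hwUnique}), and $e_i,f_i$ are partial inverses, so two elements of one component with equal image raise along a common path to the same highest weight and therefore coincide. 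Relatedly, your mechanism for weight preservation (``entries travel with boxes'') is inaccurate: $\D$ trades entries for row indices, and $\T$ reintroduces entries via $r\mapsto n-r+1$ followed by the crystal flip $\mathcal{F}$, so the weight survives only because of this double reversal --- a repairable point, but your proposed verification, applied literally to the intermediate diagrams, does not parse since diagrams carry no entries at all.
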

Hence, we are now in the situation to apply our characterization of Demazure crystals, which culminates in Theorem~\ref{thm:demazure}, and states:
\begin{theorem*}
  The graph on semistandard key tabloids defined by the raising operators is a Demazure subcrystal of a normal crystal. Therefore, writing $E_b(X_n;q,0) = \sum_{a} K_{a,b}(q) \key_a(X_n)$, we have
  \[ K_{a,b}(q) = \sum_{\substack{T \in \SSKD(b) \\ T \ \text{highest weight} \\ \wt(Z(T)) = a }} q^{\maj(T)} , \]
  where $\SSKD(b)$ denotes the set of semistandard key tabloids of shape $b$, and $\maj$ is the Haglund--Haiman--Loehr statistic. In particular, nonsymmetric Macdonald polynomials specialized at $t=0$ are a nonnegative $q$-graded sum of Demazure characters.
\end{theorem*}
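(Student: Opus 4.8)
The plan is to combine the three structural results already established: the embedding of Theorem~\ref{thm:commute}, the local characterization of Demazure subsets from Theorems~\ref{thm:dem-well-def} and \ref{thm:main-dem}, and the character algorithm of Theorem~\ref{thm:dem-low}. The argument splits naturally into a structural claim, that the semistandard key tabloids form a Demazure subcrystal, and a character computation, that reading off coefficients produces the stated formula for $K_{a,b}(q)$.

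First I would establish the structural claim. By Theorem~\ref{thm:commute}, the map $\embed$ carries the graph on $\SSKD(b)$ defined by the raising operators of Definition~\ref{def:raise-tabloid} injectively onto a weight-preserving image $\embed(\SSKD(b))$ inside a normal $\gl_n$ crystal on semistandard Young tableaux, intertwining the crystal operators. Hence it suffices to verify that this image satisfies the six local axioms of Definition~\ref{def:demazure}: by Theorems~\ref{thm:dem-well-def} and \ref{thm:main-dem}, any subset meeting those axioms is precisely a Demazure subset, equivalently a Demazure $\gl_n$ crystal, and this property transports back along the isomorphism $\embed$ to the tabloid graph. I would check the axioms directly on $\embed(\SSKD(b))$, using the explicit description of which tableaux lie in the image together with the intertwining property to translate each local condition on $\hat{e}_i$ and $\hat{f}_i$ into a verifiable statement about the Kohnert-style moves on key tabloids. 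This is the step I expect to be the main obstacle: the axioms prescribe exactly which $i$-strings are full and which are truncated, so confirming them requires a careful case analysis of how the operators behave at the boundary of the subset, and in particular showing that the truncation induced by $\embed$ is closed under the operators in precisely the manner a Demazure subset demands.

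With the structural claim in place, the character computation follows. The combinatorial formula of Haglund--Haiman--Loehr, specialized at $t=0$, expresses $E_b(X_n;q,0)$ as the generating function $\sum_{T \in \SSKD(b)} q^{\maj(T)} x^{\wt(T)}$ over semistandard key tabloids. Because the operators of Definition~\ref{def:raise-tabloid} preserve $\maj$, this statistic is constant along the lowering edges and hence on each Demazure piece, so the $q$-grading factors through the highest weight elements. Applying the character algorithm of Theorem~\ref{thm:dem-low} to the Demazure crystal on $\SSKD(b)$ then gives
\[ E_b(X_n;q,0) = \sum_{\substack{T \in \SSKD(b) \\ T \ \text{highest weight}}} q^{\maj(T)} \key_{\wt(Z(T))}, \]
where $Z(T)$ is produced by Definition~\ref{def:dem-low}. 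Extracting the coefficient of $\key_a(X_n)$ yields the asserted expression for $K_{a,b}(q)$ as the sum of $q^{\maj(T)}$ over highest weight $T$ with $\wt(Z(T)) = a$.

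Finally, since each coefficient is a sum of monomials $q^{\maj(T)}$ with coefficient $1$, it lies in $\mathbb{Z}_{\geq 0}[q]$, which establishes that the specialized nonsymmetric Macdonald polynomials are a nonnegative $q$-graded sum of Demazure characters. The delicate points to watch are the $\maj$-invariance of the operators, which underlies the factorization of the $q$-grading, and the boundary analysis in the axiom check, which is where the truncated nature of Demazure crystals makes the verification genuinely nontrivial.
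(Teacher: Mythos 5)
Your proposal follows essentially the same route as the paper: embed the tabloid graph via $\embed$ (Theorem~\ref{thm:commute}, Corollary~\ref{cor:commute2}), verify the extremal and Demazure axioms of Definitions~\ref{def:extremal} and~\ref{def:demazure} so that Theorem~\ref{thm:main-dem} applies, and then combine the $\maj$-invariance of the operators with the lowest-weight algorithm of Theorem~\ref{thm:dem-low} to read off the coefficients $K_{a,b}(q)$. The paper carries out the axiom verification component-by-component directly on the tabloids (Theorems~\ref{thm:extremal} and~\ref{thm:demazure}), which is precisely the boundary case analysis you correctly identify as the main obstacle.
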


Our results give an explicit formula for this expansion, however, in the symmetric case we can say more. The \newword{Hall--Littlewood symmetric functions} may be regarded as the $q=0$ specialization of Macdonald symmetric functions. They are long known to be Schur positive and their Schur coefficients, the \newword{Kostka--Foulkes polynomials} $K_{\lambda,\mu}(t)$, have rich interpretations in geometry and representation theory. Lascoux and Sch\"{u}tzenberger \cite{LS78} recursively defined a statistic called \newword{charge} on these objects that precisely gives $K_{\lambda,\mu}(t)$. Using our formula for nonsymmetric Macdonald polynomials, we arrive at a new expression for $K_{\lambda,\mu}(t)$ using the much simpler $\maj$ statistic. In Theorem~\ref{thm:charge-crystal}, we prove the following:
\begin{theorem*}
  The Kostka--Foulkes polynomials $K_{\lambda,\mu}(t)$ are given by
  \[ K_{\lambda,\mu}(t) = \sum_{\substack{T\in\SSKD(0^{|\mu|-\mu_1}\times\mathrm{rev}(\mu^{\prime})) \\ T \ \text{highest weight} \\ \wt(T) = \lambda^{\prime}}} t^{\maj(T)} , \]
  where $\lambda^{\prime}$ denotes the conjugate of $\lambda$.
\end{theorem*}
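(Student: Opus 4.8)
The plan is to deduce the statement directly from Theorem~\ref{thm:demazure} by specializing the indexing composition to an antidominant one, so that the nonsymmetric Macdonald polynomial becomes symmetric and the Demazure expansion collapses to a Schur expansion whose coefficients we can read off.

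First I would set $b = 0^{|\mu|-\mu_1}\times\mathrm{rev}(\mu')$ and note that $b$ is weakly increasing, hence antidominant. For an antidominant composition the corresponding Demazure subset is the whole normal crystal, so the image of $\SSKD(b)$ under $\embed$ decomposes into full components $B(\nu)$. Consequently, in the character formula of Theorem~\ref{thm:dem-low} each highest weight element $T$ with $\wt(T)=\nu$ generates an entire component, its global lowest weight $\wt(Z(T))$ (from Definition~\ref{def:dem-low}) is the antidominant rearrangement $\mathrm{rev}(\nu)$, and $\key_{\wt(Z(T))}=\key_{\mathrm{rev}(\nu)}=s_{\nu}$ is a genuine Schur polynomial. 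Thus $Z$ drops out of the bookkeeping, being replaced by the highest weight $\wt(T)$, and Theorem~\ref{thm:demazure} specializes to
\[ E_b(X;q,0) = \sum_{\substack{T\in\SSKD(b) \\ T\ \text{highest weight}}} q^{\maj(T)}\, s_{\wt(T)}(X), \]
so that the coefficient of $s_{\nu}$ equals $\sum_{T\ \text{h.w.},\ \wt(T)=\nu} q^{\maj(T)}$.

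Next I would identify this left-hand side as a Hall--Littlewood polynomial. Since $b$ is antidominant of length $|\mu|$ with $\mathrm{sort}(b)=\mu'$ and $|\mu|\geq\mu_1=\ell(\mu')$, the stabilization of nonsymmetric Macdonald polynomials to symmetric ones (the specialization recalled in the introduction, applied with the full complement of $|\mu|$ variables) gives $E_b(X;q,0)=P_{\mu'}(X;q,0)$. I would then invoke the classical Schur expansion of the $t=0$ Macdonald polynomial, namely $P_{\mu'}(X;q,0)=\sum_{\lambda}K_{\lambda,\mu}(q)\,s_{\lambda'}(X)$, equivalently that the coefficient of $s_{\nu}$ is the Kostka--Foulkes polynomial $K_{\nu',\mu}(q)$. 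This expression is the image under $\omega$ of the transformed Hall--Littlewood polynomial and encodes both the $q\leftrightarrow t$ symmetry of (modified) Macdonald polynomials and the conjugation it induces; I would take care that its normalization matches the charge convention for $K_{\lambda,\mu}(t)$ fixed in the introduction.

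Finally I would compare the two expressions for the coefficient of $s_{\nu}$ in $E_b(X;q,0)$, obtaining $K_{\nu',\mu}(q)=\sum_{T\ \text{h.w.},\ \wt(T)=\nu}q^{\maj(T)}$; substituting $\nu=\lambda'$ and renaming $q$ to $t$ yields the asserted formula. The main obstacle I anticipate is precisely the symmetric-function identification $E_b(X;q,0)=P_{\mu'}(X;q,0)$ together with the bookkeeping of conjugates: the introduction's specialization only controls $E_b$ after setting the trailing variables to zero, which is insufficient when $|\mu|<2\mu_1$ (for instance it cannot detect the $s_{\lambda'}$ with $\ell(\lambda')$ close to $|\mu|$), so one must establish the equality in the full set of $|\mu|$ variables, where every relevant $s_{\lambda'}$ is nonzero, and verify carefully that it is $\mu'$ and $\lambda'$, rather than $\mu$ and $\lambda$, that appear.
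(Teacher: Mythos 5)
Your overall strategy is the same as the paper's: restrict to the antidominant shape $b=0^{|\mu|-\mu_1}\times\mathrm{rev}(\mu')$, argue that every connected component of the crystal on $\SSKD(b)$ is a \emph{full} normal crystal so that highest weights alone determine Schur characters, identify $E_b(X;q,0)$ with a conjugated Hall--Littlewood polynomial, and compare Schur coefficients. Your identity $P_{\mu'}(X;q,0)=\sum_\lambda K_{\lambda,\mu}(q)\,s_{\lambda'}(X)$ is correct (it follows from Macdonald duality together with the definition of the transformed Hall--Littlewood functions) and is just a repackaging of the paper's formulation $\omega H_\mu(X;0,q)=\sum_\lambda K_{\lambda,\mu}(q)\,s_{\lambda'}(X)$; the final bookkeeping (the lowest weight of a full component is the reverse of its highest weight, $\key_{\mathrm{rev}(\nu)}=s_\nu$, then $\nu=\lambda'$) is also right.

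However, there are two genuine gaps, and they trace back to the same missing input. First, the step you yourself flag --- the identity $E_b(X_{|\mu|};q,0)=P_{\mu'}(X_{|\mu|};q,0)$ in the \emph{full} set of $|\mu|$ variables --- is the heart of the proof and is left unproven. It does not follow from the truncation and stability formulas recalled in the introduction (as you note, those set trailing variables to zero and so lose exactly the long-column terms), and it is not classical in this nonsymmetric setting: it is precisely the content of Theorem~\ref{thm:mac-stable}, a nontrivial combinatorial result of \cite{Ass18} which the paper quotes and then uses. Your argument closes once you invoke that theorem, but as written the crucial identification is an acknowledged hole. Second, your justification that each component is full --- ``for an antidominant composition the corresponding Demazure subset is the whole normal crystal'' --- is not valid as stated: Theorem~\ref{thm:demazure} only says each component is \emph{some} Demazure crystal $\B_w(\nu)$, and antidominance of the shape $b$ does not by itself force $w=w_0$ component by component. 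The paper proves this fullness as Lemma~\ref{lem:full}, and that argument needs the symmetry of $E_b(X_n;q,0)$ in all $n$ variables (again Theorem~\ref{thm:mac-stable}), the characterization that $\key_a$ is symmetric in $x_1,\ldots,x_n$ if and only if $a$ is weakly increasing (\cite{AS18}, Theorem~4.2), and the positivity and uniqueness of the Demazure expansion from Corollary~\ref{cor:nskostka}. So both gaps are repaired by the same missing ingredient, but without it neither the reduction to highest weights nor the coefficient comparison is established.
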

We conclude by noting that the Demazure coefficients of specialized nonsymmetric Macdonald polynomials give a refinement of the Kostka--Foulkes polynomials that removes certain multiplicities. Moreover, as nonnegative expansions into Demazure characters are becoming more ubiquitous among geometrically significant bases for the polynomial ring, we expect our methods to have wider applications to come.

%
\section{Macdonald polynomials}
%
\label{sec:macdonald}

Symmetric functions arise in many areas of mathematics, appearing as characters of polynomial representations of the general linear group, Frobenius characters of representations of the symmetric groups, and as natural representatives of Schubert classes for Grassmannians. In these contexts, the \emph{Schur functions} and their generalizations play the pivotal role of irreducible objects, and the problem of determining the coefficients of a given symmetric function in the Schur basis combinatorializes problems of finding irreducible decompositions, branching rules, and computing intersection numbers.

In \S\ref{sec:mac-sym}, we review the rich contexts in which we find Schur functions, Hall--Littlewood symmetric functions, and Macdonald symmetric functions along with their associated combinatorics on Young tableaux. In \S\ref{sec:mac-nonsym}, we generalize these symmetric functions to the nonsymmetric setting of the full polynomial ring, where generalizations to other root systems become more accessible. Finally, in \S\ref{sec:mac-SSKD}, we motivate the specialization considered in this paper from the points of view of simplified combinatorial structures where positivity manifests in meaningful ways.

\subsection{Symmetric polynomials}
\label{sec:mac-sym}

We begin by reviewing several of the classical bases for the ring $\Lambda_{\mathbb{Q}}$ of symmetric functions in variables $X = x_1,x_2,x_3,\ldots$ over the rational numbers; for more details, see \cite{Mac95}. Bases for $\Lambda_{\mathbb{Q}}$ are naturally indexed by \newword{partitions}, which are weakly decreasing sequences of nonnegative integers. Perhaps the simplest basis for $\Lambda_{\mathbb{Q}}$ is the basis of \newword{monomial symmetric functions}, denoted by $m_{\lambda}(X)$, and defined by
\begin{equation}
  m_{\lambda}(X) = \sum_{\mathrm{sort}(a)=\lambda} x_1^{a_1} x_2^{a_2} x_2^{a_3}\cdots ,
\end{equation}
where the sum is over all weak compositions $a = (a_1,a_2,a_3,\ldots)$ whose nonzero parts rearrange the partition $\lambda$. As we shall see in the examples to come, monomial positivity is often the necessary precursor to deeper positivity results. Most of the bases we consider will also exhibit triangularity with respect to the monomial basis under the \newword{dominance partial order} on partition defined by
\begin{eqnarray}
  \lambda \leq \mu & \Leftrightarrow & \lambda_1 + \cdots + \lambda_k \leq \mu_1 + \cdots + \mu_k \qquad \forall k .
\end{eqnarray}
Dominance order refines lexicographic order, the latter of which is a total order.

Another important basis for symmetric functions with deep connects to the representation theory of the symmetric group is the basis of \newword{power sum symmetric functions}, denoted by $p_{\lambda}(X)$, and defined multiplicatively by the rules
\begin{eqnarray}
  p_{k}(X) & = & x_{1}^k + x_{2}^k + x_{3}^k + \cdots , \\
  p_{\lambda}(X) & = & p_{\lambda_1}(X) p_{\lambda_2}(X)  \cdots p_{\lambda_{\ell}}(X),
\end{eqnarray}
when $\lambda$ is a partition of length $\ell$. We can use the power sum basis to define the \newword{Hall inner product} on symmetric functions by setting
\begin{equation}
  \langle p_{\lambda}(X) , p_{\mu}(X) \rangle = z_{\lambda} \delta_{\lambda,\mu},
  \label{e:Hall}
\end{equation}
where $z_{\lambda} = \prod_{i \geq 1} i^{m_i} m_i!$ for $m_i$ the multiplicity of $i$ in $\lambda$. From the formula above, the power sum basis is orthogonal with respect to this inner product.

The basis of \newword{Schur functions}, denoted by $s_{\lambda}(X)$, is the unique symmetric function basis that is upper uni-triangular with respect to the monomial basis and orthogonal with respect to the Hall inner product. Schur polynomials may be defined combinatorially as the generating polynomial for \newword{semi-standard Young tableaux}.

The \newword{diagram} of a partition $\lambda$ has $\lambda_i$ left justified unit cells in row $i$.

\begin{definition}
  Given a partition $\lambda$, a \newword{semistandard Young tableau} of shape $\lambda$ is a filling of the Young diagram of $\lambda$ with positive integers such that entries weakly increase left to right along rows and strictly increase bottom to top along columns. We denote the set of semistandard Young tableaux of shape $\lambda$ with entries in $\{1,2,\ldots,n\}$ by $\SSYT_n(\lambda)$.
  \label{def:SSYT}
\end{definition}

For example, the semistandard Young tableaux of shape $(2,2,1)$ with entries in $\{1,2,3,4\}$ are shown in Fig.~\ref{fig:SSYT}.

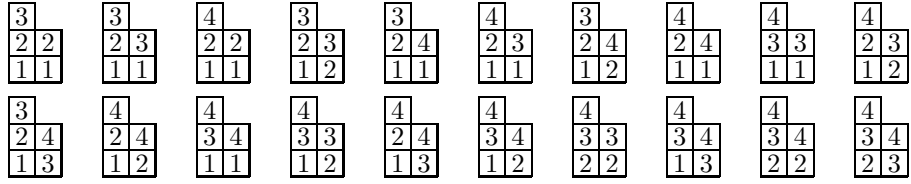
\begin{figure}[ht]
  \begin{center}
    \begin{tikzpicture}[xscale=1.25,yscale=1.25]
      \node at (0,1) (T00) {$\tableau{3 \\ 2 & 2 \\ 1 & 1}$};
      \node at (1,1) (T01) {$\tableau{3 \\ 2 & 3 \\ 1 & 1}$};
      \node at (2,1) (T11) {$\tableau{4 \\ 2 & 2 \\ 1 & 1}$};
      \node at (3,1) (Tb2) {$\tableau{3 \\ 2 & 3 \\ 1 & 2}$};
      \node at (4,1) (T22) {$\tableau{3 \\ 2 & 4 \\ 1 & 1}$};
      \node at (5,1) (T12) {$\tableau{4 \\ 2 & 3 \\ 1 & 1}$};
      \node at (6,1) (T03) {$\tableau{3 \\ 2 & 4 \\ 1 & 2}$};
      \node at (7,1) (T33) {$\tableau{4 \\ 2 & 4 \\ 1 & 1}$};
      \node at (8,1) (T13) {$\tableau{4 \\ 3 & 3 \\ 1 & 1}$};
      \node at (9,1) (Ta3) {$\tableau{4 \\ 2 & 3 \\ 1 & 2}$};
      \node at (0,0) (T04) {$\tableau{3 \\ 2 & 4 \\ 1 & 3}$};
      \node at (1,0) (T24) {$\tableau{4 \\ 2 & 4 \\ 1 & 2}$};
      \node at (2,0) (T34) {$\tableau{4 \\ 3 & 4 \\ 1 & 1}$};
      \node at (3,0) (Ta4) {$\tableau{4 \\ 3 & 3 \\ 1 & 2}$};
      \node at (4,0) (T25) {$\tableau{4 \\ 2 & 4 \\ 1 & 3}$};
      \node at (5,0) (T15) {$\tableau{4 \\ 3 & 4 \\ 1 & 2}$};
      \node at (6,0) (Tb5) {$\tableau{4 \\ 3 & 3 \\ 2 & 2}$};
      \node at (7,0) (T26) {$\tableau{4 \\ 3 & 4 \\ 1 & 3}$};
      \node at (8,0) (T06) {$\tableau{4 \\ 3 & 4 \\ 2 & 2}$};
      \node at (9,0) (T07) {$\tableau{4 \\ 3 & 4 \\ 2 & 3}$};
    \end{tikzpicture}
    \caption{\label{fig:SSYT}The twenty elements of $\SSYT_4(2,2,1)$.}
  \end{center}
\end{figure}

The \emph{weight} of a semistandard Young tableau $T$ is the weak composition $\wt(T)$ whose $i^{th}$ part is the number of entries equal to $i$.

\begin{definition}
  The \newword{Schur polynomial} $s_{\lambda}(x_1,\ldots,x_n)$ is given by
  \begin{equation}
    s_\lambda(x_1,\ldots,x_n) = \sum_{T \in \SSYT_n(\lambda)} x_1^{\wt(T)_1} \cdots x_n^{\wt(T)_n}.
  \end{equation}
  The \newword{Schur function} $s_{\lambda}(X)$ is the stable limit of the Schur polynomial as $n$ grows.
\end{definition}

We may define the \newword{Kostka numbers}, denoted by $K_{\lambda,\mu}$ as the transition coefficients between the Schur basis and the monomial basis, i.e.
\begin{equation}
  s_{\lambda}(X) = \sum_{\lambda} K_{\lambda,\mu} m_{\mu}(X) ,
  \label{e:kostka}
\end{equation}
where $K_{\lambda,\mu}$ is the number of semistandard Young tableaux of shape $\lambda$ and weight $\mu$. In particular, we have $K_{\lambda,\mu} \in \mathbb{N}$.

Schur polynomials arise in many important contexts wherein expansions of symmetric functions into the Schur basis becomes of fundamental importance. For $V^{\lambda}$ the irreducible polynomial representation of $\mathrm{GL}_n(\mathbb{C})$, its character is given by $\mathrm{char}(V^{\lambda}) = s_{\lambda}(x_1,\ldots,x_n)$. Given any polynomial representation $V$, its character $\mathrm{char}(V)$ is a symmetric polynomial, and so the expansion of $\mathrm{char}(V)$ into the Schur basis corresponds precisely to the irreducible decomposition of $V$, i.e.
\begin{eqnarray*}
  V \cong \bigoplus_{\lambda} \left(V^{\lambda}\right)^{\oplus c_{\lambda}}
  & \Leftrightarrow &
  \mathrm{char}(V) = \sum_{\lambda} c_{\lambda} s_{\lambda}(x_1,\ldots,x_n) .
\end{eqnarray*}
Under this paradigm, the Hall inner product corresponds precisely to the inner product on characters, and the coefficients of the Schur polynomials expanded into monomial basis give highest weight multiplicities. Dually, for $S^{\lambda}$ the irreducible representation of $\mathcal{S}_n$ over $\mathbb{C}$, its Frobenius character is given by $\mathrm{ch}(S^{\lambda}) = s_{\lambda}(X)$.

Geometrically, the Chern class of the Schubert variety $X_{\lambda}$ for the Grassmannian $\mathrm{Gr}(n,k)$ is naturally represented by the Schubert polynomial $\mathfrak{S}_{w(\lambda,k)} = s_{\lambda}(x_1,\ldots,x_k)$. Therefore intersection numbers for Grassmannian Schubert varieties can be computed by taking the Schur coefficients of the product of Schur polynomials.

Consider now symmetric functions over $\mathbb{Q}(q,t)$ for two independent indeterminants $q, t$. Here we may define a generalization of the Hall inner product by
\begin{equation}
  \langle p_{\lambda}(X) , p_{\mu}(X) \rangle_{q,t} = z_{\lambda} \delta_{\lambda,\mu} \prod_{i=1}^{\ell(\lambda)} \frac{1-q^{\lambda_i}}{1-t^{\lambda_i}} .
  \label{e:Hall-qt}
\end{equation}
Taking $q=t$ in Eq.~\eqref{e:Hall-qt} results in the classic Hall inner product in Eq.~\eqref{e:Hall}.

Macdonald \cite{Mac88} defined a new basis of symmetric functions over this larger ground field using this generalized inner product.

\begin{definition}[\cite{Mac88}]
  The \newword{Macdonald symmetric functions} $P_{\lambda}(X;q,t)$ are the unique basis for $\Lambda_{\mathbb{Q}(q,t)}$ that are upper uni-triangular with respect to monomial symmetric functions and are orthogonal with respect to the generalized Hall inner product in Eq.~\eqref{e:Hall-qt}.
\end{definition}

Given that dominance order is a partial order, this definition requires a theorem to be well-defined. However, the uniqueness is obvious, as is the specialization
\begin{equation}
  P_{\lambda}(X;q,q) = s_{\lambda}(X) .
\end{equation}
In fact, Macdonald defined this basis to be a simultaneous generalization of the \newword{Hall--Littlewood symmetric functions} $P_{\lambda}(X;0,t)$ and the \newword{Jack symmetric functions} $\lim_{t\rightarrow 1}P_{\lambda}(X;t^{\alpha},t)$, both of which have deep connections to representation theory and geometry.

Macdonald also considered a slight modification of the $P_{\mu}(X;q,t)$ basis called the \newword{integral form}, denoted by $J_{\mu}(X;q,t)$, and related to the $P_{\mu}(X;q,t)$ basis by
\begin{equation}
  J_{\mu}(X;q,t) = \prod_{c\in\lambda} \left(1 - q^{\mathrm{arm}(c)}t^{\mathrm{leg}(c)+1} \right) P_{\mu}(X;q,t) ,
\end{equation}
where for $c$ a cell of the diagram of $\lambda$, we set $\mathrm{arm}(c)$ to be the number of cells strictly right of $c$ and $\mathrm{leg}(c)$ the number of cells strictly above $c$. With this basis, we may define the \newword{Kostka--Macdonald polynomials} denoted by $K_{\lambda,\mu}(q,t)$ by
\begin{equation}
  J_{\mu}(X;q,t) = \sum_{\lambda} K_{\lambda,\mu}(q,t) s_{\lambda}[X(1-t)] ,
\end{equation}
where $s_{\lambda}[X(1-t)]$ denotes the \newword{plethystic Schur basis}, which may be defined as the dual basis to the Schur functions under the generalized Hall inner product Eq.~\eqref{e:Hall-qt} at $q=0$, i.e.
\[ \langle \ s_{\lambda}[X(1-t)] , s_{\mu}(X) \rangle_{0,t} = \delta_{\lambda,\mu} . \]
A priori, the coefficients $K_{\lambda,\mu}(q,t)$ are rational functions in the parameters $q,t$ with rational coefficients. Based on hand computations, Macdonald conjectured that, in fact, $K_{\lambda,\mu}(q,t)$ are \emph{polynomials} in $q,t$ with \emph{nonnegative integer} coefficients.

Garsia and Haiman \cite{GH93} considered the \newword{modified Macdonald polynomial} $H_{\mu}(X;q,t)$ that relates to the integral form via plethysm as
\begin{equation}
  H_{\mu}(X;q,t) = J_{\mu}[X\left(\textstyle{\frac{1}{1-t}}\right);q,t]  = \sum_{\lambda} K_{\lambda,\mu}(q,t) s_{\lambda}(X) ,
  \label{e:modified}
\end{equation}
where now the Kostka--Macdonald coefficients precisely give the Schur expansion of the modified Macdonald polynomial. Thus we have fallen into the fundamental problem of giving a combinatorial interpretation for the Schur coefficients of a given symmetric function.

Garsia and Haiman \cite{GH96}, building on earlier work of Garsia and Procesi \cite{GP92} on Hall--Littlewood polynomials, constructed a bi-graded $\mathcal{S}_n$ module and proved that if the dimension of the module is $n!$, then its bi-graded Frobenius character must be $H_{\mu}(X;q,t)$. As Schur functions are the Frobenius characters of the irreducible representations of $\mathcal{S}_n$, this would prove Macdonald's conjecture. Haiman \cite{Hai01} analyzed the isospectral Hilbert scheme of points in the plane, ultimately showing that it is Cohen--Macaulay (and Gorenstein), and from this established the $n!$ Conjecture of Garsia and Haiman as well as Macdonald positivity.

\begin{theorem}[\cite{Hai01}]
  The Kostka--Macdonald polynomials are polynomials in $q,t$ with nonnegative integers coefficients, i.e. $K_{\lambda,\mu}(q,t) \in \mathbb{N}[q,t]$.
\end{theorem}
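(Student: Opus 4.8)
The plan is to reduce Macdonald positivity to the \emph{$n!$ conjecture} of Garsia and Haiman and then to extract positivity essentially for free from representation theory. First, following Garsia and Haiman, I would attach to each partition $\mu$ of $n$ the \textbf{Garsia--Haiman module} $D_{\mu}$: writing the cells of the diagram of $\mu$ as $(p_1,q_1),\ldots,(p_n,q_n)$, form the $n\times n$ determinant $\Delta_{\mu} = \det\bigl(x_i^{p_j} y_i^{q_j}\bigr)$ in $2n$ variables, and let $D_{\mu}$ be the linear span of $\Delta_{\mu}$ together with all of its iterated partial derivatives. The diagonal action of $\mathcal{S}_n$ permuting the pairs $(x_i,y_i)$ makes $D_{\mu}$ a module, and the total $x$-degree and $y$-degree equip it with a bigrading compatible with the $\mathcal{S}_n$-action.

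The crucial ``soft'' observation is that any finite-dimensional bigraded $\mathcal{S}_n$-module $M = \bigoplus_{r,s} M_{r,s}$ has bigraded Frobenius characteristic $\mathrm{ch}(M) = \sum_{r,s} q^r t^s\, \mathrm{ch}(M_{r,s})$, and each $\mathrm{ch}(M_{r,s})$ is a nonnegative integer combination of Schur functions because $M_{r,s}$ is an honest $\mathcal{S}_n$-representation. Hence $\mathrm{ch}(M)$ automatically lies in the $\mathbb{N}[q,t]$-span of the Schur basis. Therefore it suffices to prove that $\mathrm{ch}(D_{\mu}) = H_{\mu}(X;q,t)$, for then $K_{\lambda,\mu}(q,t)$ is literally the bigraded multiplicity of the irreducible $S^{\lambda}$ in $D_{\mu}$ and so lies in $\mathbb{N}[q,t]$. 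Garsia and Haiman reduced this identity to the single numerical assertion $\dim D_{\mu} = n!$: once the dimension is known to be maximal, the bigraded character is pinned down by specialization and orthogonality arguments.

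Next I would establish $\dim D_{\mu} = n!$ geometrically, following Haiman. Introduce the Hilbert scheme $\mathrm{Hilb}^n(\mathbb{C}^2)$ of $n$ points in the plane (smooth and irreducible of dimension $2n$) together with its Hilbert--Chow morphism to $\mathrm{Sym}^n(\mathbb{C}^2)$, and form the \textbf{isospectral Hilbert scheme} $X_n$, the reduced fiber product of $\mathrm{Hilb}^n(\mathbb{C}^2)$ with $(\mathbb{C}^2)^n$ over $\mathrm{Sym}^n(\mathbb{C}^2)$, carrying a finite degree-$n!$ projection $\rho\colon X_n \to \mathrm{Hilb}^n(\mathbb{C}^2)$ and an $\mathcal{S}_n$-action. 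The heart of the argument is to prove that $X_n$ is normal, Cohen--Macaulay, and in fact Gorenstein. Granting this, miracle flatness over the smooth base shows that $\rho_{*}\mathcal{O}_{X_n}$ is a rank-$n!$ vector bundle (the \textbf{Procesi bundle}); computing its fiber over the torus-fixed point corresponding to the monomial ideal $I_{\mu}$ identifies that fiber with $D_{\mu}$, yielding $\dim D_{\mu} = n!$, while the two-torus scaling action on $\mathbb{C}^2$ supplies the $q,t$-bigrading from which a localization computation recovers $\mathrm{ch}(D_{\mu}) = H_{\mu}(X;q,t)$.

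The main obstacle is precisely the Cohen--Macaulayness (equivalently the Gorenstein property) of $X_n$. I would attack it by Haiman's reduction to a statement about \textbf{polygraphs}: the union $Z(n,\ell) \subset (\mathbb{C}^2)^n \times (\mathbb{C}^2)^{\ell}$ of the linear subspaces cut out by $(x_{n+k},y_{n+k}) = (x_{f(k)},y_{f(k)})$ over all functions $f\colon \{1,\ldots,\ell\}\to\{1,\ldots,n\}$. The key technical theorem is that the coordinate ring of $Z(n,\ell)$ is a \emph{free} module over the polynomial ring $\mathbb{C}[x_1,\ldots,x_n,y_1,\ldots,y_n]$. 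This freeness --- proved by a delicate induction that adds the defining subspaces one at a time while controlling the intersection ideals and exhibiting explicit module generators --- is the genuinely hard step; once it is in hand, the Cohen--Macaulay and Gorenstein properties of $X_n$, and with them the $n!$ conjecture and Macdonald positivity, follow.
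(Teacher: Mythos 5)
Your proposal is a faithful outline of exactly the argument the paper invokes for this theorem: it is stated as a citation to Haiman \cite{Hai01}, and the surrounding discussion describes the same route you take — the Garsia--Haiman module, the reduction of Macdonald positivity to the $n!$ conjecture via the automatic Schur-positivity of bigraded $\mathcal{S}_n$-characters, and Haiman's proof of the $n!$ conjecture through the Cohen--Macaulay/Gorenstein property of the isospectral Hilbert scheme (with the polygraph freeness theorem as the technical core). So the proposal is correct and matches the paper's approach.
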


Nevertheless, it remains an important open problem in algebraic combinatorics to give a manifestly positive formula for $K_{\lambda,\mu}(q,t)$.

\subsection{Nonsymmetric polynomials}
\label{sec:mac-nonsym}

We turn our focus now to the full polynomial ring $\mathbb{Q}[x_1,\ldots,x_n]$ in variables $X_n = x_1,\ldots,x_n$, which as the obvious basis of monomials indexed by weak compositions $a=(a_1,\ldots,a_n)\in\mathbb{N}^{n}$ given by $X_n^a = x_1^{a_1} \cdots x_n^{a_n}$. The \newword{Bruhat order} on weak compositions given by the transitive closure of the cover relations
\begin{displaymath}
  \begin{array}{cccl}
    a & > & t_{i,j} \cdot a & \text{ if } i<j \text{ and } a_i < a_j \\
    t_{i,j} \cdot a & > & a + \mathbf{e}_i - \mathbf{e}_j & \text{ if }i < j \text{ and } a_j - a_i > 1,
  \end{array}
\end{displaymath}
where $\mathbf{e}_i$ is the $i$th standard basis vector. Triangularity in the polynomial setting will be with respect to this partial order which refines lexicographic order.

Opdam \cite{Opd95} and Macdonald \cite{Mac96} introduced a polynomial generalization of Macdonald symmetric functions that were generalized to any root system by Cherednik \cite{Che95}. Expanding the ground field to include the two parameters $q,t$, the \newword{Cherednik inner product} on $\mathbb{Q}(q,t)[x_1,\ldots,x_n]$ is given by
\begin{equation}
  \langle f,g \rangle = [X_n^0]\left(f \ \overline{g} \ \frac{\Delta}{[X_n^0]\Delta} \right) ,
  \label{e:cherednik}
\end{equation}
where $\overline{\cdot}$ is defined linearly by $\overline{q}=q^{-1}$, $\overline{t}=t^{-1}$, $\overline{x_i}=x_i^{-1}$ and
\[ \Delta = \prod_{i<j} \prod_{k\geq 0} \frac{(1-q^kx_i/x_j)(1-q^{k+1}x_j/x_i)}{(1-tq^kx_i/x_j)(1-tq^{k+1}x_j/x_i)} . \]

Parallel to the characterization of $P_{\mu}(X;q,t)$, we have the following.

\begin{definition}[\cite{Che95}]
  The \newword{nonsymmetric Macdonald polynomials} $E_{a}(X_n;q,t)$ are the unique basis for $\mathbb{Q}(q,t)[x_1,\ldots,x_n]$ that are upper uni-triangular with respect to monomials and are orthogonal with respect to the Cherednik's inner product.
\end{definition}

The nonsymmetric Macdonald polynomials can be realized as a truncation of the nonsymmetric Macdonald polynomials which, in addition, shows that the symmetric \emph{functions} are the \newword{stable limit},
\begin{eqnarray}
  E_{0^m \times a}(x_1,\ldots,x_m,0,\ldots,0;q,t) & = & P_{\mathrm{sort}(a)}(x_1,\ldots,x_m;q,t)  \label{e:trunc-E}
 \\
 \lim_{m\rightarrow\infty} E_{0^m \times a}(x_1,\ldots,x_m,0,\ldots,0;q,t) & = & P_{\mathrm{sort}(a)}(x_1,x_2,\ldots;q,t),
  \label{e:stable-E}
\end{eqnarray}
where $\mathrm{sort}(a)$ is the partition rearrangement of the weak composition $a$.

Recall that Schur functions appear as a specialization of Macdonald symmetric functions, namely $P_{\lambda}(X;0,0) = s_{\lambda}(X)$. Ion \cite{Ion05} showed that the analogous specialization of nonsymmetric Macdonald polynomials is a \newword{Demazure character}, namely $E_{a}(X_n;0,0) = \key_{a}(X_n)$. Demazure characters, which form a geometrically significant basis for the full polynomial ring, are presented in depth in section~\ref{sec:crystal-demazure}, but for now we note that they are generalizations of Schur polynomials in the same senses as Eqs.~\eqref{e:trunc-E} and \eqref{e:stable-E}. This provides a natural place to begin searching for meaningful positivity results in the nonsymmetric setting.

Since Demazure characters are monomial positive, any positivity results for Demazure characters must include monomial positivity as well. The coefficient of $X_n^a$ in $E_{b}(X_n;q,t)$ is nonzero if and only if $a \leq b$ in Bruhat order, but these coefficients lie in $\mathbb{Q}(q,t)$, and so monomial positivity must lie elsewhere.

The \newword{nonsymmetric integral form}, denoted by $\mathcal{E}_{b}(X_n;q,t)$, is given by
\begin{equation}
  \mathcal{E}_{b}(X_n;q,t) = \prod_{c\in a} \left(1 - q^{\mathrm{leg}(c)+1}t^{\mathrm{arm}(c)+1} \right) E_{b}(X_n;q,t) ,
  \label{e:nsintegral}
\end{equation}
where the \newword{leg} of a cell $c$ in a composition diagram is the number of cells strictly right of $c$ in the same row, and the \newword{arm} of $c$ is the number of cells strictly below $c$ in the same column whose row is weakly shorter than that of $c$ plus the number of cells strictly above and one column left of $c$ whose row is strictly shorter.

Knop \cite{Kno97} showed that $\mathcal{E}_{b}(X_n;q,t)$ has its monomial coefficients in $\mathbb{Z}[q,t]$, paving the way for further positivity. However, recall that Macdonald positivity arose only when considering \emph{plethystic substitutions}. At present, there is no well-defined notion of plethysm for the full polynomial ring.

To circumvent this difficulty, from the combinatorial formula for nonsymmetric Macdonald polynomials due to Haglund, Haiman, and Loehr \cite{HHL08}, one sees that when specializing the single parameter $t=0$, the nonsymmetric Macdonald polynomial and its integral form coincide and, moreover, become \emph{monomial positive}. Assaf \cite{Ass18} proved this specialization $E_b(X_n;q,0)$ is, in fact, Demazure positive.

\begin{theorem}[\cite{Ass18}]
  For weak compositions $a,b$, define coefficients $K_{a,b}(q)$ by
  \begin{equation}
    E_b(X_n;q,0) = \sum_{a} K_{a,b}(q) \key_a(X_n) .
    \label{e:nskostka}
  \end{equation}
  Then we have $K_{a,b}(q) \in \mathbb{N}[q]$. In particular, nonsymmetric Macdonald polynomials specialized at $t=0$ are a nonnegative $q$-graded sum of Demazure characters.
  \label{thm:nskostka}
\end{theorem}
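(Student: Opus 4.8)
The plan is to give a crystal-theoretic proof by exhibiting an explicit Demazure crystal whose graded character equals $E_b(X_n;q,0)$. The starting point is the Haglund--Haiman--Loehr formula specialized at $t=0$: the factor $t^{\coinv(T)}$ kills every nonattacking filling with $\coinv(T)>0$, while the product $\prod (1-t)/(1-q^{\mathrm{leg}+1}t^{\mathrm{arm}+1})$ evaluates to $1$, collapsing the sum to
\[ E_b(X_n;q,0) = \sum_{T\in\SSKD(b)} q^{\maj(T)}\, x_1^{\wt(T)_1}\cdots x_n^{\wt(T)_n}, \]
a manifestly monomial-positive generating function over semistandard key tabloids graded by $\maj$. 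Thus it suffices to realize $\SSKD(b)$ as a graded Demazure crystal, since by the character formula of Theorem~\ref{thm:dem-low} the character of any Demazure crystal is a nonnegative sum of Demazure characters $\key_{\wt(Z(T))}$ indexed by its highest weight elements.

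First I would equip $\SSKD(b)$ with raising and lowering operators (Definition~\ref{def:raise-tabloid}) compatible with $\wt$, and invoke Theorem~\ref{thm:commute} to see that the embedding $\embed$ into $\SSYT$ is weight-preserving, injective, and intertwines the crystal operators. This places the image of $\SSKD(b)$ inside a normal $\gl_n$ crystal, so that the operators inherit the structural data — string lengths and the $\varphi,\varepsilon$ statistics — of the ambient normal crystal, which is exactly what the local Demazure axioms are phrased in terms of.

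The central step is then to check that $\embed(\SSKD(b))$ satisfies the six local axioms of Definition~\ref{def:demazure}, i.e.\ is a Demazure subset of this normal crystal. By the characterization theorem (Theorems~\ref{thm:dem-well-def} and \ref{thm:main-dem}), any subset meeting these axioms is genuinely a Demazure crystal, which frees us from having to produce the Weyl group element or intersect with a full crystal by hand. I expect this verification — showing that the truncation recorded by the key-tabloid operators is closed in each $i$-string precisely as the Demazure axioms require — to be the main obstacle, since it is here that the combinatorics of Kohnert's paradigm for $\key_a$ must be reconciled with the local crystal structure of the embedding.

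Finally, to pass from the crystal decomposition to the claimed $q$-positivity, I would show that the raising operators preserve $\maj$, so that $\maj$ is constant on each connected Demazure component. Grouping the sum by components then yields
\[ E_b(X_n;q,0) = \sum_{\substack{T\in\SSKD(b)\\ T\ \text{highest weight}}} q^{\maj(T)}\, \key_{\wt(Z(T))}(X_n), \]
whence $K_{a,b}(q) = \sum_{T} q^{\maj(T)}$, summed over highest weight $T$ with $\wt(Z(T))=a$, lies in $\mathbb{N}[q]$. The invariance of $\maj$ under the operators is the second delicate point, as it binds the Macdonald grading to the crystal structure and must be read off directly from the explicit definition of the operators.
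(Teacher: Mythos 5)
Your proposal is correct and follows essentially the same route as the paper: starting from the Haglund--Haiman--Loehr specialization to get the $\maj$-graded generating function over $\SSKD(b)$, defining the crystal operators of Definition~\ref{def:raise-tabloid}, embedding via $\embed$ into semistandard Young tableaux (Theorem~\ref{thm:commute}), verifying the local Demazure axioms of Definition~\ref{def:demazure} so that Theorem~\ref{thm:main-dem} applies (this is the paper's Theorem~\ref{thm:demazure}), and using $\maj$-invariance of the operators (Lemma~\ref{lem:raising-majpreserving}) to group terms into Demazure characters. The two points you flag as delicate --- the axiom verification and the $\maj$-preservation --- are precisely where the paper spends its effort, so your outline matches both the structure and the hard steps of the published argument.
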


While Assaf's proof is combinatorial, it does not give a direct formula for the Demazure expansion. Assaf proves that $E_b(X_n;q,0)$ is nonnegative on the \newword{fundamental slide polynomials}, a basis for $\mathbb{Z}[x_1,x_2,\ldots]$ developed by Assaf and Searles \cite{AS17} arising from their study of Schubert polynomials. From there, she uses the machinery of \newword{weak dual equivalence} \cite{Ass-W} to group terms in the fundamental slide expansion into Demazure characters, which Assaf and Searles \cite{AS18} showed are fundamental slide positive. However, extracting a formula requires one to write fundamental slide polynomials in terms of Demazure characters, which is inefficient and introduces negative signs, albeit ones that ultimately cancel.

In the present paper, we use the theory of crystal bases to give a new combinatorial proof of Theorem~\ref{thm:nskostka} that yields a manifestly positive formula for $K_{a,b}(q)$. Moreover, as crystals themselves are combinatorial skeletons of representations, this also gives a representation theoretic model for these specialized nonsymmetric Macdonald polynomials.

\subsection{Semistandard key tabloids}
\label{sec:mac-SSKD}

Haglund, Haiman and Loehr \cite{HHL08} gave a combinatorial formula for the monomial expansion of nonsymmetric Macdonald polynomials. Integrality for the nonsymmetric integral form is immediate from their formula, as is monomial positivity for the specialization we consider.

The \newword{diagram} of a weak composition has $a_i$ cells left-justified in row $i$, indexed in coordinate notation with row $1$ at the bottom.

Two cells of a diagram are \newword{attacking} if they lie in the same column or if they lie in adjacent columns with the cell on the left strictly higher than the cell on the right. A filling is \newword{non-attacking} if no two attacking cells have the same value.

\begin{figure}[ht]
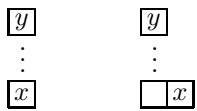

  \begin{displaymath}
    \begin{array}{l}
      \tableau{ y } \\[-0.5\cellsize] \hspace{0.4\cellsize} \vdots \\ \tableau{ x }
    \end{array}\hspace{3\cellsize}
    \begin{array}{l}
      \tableau{ y } \\[-0.5\cellsize] \hspace{0.4\cellsize} \vdots \\ \tableau{ \ & x }
    \end{array}
  \end{displaymath}
  \caption{\label{fig:attack}The two relative positions for attacking cells $x,y$.}
\end{figure}

For a non-attacking filling $T$, the \newword{major index} of $T$, denoted by $\maj(T)$, is the sum of the legs of all cells $c$ such that the entry in $c$ is strictly less than the entry immediately to its right, as illustrated in Fig.~\ref{fig:maj}.

\begin{figure}[ht]
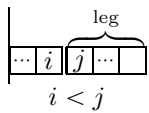

  \begin{displaymath}
    \begin{array}{c}
      \vline\tableau{_{\cdots}}\hss\tableau{i}\hss\overbrace{\tableau{j}\hss\tableau{_{\cdots}}\hss\tableau{ \ }}^{\mathrm{leg}} \\
      i < j
    \end{array}
  \end{displaymath}
  \caption{\label{fig:maj}The leg of a cell contributing to the major index.}
\end{figure}

A \newword{triple} is a collection of three cells with two row adjacent and either (Type I) the third cell is above the left and the lower row is strictly longer, or (Type II) the third cell is below the right and the higher row is weakly longer. The \newword{orientation} of a triple is determined by reading the entries of the cells from smallest to largest. A \newword{co-inversion triple} is a Type I triple oriented counterclockwise or a Type II triple oriented clockwise, as illustrated in Fig.~\ref{fig:inv}. Note that, as proved in \cite{HHL08}(Lemma~3.6.3), the entries of the cells that form a co-inversion triple are necessarily distinct.

\begin{figure}[ht]
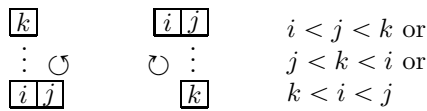

  \begin{displaymath}
      \begin{array}{l}
        \tableau{ k } \\[-0.5\cellsize] \hspace{0.4\cellsize} \vdots \hspace{0.5\cellsize} \circlearrowleft \\ \tableau{ i & j }
      \end{array}\hspace{2\cellsize}
      \begin{array}{r}
        \tableau{ i & j } \\[-0.5\cellsize] \circlearrowright \hspace{0.5\cellsize} \vdots \hspace{0.4\cellsize} \\ \tableau{ k }
      \end{array}\hspace{2\cellsize}
      \begin{array}{l}
        i < j < k \text{ or} \\
        j < k < i \text{ or} \\
        k < i < j
      \end{array}
  \end{displaymath}
  \caption{\label{fig:inv}The positions and orientation for co-inversion triples.}
\end{figure}


Generalizing their earlier formula for Macdonald symmetric functions \cite{HHL05}, Haglund, Haiman and Loehr gave the following explicit combinatorial formula for the monomial expansion of the nonsymmetric Macdonald polynomials \cite{HHL08}, which also yields a formula for the nonsymmetric integral form.

\begin{theorem}[\cite{HHL08}]
  The nonsymmetric Macdonald polynomial $E_{a}(X_n;q,t)$ is given by
  \begin{equation}
    E_{a}(X_n;q,t) = \sum_{\substack{T:a\rightarrow [n] \\ \mathrm{non-attacking}}} q^{\maj(T)} t^{\coinv(T)} X_n^{\wt(T)}
    \prod_{c \neq \mathrm{left}(c)} \frac{1-t}{1 - q^{\mathrm{leg}(c)+1} t^{\mathrm{arm}(c)+1}} .
  \end{equation}
  \label{thm:mac}
\end{theorem}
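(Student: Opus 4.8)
The plan is to show that the combinatorial sum on the right-hand side, which I will denote $\widetilde{E}_{a}(X_n;q,t)$, satisfies the recurrences of Knop and Sahi that uniquely characterize the nonsymmetric Macdonald polynomials. Since $E_{a}$ is determined by upper unitriangularity together with orthogonality for the Cherednik inner product, and since Knop and Sahi repackaged these defining conditions into a finite system of recurrences built from the double affine Hecke algebra, it suffices to check that $\widetilde{E}_{a}$ obeys the same base case and recurrences; uniqueness then forces $\widetilde{E}_{a} = E_{a}$. Verifying orthogonality and triangularity \emph{directly} from the combinatorial formula would require evaluating the Cherednik inner product on generating functions of fillings, which appears hopeless, so the recurrence route is the natural one.

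The characterization has three ingredients. The base case is $E_{0^n} = 1$, and on the combinatorial side the only non-attacking filling of the empty diagram is the empty filling, with trivial statistics and empty product, giving $\widetilde{E}_{0^n} = 1$. The first recurrence is the \emph{cyclic (affine) shift}: writing $\Phi a = (a_2,\ldots,a_n,a_1+1)$, there is a relation $E_{\Phi a} = q^{a_1}\,\Phi(E_a)$, where $\Phi$ is the associated cyclic shift operator on polynomials sending $f(x_1,\ldots,x_n)$ to $x_n f(qx_n,x_1,\ldots,x_{n-1})$ (up to convention). The second is the \emph{Demazure--Lusztig recurrence}: the Hecke-algebra operators $T_i$ of the double affine Hecke algebra relate $E_{s_i a}$ and $E_a$ through the action of $T_i$ plus an explicit rational correction term depending on the arm and leg of a distinguished cell, valid when $a_i$ and $a_{i+1}$ differ. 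Together with the base case these generate every $E_a$.

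To verify the cyclic shift, I would exhibit a weight-compatible bijection between non-attacking fillings of the diagram of $\Phi a$ and those of $a$: deleting the new bottom cell and cyclically relabeling the rows sets up the correspondence, and one then checks that $\maj$, $\coinv$, and each $\mathrm{arm}(c)$, $\mathrm{leg}(c)$ entering the product transform exactly by the factors $q^{a_1}$ and $x_n$ dictated by $\Phi$. This is essentially bookkeeping once one confirms that the non-attacking and triple conditions survive the row rotation. Verifying the Demazure--Lusztig recurrence is the heart of the matter: I would construct a weight-tracking involution on non-attacking fillings that interchanges the entries $i$ and $i+1$ within each attacking chain, and show that its net effect on the generating function reproduces the rational action of $T_i$. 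This demands a delicate case analysis of how co-inversion triples of Types I and II are created or destroyed when two adjacent values are swapped, of which swaps the non-attacking condition permits, and of how the resulting change in $\coinv$ (with $\maj$ and the arm/leg product left invariant) assembles into the coefficient $\tfrac{1-t}{1-q^{\mathrm{leg}(c)+1}t^{\mathrm{arm}(c)+1}}$.

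The main obstacle is precisely this last step. The operator $T_i$ acts as a rational function in $x_i,x_{i+1}$ with denominator $x_i-x_{i+1}$, so matching it combinatorially forces one to group fillings into orbits under the $i \leftrightarrow i+1$ exchange and to show that, within each orbit, the $t$-weighted count of co-inversion triples telescopes against the operator's coefficients. Controlling the interaction between the non-attacking restriction, which forbids equal attacking entries, and the triple statistic across an entire attacking chain, uniformly in the arm and leg data, is the genuinely intricate part; everything else reduces to careful but routine verification.
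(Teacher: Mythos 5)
This statement is not proved in the paper at all: Theorem~\ref{thm:mac} is the Haglund--Haiman--Loehr formula, imported verbatim from \cite{HHL08} and used as a black box (the authors only need its $t=0$ specialization to define semistandard key tabloids). So the only meaningful comparison is with the original proof in \cite{HHL08}, and there your strategy coincides with theirs: the formula was proved precisely by checking that the combinatorial sum satisfies the Knop--Sahi recurrences characterizing the $E_a$ --- the base case $E_{0^n}=1$, the cycling recurrence attached to $\Phi a = (a_2,\ldots,a_n,a_1+1)$, and compatibility with the Demazure--Lusztig (intertwiner) recurrence coming from the double affine Hecke algebra --- followed by uniqueness. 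Your opening observation, that verifying triangularity and Cherednik orthogonality directly against a sum over fillings is hopeless and that the recurrence route is forced, is also how the original authors frame the problem.

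As it stands, however, your proposal is a plan rather than a proof: every step that makes the theorem hard is deferred. The involution realizing the action of $T_i$ is never constructed, and the case analysis of how Type I and Type II triples are created and destroyed under the $i \leftrightarrow i+1$ exchange is described but not carried out. One concrete warning about the step you sketch: you assert the exchange can be arranged so that $\maj$ and the arm/leg product are left invariant while only $\coinv$ moves. That is not automatic --- a naive swap of $i$'s and $i+1$'s changes the descent set and hence $\maj$. Indeed, a large part of Section~4 of the present paper is devoted to engineering operators that exchange $i$'s and $i+1$'s while preserving $\maj$ (Definition~\ref{def:raise-tabloid} and Lemma~\ref{lem:raising-majpreserving}), and the supporting bookkeeping (Proposition~\ref{prop:raising-distribution}, Lemmas~\ref{lem:A} and \ref{lem:B}) is substantial even in the degenerate case $t=0$, where all the rational coefficients collapse to $1$. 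In the general two-parameter setting you would additionally have to make the $t$-weights of orbits telescope against the coefficient $\frac{1-t}{1-q^{\mathrm{leg}(c)+1}t^{\mathrm{arm}(c)+1}}$, which is exactly the part of \cite{HHL08} that occupies most of their proof. So: right approach, same as the source, but the central combinatorial verification has not been discharged.
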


Comparing with Eq.~\eqref{e:nsintegral}, we see that while the denominator can be cleared, the appearance of negative signs is inevitable even for the integral form. However, when specializing to $t=0$, the product on the right collapses to $1$ giving a manifestly positive monomial expansion. We review notation from \cite{Ass18}.

\begin{definition}
  Given a weak composition $a$, a \newword{semistandard key tabloid} of shape $a$ is a non-attacking filling of the composition diagram of $a$ with positive integers such that there are no co-inversion triples. We denote the set of semistandard Young tableaux of shape $a$ by $\SSKD(a)$.
  \label{def:SSKD}
\end{definition}

For example, Fig.~\ref{fig:SSKD} shows the semistandard key tabloids of shape $(0,2,1,2)$.

\begin{figure}[ht]
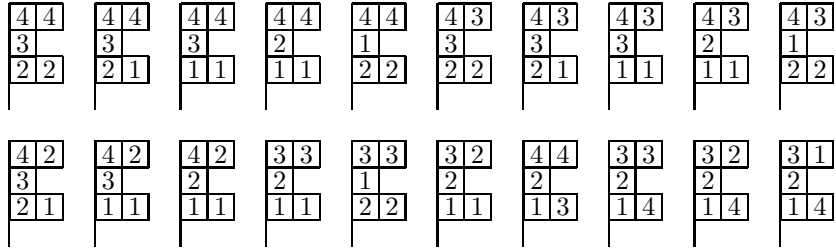

  \begin{displaymath}
    \arraycolsep=6pt
    \begin{array}{cccccccccc}
      \vline\tableau{4 & 4 \\ 3 \\ 2 & 2 \\ & } &
      \vline\tableau{4 & 4 \\ 3 \\ 2 & 1 \\ & } &
      \vline\tableau{4 & 4 \\ 3 \\ 1 & 1 \\ & } &
      \vline\tableau{4 & 4 \\ 2 \\ 1 & 1 \\ & } &
      \vline\tableau{4 & 4 \\ 1 \\ 2 & 2 \\ & } &
      \vline\tableau{4 & 3 \\ 3 \\ 2 & 2 \\ & } &
      \vline\tableau{4 & 3 \\ 3 \\ 2 & 1 \\ & } &
      \vline\tableau{4 & 3 \\ 3 \\ 1 & 1 \\ & } &
      \vline\tableau{4 & 3 \\ 2 \\ 1 & 1 \\ & } &
      \vline\tableau{4 & 3 \\ 1 \\ 2 & 2 \\ & } \\ \\
      \vline\tableau{4 & 2 \\ 3 \\ 2 & 1 \\ & } &
      \vline\tableau{4 & 2 \\ 3 \\ 1 & 1 \\ & } &
      \vline\tableau{4 & 2 \\ 2 \\ 1 & 1 \\ & } &
      \vline\tableau{3 & 3 \\ 2 \\ 1 & 1 \\ & } &
      \vline\tableau{3 & 3 \\ 1 \\ 2 & 2 \\ & } &
      \vline\tableau{3 & 2 \\ 2 \\ 1 & 1 \\ & } &
      \vline\tableau{4 & 4 \\ 2 \\ 1 & 3 \\ & } &
      \vline\tableau{3 & 3 \\ 2 \\ 1 & 4 \\ & } &
      \vline\tableau{3 & 2 \\ 2 \\ 1 & 4 \\ & } &
      \vline\tableau{3 & 1 \\ 2 \\ 1 & 4 \\ & }
    \end{array}
  \end{displaymath}
  \caption{\label{fig:SSKD}The semistandard key tabloids of shape $(0,2,1,2)$.}
\end{figure}

Classically, a \newword{semistandard Young tabloid} is a filling of a Young diagram with weakly increasing rows and no column condition. Thus a tabloid is determined by its row sets, since there is a unique ordering for each row that results in a valid filling. Our nomenclature for \emph{semistandard key tabloids} arises from the same paradigm, though now emphasis is placed on columns rather than on rows.

\begin{proposition}
  Given two semistandard key tabloids $S,T \in \SSKT(a)$, if $S$ and $T$ have the same set of entries within each column, then $S=T$.
\end{proposition}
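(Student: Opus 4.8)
The plan is to separate the statement into a trivial part and a genuinely combinatorial part. First, since any two cells in a common column are attacking, the non-attacking hypothesis already forces the entries of each column to be pairwise distinct; thus the ``set of entries of a column'' really is an unordered set, and the content of the proposition is that the \emph{vertical arrangement} of each column is completely determined once all of the column-sets are known. So it is enough to prove that $S$ and $T$ cannot differ merely by a nontrivial rearrangement of entries inside columns.

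The engine of the argument will be a local rigidity (``no-swap'') lemma: if in a semistandard key tabloid one interchanges the entries of two cells lying in the same column $j$, in occupied rows $r<s$, then the resulting filling is no longer a semistandard key tabloid. I would prove this by producing an explicit forbidden configuration. If the upper row $s$ extends into the next column, then the two cells at $(s,j),(s,j+1)$ together with $(r,j)$ form a Type II triple whenever row $s$ is weakly longer than row $r$, and interchanging the entries at $(r,j)$ and $(s,j)$ flips its orientation to clockwise, i.e.\ creates a co-inversion triple; symmetrically, if the lower row $r$ extends into the next column one gets a Type I triple that becomes counterclockwise. In the remaining case, where both rows terminate in column $j$, the same analysis is run against the column to the \emph{left}, using a Type II triple whose third cell lies in column $j$.

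With the lemma in hand I would finish by a minimal-counterexample argument. Supposing $S\neq T$ have identical column sets, there is a column in which their arrangements differ, and hence an adjacent pair of occupied rows $r<s$ in that column at which $S$ and $T$ carry interchanged values. Choosing this pair extremally---so that the third cell required by the no-swap lemma is one on which $S$ and $T$ already agree---I can apply the lemma to conclude that at least one of $S,T$ contains a co-inversion triple, contradicting that both are semistandard key tabloids. Therefore $S=T$.

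The step I expect to be the main obstacle is the no-swap lemma together with the extremal selection that feeds it. The difficulty is twofold. First, the orientation of a triple must be computed with the tie-breaking convention for equal entries, and ties genuinely occur here because a cell and its right neighbor may carry the same value; pinning down that exactly one of the two orders of the swapped entries is co-inversion-free, in every configuration of the surrounding row lengths, is a finite but delicate case analysis. Second, because the forbidden triple may reach into either the left or the right neighboring column, the extremal choice of the differing pair must be made carefully enough to guarantee that this third cell is common to $S$ and $T$; reconciling these two requirements is where the real work lies.
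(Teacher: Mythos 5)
Your route is necessarily different from the paper's, which disposes of this proposition by citation (the standard case is \cite{Ass18}, Theorem~5.6, and the semistandard case follows by destandardization), so the question is whether your self-contained rigidity argument closes. It does not, for two reasons. The most serious is the global-to-local reduction. Two arrangements of the same column set are related by a permutation, which need not contain an adjacent transposition: if a column reads $1,2,3$ bottom-to-top in $S$ and $2,3,1$ in $T$, there is no adjacent pair of occupied rows carrying interchanged values, so the pair your argument starts from need not exist. More fundamentally, your no-swap lemma compares a tabloid with its \emph{own} one-swap modification, all other cells held fixed, whereas $S$ and $T$ may differ simultaneously in many cells of that column and of both neighboring columns; to apply the lemma to the pair $(S,T)$ you need every other cell of the forbidden triple to agree in $S$ and $T$, and that is exactly what the unspecified ``extremal selection'' has to produce. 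It is not clear it can: fixing, say, the leftmost differing column controls only the left neighbor, while your forbidden configurations also reach into the right neighbor. This reduction, not the local lemma, is the real content of the proposition; the standard direct proof is an induction on columns showing that the filling of column $j-1$ \emph{forces} the placement of the entry set of column $j$, with no swap lemma needed.

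Second, the local lemma is misstated, and in a boundary case false. Under the paper's definition of triples (Fig.~\ref{fig:inv}), a Type~II triple has its third cell below the \emph{right} cell of the row-adjacent pair; your first case takes the pair $(s,j),(s,j+1)$ with third cell $(r,j)$, which sits below the \emph{left} cell and is not a triple of either type. The correct dichotomy is by row lengths: if $a_r>a_s$, the Type~I triple on $(r,j),(r,j+1),(s,j)$ works; if $a_s\geq a_r$, one must instead use the Type~II triple on $(s,j-1),(s,j),(r,j)$, which reaches into column $j-1$ even when both rows continue to the right (in either case a transposition of distinct entries reverses orientation, and the degenerate equalities produce attacking violations). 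This makes the first column a genuine obstruction: reading Definition~\ref{def:SSKD} literally, with no basement, the proposition itself fails --- for $a=(1,1)$ both arrangements of $\{1,2\}$ in the single column are non-attacking and triple-free --- so any correct proof must invoke the basement conventions of \cite{HHL08} and \cite{Ass18} (an implicit column $0$ carrying entry $i$ in row $i$), which supply the missing left column when $j=1$. Your sketch never addresses this, so the argument cannot be completed as written.
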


\begin{proof}
  This follows for \emph{standard} key tabloids from \cite{Ass18}(Theorem~5.6), and extends to semistandard key tabloids by the usual destandardization used in the proof of \cite{Ass18}(Proposition~2.6).
\end{proof}

%
\section{Crystals for the general linear group}
%
\label{sec:crystal}

Kashiwara's theory of crystal bases \cite{Kas91} provides a powerful tool for studying representations as well as for categorifying Schur positive symmetric functions by providing the combinatorial skeleton of a representation whose character is the given symmetric function. 

In \S\ref{sec:crystal-normal}, we recall the basic definitions for abstract and normal crystals in the case when $\mathfrak{g}$ is the general linear group $\gl_n$. In \S\ref{sec:crystal-demazure}, we consider the action of the Borel subalgebra $\mathfrak{b}$ consisting of upper-triangular matrices on extremal weight spaces indexed by the Weyl group $\mathcal{S}_n$ and we review the corresponding crystal theory associated with these \newword{Demazure modules}. In \S\ref{sec:crystal-tableaux}, give an explicit realization of normal and Demazure crystals with base indexed by semistandard Young tableaux and semistandard key tableaux, respectively.

\subsection{Normal crystals}
\label{sec:crystal-normal}

Let $\mathbf{e}_1, \mathbf{e}_2, \ldots, \mathbf{e}_{n}$ denote the standard basis for $V = \mathbb{R}^{n}$ with the usual inner product. The \newword{root system} $\Phi = \{\mathbf{e}_i - \mathbf{e}_j \mid i \neq j\}$ contains a subset of \newword{positive roots} $\Phi^{+} = \{\mathbf{e}_i - \mathbf{e}_j \mid i<j\}$ which in turn contains the \newword{simple roots} $\alpha_i = \mathbf{e}_i - \mathbf{e}_{i+1}$ for $i=1,\ldots,n-1$. The \newword{weight lattice} $\Lambda = \mathbb{Z}^{n}$ contains a subset of \newword{dominant weights} $\Lambda^{+} \subset \Lambda$ defined as those $\lambda \in \Lambda$ such that $\lambda_1 \geq \lambda_2 \geq \cdots \geq \lambda_{n} \geq 0$.

\begin{definition}
  A finite \newword{$\gl_n$-crystal} of dimension $n$ is a nonempty, finite set $\B$ not containing $0$ together with \newword{crystal operators} $e_i, f_i  :  \B \rightarrow \B \cup \{0\}$ for $i=1,2,\ldots,n-1$ and a \newword{weight map} $\wt : \B \rightarrow \Lambda$ satisfying the conditions
  \begin{enumerate}
  \item for $b,b^{\prime}\in\B$, $e_i(b)=b^{\prime}$ if and only if $f_i(b^{\prime}) = b$, and in this case we have $\wt(b^{\prime}) = \wt(b) + \alpha_i$;
  \item for $b\in\B$ and $i=1,\ldots,n-1$, we have $\varphi_i(b) - \varepsilon_i(b) = \wt(b)_i - \wt(b)_{i+1}$, where $\varepsilon_i, \varphi_i : \B \rightarrow \mathbb{Z}$ are the \newword{string lengths} given by
    \begin{eqnarray*}
      \varepsilon_i(b) & = & \max\{k \in \mathbb{Z}_{\geq 0} \mid e_i^k(b) \neq 0 \} \\
      \varphi_i(b) & = & \max\{k \in \mathbb{Z}_{\geq 0} \mid f_i^k(b) \neq 0 \}.
    \end{eqnarray*}
  \end{enumerate}
  \label{def:base}
\end{definition}

Note that $\gl_n$-crystals may be defined more generally, though those under consideration in this paper will always be both finite and \newword{semi-normal}.

Abusing notation, we often refer to a crystal by its underlying set $\B$ when the weight map and crystal operators are understood from context.

A \newword{crystal graph} is a directed, colored graph with vertex set given by the crystal basis $\B$ and directed edges given by the crystal lowering operators $f_i$, where we draw an $i$-edge from $b$ to $f_i(b)$ if $f_i(b)\neq 0$ and all edges to $0$ are omitted.

\begin{example}
  The \newword{standard crystal} $\B(n)$ has basis $\left\{\raisebox{-0.3\cellsize}{$\tableau{i}$} \mid i=1,\ldots,n\right\}$, weight map $\wt\left(\,\raisebox{-0.3\cellsize}{$\tableau{i}$}\,\right) = \mathbf{e}_i$, crystal raising (resp. lowering) operators $e_j$ (resp. $f_j$) that act by decrementing (resp. incrementing) the entry if $j=i+1$ (resp. $j=i$), and taking it to $0$ otherwise. We draw the crystal graph for $\B(n)$ as shown in Fig.~\ref{fig:standard}.
\end{example}

\begin{figure}[ht]
  \begin{displaymath}
    \begin{tikzpicture}[xscale=1.5,yscale=1]
      \node at (0,0)   (a) {$\tableau{1}$};
      \node at (1,0)   (b) {$\tableau{2}$};
      \node at (2,0)   (c) {$\tableau{3}$};
      \node at (3,0)   (d) {$\cdots$};
      \node at (4,0)   (e) {$\tableau{n}$};
      \draw[thick,color=blue  ,->] (a) -- (b) node[midway,above] {$1$} ;
      \draw[thick,color=purple,->] (b) -- (c) node[midway,above] {$2$} ;
      \draw[thick,color=violet,->] (c) -- (d) node[midway,above] {$3$} ;
      \draw[thick,color=orange,->] (d) -- (e) node[midway,above] {$n-1$} ;
    \end{tikzpicture}
  \end{displaymath}
  \caption{\label{fig:standard}The standard crystal $\B(n)$ for $\gl_n$.}
\end{figure}

We say a crystal $\B$ is \newword{connected} if its underlying crystal graph is connected as a(n undirected) graph. A subset $X\subseteq \B$ has an induced structure coming from the crystal structure on $\B$, and whenever $X$ is a connected component of $\B$ this structure will be a crystal. In this case we call $X$ a \newword{full subcrystal} of $\B$.

\begin{definition}
  The \newword{character} of a crystal $\B$ is the polynomial
  \begin{equation}
    \mathrm{ch}(\B) = \sum_{b \in \B} x_1^{\wt(b)_1} x_2^{\wt(b)_2} \cdots x_{n}^{\wt(b)_{n}}.
    \label{e:char}
  \end{equation}
  \label{def:char}
\end{definition}

From Definition~\ref{def:base}(1), if $b,b^{\prime} \in \B$ are elements of the same full subcrystal of $\B$, then we have $\sum_i \wt(b)_i = \sum_i \wt(b^{\prime})_i$. In particular, the character of a full subcrystal is a homogeneous polynomial of fixed degree.

For example, the standard crystal $\B(n)$ degree $1$, and its character is $\mathrm{ch}(\B(n)) = x_1 + x_2 + \cdots + x_n$, which is both homogeneous and \newword{symmetric}.

The \newword{Weyl group} for $\gl_n$ is the symmetric group $\mathcal{S}_n$, which has a natural action on $\gl_n$-crystals described as follows. For $1 \leq i<n$, let $S_i$ act on $\B$ by reflecting $b\in\B$ across its $i$-string, written formally as
\begin{equation}\label{def:Stringflip-operator}
  S_i(b) = \left\{ \begin{array}{rl}
    f_i^{\wt(b)_i - \wt(b)_{i+1}} (b) & \text{if } \wt(b)_{i} \geq \wt(b)_{i+1}, \\
    e_i^{\wt(b)_{i+1} - \wt(b)_i} (b) & \text{if } \wt(b)_{i+1} \geq \wt(b)_{i} .
\end{array} \right.
\end{equation}
Kashiwara \cite{Kas91} showed these operators satisfy the braid relations for the symmetric group, thus to any permutation $w \in \mathcal{S}_n$ we may define $S_w$ by $S_{i_1} S_{i_2} \cdots S_{i_k}$ whenever $w = s_{i_1} s_{i_2} \cdots s_{i_k}$ is a reduced expression for $w$.

\begin{proposition}\label{prop:symchar}
  The character of a (finite, semi-normal) $\gl_n$-crystal is a symmetric polynomial in the variables $x_1, x_2,\ldots, x_n$.
\end{proposition}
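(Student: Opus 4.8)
The plan is to reduce symmetry to invariance under the adjacent transpositions $s_1,\ldots,s_{n-1}$ and to realize that invariance directly through the string-reflection operators $S_i$ of Eq.~\eqref{def:Stringflip-operator}. Write $x^{\gamma} = x_1^{\gamma_1}\cdots x_n^{\gamma_n}$ for $\gamma\in\Lambda$, so that $\mathrm{ch}(\B) = \sum_{b\in\B} x^{\wt(b)}$. A polynomial in $x_1,\ldots,x_n$ is symmetric exactly when it is fixed by the $\mathcal{S}_n$-action permuting variables, and $\mathcal{S}_n$ is generated by the $s_i$, which swap $x_i$ and $x_{i+1}$; thus $s_i\cdot x^{\gamma} = x^{s_i\cdot\gamma}$, where $s_i\cdot\gamma$ interchanges coordinates $i$ and $i+1$. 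It therefore suffices to prove, for each $i$, that $s_i\cdot\mathrm{ch}(\B) = \mathrm{ch}(\B)$, equivalently that the multiset $\{\wt(b)\mid b\in\B\}$ is stable under interchanging coordinates $i$ and $i+1$.

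First I would check that $S_i$ supplies exactly this symmetry by establishing the weight identity $\wt(S_i(b)) = s_i\cdot\wt(b)$ for every $b\in\B$. Suppose $\wt(b)_i \geq \wt(b)_{i+1}$ and set $k = \wt(b)_i - \wt(b)_{i+1}\geq 0$, so that $S_i(b) = f_i^{k}(b)$. By Definition~\ref{def:base}(1) each application of $f_i$ subtracts $\alpha_i = \mathbf{e}_i - \mathbf{e}_{i+1}$ from the weight, whence $\wt(f_i^{k}(b)) = \wt(b) - k\alpha_i$: the $i$th coordinate becomes $\wt(b)_i - k = \wt(b)_{i+1}$, the $(i+1)$th becomes $\wt(b)_{i+1} + k = \wt(b)_i$, and all other coordinates are unchanged, which is precisely $s_i\cdot\wt(b)$. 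The case $\wt(b)_{i+1}\geq\wt(b)_i$ is symmetric, using $e_i$ (which adds $\alpha_i$) in place of $f_i$.

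Before this computation is legitimate I must know that $S_i(b)$ is actually defined, that is, that $f_i^{k}(b)\neq 0$ (respectively $e_i^{k}(b)\neq 0$). This is exactly where the semi-normality axiom Definition~\ref{def:base}(2) enters: from $\varphi_i(b) - \varepsilon_i(b) = \wt(b)_i - \wt(b)_{i+1} = k$ together with $\varepsilon_i(b)\geq 0$ we obtain $\varphi_i(b)\geq k$, so the $k$ applications of $f_i$ remain inside $\B$; the other case uses $\varepsilon_i(b)\geq k$ symmetrically. I would also record that $S_i$ is a bijection of $\B$: it restricts to each $i$-string as the order-reversing involution swapping the element $e_i^{j}(b)$ with $f_i^{\varphi_i(b)-\varepsilon_i(b)+j}(b)$, and since the $i$-strings partition $\B$, the operator $S_i$ is an involution on all of $\B$.

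The conclusion is then a reindexing. Using the bijection $S_i$ and the weight identity above,
\[ s_i\cdot\mathrm{ch}(\B) = \sum_{b\in\B} x^{s_i\cdot\wt(b)} = \sum_{b\in\B} x^{\wt(S_i(b))} = \sum_{c\in\B} x^{\wt(c)} = \mathrm{ch}(\B), \]
so $\mathrm{ch}(\B)$ is fixed by every $s_i$ and hence by all of $\mathcal{S}_n$, proving it is symmetric. I expect the only genuine obstacle to be the well-definedness step: the weight computation and the final reindexing are purely formal, but they rely entirely on the semi-normal axiom to guarantee that the string-reflection never exits $\B$. Notably, this argument needs only that each individual $S_i$ is a weight-swapping bijection, so the braid relations satisfied by the $S_i$ (and hence the full $\mathcal{S}_n$-action) are not required for this statement.
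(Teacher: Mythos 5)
Your proof is correct and follows exactly the route the paper intends: Proposition~\ref{prop:symchar} is stated immediately after the string-reflection operators $S_i$ of Eq.~\eqref{def:Stringflip-operator} are introduced, with the verification (the weight identity $\wt(S_i(b)) = s_i\cdot\wt(b)$, well-definedness via the semi-normal axiom, and the reindexing over the bijection $S_i$) left implicit, which is precisely what you supply. Your closing observation that only the individual involutions $S_i$ are needed---and not Kashiwara's braid relations, which the paper cites to get the full $\mathcal{S}_n$-action---is also correct and is a mild sharpening of the paper's implicit argument.
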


\begin{definition}
  An element $u \in \B$ of a $\gl_n$-crystal is a \newword{highest weight element} if $e_i(u) = 0$ for all $i=1,2,\ldots,n-1$.
  \label{def:hw}
\end{definition}

Given Proposition~\ref{prop:symchar}, one might ask what symmetries the underlying $\gl_n$-crystal possesses. Indeed, analogously to Definition~\ref{def:hw}, any $\gl_n$-crystal contains a unique \newword{lowest weight element} $z$ characterized by the property $f_i(z)=0$ for all $1\leq i \leq n-1$. 

Just like every $x \in \B$ is connected to the highest weight element $u \in \B$ by a sequence of lowering operators $f_{i_m}^{r_m}\dots f_{i_1}^{r_1}(u)=x$, any $x \in \B$ is also connected to the lowest weight element $z \in \B$ via a sequence of raising operators $e_{j_p}^{r'_p}\dots e_{j_1}^{r'_1}(z)=x$. In particular, for every $x \in \B$ with $f_{i_m}^{r_m}\dots f_{i_1}^{r_1}(u)=x$, there exists $y \in \B$ such that $e_{n-i_m}^{r_m}\dots e_{n-i_1}^{r_1}(z)=y$ and vice versa. The symmetry within the crystal that swaps the highest and lowest elements and flips the remaining vertices accordingly is concretely stated as follows. 

\begin{definition}\label{def:crystalflip}
  Let $\B$ be a finite, connected semi-normal $\gl_n$-crystal, $b=u \in \B$ its highest weight element, and $z \in \B$ its lowest weight element. The \newword{crystal flip map} $\mathcal{F}: \B \to \B$ is the involution that sends each element
\[ f_{i_m}^{r_m}\dots f_{i_1}^{r_1}(u) \mapsto e_{n-i_m}^{r_m}\dots e_{n-i_1}^{r_1}(z)
\]
and any edge $x \xrightarrow{f_i} y$ to the corresponding edge $\mathcal{F}(x) \xrightarrow{e_{n-i}} \mathcal{F}(y)$ between the images of the vertices. 
\end{definition}

\begin{remark}
We note that at the level of characters, the map $\mathcal{F}$ acts by conjugating each summand by the element of the Weyl group corresponding to the half twist, that is, with the permutation $n \; n-1 \cdots 3 \; 2 \; 1$. Since by Proposition~\ref{prop:symchar} the character of a $\gl_n$-crystal is symmetric, then the action of $S_n$ is trivial, and thus the character the crystal is unchanged as expected. 
\end{remark}

From Definition~\ref{def:base}(2), since $\varepsilon_i(b)=0$ for any highest weight element $b$, we necessarily have $\wt(b)$ is dominant. For example, the highest weight element of $\B(n)$ is $\raisebox{-0.3\cellsize}{$\tableau{1}$}$, which has weight $\mathbf{e}_1 \in \Lambda^{+}$.

Conversely, dominant weights also index irreducible representations of $\gl_n$. In order to strengthen the connection between crystals and the representation theory of $\gl_n$, we must restrict our attention to \newword{normal crystals}, those arising as full subcrystals of tensor products of the standard crystal.

\begin{definition}
  Given two crystals $\B_1$ and $\B_2$, the \newword{tensor product} $\B_1 \otimes \B_2$ is the set $\B_1 \otimes \B_2$ together with crystal operators $e_i, f_i$ defined on $\B_1 \otimes \B_2$ by
  \begin{equation}
    f_i(b_1 \otimes b_2) = \left\{ \begin{array}{rl}
      f_i(b_1) \otimes b_2 & \mbox{if } \varepsilon_i(b_2) < \varphi_i(b_1), \\
      b_1 \otimes f_i(b_2) & \mbox{if } \varepsilon_i(b_2) \geq \varphi_i(b_1),
    \end{array} \right.
  \end{equation}
  and weight function $\wt(b_1 \otimes b_2) = \wt(b_1) + \wt(b_2)$ computed coordinate-wise.
\label{def:tensor-A}
\end{definition}

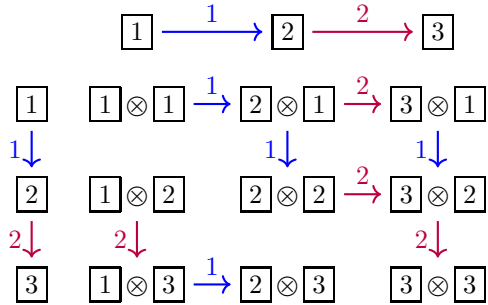
\begin{figure}[ht]
  \begin{center}
    \begin{tikzpicture}[xscale=2,yscale=1.2]
      \node at (1,2.8)   (T1)  {$\boxed{1}$};
      \node at (2,2.8)   (T2)  {$\boxed{2}$};
      \node at (3,2.8)   (T3)  {$\boxed{3}$};
      \node at (0.3,2)   (S1)  {$\boxed{1}$};
      \node at (0.3,1)   (S2)  {$\boxed{2}$};
      \node at (0.3,0)   (S3)  {$\boxed{3}$};
      \node at (1,2)   (U11)  {$\boxed{1}\otimes\boxed{1}$};
      \node at (2,2)   (U21)  {$\boxed{2}\otimes\boxed{1}$};
      \node at (3,2)   (U31)  {$\boxed{3}\otimes\boxed{1}$};
      \node at (1,1)   (U12)  {$\boxed{1}\otimes\boxed{2}$};
      \node at (2,1)   (U22)  {$\boxed{2}\otimes\boxed{2}$};
      \node at (3,1)   (U32)  {$\boxed{3}\otimes\boxed{2}$};
      \node at (1,0)   (U13)  {$\boxed{1}\otimes\boxed{3}$};
      \node at (2,0)   (U23)  {$\boxed{2}\otimes\boxed{3}$};
      \node at (3,0)   (U33)  {$\boxed{3}\otimes\boxed{3}$};
      \draw[thick,blue  ,->](T1) -- (T2)   node[midway,above]{$1$};
      \draw[thick,purple,->](T2) -- (T3)   node[midway,above]{$2$};
      \draw[thick,blue  ,->](S1) -- (S2)   node[midway,left] {$1$};
      \draw[thick,purple,->](S2) -- (S3)   node[midway,left] {$2$};
      \draw[thick,blue  ,->](U11) -- (U21) node[midway,above]{$1$};
      \draw[thick,blue  ,->](U21) -- (U22) node[midway,left] {$1$};
      \draw[thick,blue  ,->](U31) -- (U32) node[midway,left] {$1$};
      \draw[thick,blue  ,->](U13) -- (U23) node[midway,above]{$1$};
      \draw[thick,purple,->](U21) -- (U31) node[midway,above]{$2$};
      \draw[thick,purple,->](U22) -- (U32) node[midway,above]{$2$};
      \draw[thick,purple,->](U32) -- (U33) node[midway,left] {$2$};
      \draw[thick,purple,->](U12) -- (U13) node[midway,left] {$2$};
    \end{tikzpicture}
    \caption{\label{fig:tensor}Tensor product of two standard $\gl_3$ crystals.}
  \end{center}
\end{figure}

\begin{example}
  The tensor product of two copies of the standard $\gl_3$ crystal $\B(3)$ is shown in Fig.~\ref{fig:tensor}. Notice it has two connected components, one of dimension $6$ with highest weight $(2,0,0)$ and the other of dimension $3$ with highest weight $(1,1,0)$.
\end{example}

\begin{definition}
  An abstract $\gl_n$-crystal is \newword{normal} if every full subcrystal is isomorphic to a full subcrystal of $\B(n)^{\otimes k}$ for some positive integer $k$.
\end{definition}

A connected, normal crystal $\B$ has a unique highest weight $b$, and we call $\wt(b)\in\Lambda^{+}$ the \newword{highest weight of $\B$}. Moreover, two connected normal crystals are isomorphic as colored directed graphs if and only if they have the same highest weight. In other words, connected normal crystals are index by dominant weights, which in turn index irreducible representations. Given a dominant weight $\lambda \in \Lambda^{+}$, let $\B(\lambda)$ denote the connected normal crystal whose unique highest weight element has weight $\lambda$.

Even more compelling is the remarkable fact that the following combinatorial procedure on crystals corresponds to the tensor product of the corresponding representations. For example, Fig.~\ref{fig:tensor} computes that the tensor product of two copies of the standard crystal $\B(3)=\B((1,0,0))$ is given by $\B((2,0,0))$ and $\B((1,1,0))$.

The character of a connected normal crystal $\mathrm{ch}(\B(\lambda))$ is the character of the irreducible representation $V^{\lambda}$ which is the \emph{Schur polynomial} $s_{\lambda}(x_1,\ldots,x_{n})$.

Since the character of a crystal is symmetric and the character of a connected, normal crystal is a Schur polynomial, crystals provide a combinatorial method for proving symmetry and Schur positivity of a given polynomial. Moreover, the highest weights provide an efficient formula for the Schur expansion of the character of a normal crystal $\B$ by
\begin{equation}
  \mathrm{ch}(\B) = \sum_{\substack{b \in \B \\ b \ \text{highest weight}}} s_{\wt(b)}(X).
  \label{e:char-hw}
\end{equation}
Moreover, the existence of an explicit crystal structure gives a representation-theoretic interpretation for the corresponding polynomial by providing a natural action on a crystal base whose character is the given polynomial.

\subsection{Demazure crystals}
\label{sec:crystal-demazure}

Given a complex, semi-simple Lie algebra $\mathfrak{g}$ with a Cartan subalgebra $\mathfrak{h}$, we can decompose a representation $V$ of $\mathfrak{g}$ into weight spaces $V = \bigoplus V_{a}$. The \newword{extremal weights} are indexed by the Weyl group $W$, and the corresponding \newword{extremal weight spaces} are all of dimension $1$ with a natural action of $W$ permuting them. Demazure \cite{Dem74a} considered the action of a Borel subalgebra $\mathfrak{b} \supset \mathfrak{h}$ on an extremal weight space, and we call the resulting modules \newword{Demazure modules}. While the irreducible representations $V^{\lambda}$ of $\mathfrak{g}$ are indexed by dominant weights $\lambda$, the corresponding Demazure modules $V^{\lambda}_w$ are index by a pair $(\lambda,w)$ where $\lambda$ is a dominant weight and $w$ is an element of the Weyl group.

\begin{example}
  For $w = \mathrm{id}$ the identity, the Demazure module $V^{\lambda}_{\mathrm{id}}$ is the one-dimensional highest weight space of $V^{\lambda}$. For $w=w_0$ the long element of $W$, the Demazure $V^{\lambda}_{w_0}$ is the full $\mathfrak{g}$ representation $V^{\lambda}$. Thus Demazure modules can be regarded as an interpolation between the highest weight space and the full module.
\end{example}

In the case of $\gl_n$, the Borel is the subalgebra of upper triangular matrices, and the Demazure modules are indexed by pairs $(\lambda,w)$ where $\lambda$ is a partition of length $n$ and $w$ is a permutation. The data $(\lambda,w)$ is equivalent to the weak composition $a = w \cdot \lambda$, since we may recover $\lambda$ as the weakly decreasing rearrangement of $a$ and $w$ as the shortest (in Coxeter length) permutation taking $a$ to $\lambda$. To keep this correspondence bijective, for a given dominant weight $\lambda$, we consider only permutations $w$ for which $w$ acts \newword{faithfully} on $\lambda$, meaning $w$ is the shortest permutation $u$ for which $u \cdot \lambda = w \cdot \lambda$.

Demazure \cite{Dem74} gave a formula for the character of the Demazure module $V^{\lambda}_w$ which, in the case of the general linear group, can be described as follows. For $1 \leq i < n$, let $s_i$ denote the simple transposition that acts on polynomials in $n$ variables by interchanging $x_i$ and $x_{i+1}$. The \newword{divided difference operators} $\partial_i$ and $\pi_i$ for $1\leq  i <n$ act on polynomials in $n$ variables by
\begin{eqnarray}
  \partial_i f(x_1,\ldots,x_n) & = & \frac{f(x_1,\ldots,x_n) - s_i \cdot f(x_1,\ldots,x_n)}{x_i-x_{i+1}}, \\
  \pi_i f(x_1,\ldots,x_n) & = & \partial_i \left( x_i f(x_1,\ldots,x_n) \right).
\end{eqnarray}
It can be shown that the $\partial_i$ and $\pi_i$ satisfy the braid relations for the symmetric group. Therefore For $w\in S_n$, we may define $\partial_w = \partial_{i_1} \partial_{i_2} \cdots \partial_{i_k}$  and $\pi_w = \pi_{i_1} \pi_{i_2} \cdots \pi_{i_k}$ whenever $s_{i_1}s_{i_2} \cdots s_{i_k}$ is a \newword{reduced expression} for $w$, that is, whenever $w = s_{i_1}s_{i_2} \cdots s_{i_k}$ with $k$ minimal. Here $k$ is the \newword{length} of $w$.

\begin{theorem}
  For $\lambda$ a partition of length $n$ and $w$ a permutation of $\mathcal{S}_n$, the character of the Demazure module $V^{\lambda}_w$ is given by
  \begin{equation}
    \mathrm{ch}(V^{\lambda}_w) = \pi_{w} \left( x_1^{\lambda_1} x_2^{\lambda_2} \cdots x_n^{\lambda_n} \right) .
  \end{equation}
  \label{thm:key}
\end{theorem}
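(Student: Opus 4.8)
The plan is to induct on the Coxeter length $\ell(w)$, reducing the statement to the single divided-difference recursion
\[ \mathrm{ch}(V^\lambda_{s_iw}) \;=\; \pi_i\,\mathrm{ch}(V^\lambda_w) \qquad\text{whenever } \ell(s_iw)=\ell(w)+1 . \]
Writing $x^\lambda = x_1^{\lambda_1}\cdots x_n^{\lambda_n}$, the base case $w=\mathrm{id}$ is immediate: $V^\lambda_{\mathrm{id}}$ is the one-dimensional highest weight line, so $\mathrm{ch}(V^\lambda_{\mathrm{id}})=x^\lambda=\pi_{\mathrm{id}}(x^\lambda)$. Because the $\pi_i$ satisfy the braid relations, $\pi_w$ is well defined independent of the reduced expression chosen; fixing a reduced word $w=s_{i_1}\cdots s_{i_k}$ and setting $w_{k+1}=\mathrm{id}$ and $w_j=s_{i_j}w_{j+1}$, each step has $\ell(w_j)=\ell(w_{j+1})+1$, so iterating the displayed recursion telescopes to $\mathrm{ch}(V^\lambda_w)=\pi_{i_1}\cdots\pi_{i_k}(x^\lambda)=\pi_w(x^\lambda)$.

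First I would set up the recursion algebraically. Since $\ell(s_iw)=\ell(w)+1$ forces $w^{-1}\alpha_i>0$ and hence $\langle w\lambda,\alpha_i^\vee\rangle\ge 0$, the extremal vector $v_{w\lambda}$ sits at the top of its $\alpha_i$-string and $v_{s_iw\lambda}=f_i^{\langle w\lambda,\alpha_i^\vee\rangle}v_{w\lambda}$. The module $V^\lambda_w$ is stable under $e_i,h_i\in\mathfrak{b}$, so forming $U(\mathfrak{p}_i)V^\lambda_w=\sum_{k\ge 0}f_i^k V^\lambda_w$ for the minimal parabolic $\mathfrak{p}_i=\mathfrak{b}\oplus\mathfrak{g}_{-\alpha_i}$ produces a finite-dimensional $\mathfrak{sl}_2^{(i)}=\langle e_i,f_i,h_i\rangle$-module which, by complete reducibility, completes each $\alpha_i$-string of $V^\lambda_w$; the structural input of this step is the module identity $V^\lambda_{s_iw}=U(\mathfrak{p}_i)V^\lambda_w$. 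Computing characters string by string and comparing with the explicit action of $\pi_i$, which sends a top (dominant) monomial $x_i^a x_{i+1}^b$ to the full $s_i$-symmetric string $\sum_{j=0}^{a-b}x_i^{a-j}x_{i+1}^{b+j}$, would then give the recursion.

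The step I expect to be the main obstacle is that this string completion agrees with $\pi_i$ on the \emph{entire} character only when $V^\lambda_w$ meets each $\alpha_i$-string in the empty set, in its single top element, or in the whole string: a direct computation shows that a \emph{proper} intermediate segment (of size between $2$ and one less than the string length) makes $\pi_i\,\mathrm{ch}(V^\lambda_w)$ acquire spurious monomials, so the recursion hinges on the nonobvious fact that Demazure modules never produce such segments, equivalently that $V^\lambda_w$ carries a filtration by smaller Demazure modules compatible with $\mathfrak{p}_i$. I would circumvent this by passing to geometry: realize $\mathrm{ch}(V^\lambda_w)$ (up to the standard duality) as the character of the space of global sections $H^0(X_w,\mathcal{L}_\lambda)$ of the line bundle $\mathcal{L}_\lambda$ on the Schubert variety $X_w=\overline{BwB/B}$, and use the Bott--Samelson resolution, a tower of $\mathbb{P}^1$-bundles, to identify the effect of adjoining $s_i$ with the $\mathbb{P}^1$-pushforward in equivariant $K$-theory, which acts precisely as $\pi_i$ on Euler characteristics \emph{unconditionally}. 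The remaining and genuinely hard input is the vanishing $H^{>0}(X_w,\mathcal{L}_\lambda)=0$, which forces the Euler characteristic to equal $\mathrm{ch}(V^\lambda_w)$ and thereby supplies the missing filtration property; this I would obtain from the Frobenius splitting of Schubert varieties (following Mehta--Ramanathan and Ramanan--Ramanathan), or from the characteristic-zero arguments of Joseph and Kumar. With the vanishing in hand the recursion, and hence the theorem, follows by the induction above.
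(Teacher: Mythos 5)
The paper does not actually prove this statement: Theorem~\ref{thm:key} is Demazure's classical character formula, stated with a citation to \cite{Dem74} and used as known input throughout (just as Kashiwara's crystal analogue is quoted without proof immediately after it). So your proposal must be judged as a proof of Demazure's theorem itself, and as such it is a correct outline of the standard modern argument. In particular, you have isolated exactly the right difficulty: the naive induction via $V^{\lambda}_{s_iw} = U(\mathfrak{p}_i)V^{\lambda}_w$ fails precisely when some $\alpha_i$-string meets $V^{\lambda}_w$ in a proper intermediate segment (for instance, $\pi_i$ annihilates the bottom element of a string rather than completing it), and this is precisely the gap in Demazure's original 1974 argument, later repaired by Andersen, Joseph, Mehta--Ramanathan, and Ramanan--Ramanathan. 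Your geometric repair --- compute Euler characteristics along the Bott--Samelson tower of $\mathbb{P}^1$-bundles, where the pushforward realizes $\pi_i$ unconditionally, then force the Euler characteristic to equal the character of $H^0$ via higher cohomology vanishing from Frobenius splitting --- is the standard fix and is correctly attributed. Two things your sketch quietly outsources to the cited literature deserve flagging, since they belong to the same theorem package rather than being independent known facts: (i) the identification $V^{\lambda}_w \cong H^0(X_w,\mathcal{L}_\lambda)^*$, equivalently surjectivity of the restriction map from $G/B$, must be imported in a form that does not presuppose the character formula; and (ii) transferring the Euler characteristic from the Bott--Samelson resolution down to $X_w$ requires $Rf_*\mathcal{O}_Z \cong \mathcal{O}_{X_w}$ (normality and rational singularities of Schubert varieties), which also emerges from the splitting arguments. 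Both are genuinely contained in the sources you name, so the outline stands. Finally, a route closer in spirit to this paper would be Kashiwara's crystal-theoretic proof: the string-by-string behavior obstructing your naive induction is exactly what the Demazure crystal theorem of \cite{Kas93} controls combinatorially, which is why the present paper can take both results off the shelf.
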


For $\gl_n$, the Demazure characters form a basis for the polynomial ring in $n$ variables. Thus it is natural to index them by weak compositions, and we define the \newword{Demazure character} $\key_a(x_1,\ldots,x_n)$ to be
\begin{equation}
  \key_a(x_1,\ldots,x_n) = \mathrm{ch}(V^{\mathrm{sort}(a)}_{w(a)}) ,
\end{equation}
where $\mathrm{sort}(a)$ is the weakly decreasing rearrangement of $a$ and $w(a)$ is the shortest permutation taking $a$ to $\mathrm{sort}(a)$.

\begin{example}
  We may compute the Demazure character $\key_{(1,2,0,2)}$ by taking $\lambda = (2,2,1,0)$ and $w = 2413$, and then computing the character of $V^{(2,2,1,0)}_{2413}$. Taking the reduced expression $s_3 s_1 s_2$ for the permutation $w = 2413$, we have
  \begin{eqnarray*}
    \key_{(1,2,0,2)} = \mathrm{ch}(V^{(2,2,1,0)}_{2413}) & = & \pi_3 \pi_1 \pi_2 \left(x_1^2 x_2^2 x_3\right) \\
    & = & \pi_3 \pi_1 \left(x_1^2 x_2^2 x_3 + x_1^2 x_2 x_3^2 \right) \\
    & = & \pi_3 \left(x_1^2 x_2^2 x_3 + x_1^2 x_2 x_3^2 + x_1 x_2^2 x_3^2 \right) \\
    & = & x_1^2 x_2^2 x_3 + x_1^2 x_2^2 x_4 + x_1^2 x_2 x_3^2 + x_1^2 x_2 x_3 x_4 \\
    & & + x_1^2 x_2 x_4^2 + x_1 x_2^2 x_3^2 + x_1 x_2^2 x_3 x_4 + x_1 x_2^2 x_4^2 .
  \end{eqnarray*}
\end{example}

Taking $w=w_0$, the long element of $\mathcal{S}_n$, the Demazure characters include the Schur polynomials. That is, when the weak composition $a$ is weakly increasing, we have
\begin{eqnarray}
  \key_{(\lambda_n,\lambda_{n-1},\ldots,\lambda_1)}(x_1,\ldots,x_n) & = & s_{\lambda}(x_1,\ldots,x_n) .
  \label{e:sym-key}
\end{eqnarray}
Furthermore, the Schur \emph{functions} can be realized as the \newword{stable limit},
\begin{eqnarray}
  \key_{0^m \times a}(x_1,\ldots,x_m,0,\ldots,0) & = & s_{\mathrm{sort}(a)}(x_1,\ldots,x_m)  \label{e:trunc-key}
 \\
 \lim_{m\rightarrow\infty} \key_{0^m \times a}(x_1,\ldots,x_m,0,\ldots,0) & = & s_{\mathrm{sort}(a)}(x_1,x_2,\ldots),
  \label{e:stable-key}
\end{eqnarray}
where $\mathrm{sort}(a)$ is the partition rearrangement of the weak composition $a$.

\newword{Demazure crystals} are certain truncations of highest weight crystals conjectured by Littelmann \cite{Lit95} and proved by Kashiwara \cite{Kas93} to generalize Demazure characters. Given a subset $X \subseteq \B(\lambda)$, we define operators $\mathfrak{D}_i$ by
\begin{equation}
  \mathfrak{D}_i X = \{ b \in \B(\lambda) \mid e_i^k(b) \in X \mbox{ for some } k \geq 0 \},
  \label{e:D}
\end{equation}
where $e_i$ denotes the raising operator. It can be shown that these operators satisfy the braid relations for the symmetric group, and so we may define
\begin{equation}
  \mathfrak{D}_w = \mathfrak{D}_{i_1} \cdots \mathfrak{D}_{i_k}
  \label{e:Dw}
\end{equation}
for any reduced expression $s_{i_1} \cdots s_{i_k}$ for the permutation $w$.

\begin{definition}
  For $\lambda$ a partition of length $n$ and $w$ a permutation of $\mathcal{S}_n$, the \newword{Demazure crystal} $\B_w(\lambda)$ is given by
  \begin{equation}
    \B_w(\lambda) = \mathfrak{D}_{w} \{ u_\lambda \},
    \label{e:BwL}
  \end{equation}
  where $u_\lambda$ is the highest weight element in $\B(\lambda)$.
  \label{def:BwL}
\end{definition}

\begin{theorem}[\cite{Kas93}]
  The character of the Demazure crystal $\B_w(\lambda)$ is the Demazure character $\key_{w \cdot \lambda}$.
\end{theorem}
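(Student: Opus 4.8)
The plan is to induct on the length $\ell(w)$, reducing the statement to a single application of a Demazure operator and then to a local computation on each $i$-string. Since $\key_{w\cdot\lambda} = \pi_w(x^\lambda) = \pi_{i_1}\cdots\pi_{i_k}(x^\lambda)$ by Theorem~\ref{thm:key}, and since the $\mathfrak{D}_i$ satisfy the braid relations, a reduced expression $w = s_{i_1}\cdots s_{i_k}$ gives $\B_w(\lambda) = \mathfrak{D}_{i_1}\B_v(\lambda)$ with $v = s_{i_2}\cdots s_{i_k}$, $\ell(v) = \ell(w)-1$, and $\ell(s_{i_1}v) > \ell(v)$. The base case $w = \mathrm{id}$ is immediate: $\B_{\mathrm{id}}(\lambda) = \{u_\lambda\}$ has character $x^\lambda = \pi_{\mathrm{id}}(x^\lambda)$. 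So by induction it suffices to establish the one-step identity $\mathrm{ch}(\mathfrak{D}_i \B_v(\lambda)) = \pi_i(\mathrm{ch}(\B_v(\lambda)))$ whenever $\ell(s_iv) > \ell(v)$, taking $i = i_1$.

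First I would prove this one-step identity assuming the structural fact that every $b \in \B_v(\lambda)$ satisfies $\varepsilon_i(b) = 0$, i.e. that $\B_v(\lambda)$ meets each $i$-string in at most its $e_i$-maximal (top) element. Under this assumption $\mathfrak{D}_i\B_v(\lambda)$ is a disjoint union of complete $i$-strings, one for each $b \in \B_v(\lambda)$, namely $\{b, f_i(b), \ldots, f_i^{\varphi_i(b)}(b)\}$. On such a string with top weight $\wt(b)_i = p \geq q = \wt(b)_{i+1}$, the monomials contributed are $x_i^p x_{i+1}^q, x_i^{p-1}x_{i+1}^{q+1}, \ldots, x_i^q x_{i+1}^p$ times the fixed remaining variables, and their sum is exactly $\pi_i(x^{\wt(b)})$. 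Summing over $b$ and using linearity of $\pi_i$ gives
\[ \mathrm{ch}(\mathfrak{D}_i\B_v(\lambda)) = \sum_{b\in\B_v(\lambda)} \pi_i(x^{\wt(b)}) = \pi_i\!\left(\mathrm{ch}(\B_v(\lambda))\right), \]
as required.

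The crux, and where I expect the real difficulty, is the structural fact that $\varepsilon_i \equiv 0$ on $\B_v(\lambda)$ when $\ell(s_iv) > \ell(v)$. I would prove a stronger invariant by a parallel induction on $\ell(v)$: for every index $i$, the set $\B_v(\lambda)$ is a union of complete $i$-strings when $\ell(s_iv) < \ell(v)$, and meets each $i$-string in at most its top when $\ell(s_iv) > \ell(v)$. The base case is $\{u_\lambda\}$, which is top-only for every $i$. For the inductive step, write $v = s_j v'$ reduced, so $\B_v(\lambda) = \mathfrak{D}_j\B_{v'}(\lambda)$. The index $i = j$ is handled by idempotency of $\mathfrak{D}_j$, which forces $\B_v(\lambda)$ to be a union of complete $j$-strings. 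For $|i-j| \geq 2$ the operators $e_i,f_i$ commute with $e_j,f_j$, so applying $\mathfrak{D}_j$ preserves the $i$-string structure and leaves the length comparison for $i$ unchanged, transferring the invariant. The genuinely hard case is $|i-j| = 1$, where one must analyze how $\mathfrak{D}_j$ acts on the rank-two $\{i,j\}$-subcrystals (local $A_2$ pieces): filling $j$-strings downward can create or destroy $i$-strings, and verifying that the top-only/full dichotomy is preserved reduces to a finite case analysis of the possible local $A_2$ configurations. This braid case is exactly the obstacle that Stembridge-type local conditions are designed to control, and it is the part that the local characterization developed in \S\ref{sec:demazure} is meant to streamline; an alternative would be to invoke Littelmann's path model or Kashiwara's global basis argument, but the self-contained combinatorial verification comes down precisely to this rank-two analysis.
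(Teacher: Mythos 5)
Your reduction of the theorem to the one-step identity $\mathrm{ch}(\mathfrak{D}_i X)=\pi_i\,\mathrm{ch}(X)$ is the standard and correct skeleton (note the paper itself offers no proof to compare against: it quotes the result from Kashiwara), but the structural fact on which you hang everything is false, and the failure is not hiding in the rank-two case you deferred --- the statement itself is wrong. You claim $\ell(s_iv)>\ell(v)$ forces $\varepsilon_i(b)=0$ for all $b\in\B_v(\lambda)$. Take $\gl_3$, $\lambda=(2,1,0)$, $v=s_1s_2$, $i=2$, so $\ell(s_2v)=\ell(w_0)=3>2=\ell(v)$. Since $\mathfrak{D}_1X\supseteq X$ for any $X$, we have $\B_v(\lambda)=\mathfrak{D}_1\mathfrak{D}_2\{u_\lambda\}\supseteq\mathfrak{D}_2\{u_\lambda\}\ni f_2(u_\lambda)$, and $f_2(u_\lambda)\neq 0$ because $\varphi_2(u_\lambda)=\lambda_2-\lambda_3=1$; hence $\varepsilon_2(f_2(u_\lambda))=1\neq 0$. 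Indeed $\B_v(\lambda)$ contains the \emph{complete} $2$-string $\{u_\lambda,f_2(u_\lambda)\}$, so your dichotomy (full strings exactly when $\ell(s_iv)<\ell(v)$, top-only exactly when $\ell(s_iv)>\ell(v)$) fails, and it cannot be rescued by choosing a better reduced word: every reduced word for $w_0$ in $\mathcal{S}_3$ has first letter equal to last letter, and $\B_{s_{i_2}\cdots s_{i_k}}(\lambda)$ always contains the full $i_k$-string through $u_\lambda$, so your induction hits this wall for either choice. Consequently your evaluation of $\mathrm{ch}(\mathfrak{D}_i\B_v(\lambda))$ as a disjoint union of complete $i$-strings, one per element of $\B_v(\lambda)$, overcounts whenever two elements of $\B_v(\lambda)$ share an $i$-string.

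The correct invariant is Kashiwara's \emph{string property}, a trichotomy with no length condition on $i$: every $i$-string meets $\B_v(\lambda)$ in the empty set, in its top element alone, or in the whole string. With this, your one-step computation goes through after adding the missing case: a top-only intersection $\{b\}$ contributes $x^{\wt(b)}$ and $\pi_i(x^{\wt(b)})$ is the full-string sum, while a full-string intersection is symmetric in $x_i,x_{i+1}$ and hence fixed by $\pi_i$ (because $\pi_i(fg)=f\pi_i(g)$ for $s_i$-invariant $f$ and $\pi_i(1)=1$), so $\mathrm{ch}(\mathfrak{D}_i\B_v(\lambda))=\pi_i\,\mathrm{ch}(\B_v(\lambda))$ in all cases. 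But you should be aware that the trichotomy is not a routine local $A_2$ case-check that can be waved at: it is precisely the hard content of Kashiwara's theorem (conjectured by Littelmann), proved via global bases and the grand-loop induction, or alternatively via the path model. As written, your proposal assumes a false strengthening of exactly this content and supplies only the easy character computation on top of it.
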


Analogous to the case with normal crystals, Demazure crystals provide a combinatorial method for proving that a given polynomial expands nonnegatively into the Demazure character basis. Moreover, the existence of an explicit Demazure crystal structure gives a representation-theoretic interpretation for the corresponding polynomial by providing a natural action on a Demazure crystal base whose character is the given polynomial.

\subsection{Crystals on tableaux}
\label{sec:crystal-tableaux}

There is an explicit combinatorial construction of the crystal graph on tableaux defined by Kashiwara and Nakashima \cite{KN94} and Littlemann \cite{Lit95}.

\begin{definition}
  For $T\in\SSYT_n(\lambda)$ and $1 \leq i < n$, define the \newword{$i$-pairing} of cells of $T$ containing entries $i$ or $i+1$ as follows:
  \begin{itemize}
  \item $i$-pair cells containing $i$ and $i+1$ whenever they appear in the same column,
  \item iteratively $i$-pair an unpaired $i+1$ with an unpaired $i$ to its right whenever all entries $i$ and $i+1$ that lie between are already $i$-paired.
  \end{itemize}
  \label{def:SSYT-pair}
\end{definition}

It is important to note that this pairing rule determines the lengths of the $i$-strings through a given vertex. That is,
\begin{eqnarray}
&&\varepsilon_i(T) =\text{ number of unpaired $i+1$'s in $T$,} \label{eq:Estring}\\
&&\varphi_i(T) = \text{ number of unpaired $i$'s in $T$.}\label{eq:Fstring}
\end{eqnarray}
We define the action of raising (and, equivalently, lowering) as follows.

\begin{definition}
  For $T\in\SSYT_n(\lambda)$ and $1 \leq i < n$, define the \newword{raising operator} $\Ye_i$ on $T$ as follows: if $T$ has no unpaired entries $i+1$, then $\Ye_i(T)=0$; otherwise, change the leftmost unpaired $i+1$ to $i$ leaving all other entries unchanged.
  \label{def:SSYT-raise}
\end{definition}

For example, the full crystal structure for $\B(2,2,1,0)$ on $\SSYT_4(2,2,1)$ is shown in Fig.~\ref{fig:crystal-2210}. Note that the unique highest weight element has weight $(2,2,1,0)$, and the character is the Schur polynomial $s_{(2,2,1)}(x_1,\ldots,x_4)$.

\begin{figure}[ht]
  \begin{center}
    \begin{tikzpicture}[xscale=1.2,yscale=1.4]
      \node at (3,7) (T00) {$\tableau{3 \\ 2 & 2 \\ 1 & 1}$};
      \node at (1,6) (T01) {$\tableau{3 \\ 2 & 3 \\ 1 & 1}$};
      \node at (5,6) (T11) {$\tableau{4 \\ 2 & 2 \\ 1 & 1}$};
      \node at (0,5) (Tb2) {$\tableau{3 \\ 2 & 3 \\ 1 & 2}$};
      \node at (2,5) (T22) {$\tableau{3 \\ 2 & 4 \\ 1 & 1}$};
      \node at (6,5) (T12) {$\tableau{4 \\ 2 & 3 \\ 1 & 1}$};
      \node at (1,4) (T03) {$\tableau{3 \\ 2 & 4 \\ 1 & 2}$};
      \node at (3,4) (T33) {$\tableau{4 \\ 2 & 4 \\ 1 & 1}$};
      \node at (5,4) (T13) {$\tableau{4 \\ 3 & 3 \\ 1 & 1}$};
      \node at (7,4) (Ta3) {$\tableau{4 \\ 2 & 3 \\ 1 & 2}$};
      \node at (0,3) (T04) {$\tableau{3 \\ 2 & 4 \\ 1 & 3}$};
      \node at (2,3) (T24) {$\tableau{4 \\ 2 & 4 \\ 1 & 2}$};
      \node at (4,3) (T34) {$\tableau{4 \\ 3 & 4 \\ 1 & 1}$};
      \node at (6,3) (Ta4) {$\tableau{4 \\ 3 & 3 \\ 1 & 2}$};
      \node at (1,2) (T25) {$\tableau{4 \\ 2 & 4 \\ 1 & 3}$};
      \node at (5,2) (T15) {$\tableau{4 \\ 3 & 4 \\ 1 & 2}$};
      \node at (7,2) (Tb5) {$\tableau{4 \\ 3 & 3 \\ 2 & 2}$};
      \node at (2,1) (T26) {$\tableau{4 \\ 3 & 4 \\ 1 & 3}$};
      \node at (6,1) (T06) {$\tableau{4 \\ 3 & 4 \\ 2 & 2}$};
      \node at (4,0) (T07) {$\tableau{4 \\ 3 & 4 \\ 2 & 3}$};
      \draw[thick,->,blue  ] (T01) -- (Tb2) node[midway,above] {$1$};
      \draw[thick,->,blue  ] (T12) -- (Ta3) node[midway,above] {$1$};
      \draw[thick,->,blue  ] (T22) -- (T03) node[midway,above] {$1$};
      \draw[thick,->,blue  ] (T13) -- (Ta4) node[midway,above] {$1$};
      \draw[thick,->,blue  ] (T33) -- (T24) node[midway,above] {$1$};
      \draw[thick,->,blue  ] (Ta4) -- (Tb5) node[midway,above] {$1$};
      \draw[thick,->,blue  ] (T34) -- (T15) node[midway,above] {$1$};
      \draw[thick,->,blue  ] (T15) -- (T06) node[midway,above] {$1$};
      \draw[thick,->,blue  ] (T26) -- (T07) node[midway,above] {$1$};
      \draw[thick,->,purple] (T00) -- (T01) node[midway,above] {$2$};
      \draw[thick,->,purple] (T11) -- (T12) node[midway,above] {$2$};
      \draw[thick,->,purple] (T12) -- (T13) node[midway,above] {$2$};
      \draw[thick,->,purple] (Ta3) -- (Ta4) node[midway,above] {$2$};
      \draw[thick,->,purple] (T03) -- (T04) node[midway,above] {$2$};
      \draw[thick,->,purple] (T33) -- (T34) node[midway,above] {$2$};
      \draw[thick,->,purple] (T24) -- (T25) node[midway,above] {$2$};
      \draw[thick,->,purple] (T25) -- (T26) node[midway,above] {$2$};
      \draw[thick,->,purple] (T06) -- (T07) node[midway,above] {$2$};
      \draw[thick,->,violet] (T00) -- (T11) node[midway,above] {$3$};
      \draw[thick,->,violet] (T01) -- (T22) node[midway,above] {$3$};
      \draw[thick,->,violet] (Tb2) -- (T03) node[midway,above] {$3$};
      \draw[thick,->,violet] (T22) -- (T33) node[midway,above] {$3$};
      \draw[thick,->,violet] (T03) -- (T24) node[midway,above] {$3$};
      \draw[thick,->,violet] (T13) -- (T34) node[midway,above] {$3$};
      \draw[thick,->,violet] (Ta4) -- (T15) node[midway,above] {$3$};
      \draw[thick,->,violet] (T04) -- (T25) node[midway,above] {$3$};
      \draw[thick,->,violet] (Tb5) -- (T06) node[midway,above] {$3$};
    \end{tikzpicture}
    \caption{\label{fig:crystal-2210}The normal $\gl_4$ crystal with highest weight $(2,2,1,0)$.}
  \end{center}
\end{figure}
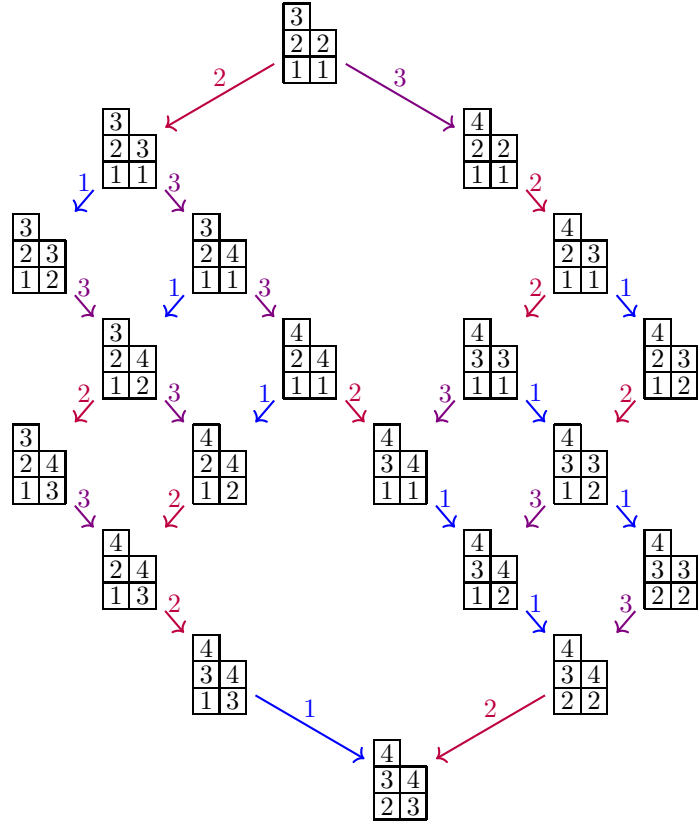

The lowering operators also have an explicit description, inverse to the raising operators.

\begin{definition}
  For $T\in\SSYT_n(\lambda)$ and $1 \leq i < n$, define the \newword{lowering operator} $\Yf_i$ on $T$ as follows: if $T$ has no unpaired entries $i$, then $\Yf_i(T)=0$; otherwise, change the rightmost unpaired $i$ to $i+1$ leaving all other entries unchanged.
  \label{def:SSYT-lower}
\end{definition}

With this explicit description, we can also describe the crystal flip map explicitly.

\begin{example}
  Consider the $\gl_4$-crystal $\B(2,2,1,0)$ shown in Fig.~\ref{fig:crystal-2210}. Under the action of $\mathcal{F}$ we have:
  \[ f_1 \mapsto e_3 \qquad f_2 \mapsto e_2 \qquad f_3 \mapsto e_1\]
  The highest weight element is mapped to the lowest weight element and vice versa as expected, 
  \[\hackcenter{\tableau{3 \\ 2 & 2 \\ 1 & 1}} \mapsto \hackcenter{\tableau{4 \\ 3 & 4 \\ 2 & 3}} \qquad \qquad \hackcenter{\tableau{4 \\ 3 & 4 \\ 2 & 3}} \mapsto \hackcenter{\tableau{3 \\ 2 & 2 \\ 1 & 1}} .\] 
  As further examples, we have
  \begin{eqnarray*}
    \hackcenter{\tableau{3 \\ 2 & 3 \\ 1 & 2}}= f_1f_2 \left( \; \hackcenter{\tableau{3 \\ 2 & 2 \\ 1 & 1}} \; \right) & \mapsto & e_3e_2 \left( \; \hackcenter{\tableau{4 \\ 3 & 4 \\ 2 & 3}} \; \right) = \hackcenter{\tableau{4 \\ 3 & 3 \\ 2 & 2}} \\
    \hackcenter{\tableau{4 \\ 3 & 3 \\ 2 & 2}} = f_1^2f_2^2f_3 \left( \; \hackcenter{\tableau{3 \\ 2 & 2 \\ 1 & 1}} \; \right) & \mapsto & e_3^2e_2^2e_1 \left( \; \hackcenter{\tableau{4 \\ 3 & 4 \\ 2 & 3}} \; \right) = \hackcenter{\tableau{3 \\ 2 & 3 \\ 1 & 2}} 
  \end{eqnarray*}
  At the level of characters, we see that this map sends the monomials $x_1^2x_2^2x_3 \mapsto x_2x_3^2x_4^2$ and $x_1x_2^2x_3^2 \mapsto x_2^2x_3^2x_4$, that is, it conjugates the monomials with the permutation $4321$. 
\end{example}

We may also consider the Demazure crystal $\B_{2413}(2,2,1,0)$, which is the subset shown on the left side of Fig.~\ref{fig:crystal-1202} of the irreducible $\gl_4$ crystal $\B(2,2,1,0)$ show in Fig.~\ref{fig:crystal-2210}. Note that it also has a unique highest weight that has weight $(2,2,1,0)$, but there are multiple lowest weight elements of different weights. Its character is $\key_{(1,2,0,2)}$.

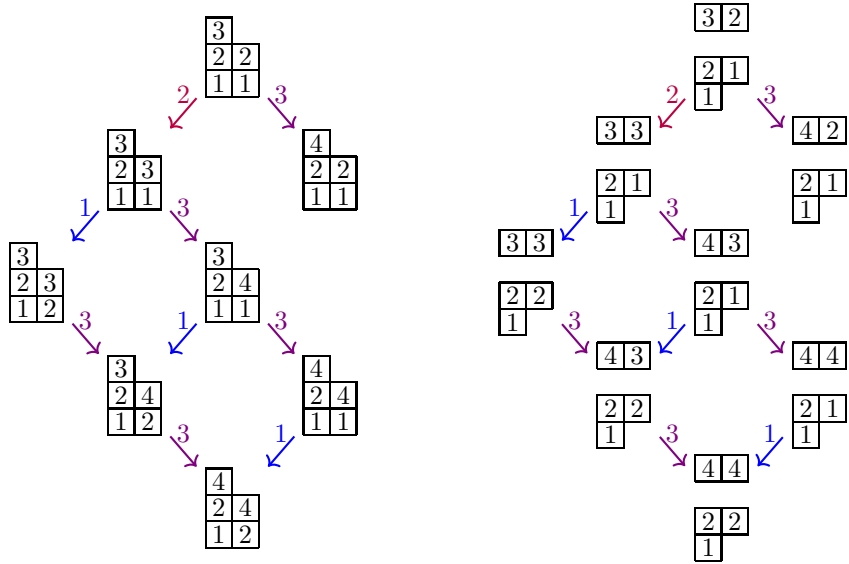
\begin{figure}[ht]
  \begin{center}
    \begin{tikzpicture}[xscale=1.3,yscale=1.5]
      \node at (3,7) (T00) {$\tableau{3 \\ 2 & 2 \\ 1 & 1}$};
      \node at (2,6) (T01) {$\tableau{3 \\ 2 & 3 \\ 1 & 1}$};
      \node at (4,6) (T11) {$\tableau{4 \\ 2 & 2 \\ 1 & 1}$};
      \node at (1,5) (Tb2) {$\tableau{3 \\ 2 & 3 \\ 1 & 2}$};
      \node at (3,5) (T22) {$\tableau{3 \\ 2 & 4 \\ 1 & 1}$};
      \node at (2,4) (T03) {$\tableau{3 \\ 2 & 4 \\ 1 & 2}$};
      \node at (4,4) (T33) {$\tableau{4 \\ 2 & 4 \\ 1 & 1}$};
      \node at (3,3) (T24) {$\tableau{4 \\ 2 & 4 \\ 1 & 2}$};
      \draw[thick,->,blue  ] (T01) -- (Tb2) node[midway,above] {$1$};
      \draw[thick,->,blue  ] (T22) -- (T03) node[midway,above] {$1$};
      \draw[thick,->,blue  ] (T33) -- (T24) node[midway,above] {$1$};
      \draw[thick,->,purple] (T00) -- (T01) node[midway,above] {$2$};
      \draw[thick,->,violet] (T00) -- (T11) node[midway,above] {$3$};
      \draw[thick,->,violet] (T01) -- (T22) node[midway,above] {$3$};
      \draw[thick,->,violet] (Tb2) -- (T03) node[midway,above] {$3$};
      \draw[thick,->,violet] (T22) -- (T33) node[midway,above] {$3$};
      \draw[thick,->,violet] (T03) -- (T24) node[midway,above] {$3$};
      \node at (8,7) (U00) {$\vline\tableau{3 & 2 \\ \\ 2 & 1 \\ 1}$};
      \node at (7,6) (U01) {$\vline\tableau{3 & 3 \\ \\ 2 & 1 \\ 1}$};
      \node at (9,6) (U11) {$\vline\tableau{4 & 2 \\ \\ 2 & 1 \\ 1}$};
      \node at (6,5) (Ub2) {$\vline\tableau{3 & 3 \\ \\ 2 & 2 \\ 1}$};
      \node at (8,5) (U22) {$\vline\tableau{4 & 3 \\ \\ 2 & 1 \\ 1}$};
      \node at (7,4) (U03) {$\vline\tableau{4 & 3 \\ \\ 2 & 2 \\ 1}$};
      \node at (9,4) (U33) {$\vline\tableau{4 & 4 \\ \\ 2 & 1 \\ 1}$};
      \node at (8,3) (U24) {$\vline\tableau{4 & 4 \\ \\ 2 & 2 \\ 1}$};
      \draw[thick,->,blue  ] (U01) -- (Ub2) node[midway,above] {$1$};
      \draw[thick,->,blue  ] (U22) -- (U03) node[midway,above] {$1$};
      \draw[thick,->,blue  ] (U33) -- (U24) node[midway,above] {$1$};
      \draw[thick,->,purple] (U00) -- (U01) node[midway,above] {$2$};
      \draw[thick,->,violet] (U00) -- (U11) node[midway,above] {$3$};
      \draw[thick,->,violet] (U01) -- (U22) node[midway,above] {$3$};
      \draw[thick,->,violet] (Ub2) -- (U03) node[midway,above] {$3$};
      \draw[thick,->,violet] (U22) -- (U33) node[midway,above] {$3$};
      \draw[thick,->,violet] (U03) -- (U24) node[midway,above] {$3$};
    \end{tikzpicture}
    \caption{\label{fig:crystal-1202}The Demazure $\gl_4$ crystal indexed highest weight $(2,2,1,0)$ and permutation $2413$ realized on $\SSYT_4(2,2,1)$ (left) and on $\SSKT(1,2,0,2)$ (right).}
  \end{center}
\end{figure}

Assaf and Schilling \cite{ASc18} defined an explicit Demazure crystal structure on semistandard key \emph{tableaux} \cite{Ass18}, the objects that correspond to Mason's semi-skyline augmented fillings \cite{Mas09}. As semistandard key tableaux are precisely the semistandard key tabloids with $\maj = 0$ \cite{Ass18}(Proposition~3.1). Their definitions will come as a special case of the more general structure we define on semistandard key tabloids, so we defer the details to Proposition~\ref{prop:ASc}. However, for comparison, the right side of Fig.~\ref{fig:crystal-1202} shows the Demazure crystal structure directly on semistandard key tabloids of shape $a=(1,2,0,2)$.

%
\section{Characterizations of Demazure crystals}
%
\label{sec:demazure}

One can prove that a given colored, directed graph with weighted vertices is the crystal of a $\gl_n$-representation by finding a weight-preserving bijection with semistandard Young tableaux that intertwines the crystal operators. To circumvent this difficulty of finding an explicit bijection, Stembridge \cite{Ste03} gave a local characterization of normal crystals for simply-laced types that allows one to determine directly if a given colored, directed graph is the crystal for some representation. Parallel to this, one can prove that a given subset of a normal crystal is a Demazure crystal by finding a weight-preserving injection into semistandard Young tableaux, or bijection to key tableaux, that intertwines the crystal operators. In this section, we present an alternative local characterization of Demazure subsets of normal crystals.

In \S\ref{sec:demazure-extremal}, we define \newword{extremal} subsets of normal crystals, which contain Demazure subsets as a special case. These extremal subsets are easy to find and characterize. In \S\ref{sec:demazure-local}, we extend the axioms for extremal subsets to a local characterization for Demazure subsets, giving a powerful tool for proving that a given structure is a Demazure crystal. In \S\ref{sec:demazure-lowest}, we characterize the \newword{Demazure lowest weight elements} for Demazure crystals. These important elements play a role analogous to highest weight elements for normal crystals in that they are the unique elements that encode the character of the crystal in their weights.

\subsection{Extremal subsets of crystals}
\label{sec:demazure-extremal}

Given a connected, normal crystal $\B(\lambda)$, recall that a weight vector is an \newword{extremal weight} if it is of the form $w \cdot \lambda$ for some permutation $w$. Similarly, we say that an element $b \in \B(\lambda)$ is \newword{extremal} if its weight $\wt(b)$ is an extremal weight. We begin by noting that extremal weight elements sit at the ends of their $i$-strings.

Due to their recurring appearance in the following section, recall from Definition~\ref{def:base} that $\varepsilon_i(b) = \max\{k \in \mathbb{Z}_{\geq 0} \mid e_i^k(b) \neq 0 \}$ and $\varphi_i(b) = \max\{k \in \mathbb{Z}_{\geq 0} \mid f_i^k(b) \neq 0 \}$. Henceforth, for any $x \in \B(\lambda)$ let $f_i^*(x):=f_i^{\varphi_i(x)}(x)$ and $e_i^*(x):= e_i^{\varepsilon_i(x)}(x)$ whenever $\varphi_i(x)$ and $\varepsilon_i(x)$ are nonzero, respectively.

\begin{proposition}
If $b\in\B(\lambda)$ is extremal, then for each $i$, either $\varphi_i(b)=0$ or $\varepsilon_i(b)=0$.
  \label{prop:extremal}
\end{proposition}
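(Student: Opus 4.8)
The plan is to show that the weight of an extremal element is an extreme point of the weight polytope, and then to observe that an element sitting strictly inside its $i$-string would force its weight to be the midpoint of two other crystal weights, which is impossible for an extreme point. Concretely, write $\wt(b) = w\cdot\lambda$, so that $\wt(b)$ is a rearrangement of the parts of $\lambda$, and let $P(\lambda) = \mathrm{conv}\{w\cdot\lambda \mid w\in\mathcal{S}_n\}$ denote the convex hull of the $\mathcal{S}_n$-orbit of $\lambda$.

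First I would record two standard facts. Since $\B(\lambda)$ is normal, every weight $\mu = \wt(c)$ with $c\in\B(\lambda)$ lies in $P(\lambda)$; equivalently, by Rado's theorem the sorted rearrangement $\mu^{+}$ is dominated by $\lambda$ in dominance order, the total $\sum_j \mu_j = \sum_j\lambda_j$ being constant on the connected crystal by Definition~\ref{def:base}(1). Second, the orbit points $w\cdot\lambda$ are exactly the vertices (extreme points) of $P(\lambda)$, since $P(\lambda)$ is the type~A permutohedron of $\lambda$. In particular $\wt(b)$ is an extreme point of $P(\lambda)$.

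With these in hand the argument is immediate. Suppose, for contradiction, that for some $i$ both $\varepsilon_i(b) > 0$ and $\varphi_i(b) > 0$. Then $e_i(b)$ and $f_i(b)$ are both nonzero elements of $\B(\lambda)$, and by Definition~\ref{def:base}(1) their weights are $\wt(e_i(b)) = \wt(b) + \alpha_i$ and $\wt(f_i(b)) = \wt(b) - \alpha_i$, two points of $P(\lambda)$ distinct from $\wt(b)$ that satisfy $\wt(b) = \tfrac12\big(\wt(e_i(b)) + \wt(f_i(b))\big)$. This exhibits $\wt(b)$ as the midpoint of two distinct points of $P(\lambda)$, contradicting that it is an extreme point. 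Hence $\varepsilon_i(b) = 0$ or $\varphi_i(b) = 0$ for every $i$.

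I expect the only real content to be the convex-geometric input of the second paragraph, namely that all crystal weights are majorized by $\lambda$ and that the orbit points are precisely the vertices of $P(\lambda)$; everything else is bookkeeping. As a sanity check, combining the conclusion with Definition~\ref{def:base}(2), which gives $\varphi_i(b) - \varepsilon_i(b) = \wt(b)_i - \wt(b)_{i+1}$, one recovers the sharper statement that $b$ sits at the top of its $i$-string when $\wt(b)_i \geq \wt(b)_{i+1}$ and at the bottom when $\wt(b)_i \leq \wt(b)_{i+1}$, matching the picture of $S_i$ reflecting extremal weights to extremal weights.
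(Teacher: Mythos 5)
Your proof is correct, but it takes a genuinely different route from the paper. You argue via convex geometry: all weights of the normal crystal $\B(\lambda)$ lie in the permutohedron $P(\lambda)=\mathrm{conv}\{w\cdot\lambda \mid w\in\mathcal{S}_n\}$ (via the Schur-polynomial character and Rado's theorem), the orbit points $w\cdot\lambda$ are precisely the extreme points of $P(\lambda)$, and an element with $\varepsilon_i(b)>0$ and $\varphi_i(b)>0$ would make $\wt(b)$ the midpoint of the two distinct polytope points $\wt(b)\pm\alpha_i$, which is impossible at a vertex. The paper instead stays entirely inside the crystal-operator formalism: it inducts on the Coxeter length of $w$ with $\wt(b)=w\cdot\lambda$, starting from the highest weight element $u_\lambda$, using that $f_i^{\varphi_i(u_\lambda)}(u_\lambda)$ lands at the bottom of its $i$-string with $\varphi_i=0$ and that $\varepsilon_j$ is unchanged for $j\neq i$, then propagating this along a reduced expression via the string operators $e_{i_k}^*$. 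What your approach buys is a short, induction-free argument that is type-uniform (it works verbatim for any Weyl group orbit polytope), at the cost of importing external inputs (Rado's theorem, the vertex description of the permutohedron, and the monomial support of Schur polynomials); what the paper's approach buys is self-containment within the crystal axioms already set up in \S\ref{sec:crystal-normal}, which is the toolkit the rest of \S\ref{sec:demazure} (e.g.\ Propositions~\ref{obv:sl2} and \ref{prop:dem-crystal is extremal}) continues to use. One small point worth making explicit in your write-up: the reason each orbit point is a vertex is that the orbit lies on a sphere centered at the centroid, so no orbit point can be a nontrivial convex combination of points of $P(\lambda)$; citing this (or a generic-linear-functional argument) closes the only step you left as an assertion.
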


\begin{proof}
Suppose $wt(b)=s_i\cdot \lambda$. Then, from Definition~\ref{def:base}(2) it follows that if $f_i(u_\lambda) \neq 0$ then $\varphi_i(f_i(u_\lambda))=\varphi_i(u_\lambda)-1$.  Since $e_i(u_\lambda)=0$ for all $i$ then $f_i^{\varphi_i(u_\lambda)}(u_\lambda)=b$ and thus $\varepsilon_i(b)=\varphi_i(u)>0$ and $\varphi_i(b)=0$. Moreover if $j\neq i$ then $\varepsilon_j(b)=\varepsilon_j(u_\lambda)=0$. Since any permutation $w$ has reduced expressions in terms of simple transpositions and for any $b \in \B(\lambda)$ there exists a minimal path to $\lambda$ such that $e_{i_1}^*\cdots e_{i_n}^*(b)=u_\lambda$, then if $b$ is extremal with $wt(b)=w\cdot \lambda$, the result follows by inducting on $n = l(w)$.
\end{proof}

Given any normal crystal $\B(\lambda)$ and any \emph{subset} $X$ of $\B(\lambda)$, we consider the \emph{induced subgraph} on $X$ that includes all edges $x \overset{e}{\rightarrow} y$ whenever $x,y\in X$. Similarly, we allow $X$ to inherit the weight map from $\B(\lambda)$. We are especially interested in two special classes of subsets.

\begin{definition}
  Given a connected, normal crystal $\B(\lambda)$, a subset $X \subseteq \B(\lambda)$ is \newword{extremal} if
  \begin{enumerate}
  \item $u_{\lambda} \in X$, where $u_{\lambda}$ is the highest weight element of $\B(\lambda)$;
  \item for $x \in X$ and $1 \leq i < n$, if $e_i(x) \neq 0$, then $e_i(x) \in X$;
  \item for $x \in X$ and $1 \leq i < n$, if $f_i(x) \neq 0$ and $f_i(x) \not\in X$, then $e_i(x) \not\in X$.
  \end{enumerate}
  \label{def:extremal}
\end{definition}

Informally, an \newword{extremal subset} contains the highest weight element and contain either all elements of an $i$-string, no elements of an $i$-string, or only the top element of an $i$-string.

Though we will not require the full power of Stembridge's axioms here, in order to study extremal subsets we begin with some technical observations that follow from almost directly from these axioms.

\begin{proposition}\label{obv:sl2}
  Suppose $|i-j|=1$ and $x, z \in \B(\lambda)$ are such that $\varphi_i(x), \varphi_j(x)>0$, $\varepsilon_i(x)$ and $\varepsilon_j(x)$ are zero, and $z=f_i^*f_j^*f_i^*(x)$. Let $u=f_j^rf_i^{s+1}(x)$, $v=e_i^re_j^{s+1}(z)$, $u'=f_i^rf_j^{s+1}(x)$, and $v'=e_j^re_i^{s+1}(z)$. Then,
  \begin{enumerate}
  \item if $r>\varphi_j(x)+1+s$ then $u=v=0$
  \item if $r>\varphi_i(x)+1+s$ then $u'=v'=0$
  \item if $\varphi_i(x)-2\geq s\geq r \geq 0$ then $f_if_j(u)=f_jf_i(u)$, $e_ie_j(v)=e_je_i(v)$, and are nonzero.
  \item if $\varphi_j(x)-2\geq s\geq r \geq 0$ then $f_if_j(u')=f_jf_i(u')$, $e_ie_j(v')=e_je_i(v')$, and are nonzero.
  \end{enumerate}
\end{proposition}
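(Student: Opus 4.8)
The plan is to observe that all four statements take place entirely inside the rank-two subcrystal generated by $x$. Since $\varepsilon_i(x)=\varepsilon_j(x)=0$, the connected component $C$ of $x$ under the four operators $e_i,f_i,e_j,f_j$ is a \emph{normal} crystal of type $A_2$ (the restriction to the simple roots $\alpha_i,\alpha_j$, which satisfy $\langle\alpha_i,\alpha_j^{\vee}\rangle=-1$ because $|i-j|=1$), with highest weight element $x$ and highest weight $p\,\omega_i+q\,\omega_j$, where $p=\varphi_i(x)$ and $q=\varphi_j(x)$. Its lowest weight element is exactly $z=f_i^*f_j^*f_i^*(x)$, the image of $x$ under the longest element $s_is_js_i$ of the rank-two Weyl group. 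I would realize $C$ concretely as the crystal on $\SSYT$ of two-row shape $(p+q,\,q)$ on the alphabet $\{1,2,3\}$, identifying color $i$ with the $1,2$-pairing and color $j$ with the $2,3$-pairing of Definition~\ref{def:SSYT-pair}, so that $f_i,f_j$ act by the explicit rule of Definition~\ref{def:SSYT-lower}. In this model $x$ has first row $1^{p+q}$ and second row $2^{q}$.

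Next I would halve the work twice by symmetry. The hypotheses on $x$ are symmetric in $i$ and $j$ (note $z=f_i^*f_j^*f_i^*(x)=f_j^*f_i^*f_j^*(x)$ is the lowest weight element irrespective of the reduced word), and interchanging $i\leftrightarrow j$ carries $u\mapsto u'$ and $v\mapsto v'$; thus (2) and (4) follow from (1) and (3). Moreover $C$ carries its Lusztig involution $\sigma:C\to C$, the rank-two analogue of the crystal flip of Definition~\ref{def:crystalflip}, satisfying $\sigma(x)=z$ and intertwining $\sigma f_i=e_j\sigma$, $\sigma f_j=e_i\sigma$ (with $\sigma(0)=0$). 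A direct computation gives $\sigma(u)=\sigma(f_j^rf_i^{s+1}(x))=e_i^re_j^{s+1}(z)=v$, and likewise $\sigma(u')=v'$; since $\sigma$ is a bijection intertwining the operators, $u=0\iff v=0$, and $f_if_j(u)=f_jf_i(u)\neq 0\iff e_ie_j(v)=e_je_i(v)\neq 0$. Hence it suffices to prove the assertions about $u$ in (1) and (3).

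For (1), I would first show $\varepsilon_j(f_i^k(x))=0$, and therefore $\varphi_j(f_i^k(x))=q+k$, for $0\le k\le p$: the Stembridge inequality $\varepsilon_j(f_iy)\le\varepsilon_j(y)$ \cite{Ste03}, applied along the $i$-string out of the $j$-highest element $x$, forces $\varepsilon_j$ to remain $0$, while $\varphi_j-\varepsilon_j=\langle\wt,\alpha_j^{\vee}\rangle$ increases by $1$ at each application of $f_i$. With $y=f_i^{s+1}(x)$ this gives $\varphi_j(y)=q+s+1=\varphi_j(x)+1+s$, so $u=f_j^r(y)=0$ precisely when $r>\varphi_j(x)+1+s$, which is (1). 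For (3), the hypothesis $s\le p-2$ keeps $\varphi_i(y)=p-s-1\ge 1$, and $r\le s$ keeps us in the commuting region. In the tableau model $y$ has unpaired $1$'s in columns $q+1,\dots,p+q-s-1$ and single $2$'s in columns $p+q-s,\dots,p+q$; applying $f_j^r$ converts the $r$ rightmost of those $2$'s into $3$'s, so $u$ has unpaired $1$'s in columns $q+1,\dots,p+q-s-1$ and unpaired $2$'s (for the $1,2$-pairing) in columns $p+q-s,\dots,p+q-r$. Then $f_i$ acts on column $p+q-s-1$ (the rightmost unpaired $1$) and $f_j$ acts on column $p+q-r$ (the rightmost unpaired $2$); because $r\le s$ these columns are distinct, and neither move disturbs the pairing that governs the other, so $f_if_j(u)$ and $f_jf_i(u)$ produce the same tableau and are nonzero, giving (3).

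The main obstacle is the bookkeeping in (3): one must verify that throughout the rectangular region $0\le r\le s\le p-2$ the cells moved by $f_i$ and $f_j$ never collide, which is exactly what $r\le s$ guarantees (the collision $r=s+1$ is precisely where the braid relation, rather than commutation, takes over, and where the $\varphi_j$-count in (1) runs out). This is most transparent in the explicit tableau realization; alternatively one can deduce the same commutation from Stembridge's local axioms by checking $\Delta_i\varepsilon_j(u)=0$ in this region, but the tableau computation makes the roles of the hypotheses $r\le s\le p-2$ immediate. Establishing the intertwining properties of $\sigma$ is routine once $C$ has been identified as a normal $A_2$-crystal.
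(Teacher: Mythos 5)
Your proof is correct, but it takes a genuinely different route from the paper's. The paper argues locally: it stays inside the ambient crystal $\B(\lambda)$ and derives a list of auxiliary statements about the quantities $\Delta_i\delta$, $\nabla_i\epsilon$, etc., directly from Stembridge's axioms (P3)--(P6) of \cite{Ste03}, from which the four claims are then read off. You instead argue globally: you restrict to the rank-two component of $x$ under $e_i,f_i,e_j,f_j$, identify it (using $\varepsilon_i(x)=\varepsilon_j(x)=0$) as the connected normal $A_2$ crystal with highest weight $\varphi_i(x)\,\omega_i+\varphi_j(x)\,\omega_j$ realized on two-row tableaux, and then collapse the four statements to two explicit computations about $u$ via the $i\leftrightarrow j$ symmetry and the Lusztig involution (the rank-two analogue of the crystal flip of Definition~\ref{def:crystalflip}). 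Both reductions are sound: the hypotheses on $x$ are indeed symmetric in $i$ and $j$, and $f_i^*f_j^*f_i^*(x)=f_j^*f_i^*f_j^*(x)$ is the lowest weight element of the component (this is essentially Proposition~\ref{prop:extremal}), so the flip intertwining $\sigma f_i=e_j\sigma$, $\sigma f_j=e_i\sigma$ carries $u\mapsto v$ and $u'\mapsto v'$ as you claim; your string-length computation for (1) uses the correct direction of the Stembridge inequality ($f_i$ cannot increase $\varepsilon_j$), and your column bookkeeping for (3) checks out ($f_i$ acts in column $p+q-s-1$, $f_j$ in column $p+q-r$, which are distinct and non-interacting precisely when $r\leq s\leq \varphi_i(x)-2$). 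What each approach buys: the paper's argument never leaves the local-axiom framework used throughout its Section 3 and needs no branching facts, but it ends with an ``immediately deduced'' step that hides the bookkeeping; your argument makes the role of each numerical hypothesis completely explicit and verifiable in the model, at the cost of invoking two standard-but-unproved-here ingredients --- that the restriction of a normal $\gl_n$ crystal to two adjacent colors is a disjoint union of normal $A_2$ crystals (this follows from Stembridge's theorem, so it is available from \cite{Ste03}, but you should say so), and the well-definedness of the flip map, which the paper itself also assumes without proof in Definition~\ref{def:crystalflip}.
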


\begin{proof}
  We recall the following definitions from Section 1 of \cite{Ste03}. Given any $x \in \B(\lambda)$ let $\epsilon(x,i):= \varphi_i(x)$ and $\delta(x,i):=\varepsilon_i(x)$ and consider the operators $\Delta_i\delta(x,j)=\delta(e_i(x),j)-\delta(x,j)$, $\Delta_i\epsilon(x,j)=\epsilon(e_i(x),j)-\epsilon(x,j)$ defined whenever $e_i(x) \neq 0$ and also $\nabla_i\delta(x,j)=\delta(x,j)-\delta(f_i(x),j)$ and $\nabla_i\epsilon(x,j)=\epsilon(x,j)-\epsilon(f_ix,j)$ defined whenever $f_i(x) \neq 0$.  Given any $\lambda \vdash n$ and $x \in \B(\lambda)$ by using axioms (P3)-(P6) in \cite{Ste03} for simply-laced crystals, we can directly deduce the following statements.
  \begin{enumerate}\setcounter{enumi}{-1}
  \item If $|i-j|=1$ and $e_i(x)=e_j(x)=0$ but $f_i(x),f_j(x) \neq 0$ then $\nabla_i\epsilon(x,j)=\nabla_j\epsilon(x,i)=-1$.
  \item If $|i-j|=1$ and $e_i(x),f_i(x),f_j(x) \neq 0$ then $\Delta_i\delta(f_j^s(x),j)=0$ whenever $f_j^s(x)\neq 0$. Likewise, if $f_i(x),e_i(x),e_j(x) \neq 0$ then $\nabla_i\epsilon(e_j^s(x),j)=0$ whenever $e_i^s(x) \neq 0$.
  \item If $|i-j|=1$ and $x$ satisfies $\Delta_i\delta(x,j)=\Delta_j\delta(x,i)=-1$ and $f_i(x),f_j(x) \neq 0$ then $\nabla_i\epsilon(x,j)=\nabla_j\epsilon(x,i)=-1$.
  \item If $|i-j|=1$ and $x$ satisfies $\Delta_i\delta(x,j)=\Delta_j\delta(x,i)=-1$ and $f_i(x) \neq 0$ then $\Delta_i\delta(f_jf_i(x),j)=\Delta_j\delta(f_jf_i(x),i)=-1$.
  \end{enumerate}
  Graphically, statement (1) can be envisioned as follows:

  \begin{center}
    $
    \hackcenter{\begin{tikzpicture}[scale=.5,yscale=1, xscale=1.2,every node/.style={scale=0.5}]
        \begin{scope}
          \node at (-1,-3)(d){$\bullet$};
          \node at (1,-3)(d'){$\bullet$};
          \node at (0,-4)(e')[scale=1.5]{$x$};
          \node at (0,-5)(f){$\bullet$};
          \draw[->,red,thick](d)--(e');
          \draw[->,blue,thick](d')--(e');
          \draw[->,blue,thick](e')--(f);
        \end{scope}
    \end{tikzpicture}}
    \qquad
    \Rightarrow \qquad
    \hackcenter{
      \begin{tikzpicture}[scale=.5,yscale=1, xscale=1.2,every node/.style={scale=0.5}]
        \begin{scope}
          \node at (-1,-3)(d){$\bullet$};
          \node at (1,-3)(d'){$\bullet$};
          \node at (-1,-4)(e){$\bullet$};
          \node at (0,-4)(e')[scale=1.5]{$x$};
          \node at (0,-5)(f){$\bullet$};
          \draw[->,red,thick](d)--(e');
          \draw[->,red,thick](e)--(f);
          \draw[->,blue,thick](d)--(e);
          \draw[->,blue,thick](d')--(e');
          \draw[->,blue,thick](e')--(f);
        \end{scope}
    \end{tikzpicture}}$
    \qquad\qquad
    $
    \hackcenter{\begin{tikzpicture}[scale=.5,yscale=-1, xscale=1.2,every node/.style={scale=0.5}]
        \begin{scope}
          \node at (-1,-3)(d){$\bullet$};
          \node at (1,-3)(d'){$\bullet$};
          \node at (0,-4)(e')[scale=1.5]{$x$};
          \node at (0,-5)(f){$\bullet$};
          \draw[<-,red,thick](d)--(e');
          \draw[<-,blue,thick](d')--(e');
          \draw[<-,blue,thick](e')--(f);
        \end{scope}
    \end{tikzpicture}}
    \qquad
    \Rightarrow \qquad
    \hackcenter{
      \begin{tikzpicture}[scale=.5,yscale=-1, xscale=1.2,every node/.style={scale=0.5}]
        \begin{scope}
          \node at (-1,-3)(d){$\bullet$};
          \node at (1,-3)(d'){$\bullet$};
          \node at (-1,-4)(e){$\bullet$};
          \node at (0,-4)(e')[scale=1.5]{$x$};
          \node at (0,-5)(f){$\bullet$};
          \draw[<-,red,thick](d)--(e');
          \draw[<-,red,thick](e)--(f);
          \draw[<-,blue,thick](d)--(e);
          \draw[<-,blue,thick](d')--(e');
          \draw[<-,blue,thick](e')--(f);
        \end{scope}
    \end{tikzpicture}}$
  \end{center}

  \noindent Whereas statements (2) and (3) say the following:
  \begin{center}
    $
    \hackcenter{\begin{tikzpicture}[scale=.5,yscale=.8, xscale=1.2,every node/.style={scale=0.5}]
        \begin{scope}
          \node at (0,0)(a){$\bullet$};
          \node at (-1,-1)(b){$\bullet$};
          \node at (1,-1)(b'){$\bullet$};
          \node at (-1,-2)(c){$\bullet$};
          \node at (1,-2)(c'){$\bullet$};
          \node at (-1,-3)(d){$\bullet$};
          \node at (1,-3)(d'){$\bullet$};
          \node at (0,-4)(e')[scale=1.5]{$x$};
          \draw[->,red,thick](a)--(b);
          \draw[->,red,thick](b')--(c');
          \draw[->,red,thick](c')--(d');
          \draw[->,red,thick](d)--(e');
          \draw[->,blue,thick](a)--(b');
          \draw[->,blue,thick](b)--(c);
          \draw[->,blue,thick](c)--(d);
          \draw[->,blue,thick](d')--(e');
        \end{scope}
        \begin{scope}[shift={(0,-4.3)}]
          \node at (0,0)(a){};
          \node at (-1,-1)(b){$\bullet$};
          \node at (1,-1)(b'){$\bullet$};
          \draw[->,red,thick](a)--(b);
          \draw[->,blue,thick](a)--(b');
        \end{scope}
    \end{tikzpicture}}
    \qquad
    \Rightarrow
    \qquad
    \hackcenter{\begin{tikzpicture}[scale=.5,yscale=.8, xscale=1.2,every node/.style={scale=0.5}]
        \begin{scope}
          \node at (0,0)(a){$\bullet$};
          \node at (-1,-1)(b){$\bullet$};
          \node at (1,-1)(b'){$\bullet$};
          \node at (-1,-2)(c){$\bullet$};
          \node at (1,-2)(c'){$\bullet$};
          \node at (-1,-3)(d){$\bullet$};
          \node at (1,-3)(d'){$\bullet$};
          \node at (0,-4)(e')[scale=1.5]{$x$};
          \draw[->,red,thick](a)--(b);
          \draw[->,red,thick](b')--(c');
          \draw[->,red,thick](c')--(d');
          \draw[->,red,thick](d)--(e');
          \draw[->,blue,thick](a)--(b');
          \draw[->,blue,thick](b)--(c);
          \draw[->,blue,thick](c)--(d);
          \draw[->,blue,thick](d')--(e');
        \end{scope}
        \begin{scope}[shift={(0,-4.3)}]
          \node at (0,0)(a){};
          \node at (-1,-1)(b){$\bullet$};
          \node at (1,-1)(b'){$\bullet$};
          \node at (-1,-2)(c){$\bullet$};
          \node at (1,-2)(c'){$\bullet$};
          \node at (-1,-3)(d){$\bullet$};
          \node at (1,-3)(d'){$\bullet$};
          \node at (0,-4)(e')[scale=1]{$\bullet$};
          \draw[->,red,thick](a)--(b);
          \draw[->,red,thick](b')--(c');
          \draw[->,red,thick](c')--(d');
          \draw[->,red,thick](d)--(e');
          \draw[->,blue,thick](a)--(b');
          \draw[->,blue,thick](b)--(c);
          \draw[->,blue,thick](c)--(d);
          \draw[->,blue,thick](d')--(e');
        \end{scope}
    \end{tikzpicture}}
    \qquad\qquad
    \hackcenter{\begin{tikzpicture}[scale=.5,yscale=.8, xscale=1.2,every node/.style={scale=0.5}]
        \begin{scope}
          \node at (0,0)(a){$\bullet$};
          \node at (-1,-1)(b){$\bullet$};
          \node at (1,-1)(b'){$\bullet$};
          \node at (-1,-2)(c){$\bullet$};
          \node at (1,-2)(c'){$\bullet$};
          \node at (-1,-3)(d){$\bullet$};
          \node at (1,-3)(d'){$\bullet$};
          \node at (0,-4)(e')[scale=1.5]{$x$};
          \draw[->,red,thick](a)--(b);
          \draw[->,red,thick](b')--(c');
          \draw[->,red,thick](c')--(d');
          \draw[->,red,thick](d)--(e');
          \draw[->,blue,thick](a)--(b');
          \draw[->,blue,thick](b)--(c);
          \draw[->,blue,thick](c)--(d);
          \draw[->,blue,thick](d')--(e');
        \end{scope}
        \begin{scope}[shift={(0,-4.3)}]
          \node at (0,0)(a){};
          \node at (0,-1)(b'){$\bullet$};
          \draw[->,blue,thick](a)--(b');
        \end{scope}
    \end{tikzpicture}}
    \qquad
    \Rightarrow
    \qquad
    \hackcenter{
      \begin{tikzpicture}[scale=.5,yscale=.8, xscale=1.2,every node/.style={scale=0.5}]
        \begin{scope}
          \node at (0,0)(a){$\bullet$};
          \node at (-1,-1)(b){$\bullet$};
          \node at (1,-1)(b'){$\bullet$};
          \node at (-1,-2)(c){$\bullet$};
          \node at (1,-2)(c'){$\bullet$};
          \node at (-1,-3)(d){$\bullet$};
          \node at (1,-3)(d'){$\bullet$};
          \node at (2,-3)(d''){$\bullet$};
          \node at (0,-4)(e')[scale=1.3]{$x$};
          \node at (2,-4)(e''){$\bullet$};
          \node at (0,-5)(f){$\bullet$};
          \node at (2,-5)(f'){$\bullet$};
          \node at (1,-6)(g){$\bullet$};
          \draw[->,red,thick](a)--(b);
          \draw[->,red,thick](b')--(c');
          \draw[->,red,thick](c')--(d');
          \draw[->,red,thick](d)--(e');
          \draw[->,red,thick](d'')--(e'');
          \draw[->,red,thick](e'')--(f');
          \draw[->,red,thick](f)--(g);
          \draw[->,blue,thick](a)--(b');
          \draw[->,blue,thick](b)--(c);
          \draw[->,blue,thick](c)--(d);
          \draw[->,blue,thick](d')--(e');
          \draw[->,blue,thick](c')--(d'');
          \draw[->,blue,thick](e')--(f);
          \draw[->,blue,thick](f')--(g);
        \end{scope}
      \end{tikzpicture}
    }$
  \end{center}

  Moreover, if $x \in \B(\lambda)$ satisfies $e_i(x)=e_j(x)=0$ and $\varphi_i(x),\varphi_j(x)>0$ then for $G_x$, the maximal connected subgraph of $\B(\lambda)$ generated by $x$ under the action of $f_i$ and $f_j$, the following also holds:
  \begin{enumerate}\addtocounter{enumi}{3}
  \item If $|i-j|\geq 2$ then $G_x$ has the property that $f_i^rf_j^s(z)=f_j^sf_i^r(z) \in \mathcal{B}(\lambda)$ for any $0\leq s \leq \varphi_j(x)$ and $0\leq r \leq \varphi_i(x)$
  \item If $|i-j|=1$ then $G_x$ has the property that:
    \begin{itemize}
    \item $f_j^{\varphi_i(x)+\varphi_j(x)}f_i^{\varphi_i(x)}(x) \neq f_i^{\varphi_i(x)+\varphi_j(x)}f_j^{\varphi_j(x)}(x)$,
    \item $e_if_j^{\varphi_i(x)+\varphi_j(x)}f_i^{\varphi_i(x)}(x)=0$ and $e_jf_i^{\varphi_i(x)+\varphi_j(x)}f_j^{\varphi_j(x)}(x)=0$,
    \item $f_i^{\varphi_j(x)}f_j^{\varphi_i(x)+\varphi_j(x)}f_i^{\varphi_i(x)}(x) = f_j^{\varphi_i(x)}f_i^{\varphi_i(x)+\varphi_j(x)}f_j^{\varphi_j(x)}(x)$.
    \end{itemize}
  \end{enumerate}

  Combining statements (0)-(5) above, the claims in the proposition can be immediately deduced.
\end{proof}

\begin{proposition}
  For $\lambda$ a partition of length $n$ and $w$ a permutation of $\mathcal{S}_n$, the Demazure crystal $\B_w(\lambda)$ is an extremal subset of $\B(\lambda)$.
  \label{prop:dem-crystal is extremal}
\end{proposition}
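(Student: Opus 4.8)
The plan is to prove the statement by induction on the Coxeter length $\ell(w)$, reducing everything to a single key lemma: if $X \subseteq \B(\lambda)$ is an extremal subset, then so is $\mathfrak{D}_i X$ for every $i$. Granting this lemma, the base case $w = \mathrm{id}$ is immediate, since $\B_{\mathrm{id}}(\lambda) = \{u_\lambda\}$ satisfies Definition~\ref{def:extremal}: axiom (1) holds trivially, axiom (2) holds because $e_i(u_\lambda) = 0$ for all $i$, and axiom (3) holds because $e_i(u_\lambda) = 0 \notin \{u_\lambda\}$. For the inductive step, choose a reduced word for $w$ and write $w = s_i w'$ with $\ell(w) = \ell(w') + 1$, so that $\mathfrak{D}_w = \mathfrak{D}_i \mathfrak{D}_{w'}$ and hence $\B_w(\lambda) = \mathfrak{D}_i \B_{w'}(\lambda)$; since $\B_{w'}(\lambda)$ is extremal by the inductive hypothesis, the key lemma completes the argument. (The factorization $\mathfrak{D}_w = \mathfrak{D}_i \mathfrak{D}_{w'}$ is legitimate because the $\mathfrak{D}_i$ satisfy the braid relations, as recorded before Eq.~\eqref{e:Dw}.)

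To prove the key lemma I would first record the useful reformulation that, because an extremal $X$ is closed under $e_i$ (axiom (2)), one has $\mathfrak{D}_i X = \{ b \in \B(\lambda) : e_i^*(b) \in X \}$; in other words $\mathfrak{D}_i X$ is a union of \emph{entire} $i$-strings, namely all those whose top element lies in $X$. This makes the axioms in the $i$-direction transparent: axiom (1) holds since $u_\lambda \in X$, and for axiom (2) with $j = i$ and axiom (3) with $j = i$, the fact that $\mathfrak{D}_i X$ consists of full $i$-strings means that $f_i(x) \neq 0$ already forces $f_i(x) \in \mathfrak{D}_i X$, so both $i$-direction conditions are automatic.

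The substance lies in checking axioms (2) and (3) for $j \neq i$, where one must control how $e_j$ and $f_j$ interact with the operation of passing to the top of an $i$-string. When $|i - j| \geq 2$, the operators $e_i$ and $e_j$ commute and $\varepsilon_j, \varphi_j$ are invariant under $e_i$, so I can simply slide: $e_i^*(e_j b) = e_j(e_i^* b)$ and $e_i^*(f_j b) = f_j(e_i^* b)$, with the relevant string lengths preserved, reducing both axioms for $\mathfrak{D}_i X$ directly to the corresponding axioms for $X$. (For axiom (3) I would use its contrapositive: if $x, e_j(x) \in \mathfrak{D}_i X$ and $f_j(x) \neq 0$, then $f_j(x) \in \mathfrak{D}_i X$, which after sliding becomes exactly axiom (3) for $X$ applied to $e_i^*(x)$.)

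The genuinely hard case, and the main obstacle, is $|i - j| = 1$, where $e_i$ and $e_j$ no longer commute and the naive sliding argument fails. Here I would invoke Proposition~\ref{obv:sl2}: bring the relevant rank-two component to its $\langle e_i, e_j\rangle$-highest element $x$ and lowest element $z$, and then use the commutation patterns in parts (1)--(4), together with the boundary statement (0) and the closure of $X$ under both $e_i$ and $e_j$, to pin down $e_i^*(e_j b)$ and $e_i^*(f_j b)$ in terms of elements already known to lie in $X$. Concretely, for axiom (2) I expect to show $e_i^*(e_j b) \in X$, and for axiom (3) to show that $x, e_j(x) \in \mathfrak{D}_i X$ with $f_j(x) \neq 0$ forces $f_j(x) \in \mathfrak{D}_i X$, in each case by tracking the two-colored subgraph generated by $f_i$ and $f_j$ described in Proposition~\ref{obv:sl2}. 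Working through the several positional cases of that rank-two analysis, and in particular verifying that the top-of-$i$-string element never escapes $X$, is where the real effort will be.
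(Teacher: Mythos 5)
Your induction skeleton (base case plus the factorization $\mathfrak{D}_w=\mathfrak{D}_i\mathfrak{D}_{w'}$ from the braid relations) matches the paper's proof, and your handling of axiom (1), of the $j=i$ direction, and of the commuting case $|i-j|\geq 2$ is sound. The fatal problem is your key lemma itself: it is \emph{false} that $\mathfrak{D}_i X$ is extremal for every extremal subset $X\subseteq\B(\lambda)$, so no rank-two analysis via Proposition~\ref{obv:sl2} can ever establish it. Here is a counterexample, verifiable directly from Definitions~\ref{def:SSYT-pair}--\ref{def:SSYT-lower}. Take $\gl_3$ and $\lambda=(2,1,0)$, write $u=u_\lambda$, and set
\[ X \;=\; \{\, u,\ f_1u,\ f_2f_1u,\ f_2^2f_1u \,\}, \]
i.e.\ the highest weight element together with the entire $2$-string through $f_1u$. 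This $X$ is extremal: it contains $u_\lambda$; it is closed under raising because $e_1$ annihilates both $f_2f_1u$ and $f_2^2f_1u$ while $e_2$ moves up the $2$-string inside $X$; and every string meets $X$ in all of itself, only its top, or not at all (the $2$-string $\{u,f_2u\}$ and the $1$-string through $f_2^2f_1u$ each meet $X$ only in their tops), so axiom (3) holds. Note $X$ is \emph{not} a Demazure crystal (it omits $f_2u$). Now $\mathfrak{D}_1X$ contains $b=f_1f_2^2f_1u$, since $e_1(b)=f_2^2f_1u\in X$. But $e_2(b)=f_1^2f_2u\neq 0$, and $e_1^*(e_2(b))=f_2u\notin X$, so $e_2(b)\notin\mathfrak{D}_1X$. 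Hence $\mathfrak{D}_1X$ fails axiom (2) of Definition~\ref{def:extremal}: your lemma is false exactly in the case you flagged as hard, $|i-j|=1$.

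The counterexample pinpoints why extremality is too weak a hypothesis: extremal subsets may be ``lopsided,'' containing a full $j$-string in one part of the crystal while containing only string-tops elsewhere, and $\mathfrak{D}_i$ can then drag an element below a string-top that was never completed. Demazure crystals cannot be lopsided in this way, and the paper's inductive step exploits precisely this: it does not prove a closure statement for arbitrary extremal subsets, but uses that the inductive object is $\mathfrak{D}_\nu(u_\lambda)$ for a permutation $\nu$. In the braid case where the sliding identity $e_i(e_j^*(x))=e_j^*(e_i(x))$ fails, the paper concludes that $\nu$ admits a reduced expression beginning $i_1=i$, $i_2=j$, and then rewrites $\B_w(\lambda)=\mathfrak{D}_j\mathfrak{D}_i\mathfrak{D}_j\mathfrak{D}_{i_3}\cdots\mathfrak{D}_{i_{m-1}}(u_\lambda)$ via the braid relation for the $\mathfrak{D}$'s to get the required membership. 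That reduced-word information is unavailable for a general extremal $X$, and by the example above it is indispensable. To repair your argument you must strengthen the inductive hypothesis so that it remembers the Demazure structure (e.g.\ carry a reduced word for $w'$ through the induction, as the paper does), rather than routing everything through a standalone lemma about extremal subsets.
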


\begin{proof}
Suppose $w$ has length one so that $\B_w(\lambda)=  \mathfrak{D}_i(u_\lambda)$ for some $ 1\leq i <n$. Then, by definition, all conditions will hold. We proceed by induction on the length of $w$.

 Suppose $\B_{\nu(\lambda)}$ is extremal for any $\nu$ of length at most $m-1$.  If $w$ has length $m$ we may write $\B_w(\lambda)=\mathfrak{D}_j\mathfrak{D}_{\nu}(u_\lambda)$ where $\nu$ is a permutation of length $m-1$ and thus $\mathfrak{D}_{\nu}(u_\lambda)=\mathfrak{D}_{i_1}\mathfrak{D}_{i_2} \dots \mathfrak{D}_{i_{m-1}}(u_\lambda)$ for some reduced expression $s_{1_i} \dots s_{i_m}$ of $\nu$ is an extremal subset of $\B(\lambda)$.

 It is obvious that $u_\lambda \in \B_w(\lambda)$. Suppose $x \in \B_w(\lambda)$ and $e_i(x) \neq 0$. If $i=j$ then by definition it follows that $e_i(x) \in \B_w(\lambda)$. If $|i-j|\geq 2$ then since $e_j^*(x) \in \mathfrak{D}_{\nu}(u_\lambda)$, then by axiom (P5) in \cite{Ste03} $e_i(e_j^*(x)) \neq 0$ and since $\mathfrak{D}_{\nu}(u_\lambda)$ is extremal, then $e_i(e_j^*(x)) \in \mathfrak{D}_{\nu}(u_\lambda)$. However, since $e_i(e_j^*(x))=e_j^*(e_i(x))$ then $e_i(x) \in \mathfrak{D}_{\nu}(u_\lambda)$. If $|i-j|=1$, by axiom (P6) in \cite{Ste03} we have that $e_i(e_j^*(x)) \neq 0$ and thus by the induction hypothesis either $e_i(e_j^*(x))=e_j^*(e_i(x)) \in \mathfrak{D}_{\nu}(u_\lambda)$ or  $e_j^*e_i^{*}e_j^*(x) = e_i^*e_j^{*}e_i^*(x)\in \mathfrak{D}_{\nu}(u_\lambda)$. In the first case, it is clear that $e_i(x) \in \mathfrak{D}_j(\mathfrak{D}_{\nu}(u_\lambda))$. In the second case, this implies there is some reduced expression of $\nu$ satisfying $i_1=i$ and $i_2=j$ so that $\B_w(\lambda) = \mathfrak{D}_j\mathfrak{D}_i\mathfrak{D}_j\mathfrak{D}_{i_3}\dots\mathfrak{D}_{i_{m-1}}(u_\lambda)$. In particular, there is $ y \in \mathfrak{D}_{\nu}(u_\lambda)$ such that $e_i^*(y) =e_j^*e_i^{*}e_j^*(x)$, and since $e_j^{*}e_i^*(x) = y$ then $e_i^*(x) \in \mathfrak{D}_j(\mathfrak{D}_{\nu}(u_\lambda))$.

 Now suppose $x \in \B_w(\lambda)$, $e_i(x), f_i(x) \neq 0$ and $e_i(x) \in \B_w(\lambda)$. Clearly, if $i=j$ then $f_i(x) \in \B_w(\lambda)$. If $|i-j|\geq 2$ then since $e_j^*e_i(x)$ and $e_j^*(x) \in \mathfrak{D}_{\nu}(u_\lambda)$, it follows by axiom (P5) in \cite{Ste03} and the induction hypothesis that $e_j^*f_i(x)=f_i(e_j^*(x)) \neq 0$ and $f_i(e_j^*(x)) \in \mathfrak{D}_{\nu}(u_\lambda)$. Thus, $f_i(x) \in \B_w(\lambda)$.
The remaining case, when $|i-j|=1$, follows from axiom (P6) in \cite{Ste03} and the induction hypothesis by similar arguments to the ones above.
\end{proof}


Recall that highest weights uniquely characterize normal $\gl_n$ crystals. In contrast, for a fixed partition $\lambda$ of length at most $n$, every Demazure subcrystal $\B_w(\lambda)$ has the same highest weight, namely $\lambda$. Thus, we need additional information to begin to classify Demazure crystals.

\begin{definition}
  Given a subcrystal $X \subseteq \B$ of a normal $\gl_n$ crystal, an element $x \in X$ is a \newword{lowest weight element} of $X$ if  for every $i=1,2,\ldots,n-1$ either $f_i(x) = 0$ or $f_i(x) \not\in X$.
  \label{def:lw}
\end{definition}

The full crystal $\B(\lambda)$ has a unique lowest weight element, which has weight the reverse of $\lambda$. In contrast, a Demazure subcrystal $\B_w(\lambda)$ can, in general, have multiple lowest weight elements. For example, the Demazure crystal $\B_{2413}(2,2,1,0)$ shown in Fig.~\ref{fig:crystal-1202} has two lowest weight elements.

Define the \newword{dominance order} on weak compositions of length $n$ by $a \leq b$ if and only if for every $k=1,\ldots,n$, we have
\begin{equation}
  a_1 + a_2 + \cdots + a_k \leq b_1 + b_2 + \cdots + b_k .
\end{equation}
Using this, we justify our nomenclature for extremal subsets with the following.

\begin{proposition}
  Any extremal subset $X \subseteq \B(\lambda)$ is connected. Moreover, if $x \in X$ is a lowest weight element then either $x$ is extremal or there exists an extremal lowest weight element $z \in X$ such that $\wt(x)>wt(z)$ in dominance order on weak compositions.
  \label{prop:extremal-lw}
\end{proposition}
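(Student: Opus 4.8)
The plan is to derive both assertions from the string-end structure that the extremal axioms force on the relevant elements. Connectedness is immediate: by Definition~\ref{def:extremal}(2) the set $X$ is closed under every raising operator that does not annihilate its argument, so from any $x\in X$ the sequence produced by repeatedly applying raising operators stays inside $X$. In the finite connected crystal $\B(\lambda)$ this sequence terminates at the unique highest weight element $u_\lambda$---for instance the statistic $\sum_i (n-i)\wt(b)_i$ strictly increases under each nonzero $e_i$, so iterating raising operators reaches an element with all $e_i=0$, which is $u_\lambda$. Hence every element of $X$ is joined inside $X$ to $u_\lambda\in X$, and $X$ is connected.

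For the second assertion I would first record that every lowest weight element $x$ of $X$ lies at an end of each of its $i$-strings. Fixing $i$, Definition~\ref{def:lw} gives either $f_i(x)=0$, so $\varphi_i(x)=0$, or $f_i(x)\neq 0$ with $f_i(x)\notin X$; in the latter case Definition~\ref{def:extremal}(3) yields $e_i(x)\notin X$, and then the contrapositive of Definition~\ref{def:extremal}(2) forces $e_i(x)=0$, so $\varepsilon_i(x)=0$. Thus $\varepsilon_i(x)\varphi_i(x)=0$ for all $i$, which is exactly the hypothesis of Proposition~\ref{prop:extremal}. If moreover $\wt(x)$ is a rearrangement of $\lambda$, then $\wt(x)$ is an extremal weight and $x$ is extremal by definition, giving the first alternative.

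Otherwise $\mathrm{sort}(\wt(x))\lneq\lambda$, and I must produce an extremal lowest weight element $z\in X$. Since $u_\lambda\in X$ is extremal, I would take $z$ to be an extremal element of $X$ whose weight is minimal in dominance order among all extremal elements of $X$, and check that such a $z$ is automatically a lowest weight element. Indeed $z$ sits at the end of every string by Proposition~\ref{prop:extremal}; if some $f_i(z)\neq 0$ lay in $X$, then $z$ would be at the top of that $i$-string, and applying Definition~\ref{def:extremal}(3) successively down the string (each element being $e_i$ of the one below it, which lies in $X$) would force the entire $i$-string below $z$ into $X$. Its bottom $f_i^{*}(z)$ is then again extremal, with weight $s_i\cdot\wt(z)\lneq\wt(z)$ in dominance, contradicting minimality. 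Hence $f_i(z)=0$ or $f_i(z)\notin X$ for every $i$, so $z$ is an extremal lowest weight element of $X$.

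The main obstacle is the dominance comparison $\wt(x)>\wt(z)$, which I would obtain by descending from $x$ itself rather than from the global minimum. Because $x$ sits at every string end but $\mathrm{sort}(\wt(x))\lneq\lambda$, its weight is not dominant, so there is an ascent index $i$ at which $S_i(x)=e_i^{\varepsilon_i(x)}(x)\in X$ (raising keeps us in $X$); the failure of $x$ to be extremal means that sorting $\wt(x)$ toward the dominant chamber by such flips cannot remain within the string-end locus all the way to $u_\lambda$, since that would force $\mathrm{sort}(\wt(x))=\lambda$. The first obstruction is confined to a pair of adjacent indices, and there Proposition~\ref{obv:sl2} describes the local $A_2$ subcrystal precisely enough to exhibit a lowest weight element of $X$ of strictly smaller dominance weight. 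Induction on the finite dominance order then terminates at an extremal lowest weight element, which serves as the desired $z$. I expect the delicate part to be exactly this step: identifying which adjacent-index configuration of Proposition~\ref{obv:sl2} occurs, and verifying that each reduction both remains inside $X$ and strictly drops the weight in dominance order.
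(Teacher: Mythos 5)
Your connectedness argument is correct and is the same as the paper's: axiom (2) of Definition~\ref{def:extremal} keeps every raising path inside $X$, and in the finite connected crystal $\B(\lambda)$ any such path terminates at the unique highest weight element $u_{\lambda}\in X$. Your observation that a lowest weight element $x$ of $X$ satisfies $\varphi_i(x)=0$ or $\varepsilon_i(x)=0$ for every $i$ is also correct (though note this is the \emph{conclusion} of Proposition~\ref{prop:extremal}, not its hypothesis; being extremal is by definition a condition on $\wt(x)$, not on string ends). Your middle paragraph, constructing an extremal lowest weight element $z$ by taking an extremal element of $X$ whose weight is minimal in dominance order and running the contrapositive of axiom (3) down its $i$-strings, is a clean and correct existence argument that does not appear in the paper, and it is a genuinely nice addition.

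However, the proposition asserts more than existence: it requires $\wt(x) > \wt(z)$ for the \emph{given} non-extremal lowest weight $x$, and as you yourself note, the dominance-minimal $z$ from your middle paragraph cannot serve here because dominance is only a partial order, so $\wt(x)$ need not be comparable to it. You therefore fall back on "descending from $x$ itself," and this is exactly where the proof stops being a proof: the claims that "the first obstruction is confined to a pair of adjacent indices" and that Proposition~\ref{obv:sl2} "describes the local $A_2$ subcrystal precisely enough to exhibit a lowest weight element of $X$ of strictly smaller dominance weight" are assertions of precisely what must be verified, and you explicitly defer the verification. The paper's proof supplies this content: taking a reduced raising path $e^*_{i_1}\cdots e^*_{i_k}(x)=u_{\lambda}$, one locates the first index $t$ with $f_{i_{t-1}}e^*_{i_t}\cdots e^*_{i_k}(x)\neq 0$, and then Stembridge's axioms (P5) and (P6) together with parts (3) and (4) of Proposition~\ref{obv:sl2} show both that the element $y=f^*_{i_k}\cdots f^*_{i_t}f_{i_{t-1}}e^*_{i_t}\cdots e^*_{i_k}(x)$ lies in $X$ (this membership check is the delicate point, since $X$ is only extremal, not closed under arbitrary lowering) and that $\wt(x)\geq\wt(y)$; iterating and using finiteness terminates at an extremal lowest weight element comparable to $\wt(x)$, with strictness following because an extremal and a non-extremal weight cannot coincide. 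Until you carry out this descent—identifying the configuration, checking membership in $X$ at each step, and checking the weight inequality—the second assertion of the proposition remains unproven. (A smaller fillable gap: your claim that $\wt(x)$ is not dominant needs the one-line argument that dominance plus the string-end property forces $\varepsilon_i(x)=0$ for all $i$, hence $x=u_{\lambda}$.)
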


\begin{proof}
  Suppose $X \subset \B(\lambda)$ is extremal. Then for any $x \in X$ such that $x \neq u_\lambda$ there exists $1\leq i <n$ for which $e_i(x) \neq 0$. Since $X$ is extremal then $e_i^N(x) \in X$ for all $N>0$ for which $e_i^N(x) \neq 0$. Thus, there is some sequence $i_1,\dots,i_k$ such that $e_{i_1}^{*} e_{i_2}^{*} \dots e_{i_k}^{*}(x)=u_\lambda$. Since this holds for any $x \in X$, then $X$ is connected.

   Suppose, in addition, that $x$ is a lowest weight element of $X$ and thus, by Proposition~\ref{prop:extremal}, $\varphi_i(x)=0$ for some $i$.  If $x$ is not extremal then for any reduced path $e_{i_1}^{*} e_{i_2}^{*} \dots e_{i_k}^{*}(x)=u_\lambda$ there exists $1<t\leq k$ for which $f_{i_{t-1}} e^*_{i_t} \dots e^*_{i_k}(x)\neq 0$. By combining axioms (P5) and (P6) in \cite{Ste03}, one can see that there is a $y \in \B(\lambda)$ such that $y = f^*_{i_k}\dots f^*_{i_t}f_{i_{t-1}} e^*_{i_t} \dots e^*_{i_k}(x)$  with $\wt(x) \geq \wt(y)$. In particular, axiom (P5) of \cite{Ste03} and (3) and (4) of Proposition~\ref{obv:sl2} ensure that $y \in X$. Iterating this procedure for each $y$ will eventually terminate in an extremal lowest weight element $z \in X$ satisfying $\wt(x)\geq \wt(z)$. Thus, every lowest weight element of $X$ is either extremal or is higher in dominance order than some other extremal lowest weight element of $X$.
\end{proof}

\subsection{Local characterizations}
\label{sec:demazure-local}

While the unique highest weight does not uniquely characterize a Demazure crystal and lowest weights themselves are not unique, each Demazure crystal has a unique lowest weight element \emph{at the lowest level} of the crystal. Moreover, this element uniquely determines the Demazure crystal.

\begin{definition}
  Given a subcrystal $X \subseteq \B$ of a normal $\gl_n$ crystal, an element $z \in X$ is a \newword{Demazure lowest weight element} of $X$ if it is a lowest weight element and for every other lowest weight element $y \in X$, we have $\wt(y) \geq \wt(z)$.
  \label{def:Dlw}
\end{definition}

The following result follows from the triangularity of Demazure characters with respect to monomials and the fact that dominance order refines lexicographic order.

\begin{proposition}
  The Demazure crystal $\B_w(\lambda)$ has a unique Demazure lowest weight element $z$ with $\wt(z)~=~w \cdot \lambda$.
  \label{prop:Dlw}
\end{proposition}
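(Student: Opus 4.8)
The plan is to read the statement off from the character of $\B_w(\lambda)$. By Kashiwara's theorem (\cite{Kas93}) we have $\mathrm{ch}(\B_w(\lambda)) = \key_{w\cdot\lambda}$, so by Definition~\ref{def:char} the multiset of weights $\{\wt(b) : b \in \B_w(\lambda)\}$ is precisely the multiset of exponent vectors of the monomials of $\key_{w\cdot\lambda}$. Thus every assertion about weights of elements of $\B_w(\lambda)$ translates into an assertion about the support of the Demazure character, and it is there that I would apply triangularity.

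First I would invoke the triangularity of Demazure characters: writing $\key_{w\cdot\lambda} = \sum_b c_b\, x^b$, one has $c_{w\cdot\lambda} = 1$ and $c_b \neq 0$ only if $b \geq w\cdot\lambda$ in dominance order. Since dominance order refines the (total) lexicographic order, this dominance-minimal monomial is attained at the single weight $w\cdot\lambda$, so there are no two distinct, dominance-incomparable minimal weights to worry about. Translating back through the character identity, I would record two facts: (i) there is exactly one element $z \in \B_w(\lambda)$ with $\wt(z) = w\cdot\lambda$, and (ii) every $b \in \B_w(\lambda)$ satisfies $\wt(b) \geq \wt(z)$ in dominance order.

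With these in hand the argument is short. To check that $z$ is a lowest weight element in the sense of Definition~\ref{def:lw}, I would suppose toward a contradiction that $f_i(z) \neq 0$ and $f_i(z) \in \B_w(\lambda)$ for some $i$; then $\wt(f_i(z)) = \wt(z) - \alpha_i$ has strictly smaller $i$-th partial sum, so $\wt(f_i(z)) < \wt(z)$ in dominance order, contradicting (ii). Hence for each $i$ either $f_i(z)=0$ or $f_i(z) \notin \B_w(\lambda)$, so $z$ is a lowest weight element; and by (ii) every lowest weight element $y$ has $\wt(y) \geq \wt(z)$, so $z$ is a Demazure lowest weight element by Definition~\ref{def:Dlw}. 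For uniqueness I would take any Demazure lowest weight element $z'$: then $\wt(z') \leq \wt(z)$ (as $z$ is a lowest weight element and $z'$ is dominance-minimal among them) while $\wt(z') \geq w\cdot\lambda = \wt(z)$ by (ii), forcing $\wt(z') = w\cdot\lambda$, and since by (i) only $z$ carries this weight, $z' = z$.

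The hard part is entirely contained in the first two facts, namely securing triangularity in its dominance form: it is essential that $w\cdot\lambda$ be the genuine dominance-minimum of the support, with every other weight lying weakly above it, rather than merely a minimal weight, since otherwise a lowest weight element of incomparable weight could obstruct both the existence and the uniqueness of the Demazure lowest weight element. This is precisely where I would lean on the uni-triangularity of $\key_{w\cdot\lambda}$ together with the compatibility of dominance and lexicographic order. As a consistency check, the conclusion also agrees with the extremal structure already developed: by Proposition~\ref{prop:extremal-lw} every lowest weight element of the extremal subset $\B_w(\lambda)$ (Proposition~\ref{prop:dem-crystal is extremal}) lies weakly above an extremal lowest weight element, which independently forces the dominance-minimal lowest weight element to be unique.
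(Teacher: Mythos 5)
Your proposal is correct and takes essentially the same approach as the paper, which states this proposition with no written proof, justifying it by exactly the two facts you make explicit: the triangularity of the Demazure character $\key_{w\cdot\lambda}$ with respect to monomials (coefficient $1$ at $x^{w\cdot\lambda}$ and support lying weakly above $w\cdot\lambda$ in dominance order, read off the crystal via $\mathrm{ch}(\B_w(\lambda))=\key_{w\cdot\lambda}$) together with the compatibility of dominance and lexicographic order. The one caveat is your closing ``consistency check'': Proposition~\ref{prop:extremal-lw} by itself does not force uniqueness, since it leaves open the possibility of two dominance-incomparable extremal lowest weight elements (ruling this out for general Demazure subsets is precisely the content of the later Lemma~\ref{lem:Dem-UniqueLowestWeight}), but as that remark sits outside your main argument it does not affect the correctness of the proof.
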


Recall the \emph{length} of a permutation $w\in\mathcal{S}_n$, denoted by $\ell(w)$, is the minimum number of simple transpositions needed to express $w$. The \newword{weak Bruhat order} on $\mathcal{S}_n$ is defined by $u \preceq v$ whenever $v = s_{i_k} \cdots s_{i_1} u$ and $\ell(v) = k + \ell(u)$. This order translates to containment on Demazure crystals in the following sense.

\begin{proposition}
  Let $u,v$ be permutations that act faithfully on a dominant weight $\lambda$. Then $\B_u(\lambda) \subseteq \B_v(\lambda)$ if and only if $u \preceq v$.
  \label{prop:contain}
\end{proposition}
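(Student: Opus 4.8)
The plan is to prove the two containments separately, relying throughout on the fact that each operator $\mathfrak{D}_i$ from Eq.~\eqref{e:D} is a closure operator on subsets of $\B(\lambda)$: it is monotone, \emph{extensive} ($X\subseteq\mathfrak{D}_iX$), and idempotent, while the family $\{\mathfrak{D}_i\}$ satisfies the braid relations so that $\mathfrak{D}_w$ in Eq.~\eqref{e:Dw} is well defined on any reduced expression. For the forward implication, suppose $u\preceq v$. Then some reduced expression for $v$ contains a reduced expression for $u$, so the product $\mathfrak{D}_u$ is obtained from $\mathfrak{D}_v$ by deleting the factors corresponding to the complementary letters. Since every deleted factor is extensive and the remaining ones are monotone, reinserting them can only enlarge the image; applying both products to $\{u_\lambda\}$ therefore gives $\B_u=\mathfrak{D}_u\{u_\lambda\}\subseteq\mathfrak{D}_v\{u_\lambda\}=\B_v$. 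This direction uses nothing but the formal closure properties of the $\mathfrak{D}_i$.

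For the reverse implication I would first reduce to a statement about a single extremal element. By Proposition~\ref{prop:Dlw} the Demazure lowest weight element $z$ of $\B_u$ has weight $u\cdot\lambda$; as this is an extremal weight it has multiplicity one in the normal crystal $\B(\lambda)$, so $z$ is the unique element of $\B(\lambda)$ of that weight, and in particular $z$ is extremal, equal to $S_u(u_\lambda)$. Thus $\B_u\subseteq\B_v$ forces $z\in\B_v$, and it suffices to prove the following: for $x$ acting faithfully on $\lambda$, if the extremal element $z_x$ of weight $x\cdot\lambda$ lies in $\B_v$, then $x\preceq v$.

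I would establish this by induction on $\ell(v)$. When $\ell(v)=0$ we have $\B_v=\{u_\lambda\}$, forcing $x=\mathrm{id}$. For $\ell(v)\geq 1$ choose a left descent $s_i$ of $v$ and write $v=s_iv'$ with $\ell(v')=\ell(v)-1$, so $\B_v=\mathfrak{D}_i\B_{v'}$. Since $z_x$ is extremal, Proposition~\ref{prop:extremal} gives $\varepsilon_i(z_x)=0$ or $\varphi_i(z_x)=0$. If $\varepsilon_i(z_x)=0$ then $z_x$ is the source of its $i$-string, and membership in the downward $i$-closure $\mathfrak{D}_i\B_{v'}$ forces $z_x\in\B_{v'}$; the inductive hypothesis then yields $x\preceq v'\preceq v$. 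If instead $\varphi_i(z_x)=0$ with $\varepsilon_i(z_x)>0$, then $s_i$ is a left descent of $x$, the $i$-string source $e_i^{\varepsilon_i(z_x)}(z_x)$ is the extremal element $z_{s_ix}$, and the same closure argument places $z_{s_ix}\in\B_{v'}$; induction gives $s_ix\preceq v'$, and the lifting property of the order, with $s_i$ a left descent of both $x$ and $v$, upgrades this to $x\preceq v$.

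The main obstacle is this reverse direction, and specifically the second case. Because $\mathfrak{D}_i$ is downward $i$-closure and hence not injective, one cannot simply cancel it from an inclusion $\mathfrak{D}_i\B_u\subseteq\mathfrak{D}_i\B_{v'}$; this is precisely why the argument is routed through the single extremal element $z_x$, whose location on its $i$-string is pinned down by the dichotomy of Proposition~\ref{prop:extremal}. Confirming that the $i$-string source of $z_x$ is again the expected extremal element $z_{s_ix}$, and that the reflection $S_i$ behaves as claimed on it, will use the local structure recorded in Proposition~\ref{obv:sl2} together with the extremality of Demazure crystals in Proposition~\ref{prop:dem-crystal is extremal}. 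The final, most delicate step is the order-theoretic bookkeeping that converts $s_ix\preceq v'$ into $x\preceq v$, where the length-additivity of the assembled reduced expression must be tracked carefully.
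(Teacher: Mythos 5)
Your forward implication is correct and coincides with the paper's own proof: if $u\preceq v$, a reduced word for $u$ completes on the left to a reduced word for $v$, and extensivity plus monotonicity of the operators $\mathfrak{D}_i$ give $\B_u(\lambda)=\mathfrak{D}_u\{u_\lambda\}\subseteq\mathfrak{D}_v\{u_\lambda\}=\B_v(\lambda)$. Note, however, that your deletion argument proves something strictly stronger: deleting factors from \emph{arbitrary} positions of the product $\mathfrak{D}_v$ can only shrink the image, so $\B_u(\lambda)\subseteq\B_v(\lambda)$ holds already whenever some reduced word for $u$ is a subword of a reduced word for $v$, that is, whenever $u\leq v$ in \emph{strong} Bruhat order.

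That observation is fatal to your reverse implication. Your reduction to the extremal element $z_x$ via Proposition~\ref{prop:Dlw}, and the string dichotomy via Proposition~\ref{prop:extremal}, are fine; the gap is the final order-theoretic ``lifting'' step, which is false for the weak order --- and indeed the reverse implication itself fails as stated. Take $\lambda=(2,1,0)$, $x=u=s_1$, $v=s_1s_2$, so $v'=s_2$. Both permutations act faithfully on $\lambda$, and $\B_{s_1}(\lambda)=\mathfrak{D}_1\{u_\lambda\}\subseteq\mathfrak{D}_1\mathfrak{D}_2\{u_\lambda\}=\B_{s_1s_2}(\lambda)$ by monotonicity of $\mathfrak{D}_1$ (equivalently, $s_1\leq s_1s_2$ in strong order). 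Yet $s_1\not\preceq s_1s_2$ in the weak order of the paper, since $vx^{-1}=s_1s_2s_1$ has length $3\neq\ell(v)-\ell(x)=1$. Running your induction on this example: $z_x=f_1(u_\lambda)$ has $\varphi_1(z_x)=0$ and $\varepsilon_1(z_x)=1$, so your Case B fires with $i=1$; the string source $u_\lambda=z_{s_1x}$ does lie in $\B_{s_2}(\lambda)$; induction gives $\mathrm{id}\preceq s_2$; and the concluding upgrade to $s_1\preceq s_1s_2$ is exactly the false step. No repair within the weak order is possible, because what is actually equivalent to containment is strong Bruhat order, for which the standard lifting property does hold and your induction would go through verbatim. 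For what it is worth, the paper's own proof of Proposition~\ref{prop:contain} establishes only the ``if'' direction, and only that direction is ever used (in the corollary that follows it and in the proof of Theorem~\ref{thm:dem-low}); so your forward argument is the part that matters, while the ``only if'' half should be regarded as misstated rather than something you failed to prove.
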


\begin{proof}
  Reduced expressions for a permutation $w$ are in one-to-one correspondence with maximal chains in the weak order from the identity to $w$. Thus if $u \preceq v$, then any reduced expression $s_{i_k} \cdots s_{i_1}$ for $u$ can be completed to a reduced expression $s_{j_l} \cdots s_{j_1} s_{i_k} \cdots s_{i_1}$ for $v$. The result now follows from Eq.~\eqref{e:D} and Eq.~\eqref{e:BwL}.
\end{proof}

In particular, combining Propositions~\ref{prop:Dlw} and \ref{prop:contain} gives the following.

\begin{corollary}
  If $u \preceq v$ in the weak order on permutations and both act faithfully on a dominant weight $\lambda$, then $u \cdot \lambda \geq v \cdot \lambda$ in dominance order on weak compositions.
\end{corollary}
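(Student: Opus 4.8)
The plan is to deduce the statement directly from Propositions~\ref{prop:contain} and \ref{prop:Dlw} together with the behavior of weights under lowering operators. First I would invoke Proposition~\ref{prop:contain}: since $u \preceq v$ in the weak order and both act faithfully on $\lambda$, we obtain the containment $\B_u(\lambda) \subseteq \B_v(\lambda)$. Then by Proposition~\ref{prop:Dlw} each of these crystals has a unique Demazure lowest weight element, say $z_u \in \B_u(\lambda)$ with $\wt(z_u) = u \cdot \lambda$ and $z_v \in \B_v(\lambda)$ with $\wt(z_v) = v \cdot \lambda$. The goal is then precisely to show $\wt(z_u) \geq \wt(z_v)$ in dominance order on weak compositions.

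The bridge between the two elements is a descent carried out inside $\B_v(\lambda)$. Because $z_u \in \B_u(\lambda) \subseteq \B_v(\lambda)$, I would start at $z_u$ and repeatedly apply lowering operators that remain inside $\B_v(\lambda)$: whenever the current element $x$ is not a lowest weight element of $\B_v(\lambda)$, Definition~\ref{def:lw} furnishes some $i$ with $f_i(x) \neq 0$ and $f_i(x) \in \B_v(\lambda)$, and I replace $x$ by $f_i(x)$. Since $\B_v(\lambda)$ is finite this process terminates at some lowest weight element $y$ of $\B_v(\lambda)$.

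The key computation is that each such step weakly decreases the weight in dominance order. By Definition~\ref{def:base}(1), applying $f_i$ changes the weight by $-\alpha_i = -(\mathbf{e}_i - \mathbf{e}_{i+1})$, which leaves every partial sum $\wt(\cdot)_1 + \cdots + \wt(\cdot)_k$ unchanged except the $i$-th, which drops by exactly one. Hence $\wt(f_i(x)) \leq \wt(x)$, and chaining the descent gives $u \cdot \lambda = \wt(z_u) \geq \wt(y)$. Finally, $y$ is a lowest weight element of $\B_v(\lambda)$, so by the defining property of the Demazure lowest weight element in Definition~\ref{def:Dlw} we have $\wt(y) \geq \wt(z_v) = v \cdot \lambda$. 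Combining the two inequalities yields $u \cdot \lambda \geq v \cdot \lambda$, as desired.

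I expect the only real subtlety to be the descent step: one must check that it both stays inside $\B_v(\lambda)$ (guaranteed by following only edges whose target lies in $\B_v(\lambda)$) and terminates (guaranteed by finiteness together with the strict drop of the $i$-th partial sum, which rules out cycles). The dominance computation for a single application of $f_i$ is elementary, so once the descent is set up carefully the result follows at once.
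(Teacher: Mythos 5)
Your proof is correct and takes essentially the same route as the paper, which derives this corollary with no further argument beyond "combining Propositions~\ref{prop:Dlw} and \ref{prop:contain}" --- exactly the two ingredients you invoke. Your descent from $z_u$ to a lowest weight element of $\B_v(\lambda)$ (using that each $f_i$ drops only the $i$-th partial sum, hence weakly lowers the weight in dominance order) is precisely the routine bridging detail the paper leaves implicit, and you have filled it in correctly.
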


We now refine our notion of extremal subsets to correspond to Demazure crystals.

\begin{definition}
  Given a connected, normal crystal $\B(\lambda)$, a subset $X \subseteq \B(\lambda)$ is \newword{Demazure} if it is extremal and for any extremal elements $x,y \in X$ the following conditions hold:
  \begin{enumerate}\addtocounter{enumi}{3}
  \item For $|i-j|\geq 2$, if $e_i^*(x)=e_j^*(y) \in X$, then $f_j(x)$ and $f_i(y)$ are nonzero and contained in $X$. Moreover, if $f_k(x)\neq 0$ for some $|k-i|=|k-j|=1$ then $f_k(y) \neq 0$ if and only if $f_kf_j^*(x) \neq 0$.
  \item For $|i-j|=1$,
   \begin{enumerate}
  \item if $e_j^*e_i^*(y)=x$ and $f_i(x) \neq 0$ then $f_i(x) \in X$.
  \item if $e_i^*(x)=e_j^*(y)$ then either $f_i(y)$ or $f_j(x) \in X$. If both $f_i(y)$ and $f_j(x) \in X$ then $f_i^*f_j^{*}(x)=f_j^*f_i^{*}(y) \in X$.
  \end{enumerate}
  \item For $|i-j|=1$, if $e_i^*(x)=e_j^* e_i^{*}(y)$ and $f_{i_n}^{*}\dots f_{i_1}^{*}(x) \in X$ for some path for which no reduced expression $s_{i_1}\dots s_{i_n}$ satisfies $s_{i_1}=j$, then $f_{i_n}^{*}\dots f_{i_1}^{*}(y) \in X$.
  \end{enumerate}
  \label{def:demazure}
\end{definition}

\begin{figure}[ht]
  \begin{center}
    \begin{tikzpicture}[scale=.8]
      \node at (-2,-2) (x) {$x$} ;
      \node at (0,0) (b) {$\bullet$} ;
      \node at (2,-2) (y) {$y$} ;
      \node at (-1,-3) (fx) {$f_{j}(x)$} ;
      \node at (1,-3) (fy) {$f_{i}(y)$} ;
      \draw[thick,color=blue  ,<-] (x) -- (-1.3,-1.3) node[midway,right] [scale=.75] {$f_i$} ;
      \draw[thick,color=blue  ,<-] (-.7,-.7) -- (b) node[midway,right] [scale=.75] {$f_i$} ;
      \draw[thick,color=blue,dotted] (-.7,-.7) -- (-1.3,-1.3) ;
      \draw[thick,color=purple,<-] (y) -- (1.3,-1.3)  node[midway,right] [scale=.75] {$f_j$} ;
      \draw[thick,color=purple,<-] (.7,-.7) -- (b) node[midway,right] [scale=.75] {$f_j$} ;
       \draw[thick,color=purple,dotted] (.7,-.7) -- (1.3,-1.3) ;
      \draw[dotted,thick,color=purple,->] (x) -- (fx) node[midway,left] [scale=.75] {$f_j$} ;
      \draw[dotted,thick,color=blue  ,->] (y) -- (fy) node[midway,right] [scale=.75] {$f_i$} ;
      \begin{scope}[shift={(6,0)}]
      \node at (0,0) (x) {$x$} ;
      \node at (-2,-2) (b) {$\bullet$} ;
      \node at (0,-4) (y) {$y$} ;
      \node at (1,-1) (fx) {$f_{j}(x)$} ;
      \draw[thick,color=purple,->] (x) -- (-.7,-.7) node[midway,right] [scale=.75] {$f_j$} ;
      \draw[thick,color=purple,->] (-1.3,-1.3) -- (b) node[midway,right] [scale=.75] {$f_j$} ;
       \draw[thick,color=purple,dotted] (-.7,-.7) -- (-1.3,-1.3) ;
      \draw[thick,color=blue  ,->] (b) -- (-1.3,-2.7)node[midway,left] [scale=.75] {$f_i$} ;
      \draw[thick,color=blue  ,->] (-.7,-3.3) -- (y) node[midway,left] [scale=.75] {$f_i$} ;
      \draw[dotted,thick,color=blue  ,->] (x) -- (fx) node[near start,right] [scale=.75] {$f_i$} ;
       \draw[thick,color=blue,dotted] (-.7,-3.3) -- (-1.3,-2.7) ;
      \end{scope}
    \end{tikzpicture}
    \caption{\label{fig:demax}An illustration of Demazure axioms (4) and (5b) (left) and (5a) (right).}
  \end{center}
\end{figure}
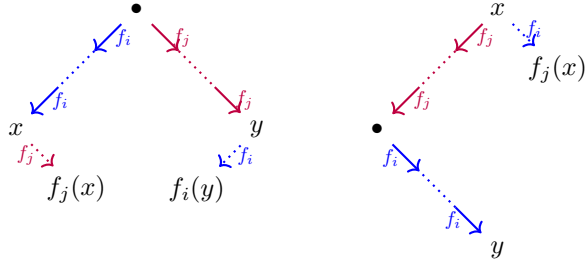

To begin to justify our definition, we have the following.

\begin{theorem}
  Any Demazure crystal $\B_w(\lambda) \subseteq \B(\lambda)$ is a Demazure subset.
  \label{thm:dem-well-def}
\end{theorem}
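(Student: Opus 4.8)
The plan is to lean on Proposition~\ref{prop:dem-crystal is extremal}, which already establishes that $\B_w(\lambda)$ is an extremal subset of $\B(\lambda)$, so that axioms (1)--(3) of Definition~\ref{def:demazure} hold for free; it then remains only to verify the three additional conditions (4), (5), (6). As in the proof of Proposition~\ref{prop:dem-crystal is extremal}, I would argue by induction on $\ell(w)$. Choosing a reduced word $w = s_j \nu$ with $\ell(\nu) = \ell(w) - 1$, I set $Y = \mathfrak{D}_\nu(u_\lambda)$ and $X = \mathfrak{D}_j Y = \B_w(\lambda)$. By the inductive hypothesis $Y$ is a Demazure subset, so I may freely use that it satisfies (1)--(6); the task is to show that adjoining the operator $\mathfrak{D}_j$ preserves each of axioms (4)--(6).

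The base case $\ell(w)=1$ is immediate: $\B_{s_i}(\lambda) = \mathfrak{D}_i\{u_\lambda\}$ is a single $i$-string whose only extremal elements are $u_\lambda$ and $f_i^*(u_\lambda)$, and since the hypotheses of (4)--(6) require two distinct colors $i,j$ with $e^*$-operators defined, they hold vacuously. For the inductive step, the decisive structural fact is that, by Eq.~\eqref{e:D}, $X = \mathfrak{D}_j Y$ is obtained from $Y$ by extending each $j$-string downward to full length, i.e. $b \in X$ if and only if $e_j^k(b) \in Y$ for some $k \ge 0$. For each axiom I would then split into cases according to how the new index $j$ compares with the indices $i$ (and $k$) occurring in that axiom. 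The easy cases are $i=j$, where membership in $X$ follows directly from the definition of $\mathfrak{D}_j$, and $|i-j|\geq 2$, where $e_j, f_j$ commute with $e_i, f_i$ on the relevant strings; here Stembridge's axiom (P5) lets me pull the hypothesis (for instance ``$e_i^*(x)=e_j^*(y)\in X$'' in (4)) back into $Y$, apply the inductive hypothesis there, and then push the conclusion forward through $f_j$, using Proposition~\ref{prop:extremal} to keep track of which of $\varphi_i, \varepsilon_i$ vanish on the extremal elements involved.

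The substantive cases are those with $|i-j|=1$ in which one of the two adjacent indices is the new index $j$ of $\mathfrak{D}_j$. Here commutation fails, and I would instead invoke the rank-two relations of Proposition~\ref{obv:sl2} (its statements (1)--(4)), which pin down exactly the lengths and commutativity of the paths $f_j^r f_i^{s+1}$ and $e_i^r e_j^{s+1}$ inside the two-color subgraph generated from a source with $e_i=e_j=0$, together with axiom (P6). Using these I can locate, inside the newly lengthened $j$-strings, the images of the extremal elements $x,y$, identify the commuting square $f_i^* f_j^*(x) = f_j^* f_i^*(y)$ required by (5b), and verify (5a) by the same hexagonal analysis. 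For axiom (6) I would argue that the hypothesis ``no reduced expression $s_{i_1}\cdots s_{i_n}$ begins with $s_j$'' is exactly what rules out the braid move $\mathfrak{D}_j\mathfrak{D}_i\mathfrak{D}_j = \mathfrak{D}_i\mathfrak{D}_j\mathfrak{D}_i$ being needed, so that the relevant $f$-path from $x$ is already realized inside $Y$ and its translate from $y$ survives in $X$.

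I expect the main obstacle to be precisely axioms (5b) and (6). These are the only conditions that assert a relation about a full two-color configuration (the commuting square) or about an arbitrary starred $f$-path, rather than about a single string; consequently the bookkeeping of which vertices of the $A_2$ configuration lie in $Y$ versus are freshly adjoined by $\mathfrak{D}_j$ is delicate, and correctly matching the reduced-word hypothesis of (6) to the braid relation among the $\mathfrak{D}$-operators is the crux of the argument.
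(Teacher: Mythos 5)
Your starting point (extremality via Proposition~\ref{prop:dem-crystal is extremal}, so that only axioms (4)--(6) of Definition~\ref{def:demazure} need checking) agrees with the paper, but your induction on $\ell(w)$ is a genuinely different organization: the paper's proof of Theorem~\ref{thm:dem-well-def} does \emph{not} induct. Instead, given extremal $x,y\in\B_w(\lambda)$ with $z=e_i^*(x)=e_j^*(y)$ extremal, it observes that this configuration forces a factorization $w=w''s_is_jw'=w''s_js_iw'$, hence $\B_w(\lambda)=\mathfrak{D}_{w''}\mathfrak{D}_i\mathfrak{D}_j\mathfrak{D}_{w'}\{u_\lambda\}$, and then reads membership of the desired elements directly off the subword characterization of $\mathfrak{D}_w$, with Proposition~\ref{obv:sl2} and Stembridge's (P5)/(P6) supplying only the rank-two geometry; for (5) it classifies the possible two-color subgraphs $G_z$ generated from an extremal $z$ into four cases according to which subwords occur in reduced expressions of $w$. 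This ``configuration forces a reduced-word factorization'' step is the key idea your plan is missing, and its absence surfaces as a genuine gap exactly where you predicted trouble.

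Concretely, in your treatment of axiom (6) the claim that ``the relevant $f$-path from $x$ is already realized inside $Y$'' is false. The hypothesis of (6) only forbids reduced expressions of the path-word from \emph{beginning} with $s_j$; the path may still use $f_j$ later (for $|i-j|=1$ the word $s_is_j$ has no reduced expression beginning with $s_j$, yet the path applies $f_j^*$), and when the axiom's $j$ is your newly adjoined index, such a path descends into $X\setminus Y$, where the inductive hypothesis is silent. Repairing this forces you to commute the entire starred path past the freshly extended $j$-strings, which is precisely the subword/braid analysis the paper performs directly and which your induction was designed to avoid. A second, related soft spot is your appeal to Proposition~\ref{obv:sl2} and (P6) in the adjacent cases of (5a)/(5b): those relations only locate elements of $\B(\lambda)$ and show the square $f_i^*f_j^*(x)=f_j^*f_i^*(y)$ exists and closes; they do not certify membership in $X$. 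Membership must come from closure of $X$ under full $j$-strings, from axiom (3) of extremality, and from the inductive hypothesis applied to pulled-back configurations -- and when $x$ or $y$ lies in $X\setminus Y$, the pulled-back configuration need not satisfy the hypothesis of the same axiom you are proving (possibly none of them), so this bookkeeping is the actual crux rather than a routine verification. As written, the proposal does not supply a substitute for the paper's four-case analysis of $G_z$ or its sub-expression argument for (6), so the hard cases remain open.
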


\begin{proof}
  By Proposition~\ref{prop:dem-crystal is extremal} any Demazure crystal $\B_w(\lambda)$ is an extremal subset of $\B(\lambda)$. Thus, without loss of generality suppose $x,y \in \B_w(\lambda)$ are extremal elements and that $z \in \B_w(\lambda)$ is such that $z = e^*_i(x)=e^*_j(y)$. By (4) and (5) in the proof of Proposition~\ref{obv:sl2} it follows that $z$ is also extremal.

  Condition (4) of Definition~\ref{def:demazure} follows from axioms (P5) and (P6) in \cite{Ste03}. In particular, since $z$ is extremal, then $e_j(x)=e_i(y)=0$. Thus, $\B_w(\lambda)=\mathfrak{D}_{w''}\mathfrak{D}_i\mathfrak{D}_j \mathfrak{D}_w'(\lambda)$ for some permutations $w',w''$ satisfying $w''s_is_jw'=w''s_js_iw'=w$. Since in $\B(\lambda)$, $f^*_i(y)$ and $f^*_j(x) \neq 0$, it immediately follows that $f^*_i(y)=f^*_j(x) \in \B_w(\lambda)$, as desired. The second statement in (4) follows directly from noting that if $|i-k|=|i-j|=1$ and $f_k(x)$ and either $f_k(y)$ or $f_kf_j^*(x)\in \B_w(\lambda)$ then necessarily $s_ks_js_i$ is a subword of $w$.

  Conditions (5a) and (5b) of Definition~\ref{def:demazure} follow from the following observations. If $z$ is an extremal element of $\B_w(\lambda)$ with $e_i(z)=e_j(z)=0$ and $f^*_i(z),f^*_j(z) \neq 0$, then for any reduced expression $s_{i_1}\dots s_{i_n}$ of $w$ there is a sub-expression $s_{i_k}\dots s_{i_n}$ for some $k$ such that $z \in \mathfrak{D}_{i_k}\dots \mathfrak{D}_{i_1}(u_\lambda)$ and $f_{i_a}(z)=0$ for all $1\leq a\leq k$. Suppose $s_{i_c}s_{i_b}s_{i_a}$ is a subword of $s_{i_k}\dots s_{i_n}$ with $k<c<b<a\leq n$ and denote by $G_z$ the subset of $\B_w(\lambda)$ generated by acting on $z$ by $f_i$ and $f_j$. Then one of the following situations must hold.

  \begin{enumerate}
  \item If there exists a subword for which $i_a=i, i_b=j$ and $i_c=i$, then $G_z$ is the subset with highest weight element $z$ and lowest weight element $f^*_if^*_jf^*_i(z)$ and whose edges given by all the $i$-strings and $j$-strings connecting these two vertices.
  \item If for any such subword, $i_a \neq i$ for any $a$ but $i_b=j$ and $i_c=i$ then $G_z$ is the subset with highest weight element $z$ and lowest weight elements $\lbrace f^*_jf_i^s(z)\rbrace_{s\geq0}$ and edges defined by condition (3) in Proposition~\ref{obv:sl2}.
  \item If for any subword, $i_a \neq i$ and $i_b \neq j$ for any $a$ and $b$ but $i_c=i$, then $G_z$ is the full $i$-string $\lbrace f_i^s(z)\rbrace_{s\geq0}$.
  \item If for any subword, $i_a\neq i, i_b\neq j$ and $ i_c\neq i$ for any $a,b,c$ then $G_z$ is the single vertex $\lbrace z\rbrace$ with no edges.
  \end{enumerate}

  Condition (6) in Definition~\ref{def:demazure} follows from the relations of the symmetric group and axioms (P5) and (P6) in \cite{Ste03}. Namely, if $x,y$ are extremal, $e^*_i(x)=e^*_je^*_i(y)$, and $\alpha$ is some path with reduced expression $s_{i_1} \dots s_{i_n}$ such that $f^*_{i_n}\dots f^*_{i_1}(x) \in \B_w(\lambda)$, it follows that, as before, $\B_w(\lambda)=\mathfrak{D}_{w''}\mathfrak{D}_i\mathfrak{D}_j \mathfrak{D}_w'(\lambda)$ where $x$ and $y$ are lowest weight elements of $\mathfrak{D}_i\mathfrak{D}_j \mathfrak{D}_w'(\lambda)$ and $\alpha$ is a sub-expression of $w''$. If no reduced expression for $\alpha$ satisfies $s_{i_1} \neq j$, then $(\alpha \cdot s_i)s_j \neq s_k (\alpha \cdot s_i)$ for any $k$ and thus there exists no paths in $\B_w(\lambda)$ satisfying $f^*_kf^*_{i_n}\dots f^*_{i_1}(x) = f^*_{i_n}\dots f^*_{i_1}(y)$. Moreover, by iterated applications of axioms (P5) and (P6) and keeping track of the weights after each application of the lowering operators, one can see that $f^*_{i_n}\dots f^*_{i_1}(y) \neq 0$. Combining this with the fact that $\alpha$ is a sub-expression of $w''$ implies that $f^*_{i_n}\dots f^*_{i_1}(y) \in \B_w(\lambda)$ as desired.
\end{proof}

In order to prove the converse of Theorem~\ref{thm:dem-well-def}, we begin by noting that every Demazure subset has a unique lowest weight at the lowest level.

In anticipation of the following proof, we recall that a crystal can be regarded as a partially ordered set with $a \preceq b$ if there exists a sequence of lowering operators $f_{i_1},\ldots,f_{i_k}$ such that $a = f_{i_1}\cdots f_{i_k}(b)$. Regarded as such, a connected $\mathfrak{gl}_n$-crystal is a \newword{lattice}, meaning each pair of elements $a,b$ has a unique \newword{join} (least upper bound), denoted by $a \vee b$, and a unique \newword{meet} (greatest lower bound), denoted by $a \wedge b$.

\begin{lemma}\label{lem:Dem-UniqueLowestWeight}
  For any Demazure subset $X \subseteq \B(\lambda)$ there exists a unique global lowest weight element $Z$ satisfying $\wt(x)>wt(Z)$ in dominance order for any other lowest weight element $x \in X$. In particular, $Z$ is extremal and hence if $x$ is also extremal then $wt(x) \prec wt(Z)$ in Bruhat order.
\end{lemma}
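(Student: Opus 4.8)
The plan is to single out a distinguished lowest weight element by a minimality principle and then to force every other lowest weight element to dominate it, using the Demazure axioms to control branch points. I would first record two reductions. Since the extremal weight spaces of a normal crystal are one-dimensional, distinct extremal elements of $\B(\lambda)$ carry distinct weights; and since dominance order refines lexicographic order (if $\wt(a)<\wt(b)$ in dominance then $\wt(a)$ precedes $\wt(b)$ lexicographically), I may let $Z$ be the lowest weight element of $X$ whose weight is smallest in the \emph{total} lexicographic order, and this choice is unambiguous. If $Z$ were not extremal, then Proposition~\ref{prop:extremal-lw} would supply an extremal lowest weight element $z'\in X$ with $\wt(z')<\wt(Z)$ in dominance, hence lexicographically smaller, contradicting the choice of $Z$. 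Thus $Z$ is extremal, and it remains to prove $\wt(x)>\wt(Z)$ in dominance for every other lowest weight element $x$.

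\textbf{Reduction to comparability via joins.} By Proposition~\ref{prop:extremal-lw}, every lowest weight element of $X$ weakly dominates an extremal lowest weight element, so it suffices to prove that for every \emph{extremal} lowest weight element $x$ the weights $\wt(x)$ and $\wt(Z)$ are comparable in dominance; lexicographic minimality of $Z$ then orients the comparison as $\wt(x)\geq\wt(Z)$, with equality only when $x=Z$. To prove comparability I would exploit the lattice structure on the connected crystal $\B(\lambda)$: given two extremal lowest weight elements $x,y$, form their join $z=x\vee y$. Condition (2) of Definition~\ref{def:extremal} shows that $X$ is closed under raising along nonzero strings, so every element lying above $x$ belongs to $X$; in particular $z\in X$. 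I would then induct on the total descent length $\mathrm{dist}(z,x)+\mathrm{dist}(z,y)$.

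\textbf{Local analysis at the branch.} Minimality of the join forces the two descents from $z$ to diverge immediately: if a single child $f_i(z)$ were an upper bound of both $x$ and $y$ it would contradict $z=x\vee y$, so the first colors $i\neq j$ of the descents differ. Passing to the connected $\{i,j\}$-subgraph through $z$ and invoking parts (4) and (5) of Proposition~\ref{obv:sl2} identifies it as a commuting grid when $|i-j|\geq 2$ and as an $\mathfrak{sl}_3$-configuration when $|i-j|=1$, and reduces the situation to a branch of extremal elements to which the Demazure axioms apply. For $|i-j|\geq 2$ I would invoke condition (4) of Definition~\ref{def:demazure}, which makes the $i$- and $j$-directions commute and keeps both continuations in $X$; for $|i-j|=1$ I would invoke conditions (5) and (6), where (5b) guarantees that at least one of the two strings continues within $X$ and, when both continue, that the far corner $f_i^*f_j^*(\cdot)=f_j^*f_i^*(\cdot)$ lies in $X$, while (6) transports a descending path from one corner to the other. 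In every case at least one descent extends within $X$ past the branch, so either the two descents merge at a common element of $X$ strictly below both $x$ and $y$ --- impossible, since $x,y$ are lowest weight elements --- or the descents acquire a longer common prefix, strictly lowering $\mathrm{dist}(z,x)+\mathrm{dist}(z,y)$ so that the inductive hypothesis applies. Hence $x$ and $y$ are comparable; taking $y=Z$ shows $Z$ is the unique global lowest weight element with $\wt(x)>\wt(Z)$ for all other lowest weight elements $x$.

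\textbf{The Bruhat refinement and the main obstacle.} When $x$ is also extremal, both $\wt(x)=u\cdot\lambda$ and $\wt(Z)=w\cdot\lambda$ are extremal weights with $u,w$ faithful on $\lambda$, and the comparability produced above is realized by a chain of $\mathfrak{D}_i$-containments, which by Proposition~\ref{prop:contain} is precisely a relation $u\prec w$ in weak Bruhat order; the corollary following Proposition~\ref{prop:contain} then recovers the dominance inequality. I expect the main obstacle to be the $|i-j|=1$ inductive step: tracking exactly which vertices of the local $\mathfrak{sl}_3$-subgraph lie in $X$ under conditions (5) and (6), verifying that the axioms may legitimately be applied to \emph{extremal} corner elements (which requires reducing the branch to one situated among extremal vertices), and confirming that the merge or path-transport genuinely decreases the join distance so the induction closes. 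By contrast, the supporting facts --- that $z=x\vee y$ lies in $X$, that the first descent colors are distinct, and that lexicographic minimality pins down $Z$ --- are the clean ingredients that set up this analysis.
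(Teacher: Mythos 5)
Your high-level skeleton (reduce to extremal lowest weight elements via Proposition~\ref{prop:extremal-lw}, pass to a common ancestor of two lowest weight elements, analyze the branch using axioms (4)--(6) of Definition~\ref{def:demazure}, and induct) is the same as the paper's, but the inductive engine you propose does not close, and in fact it is aimed at a false intermediate statement. First, your dichotomy is internally inconsistent: since $z=x\vee y$ is the \emph{least} upper bound, the horn ``the descents acquire a longer common prefix'' can never occur, because the end of such a prefix would be a common upper bound of $x$ and $y$ strictly below $z$. Read literally, your argument therefore says that every pair of distinct extremal lowest weight elements produces the ``merge'' contradiction, i.e.\ that a Demazure subset has a \emph{unique} lowest weight element; this is false, as $\B_{2413}(2,2,1,0)$ in Fig.~\ref{fig:crystal-1202} already has two. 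Second, the pairwise comparability your induction is supposed to deliver (``given two extremal lowest weight elements $x,y$, \ldots hence $x$ and $y$ are comparable'') is itself false. Take $\lambda=(3,2,1,0)$ and $w=3412$. The extremal elements of $\B_{3412}(\lambda)$ are the $b_v$ with $v\leq 3412$ in Bruhat order, and such a $b_v$ is a lowest weight element exactly when every $i$ with $\ell(s_iv)>\ell(v)$ has $s_iv\not\leq 3412$; both $v=1432$ and $v=3214$ satisfy this, and their weights $(3,0,1,2)$ and $(1,2,3,0)$ are incomparable in dominance (partial sums $(3,3,4,6)$ versus $(1,3,6,6)$). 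Both dominate the global lowest weight $3412\cdot\lambda=(1,0,3,2)$, exactly as Lemma~\ref{lem:Dem-UniqueLowestWeight} asserts, but they do not compare with each other, so no correct argument can yield comparability for arbitrary pairs.

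The reason no merge can be forced at a branch is the asymmetry of axiom (5b): it only guarantees that \emph{one} of $f_i(y)$, $f_j(x)$ lies in $X$, and when only one string continues there is neither a common element below both nor any violation of lowest-weight-ness. The paper's proof is built around precisely this asymmetry: it does not compare an arbitrary pair, but assumes the \emph{global} lowest weight is not unique and chooses two extremal lowest weight elements that are maximal in Bruhat order among extremal elements of $X$. Its Case~1/Case~2 iteration then reroutes paths through new branch points $u'$, $u''$ (tracking reduced words, not a shrinking join distance), and terminates at Case~1.1, where axiom (6) produces an extremal element of $X$ strictly higher in Bruhat order than $y$ --- a contradiction with the chosen maximality, not with lowest-weight-ness. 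That choice of pair, together with the bookkeeping that makes the iteration terminate, is the technical core of the lemma, and it is exactly the part your proposal flags as ``the main obstacle'' and leaves unresolved.
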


\begin{proof}
  Consider the set of lowest weight elements of $X$. By Proposition~\ref{prop:extremal-lw} it suffices to consider only those weights which are extremal. Suppose $X$ does not have a unique global lowest weight element. Since extremal weights form a poset under Bruhat order, then there must exist extremal lowest weights $x,y \in X$ for which $\wt(b) \preceq \wt(x)=\wt(y)$ for any other extremal element $b \in X$. Consider $\wt(x)\wedge \wt(y)$ in the Bruhat graph of the extremal elements of $X$. A straightforward application of axioms (P5) and (P6) in \cite{Ste03} shows that the element $u \in X$ satisfying $\wt(u)=\wt(x) \wedge \wt(y)$ must also be extremal. Let $s_{i_1}\dots s_{i_n}$ and $s_{j_1}\dots s_{j_n}$ be reduced expressions for the paths from $x$ and $y$ to $u$, so that $e^*_{i_1}\dots e^*_{i_n}(x)=e^*_{j_1}\dots e^*_{j_n}(y)=u$.

  \begin{itemize}
  \item[Case 1:] Assume for any reduced expressions $s_{i_1}\dots s_{i_n}$ and $s_{j_1}\dots s_{j_n}$ the relation $|i_1-j_1|=1$ always holds. Suppose there exists no paths for which $i_2\neq j_1$ and $j_2=i_1$.
    \begin{itemize}
    \item If $|i_1-i_2|\geq 2$ then it follows that $|i_2-j_1|=1$. If $|j_2-j_1| \geq 2$ then $|j_2-i_1|=|j_2-i_2|=1$. Since $|j_1-i_1|=|j_1-i_1|=1$, then $j_1=j_2$ which is clearly impossible since $s_{j_1}\dots s_{j_n}$ is reduced.

    \item If $|i_1-i_2|=1$, since $j_1 \neq i_2$ if $|j_1-j_2|\geq 2$ it follows that $j_2=i_2$. Hence, there exists a reduced expression for the path from $u$ to $y$ satisfying $j'_1=i_2$, which contradicts our assumptions. If $|j_1-j_2|=1$, by condition (5b) from Definition~\ref{def:demazure}, either $f^*_{j_1}f^*_{i_1}(u) \in X$ or $f^*_{i_1}f^*_{j_1}(u) \in X$. In either case, since $|j_1-i_2|=|i_1-j_2|=2$ this again implies there is reduced expression satisfying $j'_1=i_2$.
    \end{itemize}

    Thus, if $|i_1-j_1|=1$ for all possible reduced expressions, then there is at least one such expression for which $j_2=i_1$ or $i_2=j_1$. So then, without loss of generality, suppose $i_2=j_1$.
    \begin{enumerate}
    \item If $j_2 \neq i_2$ for any such path then, by condition (6) in Definition~\ref{def:demazure}, there is a $y' \in X$ satisfying $e^*_{i_1}e^*_{j_1} \dots e^*_{j_n}(y')=u$. However, this implies that $\wt(y)\prec wt(y')$ which contradicts $y$ being a global lowest weight element of $X$ (see Fig.~\ref{fig:Case1.1}).
    
      \begin{figure}[ht]
      \begin{tikzpicture}[scale=.75]
        \node at (0,0)(a) {\textbf{\Large{\textit{u}}}};
        \node at (-1,-1)(b) {$\bullet$};
        \node at (1,-1)(c) {$\bullet$};
        \node at (-2,-2)(d) {$\bullet$};
        \node at (-3,-3)(e) {\textbf{\Large{\textit{x}}}};
        \node at (3,-3)(f) {\textbf{\Large{\textit{y}}}};
        \node at (-2,-4)(g) {\textbf{\Large{\textit{y$'$}}}};
        \draw[thick,->] (a) -- (b) node[midway,above] {$i_1$};
        \draw[thick,->] (a) -- (c) node[midway,above] {$j_1$};
        \draw[thick,->] (b) -- (d) node[midway,above] {$j_1$};
        \draw[thick,->,dashed,thick,red]  (c) -- (f)  node[midway,right]{$\alpha$};
        \draw[thick,->,dotted] (d) -- (e) ;
        \draw[thick,->,dashed, thick,red] (d) -- (g) node[midway,right]{$\alpha$};
      \end{tikzpicture}
      \caption{Case 1.1 for the proof of Lemma~\ref{lem:Dem-UniqueLowestWeight}}\label{fig:Case1.1}
      \end{figure}
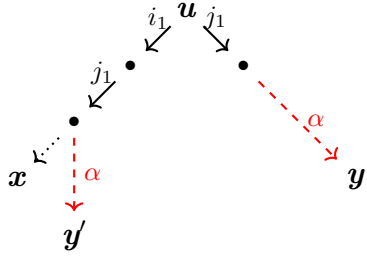
    \item If $j_2 = i_2$ for some path, since $i_2=j_1$, then by condition (5) of Definition~\ref{def:demazure} we must have $f^*_{i_1}f^*_{j_1}f^*_{i_1}(u) = f^*_{j_1}f^*_{i_1}f^*_{j_1}(u) \in X$. If $j_3=j_1$, then $\wt(f^*_{j_1}f^*_{i_1}(u)) = \wt(x)\wedge \wt(y)=\wt(u)$, which is impossible. Clearly, if $i_3=i_1$ an analogous contradiction arises. Thus, $i_3 \neq i_1$ and $j_3 \neq j_1$. So let $u'=f^*_{j_1}f^*_{i_1}(u)$ and $u''=f^*_{i_1}f^*_{j_1}(u)$ (see Fig.~\ref{fig:Case1.2}).
      \begin{enumerate}
      \item Suppose every reduced expression of the paths from $u'$ to $x$ or $u''$ to $y$ satisfies $|i_3-i_1|=1$ or $|j_3-j_1|=1$, respectively.
        \begin{enumerate}
        \item If $f^*_{i_3}f^*_{i_1}(u')$ or $f^*_{j_3}f^*_{j_1}(u'') \in X$, since either $|j_3-i_1|=2$ or $|i_3-j_1|=2$, by (4) in Definition~\ref{def:demazure} it follows that $f^*_{i_3}f^*_{i_1}(u)$ or $f^*_{j_3}f^*_{j_1}(u) \in X$. Moreover, by condition (6) this implies that either $f^*_{i_3}(u'') = f^*_{i_3}f^*_{i_1}f^*_{j_1}(u) \in X$ or $f^*_{j_3}(u') = f^*_{j_3}f^*_{j_1}f^*_{i_1}(u) \in X$ and so, $f^*_{i_3}f^*_{j_1}(u'')= f^*_{i_3}f^*_{i_1}(u')$ or $f^*_{j_3}f^*_{i_1}(u')= f^*_{j_3}f^*_{j_1}(u'')$ are also in $X$. Thus, we may iterate Case 1 with $u'$ or $u''$ in place of $u$.

        \item If either $f^*_{i_3}f^*_{i_1}(u')$ or $f^*_{j_3}f^*_{j_1}(u'') \in X$, then we can iterate Case 1 with $u'$ or $u''$ in place of $u$, respectively.

        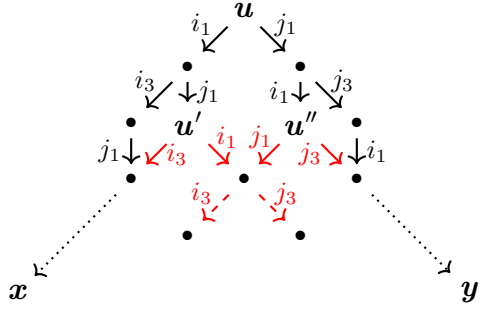
\begin{figure}[ht]
        \begin{center}
        \begin{tikzpicture}[scale=.75]
          \node at (0,0)(a) {\textbf{\Large{\textit{u}}}};
          \node at (-1,-1)(b) {$\bullet$};
          \node at (1,-1)(c) {$\bullet$};
          \node at (-2,-2)(d) {$\bullet$};
          \node at (-1,-2)(e) {\textbf{\Large{\textit{u}$'$}}};
          \node at (1,-2)(f) {\textbf{\Large{\textit{u}$''$}}};
          \node at (2,-2)(g) {$\bullet$};
          \node at (-2,-3)(h) {$\bullet$};
          \node at (0,-3)(i) {$\bullet$};
          \node at (2,-3)(j) {$\bullet$};
          \node at (-1,-4)(k) {$\bullet$};
          \node at (1,-4)(l) {$\bullet$};
          \node at (-4,-5)(m) {\textbf{\Large{\textit{x}}}};
          \node at (4,-5)(n) {\textbf{\Large{\textit{y}}}};
          \draw[thick,->] (a) -- (b) node at (-.75,-.25) {$i_1$};
          \draw[thick,->] (b) -- (e) node[midway,right] {$j_1$};
          \draw[thick,->,red] (e) -- (i) node at (-.3,-2.25) {$i_1$};

          \draw[thick,->] (a) -- (c)  node at (.75,-.25) {$j_1$};
          \draw[thick,->] (c) -- (f) node[midway,left] {$i_1$};
          \draw[thick,->,red] (f) -- (i) node at (.3,-2.25) {$j_1$};

          \draw[thick,->] (b) -- (d) node at (-1.75,-1.25) {$i_3$};
          \draw[thick,->,red] (e) -- (h) node[midway,right] {$i_3$};
          \draw[thick,->] (d) -- (h) node[midway,left] {$j_1$};

          \draw[thick,->] (c) -- (g) node at (1.75,-1.25) {$j_3$};
          \draw[thick,->,red] (f) -- (j) node[midway,left] {$j_3$};
          \draw[thick,->] (g) -- (j) node[midway,right] {$i_1$};

          \draw[thick,->,dashed,red] (i) -- (k) node at (-.75,-3.25) {$i_3$};
          \draw[thick,->,dashed,red] (i) -- (l) node at (.75,-3.25) {$j_3$};

          \draw[thick,->, dotted] (h) -- (m) ;
          \draw[thick,->,dotted] (j) -- (n) ;
        \end{tikzpicture}
        \caption{Case 1.2 for the proof of Lemma~\ref{lem:Dem-UniqueLowestWeight}}\label{fig:Case1.2}
        \end{center}
        \end{figure}
         \end{enumerate}
      \item Suppose there exist reduced expressions for the paths from $u'$ to $x$ or $u''$ to $y$ satisfying $|i_3-i_1|\geq 2$ or $|j_3-j_1|\geq 2$. In this case, we proceed to Case 2 with $u'$ or $u''$ in place of $u$, respectively.
      \end{enumerate}
    \end{enumerate}
  \item[Case 2:] Suppose there exist reduced expressions $s_{i_1}\dots s_{i_n}$ and $s_{j_1}\dots s_{j_n}$ for the paths from $x$ and $y$ to $u$ satisfying the relation $|i_1-j_1|\geq 2$.
    \begin{enumerate}
    \item Suppose $i_k\neq j_1$ for any $k$. If $|i_k-j_1|\geq 2$ then by (4) in Definition~\ref{def:demazure} this would imply that $f_{j_1}(x) \in X$, which contradicts $y$ being a lowest weight. Thus, there must exist some maximal $r$ for which $|i_k-j_1|\geq 2$ for all $1\leq k<r$ but $|i_r-j_1|=1$. Set $u':= f^*_{i_{r-1}}\dots f^*_{i_i}(u)$ (see Fig.~\ref{fig:Case2.1}).
      \begin{enumerate}
      \item If $f^*_{i_r}f^*_{j_1}(u') \in X$ then any path from it cannot terminate in $y$, since this would contradict $u$ satisfying $\wt(u)=\wt(x)\wedge \wt(y)$. Thus, the longest possible path out of $u'$ that passes through $f^*_{i_r}f^*_{j_1}(u')$ must be shorter than the path from $u'$ to $y$ and thus we iterate Case 1 with $u'$ in place of $u$.

      \item If $f^*_{i_r}f^*_{j_1}(u') \not\in X$, then by (5) in Definition~\ref{def:demazure} $f^*_{j_1}f^*_{i_r}(u') \in X$. Since $j_1\neq i_{r+1}$ then by considering $|i_{r+1}-j_1|$ we can iterate Case 1 or Case 2 as needed, with $u'$ in place of $u$. Clearly, if $j_k \neq i_1$ for any $k$ an analogous result holds.

       \begin{figure}[ht]
       \begin{center}
      \begin{tikzpicture}[scale=1]
        \node at (0,0)(a) {\textbf{\Large{\textit{u}}}};
        \node at (-1,-1)(b) {$\bullet$};
        \node at (1,-1)(c) {$\bullet$};
        \node at (-2,-2)(d) {$\bullet$};
        \node at (0,-2)(e) {$\bullet$};
        \node at (-3,-3)(g) {\textbf{\Large{\textit{u$'$}}}};
        \node at (-1,-3)(h) {$\bullet$};
        \node at (-4,-4)(k) {$\bullet$};
         \node at (-4,-5)(p'') {$\bullet$};
        \node at (-2,-4)(l) {$\bullet$};
        \node at (-5,-5)(p) {$\bullet$};
        \node at (-2,-5)(p') {$\bullet$};
        \node at (-6,-6)(x) {\textbf{\Large{\textit{x}}}};
        \node at (3,-6)(y) {\textbf{\Large{\textit{y}}}};

        \draw[thick,->] (a) -- (b) node[midway,above] {$i_1$};
        \draw[thick,->,dotted] (b) -- (d) ;
        \draw[thick,->] (d) -- (g)  node at (-2.8,-2.3) {$i_{r-1}$};
        \draw[thick,->,red] (g) -- (k) node at (-3.8,-3.3) {$i_{r}$};
        \draw[thick,->] (k) -- (p) node at (-4.8,-4.3) {$i_{r+1}$};
        \draw[thick,->,dotted] (p) -- (x) ;
        \draw[thick,->] (c) -- (e) node[midway,above] {$i_1$};
        \draw[thick,->,dotted] (e) -- (h) ;
        \draw[thick,->] (h) -- (l)  node at (-1.8,-3.3) {$i_{r-1}$};
        %
        \draw[thick,->] (a) -- (c) node[midway,above] {$j_1$};
        \draw[thick,->,dotted] (c) -- (y) ;
        \draw[thick,->] (b) -- (e) node[midway,above] {$j_1$};
        \draw[thick,->] (d) -- (h) node[midway,above] {$j_1$};
        %
        \draw[thick,->,red] (g) -- (l) node[midway,above] {$j_1$};
        \draw[thick,->,dotted,red ] (l) -- (p') node[midway,left] {$i_{r}$};
        \draw[thick,->,dotted,red ] (k) -- (p'') node[midway,right] {$j_1$};
      \end{tikzpicture}
        \caption{Case 2.1 for the proof of Lemma~\ref{lem:Dem-UniqueLowestWeight}}\label{fig:Case2.1}
        \end{center}
        \end{figure}
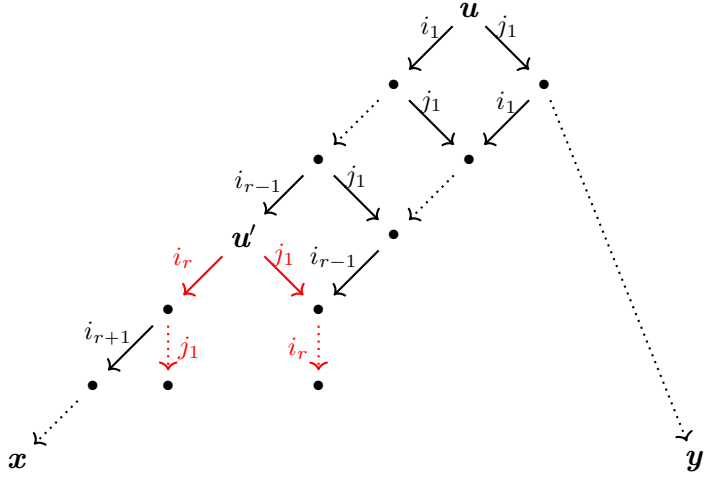
      \end{enumerate}

    \item Now suppose every reduced expression satisfies $i_r = j_1$ for some $r$ and $j_s=i_1$ for some $s$. Then we can choose paths such that $r$ and $s$ are minimized. As an immediate consequence $|j_1-i_{r-1}|=|j_{s-1}-i_1|=1$, else the minimality of $r$ and $s$ would be violated. Moreover, if $k<r$ is the largest index for which $|i_k-i_{k-1}|\geq 2$ then it follows that $|i_{k-1}-i_t|\geq 2$ for all $k<t\leq r$. However, this implies there exists some reduced expression $s_{i'_1}\dots s_{i'_n}$ for the same path for which $i'_{r-1}=j_1$ which contradicts the minimality of $r$. Thus, $i_k=i_{k-1}+1$ and $j_{k'}=j_{k'-1}-1$ (or vice versa) for all $1 \leq k \leq r$ and $1\leq k'\leq s$. Since $i_r=j_1$ and $j_s=i_1$, then without loss of generality if we assume $i_1<j_1$ then $i_1=j_1-r+1$ and $j_s=j_1-s+1$ imply $s=r$ and $i_k=j_1-r+k$ for all $1\leq k\leq r$. In particular, $|j_1-i_k|=|j_1-(j_1-r+k)|=|r-k|\geq 2$ whenever $k\leq r-2$. Likewise, $|j_2-i_k|\geq 2$ for $k\leq r-3$.
    \smallskip
    
    Thus, if set $u':=f^*_{i_{r-2}}\dots f^*_{i_1}(u)$ (see figure below) then by condition (4) of Definition~\ref{def:demazure} it follows that $f^*_{j_{r-1}}f^*_{j_1}(u') \in X$. It is clear the analogous situation holds for $u'':=f^*_{j_{r-2}}\dots f^*_{j_1}(u)$. In particular, we can iteratively apply condition (4) from Definition~\ref{def:demazure} and obtain a sequence of elements  in $X$ that lie higher in Bruhat order than $u'$ and $u''$ (see figure below). Moreover, since $i_{r-1}=j_1-1$ and $j_{r-1}=i_1-1$ then we can proceed to Case 1 by replacing $u$ with $u'$ and $u''$, respectively. (See Fig.~\ref{fig:Case2.2})
    
      \begin{figure}[ht]
      \begin{center}
      \begin{tikzpicture}[scale=1]
        \node at (0,0)(a) {\textbf{\Large{\textit{u}}}};
        \node at (-1,-1)(b) {$\bullet$};
        \node at (1,-1)(c) {$\bullet$};
        \node at (-2,-2)(d) {$\bullet$};
        \node at (0,-2)(e) {$\bullet$};
        \node at (2,-2)(f) {$\bullet$};
        \node at (-3,-3)(g) {\textbf{\Large{\textit{u$'$}}}};
        \node at (-1,-3)(h) {$\bullet$};
        \node at (1,-3)(i) {$\bullet$};
        \node at (3,-3)(j) {\textbf{\Large{\textit{u}$''$}}};
        \node at (-4,-4)(k) {$\bullet$};
        \node at (-2,-4)(l) {$\bullet$};
        \node at (0,-4)(m) {$\bullet$};
        \node at (2,-4)(n) {$\bullet$};
        \node at (4,-4)(o) {$\bullet$};
        \node at (-5,-5)(p) {$\bullet$};
        \node at (-2,-5)(p') {$\bullet$};
        \node at (2,-5)(q') {$\bullet$};
        \node at (5,-5)(q) {$\bullet$};
        \node at (-6,-6)(x) {\textbf{\Large{\textit{x}}}};
        \node at (6,-6)(y) {\textbf{\Large{\textit{y}}}};

        \draw[thick,->] (a) -- (b) node[midway,above] {$i_1$};
        \draw[thick,->,dotted] (b) -- (d) ;
        \draw[thick,->] (d) -- (g)  node at (-2.8,-2.3) {$i_{r-2}$};
        \draw[thick,->,red] (g) -- (k) node at (-3.8,-3.3) {$i_{r-1}$};
        \draw[thick,->] (k) -- (p) node[midway,above] {$i_r$};
        \draw[thick,->,dotted] (p) -- (x) ;
        \draw[thick,->] (c) -- (e) node[midway,above] {$i_1$};
        \draw[thick,->,dotted] (e) -- (h) ;
        \draw[thick,->] (h) -- (l)  node at (-1.8,-3.3) {$i_{r-2}$};
        \draw[thick,->] (f) -- (i) node[midway,above] {$i_1$};
        \draw[thick,->,dotted] (i) -- (m);
        \draw[thick,->,red] (j) -- (n) node[midway,above] {$i_1$};
        \draw[thick,->] (a) -- (c) node[midway,above] {$j_1$};
        \draw[thick,->,dotted] (c) -- (f) ;
        \draw[thick,->] (f) -- (j) node at (2.8,-2.3) {$j_{r-2}$};
        \draw[thick,->,red] (j) -- (o) node at (3.8,-3.3) {$j_{r-1}$};
        \draw[thick,->] (o) -- (q) node[midway,above] {$j_r$};
        \draw[thick,->,dotted] (q) -- (y) ;
        \draw[thick,->] (b) -- (e) node[midway,above] {$j_1$};
        \draw[thick,->,dotted] (e) -- (i) ;
        \draw[thick,->] (i) -- (n) node at (1.8,-3.3) {$j_{r-2}$};
        \draw[thick,->] (d) -- (h) node[midway,above] {$j_1$};
        \draw[thick,->,dotted] (h) -- (m) ;
        \draw[thick,->,red] (g) -- (l) node[midway,above] {$j_1$};
        \draw[thick,->,red ] (l) -- (p') node[midway,left] {$i_{r-1}$};
        \draw[thick,->, red] (n) -- (q') node[midway,right] {$j_{r-1}$};
      \end{tikzpicture}
        \caption{Case 2.2 for the proof of Lemma~\ref{lem:Dem-UniqueLowestWeight}}\label{fig:Case2.2}
        \end{center}
        \end{figure}
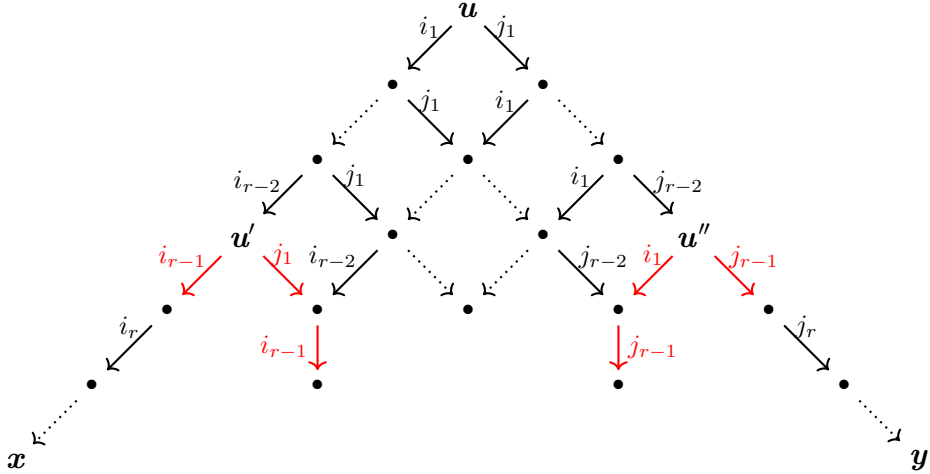
    \end{enumerate}
  \end{itemize}

  Thus, by iterative applications of Case 1 and 2 we can eventually find an element $u'$ on the path between $u$ and $x$ or $u$ and $y$ which satisfies Case 1.1 and yields the desired contradiction. That is, if $x,y \in X$ are extremal lowest weights with $\wt(x)=\wt(y)$, then there is a $z \in X$ also extremal satisfying $\wt(x)\prec wt(z)$ in Bruhat order. Since $X$ is finite, then this implies $X$ has a unique global lowest weight element.
\end{proof}

Finally, we have our main result of this section.

\begin{theorem}
  For any Demazure subset $X \subseteq \B(\lambda)$ there exists $w \in S_n$ such that $X= \B_w(\lambda)$. That is, any Demazure subset of a normal crystal is a Demazure crystal.
  \label{thm:main-dem}
\end{theorem}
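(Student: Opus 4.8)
The plan is to induct on $|X|$ and to read the target permutation off the bottom of the crystal. By Lemma~\ref{lem:Dem-UniqueLowestWeight}, $X$ has a unique global lowest weight element $Z$, and $Z$ is extremal, so $\wt(Z)$ is an extremal weight and we may write $\wt(Z) = w\cdot\lambda$ for a unique permutation $w$ acting faithfully on $\lambda$. This $w$ is the candidate, and everything reduces to proving $X = \B_w(\lambda)$. For the base case $|X|=1$ we have $\wt(Z)=\lambda$, so $Z = u_\lambda$; since $X$ is extremal it is connected by Proposition~\ref{prop:extremal-lw}, so every $x\in X$ is joined to $u_\lambda$ by raising operators, and the first step of any lowering path down from $u_\lambda$ would put some $f_i(u_\lambda)\in X$, contradicting that $u_\lambda=Z$ is a lowest weight element. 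Hence $X=\{u_\lambda\}=\B_{\mathrm{id}}(\lambda)$.

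For the inductive step, I would pick a \emph{left descent} $i$ of $w$, meaning $\ell(s_i w) = \ell(w)-1$. Through the correspondence $\wt(Z) = w\cdot\lambda$ with $\lambda = \mathrm{sort}(\wt(Z))$, a left descent is precisely an index with $\wt(Z)_i < \wt(Z)_{i+1}$, so by Definition~\ref{def:base}(2) and Proposition~\ref{prop:extremal} we get $\varphi_i(Z)=0$ and $\varepsilon_i(Z)>0$. Setting $v = s_i w$ gives $\ell(v) < \ell(w)$, $w = s_i v$ with $\ell(s_i v) = \ell(v)+1$, and $v$ again acts faithfully on $\lambda$ (a shorter permutation sending $\lambda$ to $v\cdot\lambda$ would, upon prepending $s_i$, contradict minimality of $w$). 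Consequently $\B_w(\lambda) = \mathfrak{D}_i\B_v(\lambda)$ by Eqs.~\eqref{e:D} and \eqref{e:BwL}. I then set $Y = \{x\in X : \varepsilon_i(x) = 0\}$, the collection of $i$-string tops of $X$ (each lies in $X$ since $X$ is closed under $e_i$ by Definition~\ref{def:extremal}(2)), and aim to establish three facts: \textbf{(a)} $X$ is closed under $f_i$, so that every $i$-string of $X$ is full, whence $X = \mathfrak{D}_i Y$ and, since $Z\notin Y$, $|Y| < |X|$; \textbf{(b)} $Y$ is itself a Demazure subset of $\B(\lambda)$; and \textbf{(c)} the global lowest weight of $Y$ has weight $s_i\cdot\wt(Z) = v\cdot\lambda$. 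Granting these, the inductive hypothesis applied to $Y$ gives $Y = \B_v(\lambda)$, and therefore $X = \mathfrak{D}_i Y = \mathfrak{D}_i\B_v(\lambda) = \B_w(\lambda)$, completing the induction.

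The main obstacle is step (a), the closure of $X$ under the chosen $f_i$. By axioms (2) and (3) of Definition~\ref{def:extremal}, every $i$-string of $X$ is either full or consists of its top alone, so it suffices to rule out a ``top-only'' $i$-string, i.e. an $x\in X$ with $\varepsilon_i(x)=0$, $f_i(x)\neq 0$, and $f_i(x)\notin X$. I expect to exclude this by tracing a lowering path from such an $x$ down to a lowest weight element of $X$ and re-deploying the machinery of Lemma~\ref{lem:Dem-UniqueLowestWeight}: a top-only $i$-string forces the existence of an extremal lowest weight element whose weight is incomparable to, or strictly above, $\wt(Z)$ in dominance order, contradicting either the uniqueness or the minimality of $Z$. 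The Demazure axioms (4), (5), and (6) of Definition~\ref{def:demazure}, together with Stembridge's local relations (P5) and (P6), are exactly the tools that propagate membership across commuting ($|i-j|\geq 2$) and braiding ($|i-j|=1$) pairs of strings, precisely as in Cases~1 and~2 of that lemma.

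Step (b) is then a matter of verifying the six axioms for $Y$: extremality transfers directly (for instance $u_\lambda\in Y$ since $\varepsilon_i(u_\lambda)=0$, and closure under the $e_j$ of $i$-highest elements follows from the same local relations), while axioms (4)--(6) for $Y$ descend from those for $X$ once one checks, again via (P5) and (P6), that passing to $i$-highest elements preserves the relevant $e_j^*$- and $f_j^*$-configurations. Step (c) follows from (b): the element $e_i^*(Z)$ lies in $Y$, is extremal, and by the string-flip of Eq.~\eqref{def:Stringflip-operator} (using $\varphi_i(Z)=0$) has weight $s_i\cdot\wt(Z) = v\cdot\lambda$; applying Lemma~\ref{lem:Dem-UniqueLowestWeight} to $Y$ and noting that $e_i^*(Z)$ is the Bruhat-minimal extremal element of $Y$ identifies it as the global lowest weight of $Y$. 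Thus essentially all of the difficulty is concentrated in step (a), which reuses the local case analysis already carried out for Lemma~\ref{lem:Dem-UniqueLowestWeight}.
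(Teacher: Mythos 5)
Your overall route---induction on $|X|$, peeling off a left descent $s_i$ of $w$ and recursing on a smaller subset $Y$ with $X=\mathfrak{D}_iY$---is genuinely different from the paper's proof, which is direct: the paper invokes Lemma~\ref{lem:Dem-UniqueLowestWeight} to get the unique global lowest weight $Z$ with $\wt(Z)=w\cdot\lambda$, notes that every extremal element of $X$ has permutation below $w$ in Bruhat order, and concludes $X=\B_w(\lambda)$ from closure under the raising operators, with no intermediate subsets constructed. Unfortunately your inductive step has a genuine gap that is not a matter of filling in details: the set $Y=\{x\in X:\varepsilon_i(x)=0\}$ of $i$-string tops is in general \emph{not} a Demazure subset---it already fails condition (2) of Definition~\ref{def:extremal}---and it is not equal to $\B_{s_iw}(\lambda)$. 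Concretely, take $\lambda=(2,1,0)$ for $\gl_3$ and $X=\B(\lambda)=\B_{w_0}(\lambda)$ with $w_0=s_1s_2s_1$, a Demazure subset by Theorem~\ref{thm:dem-well-def}; here $\wt(Z)=(0,1,2)$ and $i=1$ is a left descent. The tableau $T=\hackcenter{\tableau{2\\ 1 & 3}}$ has its unique $2$ column-paired with the $1$, so $\varepsilon_1(T)=0$ and $T\in Y$; but $\Ye_2(T)=\hackcenter{\tableau{2\\ 1 & 2}}$ has $\varepsilon_1=1$ (the $2$ in the bottom row is unpaired), so $\Ye_2(T)\notin Y$. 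Thus $Y$ is not closed under raising operators, the inductive hypothesis cannot be applied to it, and steps (b) and (c) collapse.

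The underlying misconception is that $\B_v(\lambda)$, $v=s_iw$, sits inside $\B_w(\lambda)=\mathfrak{D}_i\B_v(\lambda)$ as the set of $i$-string tops. It does not: $\B_v(\lambda)$ typically contains \emph{full} $i$-strings---in the example above, $\B_{s_2s_1}(2,1,0)$ contains the entire $1$-string $\bigl\{\hackcenter{\tableau{2\\ 1 & 1}},\ \hackcenter{\tableau{2\\ 1 & 2}}\bigr\}$---and which $i$-strings of $\B_w(\lambda)$ remain full in $\B_v(\lambda)$ is global information that cannot be recovered from $X$ and $i$ by a pointwise condition such as $\varepsilon_i(x)=0$. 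So even granting your step (a), which you only sketch and whose proof would require essentially the same case analysis as Lemma~\ref{lem:Dem-UniqueLowestWeight}, the induction has no valid smaller Demazure subset to recurse on. (Two smaller points: once (a) and (b) were in place, step (c) would be unnecessary, since induction would give $Y=\B_{v'}(\lambda)$ for some $v'$ and then $X=\mathfrak{D}_i\B_{v'}(\lambda)$ is automatically a Demazure crystal; and the global lowest weight in Lemma~\ref{lem:Dem-UniqueLowestWeight} is Bruhat-\emph{maximal}, i.e.\ dominance-minimal, among extremal lowest weights, not ``Bruhat-minimal'' as written in your step (c).)
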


\begin{proof}
By Lemma~\ref{lem:Dem-UniqueLowestWeight} $X$ contains a unique global lowest weight satisfying $wt(Z)<wt(b)$ in dominance order for any $b \in X$ and $wt(x) \prec wt(Z)$ in Bruhat order for any other extremal lowest weight $x \in X$. Hence, if $y \in X$ is any extremal weight with $wt(y) = \sigma \cdot \lambda$ then since $wt(y) \prec wt(Z)$ in Bruhat order, then if $wt(Z) = w \cdot \lambda$ it follows that $\sigma \prec w$. Since by Definition~\ref{def:demazure} $X$ is an extremal subset and thus closed under the action of $e_i$, it follows that $X = \B_w(\lambda)$, the Demazure crystal with highest weight $u_\lambda$ and Demazure lowest weight $Z$.
\end{proof}

\subsection{Demazure lowest weights}
\label{sec:demazure-lowest}

Highest weight elements are extremely powerful since they uniquely characterize normal crystals, immediately determine the character, and are easily found from a vertex of the crystal by applying any sequence of raising operators. In contrast, Demazure lowest weight elements satisfy the first two conditions but lack the essential property of being easy to find by applying arbitrary sequences of lowering operators.

In order to find the Demazure lowest weights of a Demazure crystal algorithmically, we consider certain sequences of lowering operators that may be applied to elements of extremal subsets.

\begin{definition}
  Let $X\subseteq\B(\lambda)$ be an extremal subset of a normal crystal. For $i \leq j$ and $b \in X$, define an operator $F_{[i,j]}$ on $X$ by
  \begin{equation}
    F_{[i,j]} (b) = f_{i}^{r_i} f_{i+1}^{r_{i+1}} \cdots f_{j}^{r_j} (b)
    \label{e:fardown}
  \end{equation}
  where $r_k = \varphi_k(F_{[k+1,j]}(b))$ if $f_k(F_{[k+1,j]}(b))\in X$ and otherwise $r_k=0$. We say that $F_{[i,j]}$ acts \newword{faithfully} on $b$ if $r_k>0$ for each $i \leq k \leq j$.
\label{def:fardown}
\end{definition}

In other words, $F_{[i,j]}$ applies lowering operators sequentially from $f_j$ down to $f_i$ with each applied as many times as possible without annihilating the element or leaving $X$, and this is \emph{faithful} if it can be done with each $f_k^{r_k}$ acting nontrivially.

We use these composite lowering operators to find the Demazure lowest weight element from the highest weight element by the following algorithm.

\begin{definition}
  Given a connected Demazure crystal $\B$, define an element $Z\in\B$ by the following procedure. Set $X_0$ to be the (unique) highest weight element of $\mathcal{B}$, and for $k>0$, do the following
  \begin{enumerate}
  \item if $X_{k-1}$ is a lowest weight, then set $Z=X_{k-1}$ and stop;
  \item otherwise, set $X_k = F_{[i_k,j_k]}(X_{k-1})$ where
    \begin{enumerate}
    \item $i_k$ is minimal among all $i$ for which there exists $j \geq i$ such that $F_{[i,j]}$ acts faithfully on $X_{k-1}$, and
    \item $j_k$ is maximal among all $j\geq i_k$ for which $F_{[i_k,j]}$ acts faithfully on $X_{k-1}$.
    \end{enumerate}
  \end{enumerate}
  \label{def:dem-low}
\end{definition}

In \cite{Ass-R}(Proposition~2.4), Assaf shows that every permutation $w$ has a unique reduced word $\pi$ characterized by the properties that, when writing $\pi=(\pi^{(k)}|\cdots|\pi^{(1)})$ such that each subword $\pi^{(i)}$ is an increasing subsequences of maximal length,
\begin{enumerate}
\item each such subsequence $\pi^{(i)}$ is an interval of integers, and
\item the smallest letters in each maximal length increasing subsequence decrease from left to right, i.e. $\pi^{(i)}_1 > \pi^{(i-1)}_1$.
\end{enumerate}
Such a word is called \newword{super-Yamanouchi} \cite{Ass-R}(Definition~2.3). For example, the word
  \[ ( \overbrace{5 , 6, 7}^{\pi^{(4)}} \mid \overbrace{4 , 5 }^{\pi^{(3)}} \mid \overbrace{3, 4, 5, 6}^{\pi^{(2)}} \mid \overbrace{1 , 2, 3}^{\pi^{(1)}}  ) \]
is the super-Yamanouchi reduced word for the permutation $41758236$. 

\begin{lemma}
  Let $\B_w(\lambda)$ be a Demazure crystal, and let $u_{\lambda}$ be the highest weight element. Let $\pi$ be the super-Yamanouchi reduced word for $w$, and write $\pi=(\pi^{(k)}|\cdots|\pi^{(1)})$ for the decomposition of $\pi$ into increasing subsequence of maximal length. Then for all $i\leq k$, $F_{\pi^{(i)}}$ acts faithfully on $F_{\pi^{(i-1)}}\circ\cdots\circ F_{\pi^{(1)}}(u_{\lambda})$.
  \label{lem:faithful}
\end{lemma}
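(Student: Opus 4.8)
The plan is to induct on the number of blocks, processing the super-Yamanouchi word from its rightmost block $\pi^{(1)}$ to its leftmost $\pi^{(k)}$, exactly as Definition~\ref{def:dem-low} prescribes. Write each block as an interval $\pi^{(j)}=[a_j,b_j]$ and set $w_i=s_{\pi^{(i)}}\cdots s_{\pi^{(1)}}$, so that $w_k=w$. I will maintain the invariant that $b^{(i)}:=F_{\pi^{(i)}}\circ\cdots\circ F_{\pi^{(1)}}(u_\lambda)$ is a well-defined \emph{extremal} element lying in the Demazure subcrystal $\B_{w_i}(\lambda)\subseteq\B_w(\lambda)$ with $\wt(b^{(i)})=w_i\cdot\lambda$; the base case $b^{(0)}=u_\lambda$ is immediate since the identity acts trivially. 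Granting the invariant, the lemma is precisely the faithfulness claim proved in the inductive step.

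For the inductive step I analyze a single increasing block $[a,b]=\pi^{(i+1)}$ applied to $b^{(i)}$, processing the letters $c=b,b-1,\ldots,a$ in the order dictated by $F_{[a,b]}=f_a^{r_a}\cdots f_b^{r_b}$. Since prefixes and suffixes of reduced words are reduced, the suffix $(c,c+1,\ldots,b\mid\pi^{(i)}\mid\cdots\mid\pi^{(1)})$ of $\pi$ is a reduced word for $\sigma_c:=s_cs_{c+1}\cdots s_b\,w_i$, so each $\sigma_c$ is a suffix of $w$ in a reduced factorization. Two observations drive the argument: (i) because $w$ acts faithfully on $\lambda$, i.e. is the minimal-length representative of its coset modulo the stabilizer $W_\lambda$, every suffix of $w$ in a reduced factorization is again a minimal coset representative (a standard consequence of the right-descent characterization: if $w=uv$ is reduced and $vs>v$ failed for some simple $s\in W_\lambda$, then $ws<w$, contradicting minimality of $w$); and (ii) the minimal coset representatives $\sigma_{c+1}$ and $\sigma_c=s_c\sigma_{c+1}$ differ in length by one, hence lie in different cosets, forcing $(\sigma_{c+1}\cdot\lambda)_c>(\sigma_{c+1}\cdot\lambda)_{c+1}$ \emph{strictly} (weak descent follows from the length increase, and strictness from the fact that equal coordinates would give $\sigma_c\cdot\lambda=\sigma_{c+1}\cdot\lambda$ and hence $\sigma_c=\sigma_{c+1}$). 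Tracing the weight shows the element presented to $f_c$ has weight $\sigma_{c+1}\cdot\lambda$; by Proposition~\ref{prop:extremal} either $\varphi_c=0$ or $\varepsilon_c=0$, and the strict descent gives $\varphi_c-\varepsilon_c=\wt_c-\wt_{c+1}>0$, forcing $\varepsilon_c=0$. Thus $f_c^{r_c}$ is a genuine string flip $S_c$, sending the weight to $s_c\cdot(\sigma_{c+1}\cdot\lambda)$, keeping the running element extremal, and yielding $r_c=\varphi_c>0$. Inducting on $c$ then shows $F_{[a,b]}$ acts faithfully and $b^{(i+1)}=S_aS_{a+1}\cdots S_b(b^{(i)})$ is extremal of weight $w_{i+1}\cdot\lambda$.

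To close the induction I must verify containment, so that the definition of $F_{[i,j]}$ actually sets $r_c=\varphi_c$ rather than resetting it to $0$. I do this by telescoping the Demazure operators: since $\B_{w_{i+1}}(\lambda)=\mathfrak{D}_a\mathfrak{D}_{a+1}\cdots\mathfrak{D}_b\,\B_{w_i}(\lambda)$ and each $\mathfrak{D}_c$ adjoins the \emph{full} $c$-string beneath every element of its argument, a short inner induction gives $f_c^{r_c}\cdots f_b^{r_b}(b^{(i)})\in\mathfrak{D}_c\cdots\mathfrak{D}_b\,\B_{w_i}(\lambda)$, using that each intermediate Demazure crystal is extremal (Proposition~\ref{prop:dem-crystal is extremal}) and hence top-closed along $c$-strings. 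Consequently the whole $c$-string through the current element lies in $X=\B_w(\lambda)$, so the membership condition of Definition~\ref{def:fardown} holds and $r_c=\varphi_c$. Finally, since extremal weights occur with multiplicity one in $\B(\lambda)$ (as in $s_\lambda$), the extremal element $b^{(i+1)}$ of weight $w_{i+1}\cdot\lambda$ is the unique Demazure lowest weight of $\B_{w_{i+1}}(\lambda)$ by Proposition~\ref{prop:Dlw}, which restores the full invariant and lets the outer induction proceed.

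The main obstacle is observation (ii): ruling out the degenerate case in which some letter $c$ within a block meets equal weight coordinates (a non-strict descent), which would give $\varphi_c=\varepsilon_c=0$ and destroy faithfulness. I expect this to be exactly where the super-Yamanouchi hypotheses and the faithfulness of the $w$-action on $\lambda$ are indispensable, funneled through the fact that every suffix of $w$ remains a minimal coset representative; once that strictness is secured, the weight bookkeeping in the second paragraph and the telescoping containment in the third are routine.
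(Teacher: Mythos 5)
Your proof is correct, and it is considerably more complete than the one in the paper. The paper's own argument is a two-sentence sketch: it observes, via Proposition~\ref{prop:extremal}, that a faithful application of $F_{[i,j]}$ to an extremal element of an extremal subset yields an extremal element of that subset, and then asserts that the lemma ``follows from Definition~\ref{def:BwL} and Eq.~\eqref{e:Dw} since $\pi$ is a reduced word for $w$''; it never addresses why each exponent $r_c$ is strictly positive, which is the actual content of faithfulness. Your argument shares the same skeleton --- induction along the reduced word, extremality of the running element, and the telescoping identity $\mathfrak{D}_c\cdots\mathfrak{D}_b\,\B_{w_i}(\lambda)=\B_{\sigma_c}(\lambda)\subseteq\B_w(\lambda)$ to secure the membership condition in Definition~\ref{def:fardown} --- but you supply the key Coxeter-theoretic ingredient the paper leaves unstated: since $w$ acts faithfully on $\lambda$ (i.e., is the minimal-length representative of $wW_\lambda$), every suffix $\sigma_c$ of $\pi$ is again a minimal coset representative, so the length-one difference between $\sigma_{c+1}$ and $\sigma_c=s_c\sigma_{c+1}$ forces the strict descent $(\sigma_{c+1}\cdot\lambda)_c>(\sigma_{c+1}\cdot\lambda)_{c+1}$, whence $\varepsilon_c=0$ and $\varphi_c>0$ by Proposition~\ref{prop:extremal}. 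This pinpoints exactly where the faithfulness hypothesis (implicit in the conventions of \S\ref{sec:crystal-demazure}) enters; without it the lemma is genuinely false, e.g.\ for $\lambda=(1,1,0)$ and $w=s_1$ one gets $\varphi_1(u_\lambda)=0$ and $F_{[1,1]}$ does not act faithfully. Your one piece of excess is the closing appeal to Proposition~\ref{prop:Dlw}: identifying $b^{(i+1)}$ as the Demazure lowest weight element is not needed to maintain the invariant you stated, since extremality, weight, and containment were already verified directly.
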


\begin{proof}
  From Proposition~\ref{prop:extremal} it follows that for $X \subset \B(\lambda)$ an extremal subset, if $b\in X$ is an extremal element and $F_{[i,j]}$ acts faithfully on $b$ within $X$, then $F_{[i,j]}(b) \in X$ is an extremal element. The result now follows from Definition~\ref{def:BwL} and Eq.~\eqref{e:Dw} since $\pi$ is a reduced word for $w$.
\end{proof}

In fact, the sequence $F_{\pi^{(k)}}\circ\cdots\circ F_{\pi^{(1)}}(u_{\lambda})$ is precisely the result of Definition~\ref{def:dem-low}. Thus we always have a canonical sequence of lowering operators that will terminate at the Demazure lowest weight.

\begin{theorem}
  For $\B_w$ a connected Demazure crystal, the element $Z\in\B_w$ determined by Definition~\ref{def:dem-low} is the Demazure lowest weight of $\B_w$.
  \label{thm:dem-low}
\end{theorem}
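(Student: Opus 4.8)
The plan is to reduce the statement to the uniqueness of the Demazure lowest weight element established in Proposition~\ref{prop:Dlw}, by showing that the greedy procedure of Definition~\ref{def:dem-low} reconstructs, block by block, the super-Yamanouchi reduced word $\pi=(\pi^{(k)}\mid\cdots\mid\pi^{(1)})$ of $w$, so that its output is exactly the faithful composition $F_{\pi^{(k)}}\circ\cdots\circ F_{\pi^{(1)}}(u_\lambda)$ already shown to be well-defined in Lemma~\ref{lem:faithful}. First I would record the well-definedness of the algorithm itself: each step of Definition~\ref{def:dem-low} applies nontrivial lowering operators and hence strictly decreases the weight in dominance order, so since $\B_w(\lambda)$ is finite the procedure halts at a lowest weight element $Z$. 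I would also note, exactly as in the proof of Lemma~\ref{lem:faithful}, that a faithful application of $F_{[i,j]}$ to an extremal element of an extremal subset again yields an extremal element; since $X_0=u_\lambda$ is extremal and Proposition~\ref{prop:dem-crystal is extremal} makes $\B_w(\lambda)$ extremal, every intermediate $X_k$, and in particular $Z$, is extremal.

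The heart of the argument is a combinatorial description of faithful actions on extremal elements. For an extremal $b$ with $\wt(b)=v\cdot\lambda$, Proposition~\ref{prop:extremal} gives $f_l^{\varphi_l(b)}=S_l$ whenever $\wt(b)_l>\wt(b)_{l+1}$, and tracking weights shows that $F_{[i,j]}$ acts faithfully on $b$ as a crystal operator precisely when $\wt(b)_l>\wt(b)_{j+1}$ for all $i\leq l\leq j$; in that case $F_{[i,j]}(b)$ is the extremal element of weight $s_i s_{i+1}\cdots s_j\cdot\wt(b)$, obtained by cycling the value in position $j+1$ into position $i$. Intersecting this with the requirement of Definition~\ref{def:fardown} that the intermediate elements remain inside $\B_w(\lambda)$ translates admissible faithful intervals into increasing runs that may be appended to the reduced word of $w$ already used to reach $b$.

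With this dictionary in hand, I would prove by induction on $m$ that the greedy choice $(i_m,j_m)$ coincides with the interval $\pi^{(m)}=[\min\pi^{(m)},\max\pi^{(m)}]$, so that $X_m=F_{\pi^{(m)}}\circ\cdots\circ F_{\pi^{(1)}}(u_\lambda)$. The two defining properties of super-Yamanouchi words from \cite{Ass-R}(Proposition~2.4) — that each block is an interval of integers and that the minimal letters strictly decrease, $\pi^{(i)}_1>\pi^{(i-1)}_1$ — are exactly the greedy selection rules of Definition~\ref{def:dem-low}: choosing $i_m$ minimal selects the block with the smallest available initial letter, namely $\pi^{(m)}_1$, and choosing $j_m$ maximal extends it to the full interval. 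Lemma~\ref{lem:faithful} guarantees that each $\pi^{(m)}$ does act faithfully on $X_{m-1}$, while the weight-and-membership characterization above shows that no faithful interval can begin earlier than $\pi^{(m)}_1$ or extend past $\max\pi^{(m)}$ without forcing a non-reduced word or leaving $\B_w(\lambda)$. This comparison is the main obstacle, since it requires simultaneously controlling the descent condition on $\wt(X_{m-1})$ and the reduced-word constraint imposed by $\mathfrak{D}_w$.

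Finally, having identified $Z=F_{\pi^{(k)}}\circ\cdots\circ F_{\pi^{(1)}}(u_\lambda)$, I would compute its weight. Since each faithful block realizes the reflection $s_{\min\pi^{(m)}}\cdots s_{\max\pi^{(m)}}$ on the weight and $\pi$ is a reduced word for $w$, the composition sends $\lambda$ to $w\cdot\lambda$, so $\wt(Z)=w\cdot\lambda$. Because extremal weight spaces of $\B(\lambda)$ are one-dimensional, $Z$ is the unique element of weight $w\cdot\lambda$; by Proposition~\ref{prop:Dlw} this is precisely the (unique) Demazure lowest weight element of $\B_w(\lambda)$, which by Lemma~\ref{lem:Dem-UniqueLowestWeight} lies strictly below every other lowest weight in dominance order. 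Hence the algorithm's output $Z$ is the Demazure lowest weight, as claimed.
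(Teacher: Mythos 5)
Your proposal is correct and follows essentially the same route as the paper: both arguments show that the greedy procedure of Definition~\ref{def:dem-low} reconstructs, block by block, the super-Yamanouchi reduced word of $w$, so that $Z = F_{\pi^{(k)}}\circ\cdots\circ F_{\pi^{(1)}}(u_{\lambda})$, relying on Lemma~\ref{lem:faithful} together with the minimality property \eqref{e:super-Y} of super-Yamanouchi words to pin down the greedy choices. The differences are organizational rather than substantive: the paper inducts on the number of blocks via the nested crystals $\B_v(\lambda)\subseteq\B_w(\lambda)$ from Proposition~\ref{prop:contain}, whereas you induct directly on the algorithm's steps inside $\B_w(\lambda)$ and spell out the concluding identification of $F_{\pi}(u_{\lambda})$ with the Demazure lowest weight (via the one-dimensionality of extremal weight spaces and Proposition~\ref{prop:Dlw}), a step the paper leaves implicit.
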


\begin{proof}
  Consider the Demazure crystal $\B_w(\lambda)$. Let $\pi=(\pi^{(k)}|\cdots|\pi^{(1)})$ denote the decomposition of the super-Yamanouchi reduced word for $w$ into increasing subsequences of maximal length. We proceed by induction on $k$. If $k=1$, then $\pi$ is an increasing interval, say $\pi=(i,i+1,\ldots,j)$ for some $i \leq j$. In this case, $f_h(u_{\lambda}) \not\in \B_w(\lambda)$ for any $h\not\in[i,j]$. Therefore the only intervals $[i^{\prime},j^{\prime}]$ for which $F_{[i^{\prime},j^{\prime}]}$ acts faithfully are contained in $[i,j]$, so by Lemma~\ref{lem:faithful}, since $F_{[i,j]}$ acts faithfully on $u_{\lambda}$, the result of Definition~\ref{def:dem-low} is $Z = F_{\pi}(u_{\lambda})$ as desired.

  Now let $k>1$ and assume the result whenever the super-Yamanouchi reduced word for $w$ has fewer than $k$ increasing intervals. Let $v$ be the permutation with reduced word $\pi^{(k-1)}\cdots\pi^{(1)}$. Then $\pi^{(k-1)}\cdots\pi^{(1)}$ is super-Yamanouchi, and $v$ acts faithfully on $\lambda$. Therefore by induction, the element $Z^{\prime} \in \B_v(\lambda)$ constructed by Definition~\ref{def:dem-low} for $\B_v(\lambda)$ is given by $Z^{\prime} = F_{\pi^{(k-1)}}\circ\cdots\circ F_{\pi^{(1)}}(u_{\lambda})$. Since $w = \pi^{(k)} v$, we have $v \prec w$ in weak Bruhat order and so, by Proposition~\ref{prop:contain}, we have $\B_v(\lambda) \subset \B_w(\lambda)$. In particular, $Z^{\prime} \in \B_w(\lambda)$.

  From the characterization of super-Yamanouchi words, it follows that for each $1\leq i < k$, if $\pi_j^{(i)}$ denotes the $j^{th}$ entry of $\pi^{(i)}$, then
  \begin{equation}
    \pi^{(i)}_1 < \min_{1\leq j} \left\lbrace\pi^{(s)}_j \; | \; i+1\leq s\leq k \right\rbrace.
    \label{e:super-Y}
  \end{equation}
  Therefore $Z^{\prime}$ is the element constructed by the first $k-1$ iterations of Definition~\ref{def:dem-low}(2) since each $\pi^{(i)}_1$ is minimal within $\B_w(\lambda)$ as well. By Lemma~\ref{lem:faithful}, $F_{\pi^{(k)}}$ acts faithfully on $Z^{\prime}$, and since $f_h(Z^{\prime})\not\in \B_w(\lambda)$ for $h\not\in\pi^{(k)}$, the final step of Definition~\ref{def:dem-low}(2) will result in $Z = F_{\pi^{(k)}}(Z^{\prime}) = F_{\pi}(u_{\lambda})$ as desired.
\end{proof}

Parallel to the expansion in Eq.~\eqref{e:char-hw}, we have the following tractable character formula.

\begin{corollary}
  For $\B$ any Demazure crystal, we have
  \begin{equation}
    \mathrm{ch}(\B) = \sum_{\substack{u \in \B \\ u \ \text{highest weight}}} \key_{\wt(Z(u))}.
    \label{e:char-dlw}
  \end{equation}
\end{corollary}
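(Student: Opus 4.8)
The plan is to reduce to the connected case and then assemble the pieces already established. First I would note that the first equality in the displayed identity is nothing but Definition~\ref{def:char}, so all the content lies in the second equality. Since the character is additive over connected components, I would decompose $\B$ into its connected components, $\B = \bigsqcup_{\alpha} \B^{(\alpha)}$, and observe that each $\B^{(\alpha)}$ is itself a connected Demazure crystal. The key bookkeeping point is that the highest weight elements of $\B$ are exactly the unique highest weight elements $u_{\alpha}$ of the components $\B^{(\alpha)}$, each connected normal crystal (and hence each connected Demazure crystal) having a unique highest weight element. For such a general, possibly disconnected, $\B$ the symbol $Z(u)$ is understood as the output of Definition~\ref{def:dem-low} applied to the connected component containing $u$, which is legitimate precisely because that component has $u$ as its unique highest weight. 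With this in hand, the right-hand sum becomes $\sum_{\alpha} \key_{\wt(Z(u_{\alpha}))}$, so it suffices to prove $\mathrm{ch}(\B^{(\alpha)}) = \key_{\wt(Z(u_{\alpha}))}$ for each component.

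For a fixed component, I would write $\B^{(\alpha)} = \B_{w}(\lambda)$ for the appropriate partition $\lambda$ and permutation $w$; this is guaranteed by Theorem~\ref{thm:main-dem}, which identifies any Demazure subset of a normal crystal with some $\B_{w}(\lambda)$. By Kashiwara's theorem \cite{Kas93}, the character of $\B_{w}(\lambda)$ is the Demazure character $\key_{w\cdot\lambda}$. On the other hand, Theorem~\ref{thm:dem-low} identifies the element $Z(u_{\alpha})$ produced by Definition~\ref{def:dem-low}, starting from the highest weight element $u_{\alpha} = u_{\lambda}$, as the Demazure lowest weight of $\B_{w}(\lambda)$, and Proposition~\ref{prop:Dlw} computes its weight to be $\wt(Z(u_{\alpha})) = w\cdot\lambda$. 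Chaining these equalities gives $\mathrm{ch}(\B^{(\alpha)}) = \key_{w\cdot\lambda} = \key_{\wt(Z(u_{\alpha}))}$, which is exactly the per-component claim.

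Summing over all components then yields $\mathrm{ch}(\B) = \sum_{\alpha} \mathrm{ch}(\B^{(\alpha)}) = \sum_{\alpha} \key_{\wt(Z(u_{\alpha}))} = \sum_{u \in \B,\ u\ \text{highest weight}} \key_{\wt(Z(u))}$, as required. Since every genuinely hard step has already been carried out, this is properly a corollary: the one point that warrants a sentence of care is the reduction to connected components, namely that each component of a Demazure crystal is again a connected Demazure crystal with a single highest weight element and that Definition~\ref{def:dem-low} applies to it. I do not expect any substantive obstacle here; the only thing to verify is that $\wt(Z(u))$ is well-defined independent of the ambient crystal, which follows because $Z$ is computed entirely inside the component of $u$ via the faithful lowering operators $F_{[i,j]}$ of Definition~\ref{def:dem-low}.
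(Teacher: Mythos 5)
Your proof is correct and follows exactly the route the paper intends: the corollary is stated without proof because it is the immediate concatenation of Kashiwara's theorem ($\mathrm{ch}(\B_w(\lambda)) = \key_{w\cdot\lambda}$), Proposition~\ref{prop:Dlw}, and Theorem~\ref{thm:dem-low}, summed over connected components. Your added care about reducing to components and the uniqueness of the highest weight element in each is the right (and only) bookkeeping needed.
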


%
\section{Demazure crystal on key tabloids}
%
\label{sec:tabloid}

We now apply the tools and techniques of crystal theory to the specialized nonsymmetric Macdonald polynomials. In particular, we will define crystal operators on semistandard key tabloids that generate a Demazure crystal, thereby giving a new combinatorial proof of Theorem~\ref{thm:nskostka} along with a tractable formula for the coefficients that arise in the Demazure expansion of a specialized nonsymmetric Macdonald polynomial.

In \S\ref{sec:tabloid-edges}, we define explicit raising and lowering operators on semistandard key tabloids that are inverse to one another and change the weight in the prescribed way. In \S\ref{sec:tabloid-rect}, we shift our paradigm to Kohnert's diagram model for Demazure characters in order to obtain an injection from semistandard key tabloids to semistandard Young tableaux that intertwines with the crystal operators. Then, in \S\ref{sec:tabloid-lowest}, we use the tools developed in \S\ref{sec:demazure} to prove our operators define a Demazure crystal by showing that their image under the map to semistandard Young tableaux is in fact a Demazure subset.

\subsection{Crystal operators on key tabloids}
\label{sec:tabloid-edges}

Generalizing the crystal constructions on Young tableaux, we give a new proof of Theorem~\ref{thm:nskostka} by constructing an explicit Demazure crystal on semi-standard key tabloids. To begin, we define a pairing rule that will determine the lengths of the $i$-strings.

\begin{definition}
  For $T \in \SSKD(a)$ and $1 \leq i <n$ an integer, define the \newword{i-pairing} of the cells of $T$ with entries $i$ or $i+1$ as follows: $i$-pair together $i$ and $i+1$ whenever they occur in the same column, and then iteratively $i$-pair an unpaired $i+1$ with an unpaired $i$ to its left whenever all entries $i$ or $i+1$ that lie between them are already $i$-paired.
  \label{def:pair}
\end{definition}

\begin{example}
  For $a=(0,0,0,0,0,10,12,0,8,3)$, Fig.~\ref{fig:pairing} shows the $2$-pairing for the given semistandard key tabloid. The paired entries are connected with red lines and the only unpaired $3$ is circled.
  \label{ex:pairing}
\end{example}

\begin{figure}[ht]
  \begin{center}
    \begin{tikzpicture}[scale=.85,every node/.style={scale=0.85}]
      \draw[thick](-3.25,2.25) to (-3.25,-1.25);
      \node at (-1.085,1.7){\begin{ytableau}
          \color{white!30!blue}2&2&2\\
          4&3&3&\color{red}3&\color{white!30!blue}2&\color{white!30!blue}2&2&2\\
      \end{ytableau}};
      \node at (0,0){\begin{ytableau}
          7&7&6&6&5&4&3&3&\color{red}3&3&\color{red}3&\color{violet}3\\
          1&1&1&1&1&1&1&1&1&2\\
      \end{ytableau}};
      \draw[red,thick](-2.45,1.6)--(-2.45,1.85);
      \draw[red,thick](-1.9,1.6)--(-1.9,1.85);
      \draw[red,thick](.25,0.45)--(0.25,1.25);
      \draw[red,thick](0.8,0.45)--(.8,1.25);
      \draw[red,thick](1.9,-.1)--(1.9,.15);
      \draw[red, very thick](-3,2.1)--(-3,2.4)--(-1.4,2.4)--(-1.4,1.6);
      \begin{scope}[shift={(2.75,-.5)}]
        \draw[red, very thick](-3,2.1)--(-3,2.4)--(-1.4,2.4)--(-1.4,.95);
      \end{scope}
      \begin{scope}[shift={(2.2,-.25)}]
        \draw[red, very thick](-3,1.9)--(-3,2.4)--(.2,2.4)--(.2,.7);
      \end{scope}
      \draw (3,0.27) circle (.22cm);
    \end{tikzpicture}
  \end{center}
  \caption{\label{fig:pairing}An example of the $2$-pairing on a semistandard key tabloid.}
\end{figure}
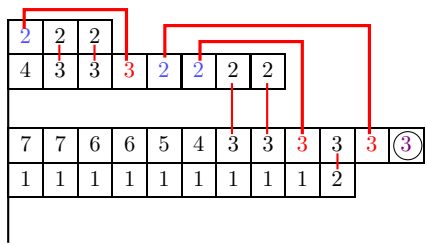

\begin{definition}  \label{def:raise-tabloid}
  Given any $T \in \SSKD(a)$ and an integer $1\leq i <n$ define the \newword{raising operator} $e_i$ on $\SSKD(a)$ whose action on $T$ is as follows:
  \begin{itemize}
  \item if $T$ does not have any cells containing an unpaired $i+1$ then $e_i(T)=0$.
  \item otherwise, $e_i$ changes the rightmost unpaired $i+1$ to $i$ and
    \begin{itemize}
    \item swaps the entries $i$ and $i+1$ in each of the consecutive columns left of this entry that have an $i+1$ in the same row and an $i$ above, and
    \item swaps the entries $i$ and $i+1$ in each of the consecutive columns right of this entry that have an $i+1$ in the same row and an $i$ below.
    \end{itemize}
  \end{itemize}
\end{definition}

\begin{example}\label{ex:raising-oper}
  Let $a = (0,0,4,0,6,2,2)$. Then $e_4$ acts on cells with entries $4$ and $5$ as shown in Fig.~\ref{fig:raise-ex}. At each step, the circled entries contain the unpaired $5$'s and the red highlighted entries are the columns on which $e_4$ will act. As can be seen, each application of $e_4$ decreases the number of unpaired $5$'s by one. When no more remain, then $e_4$ acts by zero.
\end{example}

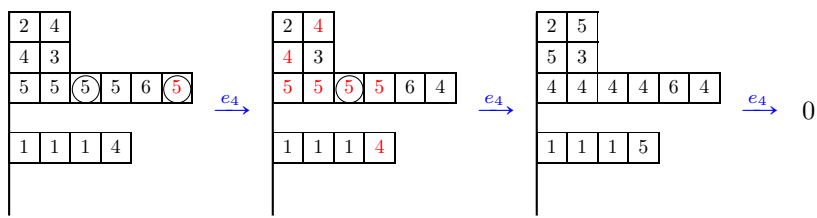
\begin{figure}[ht]
  \begin{center}
    \[
  \hackcenter{\begin{tikzpicture}[scale=1.5,every node/.style={scale=1.5}]
      \draw[thick] (-.54,-1.4)--(-.54,0.4);
      \node at (.27,0)[scale=.5]{$\begin{ytableau}
          2& 4\\
          4&3\\
          5&5&5&5&6&\color{red}5\\
        \end{ytableau}$};
      \node at (0, -.8)[scale=.5]{$\begin{ytableau}
          1&1&1&4\\
        \end{ytableau}$};
      \draw (.14,-.28) circle (1.2mm);
      \draw (.95,-.28) circle (1.2mm);
  \end{tikzpicture}}
  \;\;{\color{blue}\xrightarrow{e_4}}\;\;
  \hackcenter{\begin{tikzpicture}[scale=1.5,every node/.style={scale=1.5}]
      \draw[thick] (-.54,-1.4)--(-.54,.4);
      \node at (.27,0)[scale=.5]{$\begin{ytableau}
          2& \color{red}4\\
          \color{red}4&3\\
          \color{red}5&\color{red}5&\color{red}5&\color{red}5&6&4\\
        \end{ytableau}$};
      \node at (0, -.8)[scale=.5]{$\begin{ytableau}
          1&1&1&\color{red}4\\
        \end{ytableau}$};
      \draw (.14,-.28) circle (1.2mm);
  \end{tikzpicture}}
  \;\;{\color{blue}\xrightarrow{e_4}}\;\;
  \hackcenter{\begin{tikzpicture}[scale=1.5,every node/.style={scale=1.5}]
      \draw[thick] (-.54,-1.4)--(-.54,.4);
      \node at (.27,0)[scale=.5]{$\begin{ytableau}
          2& 5\\
          5&3\\
          4&4&4&4&6&4\\
        \end{ytableau}$};
      \node at (0, -.8)[scale=.5]{$\begin{ytableau}
          1&1&1&5\\
        \end{ytableau}$};
  \end{tikzpicture}}
  \;\;{\color{blue}\xrightarrow{e_4}}\;\;
  \hackcenter{\begin{tikzpicture}
      \node at (7,3){0};
  \end{tikzpicture}}\]
  \end{center}
  \caption{\label{fig:raise-ex}An example of the raising operators applied to a semistandard key tabloid.}
\end{figure}

Note for any $T \in \SSKD$ on which $e_i$ acts non-trivially, the consecutive sequence of columns that will be affected by its action will have entries $i$ and $i+1$ distributed in a specific way, as illustrated in Fig.~\ref{fig:raising-distribution}. The following result collects useful facts about these affected columns.

\begin{proposition}\label{prop:raising-distribution}
Let $T \in \SSKD(a)$ and $i$ an integer such that $e_i(T) \neq 0$. Then the columns of $T$ that are modified non-trivially by the action of $e_i$ will satisfy the following properties:
\begin{itemize}
\item[i)] The column of the rightmost unpaired $i+1$ will have no cells with value equal to $i$.
\item[ii)] For any two consecutive columns both of which contain a cell equal to $i$, the $i$ in the right column must be in the same row or higher than the $i$ in the left column.
\item[iii)] The column immediately left of the leftmost affected column cannot have a cell containing an unpaired $i$.
\item[iv)] The $i+1$ in the leftmost affected column cannot have a cell with value $i$ immediately to its left.
\item[v)] The column immediately right of the rightmost affect column cannot have any cells with an unpaired $i+1$.
\item[vi)] The $i+1$ in the rightmost affected column cannot have a cell with value $i+1$ immediately to its right.
\end{itemize}
\end{proposition}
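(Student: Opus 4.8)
The plan is to fix the data of the $e_i$-action and then verify the six properties in order, dispatching (i)--(iii) and (v) quickly from the pairing rule and saving the no-co-inversion-triple hypothesis for (iv) and (vi). Write $b$ for the rightmost unpaired $i+1$, say in column $c_0$ and row $r_0$, and let $c_L \le c_0 \le c_R$ be the leftmost and rightmost columns modified by $e_i$, so that the left affected columns $c_L,\dots,c_0-1$ each carry an $i+1$ in row $r_0$ with an $i$ above, the right affected columns $c_0+1,\dots,c_R$ each carry an $i+1$ in row $r_0$ with an $i$ below, and every affected column has its $i+1$ in the common row $r_0$. The central tool is to reinterpret the $i$-pairing of Definition~\ref{def:pair} as a bracket matching: since the non-attacking condition forces distinct entries in each column, the same-column step pairs the unique $i$ with the unique $i+1$ whenever both occur, so each column contributes at most one unpaired (``lone'') $i$ or $i+1$; reading columns left to right with a lone $i$ an open bracket and a lone $i+1$ a close bracket, the remaining rule matches each close to the nearest unmatched open on its left, and ``unpaired'' means ``unmatched.'' Property (i) is then immediate, since a column containing both $i$ and $i+1$ has its $i+1$ paired in the same-column step, so $c_0$ has no $i$; and (ii) is just the non-attacking condition for the value $i$, two equal entries in adjacent columns with the left strictly higher being attacking.

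For (iii) and (v) I would exploit that $b$ is the \emph{rightmost} unpaired $i+1$. In the bracket matching, $b$ being unmatched means the stack of unmatched opens is empty when its column is reached, hence every lone $i$ in a column left of $c_0$ has already been matched; since $c_L-1<c_0$, the column immediately left of the leftmost affected column carries no unpaired $i$, giving (iii). Dually, $c_R+1>c_0$ lies to the right of the rightmost unpaired $i+1$, so it contains no unpaired $i+1$, giving (v).

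The substance lies in (iv) and (vi), which pin down the boundary of the affected block and are where the no-co-inversion-triple hypothesis enters. For (iv), suppose toward a contradiction that the cell immediately left of the $i+1$ in $c_L$, namely $(c_L-1,r_0)$, equals $i$. If column $c_L-1$ has no $i+1$, this $i$ is a lone open bracket with only paired entries between it and $b$ (each affected column pairs its $i$ and $i+1$ in the same column), so the matching would pair it with $b$, contradicting that $b$ is unpaired. If $c_L-1$ has an $i+1$ above its $i$, then that $i+1$ and the $i+1$ of $c_L$ in row $r_0$ (hence strictly lower) are attacking with equal value, violating non-attacking. The remaining case, an $i+1$ below the $i$ in $c_L-1$, is the crux: since $c_L$ carries an $i$ in some row $r_1>r_0$, left-justification forces the cell $(c_L-1,r_1)$ to exist, and then the rectangle on rows $r_0,r_1$ and columns $c_L-1,c_L$ yields a forbidden co-inversion triple --- a Type~I triple on the pair $(c_L-1,r_0),(c_L,r_0)$ with apex $(c_L-1,r_1)$ when row $r_0$ is strictly longer than row $r_1$, and a Type~II triple on the pair $(c_L-1,r_1),(c_L,r_1)$ with base $(c_L,r_0)$ otherwise --- and in each case, for both possible orderings of the entries, one checks the orientation is of co-inversion type, contradicting $T\in\SSKD(a)$.

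Property (vi) follows the same template after first showing, via (v) and the bracket matching, that an offending $(c_R+1,r_0)=i+1$ must be paired in its own column with an $i$ strictly above it, since an $i$ below would instead force $c_R+1$ to be an affected column and a bracket match would collide with the unpaired $b$. Left-justification again produces the relevant rectangle on columns $c_R,c_R+1$, from which I would extract a forbidden co-inversion triple. The main obstacle I anticipate is precisely this final step: unlike in (iv), the two cells of the rectangle lying in row $r_0$ now both carry the value $i+1$, so only the Type~II triple on the upper row is available and the Type~I fallback is lost; organizing the length bookkeeping so that a co-inversion triple is nonetheless guaranteed in the configuration where the upper row is short will be the delicate point, and I expect it to require invoking an additional cell of the diagram forced by left-justification rather than the single rectangle used for (iv).
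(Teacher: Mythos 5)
Your bracket-matching reformulation of the pairing rule is a faithful repackaging of Definition~\ref{def:pair}, and on properties (i), (ii), (iii), (v) and on cases (a), (b) of (iv) your arguments are correct and essentially the same as the paper's. The genuine problem is case (c) of (iv). Your argument there rests on the sentence ``since $c_L$ carries an $i$ in some row $r_1>r_0$,'' which is true only when $c_L<c_0$, i.e.\ when at least one column strictly left of the rightmost unpaired $i+1$ is affected. But very often the leftmost affected column \emph{is} the column $c_0$ of the unpaired $i+1$ itself (this happens whenever no column to its left has an $i+1$ in row $r_0$ with an $i$ above it), and in that case property (i) --- which you proved --- says that column $c_L=c_0$ contains no cell with value $i$ at all, so there is no row $r_1$ and no rectangle. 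The configuration that must still be excluded in this case, namely an $i$ at $(c_0-1,r_0)$ with an $i+1$ below it in column $c_0-1$ sitting next to the unpaired $i+1$ at $(c_0,r_0)$, is not vacuous: ruling it out is real work, and none of your cases (a), (b), (c) says anything about it. The paper's proof is uniform in $c_L$ precisely because it builds its triples out of the $i+1$ \emph{below} (in column $c_L-1$, say row $r'$) together with the cells of column $c_L$ in rows $r'$ and $r_0$, excluding the value $i$ in column $c_L$ by property (i) rather than by the presence of a second $i$. Ironically, where your rectangle does apply ($c_L<c_0$) it is somewhat cleaner than the paper's second case, since all four of your cells exist by left-justification, whereas the paper tacitly uses the cell $(c_L,r')$ whose existence is not automatic; but as written your proof of (iv) is incomplete.

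Property (vi) is a second, self-acknowledged gap: what you give is a plan, not a proof. Your reduction is correct --- by (v) and the collision-with-$b$ argument, an offending $i+1$ at $(c_R+1,r_0)$ must be paired with an $i$ strictly above it in its own column, and your Type~II rectangle on columns $c_R,c_R+1$ handles the case where that $i$'s row is weakly longer than row $r_0$ --- but the case where it is strictly shorter is exactly where the substance lies, and you leave it open. The paper closes it by propagation: when the $i$'s row is strictly shorter than row $r_0$, a Type~I triple (with the value $i$ excluded by the attacking condition against the $i$ sitting above and to the left) forces the cell immediately right of the offending $i+1$ to be yet another $i+1$; this new $i+1$ again lies right of $b$, hence is paired in its own column, and iterating the same dichotomy column by column along row $r_0$ eventually manufactures an unpaired $i+1$ strictly right of $b$, contradicting (v). Without this (or an equivalent) iteration, (vi) remains unproved, so the proposal as a whole does not establish the proposition.
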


\begin{proof}
We prove each point separately.
\begin{itemize}
\item[i)] This follows from the definition of $i$-paring, since if such an $i$ existed then the leftmost unpaired $i+1$ would be paired.
\item[ii)] If this were not the case both cells containing $i$ would be attacking, which is impossible since $T \in \SSKD(a)$.
\item[iii)] If such a cell existed then by Definition~\ref{def:pair} it would $i$-pair with the rightmost unpaired $i+1$, which is a contradiction.
\item[iv)] Consider the cell containing an $i+1$ in the leftmost affected column. Then if the cell immediately to its right has value $i$, then by (iii) this $i$ must be paired with some $i+1$ located in the same column and below it. If the $i$ is in a row strictly shorter than the row of the $i+1$ below it with which it is paired, then the cell right of this $i+1$ must also contain an $i+1$. This implies the leftmost affected column has two cells with value $i+1$ which is impossible. If instead, the $i$ is in a row weakly longer than the row of the $i+1$ below it with which it is paired, then the cell right of this $i+1$ must again have value $i+1$. Thus, the cell right of the leftmost affected $i+1$ cannot have value $i$.
\item[v)] This is obvious, since clearly there can exist no unpaired $i+1$'s right of the rightmost unpaired $i+1$.

\item[vi)] Consider the cell with value $i+1$ in the rightmost affected column of $T$ and suppose the cell immediately to its right also contained an $i+1$. By (v) this cell must be paired with an $i$ located in the same column and above it. If the row of the $i$ in this column were weakly longer than the row of the $i+1$ below it, then the cell right of the $i$ would also have value $i$. This is impossible since the $i+1$ in the rightmost affected column is either unpaired or paired with an $i$ below it. Thus, the row of $i$ must strictly shorter than the row of $i+1$ and thus the cell to the right of $i+1$ must also have value $i+1$. Since this new $i+1$ also lies right of the rightmost unpaired $i+1$ we can iterate the previous argument for each consecutive column to the right of the rightmost affected column and conclude that the rightmost affected $i+1$ has a cell in the same row and in some column to its right with an unpaired $i+1$ which is clearly a contraction.
\end{itemize}
\end{proof}

\begin{figure}[ht]
\begin{center}
\begin{tikzpicture}[scale=.6,every node/.style={scale=.7}]
\node at (-1,6) (a)[scale=1.5] {$i$};
\node at (-4,4)(b)[scale=1.5] {$i$};
\node at (-3,4) (c) [scale=1.5]{$i$};
\node at (-6,3) (d) [scale=1.5]{$i$};
\node at (-7,2) (e) [scale=1.5]{$i$};
\node at (-8,0) {$\cdots$};
\node at (-7,0) (f) {$i+1$};
\node at (-6,0) (g) {$i+1$};
\node at (-5,0) {$\cdots$};
\node at (-4,0) (h) {$i+1$};
\node at (-3,0) (i) {$i+1$};
\node at (-2,0){$\cdots$};
\node at (-1,0) (j) {$i+1$};
\node at (0,0) (k) {$\color{red}i+1$};
\node at (1,0) (l) {$i+1$};
\node at (2,0){$\cdots$};
\node at (3,0) (m) {$i+1$};
\node at (4,0) (n) {$i+1$};
\node at (5,0){$\cdots$};
\node at (6,0) (o) {$i+1$};
\node at (7,0) (p) {$i+1$};
\node at (8,0) {$\cdots$};
\node at (1,-6)(r)[scale=1.5]{$i$};
\node at (3,-5)(s)[scale=1.5]{$i$};
\node at (4,-3)(t)[scale=1.5]{$i$};
\node at (6,-2)(u)[scale=1.5]{$i$};
\node at (7,-1)(v)[scale=1.5]{$i$};
\csquare{(a)}
\csquare{(b)}
\csquare{(c)}
\csquare{(d)}
\csquare{(e)}
\csquare{(f)}
\csquare{(g)}
\csquare{(h)}
\csquare{(i)}
\csquare{(j)}
\csquare{(k)}
\csquare{(l)}
\csquare{(m)}
\csquare{(n)}
\csquare{(o)}
\csquare{(p)}
\csquare{(r)}
\csquare{(s)}
\csquare{(t)}
\csquare{(u)}
\csquare{(v)}
\draw[dotted] (-1,5.25)--(-1,.75);
\draw[dotted] (-3,3.25)--(-3,.75);
\draw[dotted] (-4,3.25)--(-4,.75);
\draw[dotted] (-6,2.25)--(-6,.75);
\draw[dotted] (-7,1.25)--(-7,.75);
\draw[red,dotted](0,6.5)--(0,.75);
\draw[red,dotted](0,-6.5)--(0,-.75);
\draw[dotted] (1,-5.25)--(1,-.75);
\draw[dotted] (3,-4.25)--(3,-.75);
\draw[dotted] (4,-2.25)--(4,-.75);
\draw[dotted] (6,-1.25)--(6,-.75);
\end{tikzpicture}
  \caption{\label{fig:raising-distribution}The layout of the consecutive columns of $T \in \SSKD(a)$ affected by $e_i$. The rightmost unpaired $i+1$ of $T$ is highlighted in red and contains no cells equal to $i$ in the same column. For any other two adjacent columns, the row of the $i$ in the left column must be at the same height or lower than the row of the $i$ in the right column. }
  \end{center}
\end{figure}
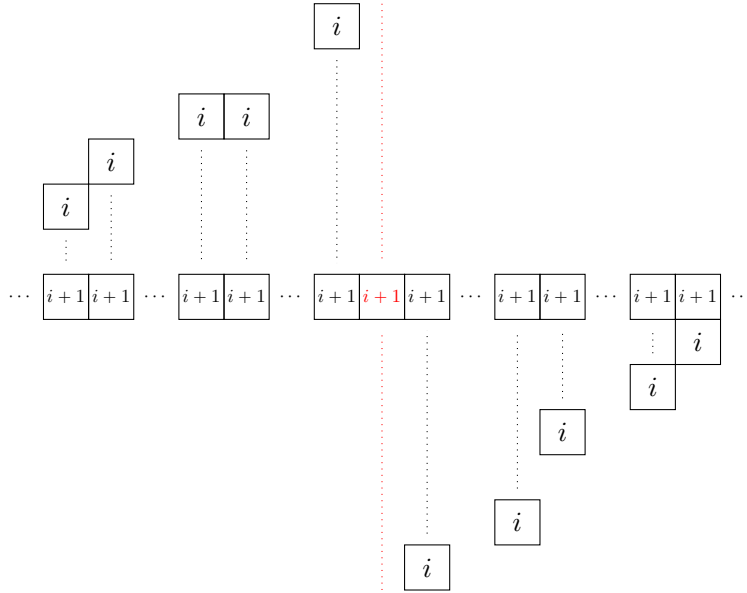

Unlike the crystal operators on semistandard Young tableaux, the raising operator $e_i$ on semistandard key tabloids can invert the relative order of $i$ and $i+1$ within a given column, motivating the following definition.

Given any $T \in \SSKD(a)$ and integer $1 \leq i<n$ such that $e_i(T) \neq 0$, we say $e_i$ \emph{flips} $T$ if its action on $T$ changes an $i$ above $i+1$ in some column to an $i+1$ above $i$.  For instance, in Example~\ref{ex:raising-oper} the first application of $e_4$ does not flip the tabloid but the second application of $e_4$ does.

From Definition~\ref{def:raise-tabloid}, we see that for any $T \in \SSKD(a)$, the action of $e_i$ on $T$ is restricted to cells with $i$ and $i+1$'s in a consecutive set of columns left and right of the rightmost unpaired $i+1$. As will be demonstrated in the following two lemmas, the very specific distribution these columns satisfy (see Fig.~\ref{fig:raising-distribution}) imposes certain restrictions on the lengths of the rows containing $i+1$ and $i$ in each column.

\begin{lemma}\label{lem:A}
Let $i$ be an integer $1 \leq i <n$ and suppose $T \in \SSKD(a)$  has a cell with an unpaired $i+1$ in column $c$ and row $r$, for some $c,r \geq 1$.

If $T$ contains a sequence of consecutive columns immediately right of column $c$ such that each column has an $i+1$ in row $r$ and an $i$ in some row below it, then row $r$ is weakly longer than the rows of all the $i$'s contained in the sequence.
\end{lemma}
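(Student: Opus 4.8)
The plan is to argue by contradiction, using the no–co-inversion-triple condition of Definition~\ref{def:SSKD} as the essential input. Suppose some column of the sequence has its $i$ lying in a row strictly longer than row $r$; concretely, say column $c+j$ contains $i+1$ in row $r$ and $i$ in row $r'$ with $r'<r$ and $a_{r'}>a_r$. I will show that such a ``long'' $i$ propagates leftward, forcing an $i$ into every cell $(r',c),(r',c+1),\ldots,(r',c+j)$, and in particular into column $c$. But an $i$ in column $c$ would $i$-pair with the $i+1$ at $(r,c)$ by the first clause of Definition~\ref{def:pair}, contradicting the hypothesis that this $i+1$ is unpaired.

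The core of the argument is a local propagation step, carried out by downward induction on the column index from $c+j$ toward $c$. Fix $1\le m\le j$ and assume the cell $(r',c+m)$ contains $i$. Every column $c,c+1,\ldots,c+j$ carries an $i+1$ in row $r$ (the columns of the sequence, together with column $c$ itself), so $(r,c+m-1)$ contains $i+1$. The cell $(r',c+m-1)$ exists, since $a_{r'}>a_r\ge c+j\ge c+m$, and together with $(r',c+m)$ and $(r,c+m-1)$ it occupies the Type~I position: two row-adjacent cells in row $r'$ with a third cell directly above the left one. The hypothesis $a_{r'}>a_r$ is exactly the requirement that the lower row be strictly longer, so these three cells genuinely form a Type~I triple.

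Now I invoke that $T\in\SSKD(a)$ has no co-inversion triples. Writing $x$ for the entry of $(r',c+m-1)$, the bottom-left, bottom-right, and top entries are $x,\,i,\,i+1$. Reading off the counterclockwise (co-inversion) orientations from Fig.~\ref{fig:inv}, the forbidden cases are precisely $x<i$ and $x>i+1$; since neither may occur, $x\in\{i,i+1\}$. The value $x=i+1$ is excluded because column $c+m-1$ already has $i+1$ in row $r\ne r'$, and a non-attacking filling cannot repeat a value within a column. Hence $x=i$, completing the inductive step. Iterating from $m=j$ down to $m=1$ places an $i$ at $(r',c)$, which $i$-pairs with the unpaired $i+1$ at $(r,c)$ in the same column, the desired contradiction. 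Therefore $a_{r'}\le a_r$ for every column of the sequence, i.e.\ row $r$ is weakly longer than the rows of all the $i$'s.

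I expect the main obstacle to be the local step: setting up the Type~I triple with the correct cells and verifying that the orientation analysis forces $x=i$ rather than some other value. Once this is established, the induction and the closing pairing contradiction are routine, as is the bookkeeping that all cells in play lie in the diagram (which follows from $a_{r'}>a_r$ together with the existence of $(r,c+j)$ and $(r',c+j)$) and that non-attacking rules out two equal entries in one column.
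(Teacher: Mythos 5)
Your proof is correct and rests on essentially the same argument as the paper's: both use the identical Type I co-inversion-triple analysis (the $i+1$ above and the $i$ to the bottom-right force the bottom-left cell to contain $i$ or $i+1$, and non-attacking excludes $i+1$), and both terminate in the same contradiction that an $i$ in column $c$ would $i$-pair with the supposedly unpaired $i+1$. The only difference is organizational: the paper inducts left-to-right on the invariant that row $r$ remains weakly longer than each row $r_s$, while you propagate a single $i$ right-to-left along the fixed row $r'$ from a putative violating column back to column $c$.
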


\begin{proof}
Denote by $\lbrace c+s \rbrace_{1\leq s \leq m}$ the maximal sequence of consecutive columns right of column $c$ satisfying the conditions above, and for each column $c+s$, denote by $r_s$ the row containing $i$. We proceed by induction on $s$.

First, suppose $r$ is strictly shorter than $r_1$. Since any other value would create a co-inversion triple, then the cell below $i+1$ in column $c$ and left of $i$ in column $c+1$ must contain an $i$. This contradicts the $i+1$ in column $c$ being unpaired. Hence, $r$ must be weakly longer than $r_1$.

Now, suppose there is an $s>1$ such that $r$ is weakly longer than $r_{s-1}$ but is strictly shorter than $r_s$. Then the cell left of the $i$ in row $r_s$ and below the $i+1$ in column $c+s-1$ must have value $i$. Since no column may have two cells of equal value, this implies $r_s=r_{s-1}$. Thus, $r$ is weakly longer than $r_s$ which contradicts the assumptions.
\end{proof}

\begin{lemma}\label{lem:B}
Let $i$ be an integer $1 \leq i <n$ and suppose $T \in \SSKD(a)$  has a cell with an unpaired $i+1$ in column $c$ and row $r$, for some $c,r \geq 1$.

If $T$ contains a sequence of consecutive columns immediately left of column $c$ such that each column has an $i+1$ in row $r$ and an $i$ in some row above it, then row $r$ is strictly longer than the rows of all the $i$'s contained in the sequence.
\end{lemma}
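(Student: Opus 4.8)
The plan is to mirror the inductive argument of Lemma~\ref{lem:A}, now running along the maximal run of columns to the \emph{left} of $c$ and extracting a contradiction whenever the asserted strict inequality fails.

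First I would set up notation in parallel with Lemma~\ref{lem:A}. Let $\lbrace c-s\rbrace_{1\leq s\leq m}$ be the maximal sequence of consecutive columns immediately left of $c$, each having an $i+1$ in row $r$ together with an $i$ in some row strictly above $r$; write $r_s>r$ for the row of that $i$ in column $c-s$. Because the $i$ and the $i+1$ of column $c-s$ share a column, they are $i$-paired by Definition~\ref{def:pair}. In particular the distinguished $i+1$ sits in column $c$, and since it is unpaired, column $c$ contains no $i$ at all; this last fact is the hook for the contradiction, exactly as in Lemma~\ref{lem:A}.

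Next I would induct on $s$. For the base case I assume, toward a contradiction, that row $r$ is \emph{not} strictly longer than row $r_1$; since the entry at $(r,c)$ lies in the diagram and row $r$ is then weakly shorter than row $r_1$, the cell $(r_1,c)$ also lies in the diagram. The aim is to show $(r_1,c)$ must contain an $i$, which is absurd, because such an $i$ would share column $c$ with the unpaired $i+1$ and hence $i$-pair with it. For the inductive step I would instead suppose row $r$ is strictly longer than row $r_{s-1}$ yet fails to be strictly longer than row $r_s$, locate the analogous forced cell in row $r_s$ between columns $c-s$ and $c-s+1$, force it to contain $i$, and conclude $r_s=r_{s-1}$ since a column cannot repeat an entry---contradicting that row $r_{s-1}$ is shorter than row $r_s$.

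The forcing of that cell is where the argument genuinely diverges from Lemma~\ref{lem:A}, and I expect it to be the main obstacle. In Lemma~\ref{lem:A} the unpaired $i+1$ serves as the \emph{upper} vertex of a Type~I triple, so the no-co-inversion condition directly pins the unknown entry to $\lbrace i,i+1\rbrace$. Here, by contrast, the unpaired $i+1$ lies \emph{below} the cell to be forced, so the natural triple $\lbrace(r_1,c-1)=i,\ (r_1,c),\ (r,c)=i+1\rbrace$ is of Type~II, and the Type~II co-inversion condition by itself places no restriction on the middle entry. Resolving this will require combining several ingredients rather than a single triple: the non-attacking condition (which forbids a second $i+1$ in column $c$), the absence of any $i$ in column $c$, the admissibility clause for Type~II triples (which is precisely where the hypothesis that row $r$ is weakly shorter than row $r_1$ enters), and the leftward orientation of the $i$-pairing rule in Definition~\ref{def:pair}. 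It is exactly this ``$i$ above, pair to the left'' geometry---opposite to the ``$i$ below'' geometry of Lemma~\ref{lem:A}, and depicted in Fig.~\ref{fig:raising-distribution}---that should upgrade the conclusion from weakly longer to strictly longer. I would also expect to invoke Proposition~\ref{prop:raising-distribution}, particularly part (ii) on the weak monotonicity of the heights $r_s$ across consecutive columns, to keep the inductive comparison of $r_{s-1}$ and $r_s$ under control.
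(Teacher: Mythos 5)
Your high-level plan (induct along the run, force a cell adjacent to a known $i$ to itself equal $i$, then contradict via attacking or the pairing rule) is the right family of ideas, and you correctly spotted that the triple $\lbrace (r_1,c-1)=i,\ (r_1,c),\ (r,c)=i+1\rbrace$ is Type~II with the known $i$ on the top-\emph{left}, hence never a co-inversion triple and never forces anything. But your proposed repair (``combining several ingredients'') does not exist: no local condition at columns $c-1,c$ can force $(r_1,c)=i$, and the same failure silently kills your inductive step as well. There, your hypothesis ``$r$ strictly longer than $r_{s-1}$'' together with the assumption ``$r$ weakly shorter than $r_s$'' leaves only the useless triple available (known $i$ top-left at $(r_s,c-s)$, unknown top-right, $i+1$ below-right), while the useful triple --- unknown top-left at $(r_{s-1},c-s)$, known $i$ top-right at $(r_{s-1},c-s+1)$, $i+1$ below it --- requires row $r_{s-1}$ to be weakly longer than row $r$, which your induction hypothesis explicitly denies. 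So neither your base case nor your step can be completed; the gap is structural, not a missing lemma.

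The paper's proof runs the induction in the opposite direction, and that is what makes the forcing work. Its base case is at the \emph{leftmost} column $c-n$ of the maximal run, and the step goes rightward: assuming for contradiction that $r$ is weakly shorter than $r_{s-1}$ (resp.\ $r_n$), that weak inequality is precisely the Type~II admissibility condition for the triple whose unknown cell sits immediately \emph{left} of a known $i$; with the $i$ on the top-right and $i+1$ below it, co-inversion-freeness pins the unknown to $\lbrace i,i+1\rbrace$ and the attacking condition excludes $i+1$. In the inductive step this forced $i$ collides with the $i$ already present in column $c-s$, giving $r_s=r_{s-1}$ and contradicting the inductive hypothesis; in the base case the forced $i$ lies in the column just \emph{outside} the maximal run, where, since every $i$ and $i+1$ inside the run is column-paired, Definition~\ref{def:pair} would pair it with the supposedly unpaired $i+1$ in column $c$ --- contradiction. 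Note that your write-up never uses the maximality of the run and never examines the column to its left: those are exactly the ingredients your approach is missing, and they cannot be grafted onto an induction that starts at column $c-1$.
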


\begin{proof}
Once again, denote by $\lbrace c-s \rbrace_{1\leq s \leq n}$ the maximal sequence of consecutive columns left of column $c$ satisfying the assumptions above, and for each column $c-s$, denote by $r_s$ the row containing $i$. We proceed by induction on $-s$.

Suppose $r$ is weakly shorter than $r_n$. Then, the cell immediately left of the $i$ contained in column $c-n$ must have value $i$. However, since $n$ is maximal, if such a cell  existed it would be paired with the $i+1$ contained in column $c$, which is a contradiction.

If there exists $s<n$ such that $r$ is strictly longer than $r_s$ but weakly shorter than $r_{s-1}$, then the cell immediately left of the $i$ contained in column $c-{s-1}$ has entry $i$. This implies $r_s=r_{s-1}$, so that $r_{s-1}$ is both strictly longer and weakly shorter than $r$, which is nonsense.
\end{proof}

The specific distribution of the cells containing $i$ and $i+1$ exemplified in the previous two lemmas is the key to proving that the raising operators $e_i$ are well defined on $\SSKD(a)$. In order to do so we show that for any $T \in \SSKD(a)$ on which $e_i$ acts non-trivially, $e_i(T)$ has no attacking cells nor any co-inversion triples and has the same major index as $T$.

\begin{lemma}\label{lem:raising-nonattacking}
  Let $T \in \SSKD(a)$ and $1\leq i <n$ be an integer such that $e_i(T) \neq 0$. Then $e_i(T)$ has no attacking cells.
\end{lemma}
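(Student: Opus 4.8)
The plan is to reduce the statement to a purely local check near the cells that move, and then to dispatch that check using Proposition~\ref{prop:raising-distribution} together with Lemmas~\ref{lem:A} and \ref{lem:B}. First I would record exactly how $e_i$ alters $T$. Since $e_i$ only touches cells valued $i$ or $i+1$ and leaves every such cell valued in $\{i,i+1\}$, any cell of value $v\notin\{i,i+1\}$ is left in place; hence an attacking pair of equal value $v\neq i,i+1$ in $e_i(T)$ would already attack in $T$, contradicting $T\in\SSKD(a)$. So it suffices to forbid attacking pairs of $i$'s and of $i+1$'s. Writing $r$ for the row of the rightmost unpaired $i+1$ (in column $c_0$) and using the layout of Figure~\ref{fig:raising-distribution}, after the action every affected column carries its unique $i$ in the common row $r$; in each left affected column the $i+1$ is raised into the row that previously held that column's $i$, in each right affected column the $i+1$ is lowered into the row that previously held its $i$, and the central column $c_0$ gains an $i$ in row $r$ and no $i+1$.

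For the $i$'s, every $i$ produced in the affected columns lies in the single row $r$, so any two of them sit at the same height and cannot attack (attacking requires the left cell to be \emph{strictly} higher than the right). The only remaining pairs are an affected $i$ in row $r$ against an $i$ in an immediately adjacent unaffected column. At the left end I would show, using Proposition~\ref{prop:raising-distribution}(iii) and (iv) together with the fact that the leftward sweep stopped and the non-attacking property of $T$, that the column just left of the leftmost affected column has no $i$ strictly above row $r$; symmetrically, Proposition~\ref{prop:raising-distribution}(v) and (vi) forbid an $i$ strictly below row $r$ in the column just right of the rightmost affected column. This rules out all attacking pairs of $i$'s.

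For the $i+1$'s, the relocated entries occupy exactly the rows formerly holding the $i$'s, and by Proposition~\ref{prop:raising-distribution}(ii) those heights weakly increase from left to right within the left block and within the right block. A run of equal entries whose heights weakly increase from left to right contains no attacking pair, so no internal attack is created; moreover $c_0$ now holds no $i+1$, so it separates the (high) left block from the (low) right block and prevents any cross-attack between them. For the two outer boundaries I would invoke Lemmas~\ref{lem:A} and \ref{lem:B}: the strict (on the left) and weak (on the right) row-length inequalities there, combined with Proposition~\ref{prop:raising-distribution}(iv),(vi) and the absence of co-inversion triples in $T$, exclude an $i+1$ in the adjacent unaffected column placed so as to attack the relocated $i+1$.

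The hard part will be entirely at these interfaces. Inside the affected region the monotonicity of Proposition~\ref{prop:raising-distribution}(ii) makes the verification immediate, but at the two outer boundaries one must match the \emph{directional} nature of the attacking condition against the stopping conditions of the sweep and the row-length bounds of Lemmas~\ref{lem:A} and \ref{lem:B}. Translating those length bounds into the positional non-attacking statements actually needed, while using that $T$ itself has neither attacking cells nor co-inversion triples, is the delicate step I expect to require the most care.
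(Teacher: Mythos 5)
Your reduction to equal-valued pairs among the cells valued $i$ or $i+1$, and your treatment of the interior of the affected region, are correct, and are in fact organized more transparently than the paper's proof (which instead runs a case analysis on the possible values of two cells sitting in an attacking position). However, there is a genuine gap exactly at the point you yourself identify as hard: the boundary claims for the $i$'s do not follow from the results you cite. Proposition~\ref{prop:raising-distribution}(iii) excludes only an \emph{unpaired} $i$ in the column $L$ immediately left of the leftmost affected column. It does not exclude a column-\emph{paired} $i$ in $L$ lying strictly above row $r$ whose partner $i+1$ lies strictly \emph{below} row $r$ in that same column. Such a column is not swept (its $i+1$ is not in row $r$), so its $i$ is fixed by $e_i$ and would attack the new $i$ created in row $r$; yet this configuration is fully compatible with (iii), (iv), the stopping condition of the sweep, and the non-attackingness of $T$. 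Excluding it is the real content of the lemma: one must invoke the absence of co-inversion triples in $T$, which (by a Type I/Type II analysis) forces the cell immediately left of such an $i$ to be another $i$, so the configuration propagates leftward column by column --- the same mechanism used in the proofs of Lemmas~\ref{lem:A} and~\ref{lem:B} --- until it collides with the pairing rule and the constraints on the first column. Symmetrically, parts (v) and (vi) are statements about $i+1$'s and say nothing about an $i$ strictly below row $r$ in the column right of the rightmost affected column; excluding that one genuinely requires Lemma~\ref{lem:A} \emph{combined with} a co-inversion-triple argument, where (vi) only serves to pin down the entry in row $r$ of that column. Your closing hedge does not repair this, since it locates the delicacy in translating length bounds, not in the fact that the cited propositions are silent about \emph{paired} letters on the wrong side of row $r$.

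Two further comparisons with the paper. Your interface analysis for the $i+1$'s is heavier than necessary: in $T$, any $i+1$ in the column left (resp.\ right) of the affected region lies weakly below (resp.\ weakly above) row $r$ by non-attackingness, while the relocated $i+1$'s move strictly above (resp.\ strictly below) row $r$, so no attack can be created there and Lemmas~\ref{lem:A} and~\ref{lem:B} are not needed at that interface at all. Conversely, the paper meets the dangerous configuration described above in its ``left cell $i$, right cell $i+1$'' case and disposes of it by asserting that such an $i$ must be paired with an $i+1$ sitting in row $r$ itself, immediately left of the right cell (so that both columns are swept simultaneously and no attack results); that assertion is precisely the statement your citations fail to deliver, and any complete write-up of your plan must supply a proof of it.
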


\begin{proof}
  First consider the case of two cells located in the same column. If the column contains both an $i$ and an $i+1$, then $e_i$ will act by swapping these entries, and hence its image will never have a column with two cells of the same value. Likewise, if the column contains an unpaired $i+1$, then this column cannot contain a cell with value $i$. Thus, when $e_i$ sends this $i+1$ to $i$ it will not be attacking a cell in the same column, so we need only consider the case of attacking cells in adjacent columns  with the cell on the left strictly higher than the cell on the right.

  If the left cell has value $i$ and the right cell has value $i+1$ (see the left diagram in Fig.~\ref{fig:Nonattacking}), then $e_i$ acts non-trivially on these columns only if $i$ is paired with an $i+1$ below it and immediately left of the $i+1$ in the right cell. Since $e_i$ will act by exchanging these entries, the image of these columns will remain non-attacking. 
  
Suppose instead the left cell has value $i+1$ and the right cell has value $i$ (see the middle diagram in Fig.~\ref{fig:Nonattacking}). If the top row is weakly longer then the entry immediately right of the left cell must have value $i+1$. Thus, if $e_i$ acts non-trivially on these columns it must swap all $i$'s and $i+1$'s and consequently does not create any attacking cells. 

If the top row is strictly shorter, then the entry immediately left of the right cell must have value $i$ (see the right diagram in Fig.~\ref{fig:Nonattacking}). Moreover, if $e_i$ acted only on the right column then the right cell containing an $i$ would be paired with an $i+1$ above it, which occurs only if there exists an unpaired $i+1$ in some column to its left. However, by the definition of $e_i$, this implies the $i+1$ above the right cell and the $i+1$ in the left cell lie in the same row. By Lemma~\ref{lem:A} this is impossible since this would mean the top row is both weakly longer and strictly shorter than the bottom row. If $e_i$ acted on the left column then once again by Lemma~\ref{lem:A} this leads to a contradiction regarding the relative lengths of the top and bottom row.

\end{proof}

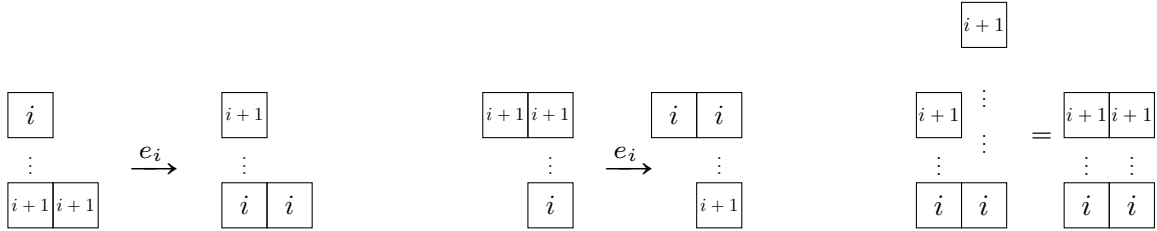
\begin{figure}[ht]
\begin{center}
\begin{tikzpicture}[scale=.6,every node/.style={scale=.7}]
\node at (0,0) (a) {$i+1$};
\node at (0,1){$\vdots$};
\node at (0,2)(b)[scale=1.5]{$i$};
\node at (1,0)(c){$i+1$};
\csquare{(a)}
\csquare{(b)}
\csquare{(c)}
\end{tikzpicture}\;
\begin{tikzpicture}[scale=.6,every node/.style={scale=.7}]
\node at (-2,1) [scale=2]{$\xrightarrow{e_i}$};
\node at (0,0) (a) [scale=1.5]{$i$};
\node at (0,1){$\vdots$};
\node at (0,2)(b){$i+1$};
\node at (1,0)(c)[scale=1.5]{$i$};
\csquare{(a)}
\csquare{(b)}
\csquare{(c)}
\end{tikzpicture}
\qquad\qquad\qquad
\begin{tikzpicture}[scale=.6,every node/.style={scale=.7}]
\node at (0,0) (a)[scale=1.5]{$i$};
\node at (0,1){$\vdots$};
\node at (0,2)(b){$i+1$};
\node at (-1,2)(c){$i+1$};
\csquare{(a)}
\csquare{(b)}
\csquare{(c)}
\end{tikzpicture}\;
\begin{tikzpicture}[scale=.6,every node/.style={scale=.7}]
\node at (-2,1) [scale=2]{$\xrightarrow{e_i}$};
\node at (0,0) (a){$i+1$};
\node at (0,1){$\vdots$};
\node at (0,2)(b)[scale=1.5]{$i$};
\node at (-1,2)(c)[scale=1.5]{$i$};
\csquare{(a)}
\csquare{(b)}
\csquare{(c)}
\end{tikzpicture}
\qquad \qquad\qquad
\begin{tikzpicture}[scale=.6,every node/.style={scale=.75}]
\node at (0,0) (a) [scale=1.5]{$i$};
\node at (0,1){$\vdots$};
\node at (0,2)(b){$i+1$};
\node at (1,0)(c)[scale=1.5]{$i$};
\node at (1,4)(d){$i+1$};
\node at (1,1.5){$\vdots$};
\node at (1,2.5){$\vdots$};
\csquare{(a)}
\csquare{(b)}
\csquare{(c)}
\csquare{(d)}
\end{tikzpicture}
\begin{tikzpicture}[scale=.6,every node/.style={scale=.75}]
\node at (-1,1.5)[scale=1.5]{$=$};
\node at (0,0) (a) [scale=1.5]{$i$};
\node at (0,1){$\vdots$};
\node at (0,2)(b){$i+1$};
\node at (1,0)(c)[scale=1.5]{$i$};
\node at (1,2)(d){$i+1$};
\node at (1,1){$\vdots$};
\csquare{(a)}
\csquare{(b)}
\csquare{(c)}
\csquare{(d)}
\end{tikzpicture}
\caption{The different cases for the proof of Lemma~\ref{lem:raising-nonattacking}.}\label{fig:Nonattacking}
\end{center}
\end{figure}

\begin{lemma}\label{lem:raising-majpreserving}
  Let $T \in \SSKD(a)$ and $1 \leq i <n$ and integer such that $e_i(T) \neq 0$. Then $\maj\left( e_i(T)\right)=\maj(T)$.
\end{lemma}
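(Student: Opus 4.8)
The plan is to prove the stronger statement that $e_i$ preserves the entire \emph{descent set} of $T$, from which $\maj(e_i(T))=\maj(T)$ is immediate. Recall that $\maj(T)=\sum_c \mathrm{leg}(c)$, where the sum ranges over the \emph{descent cells} $c$ (those whose entry is strictly smaller than the entry immediately to their right) and $\mathrm{leg}(c)$ depends only on the position of $c$, not on the filling. Since $e_i$ fixes the shape $a$, it therefore suffices to show that a cell $c$ is a descent cell of $T$ if and only if it is a descent cell of $e_i(T)$; then the two leg-sums agree term by term.

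First I would localize the problem. By Definition~\ref{def:raise-tabloid}, the only entries altered by $e_i$ lie in a contiguous block of columns and take values in $\{i,i+1\}$ before and after: every cell of the central row $r$ (the row of the rightmost unpaired $i+1$) inside the block changes $i+1\mapsto i$, while in each affected column to the left (resp. right) a single cell above (resp. below) row $r$ changes $i\mapsto i+1$. A horizontally adjacent pair can change descent status only if it meets one of these cells, so only finitely many local configurations remain. For pairs inside row $r$ this is immediate: interior pairs pass from $(i+1,i+1)$ to $(i,i)$ and are never descents, while the two boundary pairs of row $r$ are governed by Proposition~\ref{prop:raising-distribution}(iv) and (vi), which forbid precisely the neighboring value that would flip the descent status.

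The substantive case is the ``side'' cells, where an $i$ becomes an $i+1$. Using that each column contains at most one $i$ and at most one $i+1$, together with the staircase property Proposition~\ref{prop:raising-distribution}(ii), I would verify that in an interior affected column the horizontal neighbor of a side cell is either another side cell in the same row (so the pair reads $(i,i)\mapsto(i+1,i+1)$, never a descent) or a cell whose value is pinned away from the critical value by the one-$i$/one-$(i{+}1)$-per-column constraint, so its descent status is unchanged. The delicate configurations are the two outermost columns. At the rightmost affected column a change of descent status would require the right-hand neighbor of the rightmost side cell (a cell in a row below $r$) to equal $i+1$; but the $i+1$ occupying row $r$ of that column then lies strictly above and immediately to the left of this neighbor, so the two equal entries $i+1$ would attack, contradicting that $T$ is non-attacking. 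This rules out any change there cleanly. At the leftmost affected column the analogous up-left cell carries value $i$, so the non-attacking condition gives no contradiction; here I would instead argue from the $i$-pairing of Definition~\ref{def:pair}: an $i$ placed immediately left of the leftmost affected column (in the row $r'$ of the leftmost side cell) would, since all intervening $i$'s and $(i{+}1)$'s in the block are already paired within their own columns, be available to pair with the central $i+1$, contradicting both Proposition~\ref{prop:raising-distribution}(iii) and the defining property that the central $i+1$ is the \emph{rightmost unpaired} $i+1$.

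I expect this left-boundary analysis to be the main obstacle, precisely because it is the one place where the clean non-attacking contradiction is unavailable and one must reason about the bracket-matching implicit in the $i$-pairing rule, invoking the relative row-length estimates of Lemmas~\ref{lem:A} and~\ref{lem:B} to pin down the positions involved. Once every local configuration is shown to preserve descent status, the descent set, and hence $\maj$, is invariant; the identical bookkeeping applies to the lowering operator, which is the inverse of $e_i$.
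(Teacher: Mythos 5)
Your proposal takes essentially the same route as the paper's proof: the paper also establishes the stronger claim that the descent set is preserved, localizes to the consecutive block of changed cells in each row, observes that all interior pairs toggle between equal values (so never descents), and rules out the two critical boundary values ($i$ for the left neighbor, $i+1$ for the right neighbor) by appealing to Proposition~\ref{prop:raising-distribution}. Your finer case split (row-$r$ boundaries via parts (iv) and (vi), side-cell boundaries via the non-attacking condition and the $i$-pairing rule) is the same argument with slightly more explicit bookkeeping at the outermost columns.
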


\begin{proof}
  We will show that the set of cells $c$ for which the entry is greater than that to its right is preserved by $e_i$, which implies preservation of major index.

  Consider a fixed row of $T$. If no cell in that row changes from $T$ to $e_i(T)$, then the major index is trivially maintained. Suppose then that cells $b_1,\ldots,b_k$, $k\geq 1$, change in passing from $T$ to $e_i(T)$. By Proposition~\ref{prop:raising-distribution}, these entries must lie in consecutive columns, say with $b_{j}$ immediately left of $b_{j+1}$. Let $a$ denote the cell immediately left of $b_1$ and let $c$ denote the cell immediately right of $b_k$.

  By the definition of $e_i$, all entries in $b_1,\ldots,b_k$ of $T$ must coincide, and $e_i$ will toggle the values between $i$ and $i+1$. Thus there is no decent among $b_1,\ldots,b_k$ before or after applying $e_i$. Therefore the only cases to be checked are the potential descent from $a$ to $b_1$ and from $b_k$ to $c$.

  If $a<i$, then $a < i,i+1$ creating a descent in both $T$ and $e_i(T)$, and if $a\geq i+1$, then $a \geq i,i+1$ avoiding a descent in both $T$ and $e_i(T)$. By Proposition~\ref{prop:raising-distribution}(iii), $a\neq i$, so this resolves all cases for $a$.

  Similarly, if $c\leq i$, then $i,i+1 \geq c$ avoiding a descent in both $T$ and $e_i(T)$, and if $c> i+1$, then $i,i+1 < c$ creating a descent in both $T$ and $e_i(T)$. By Proposition~\ref{prop:raising-distribution}(iv), $c\neq i+1$, so this resolves all cases for $c$.
\end{proof}

\begin{lemma}\label{lem:raising-inversiontriples}
Let $T \in \SSKD(a)$ and $1 \leq i <n$ an integer such that $e_i(T) \neq 0$. Then $\coinv(e_i(T))=0$.
\end{lemma}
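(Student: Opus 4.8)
The plan is to show that $e_i$ creates no new co-inversion triples, exploiting the fact that $e_i$ alters only the entries of $T$ and never its shape. First I would observe that since the diagram of $a$ is unchanged, every triple (Type~I or Type~II) occupies the same three cells in $T$ and in $e_i(T)$, so only the entries, and hence the orientations, can differ. Because $T\in\SSKD(a)$ already has $\coinv(T)=0$, it suffices to prove that no triple that fails to be a co-inversion triple in $T$ becomes one in $e_i(T)$. I would also record the basic fact that in any triple the third cell shares a column with one of the two row-adjacent cells, so that pair is non-attacking and therefore carries distinct entries; consequently a repeated entry inside a triple can occur only between its two row-adjacent cells.

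Next I would localize the problem. The operator $e_i$ toggles entries only between $i$ and $i+1$, and these are consecutive integers, so no entry can lie strictly between them. Reading entries from smallest to largest, the cyclic orientation of a triple can reverse only if the change of entries swaps the relative order of two of its cells whose values lie in $\{i,i+1\}$. This reduces the analysis to two families of configurations: (i) two cells of the triple are affected by $e_i$ and exchange the values $i$ and $i+1$, the third cell carrying a value outside $\{i,i+1\}$; and (ii) the two row-adjacent cells of the triple are equal in $T$ (both $i$ or both $i+1$), exactly one of them is affected, and they become distinct in $e_i(T)$. Every other triple keeps all three of its entries in the same cyclic order and hence retains its non-co-inversion orientation, so it poses no problem.

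For the remaining configurations I would run a case analysis organized by triple type (I versus II) and by the position of the affected cells relative to the rightmost unpaired $i+1$ (to its left, to its right, or in its column), appealing to the structural description of the affected columns. Proposition~\ref{prop:raising-distribution} pins down the local picture: the column of the unpaired $i+1$ contains no $i$, along the affected columns the $i$'s weakly ascend to the right, and the boundary columns contain neither an unpaired $i$ on the left nor an unpaired $i+1$ on the right. Lemmas~\ref{lem:A} and \ref{lem:B} then supply the decisive row-length inequalities: the row $r$ of the unpaired $i+1$ is weakly longer than the rows of the $i$'s to its right and strictly longer than the rows of the $i$'s to its left. In each case these inequalities either contradict the Type~I hypothesis (lower row strictly longer) or the Type~II hypothesis (higher row weakly longer), or they force the cyclic reading of $e_i(T)$ to be the non-co-inversion orientation; the boundary subcases are dispatched using parts (iii)--(vi) of Proposition~\ref{prop:raising-distribution}, exactly as in the proof of Lemma~\ref{lem:raising-nonattacking}.

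The main obstacle I anticipate is configuration (i) when the two affected cells straddle two adjacent columns inside the swapping block, since there the orientation interacts simultaneously with the ascending pattern of the $i$'s and with whether the triple sits to the left or to the right of the unpaired $i+1$. Matching the row-length inequalities of Lemmas~\ref{lem:A} and \ref{lem:B} to the precise longer-row requirement that distinguishes a Type~I from a Type~II triple, and verifying that the resulting clockwise or counterclockwise reading is never the co-inversion one, is the delicate bookkeeping at the heart of the argument. As with Lemma~\ref{lem:raising-majpreserving}, the payoff is that the combinatorial constraints defining $\SSKD(a)$ turn out to be exactly strong enough to rule out every bad orientation, so that $e_i(T)$ has no co-inversion triples and $\coinv(e_i(T))=0$.
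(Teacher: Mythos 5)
Your proposal is correct and follows essentially the same route as the paper's proof: both reduce the problem to triples containing one $i$, one $i+1$, and one entry $x \neq i,i+1$ (repeated-entry triples being harmless), and both dispatch the resulting Type I and Type II configurations using Proposition~\ref{prop:raising-distribution}(iii)--(vi) together with the row-length inequalities of Lemmas~\ref{lem:A} and \ref{lem:B}. The only cosmetic difference is direction: the paper enumerates hypothetical co-inversion triples in $e_i(T)$ and rules out each possible preimage, while you track each triple of $T$ forward and show its orientation cannot flip to the co-inversion one --- the same case analysis read contrapositively.
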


\begin{proof}
Suppose $T \in \SSKD(a)$ has two consecutive rows forming a triple.  If only one of the cells contains an entry equal to $i$ or $i+1$ but the other two cells do not then clearly the action of $e_i$ on these columns will not modify the existing orientation and so the image of $T$ under $e_i$ will not contain co-inversion triples.

Now suppose the triple has two cells with value $i$ or $i+1$. By \cite{HHL08}(Lemma~3.6.3), a co-inversion triple will never contain two cells with equal value, and so it suffices to consider triples containing one cell with value $i$, one cell with value $i+1$, and one cell with value $x \neq i, i+1$. Hence, let $T \in \SSKD(a)$ be such that $e_i(T)$ contains a co-inversion triple with only one cell not equal to $i$ or $i+1$. We will show that such a $T$ cannot exist by considering each possible co-inversion triple of this form and deriving a contradiction.

Assume $e_i(T)$ contains a triple of \emph{type I}, thus the bottom row is strictly longer than the top row of the triple.

\begin{itemize}
\item Suppose $i$ lies in the top cell, $i+1$ in the bottom left cell, and $x$ in the bottom right cell (see figure below). Then the only possible pre-image interchanges the $i$ and $i+1$. However, if in $T$ $i+1$ lies above $i$ in the same column then by Lemma~\ref{lem:A} the top row must be weakly longer than the bottom row, which is a contradiction.
\[
\hackcenter{\begin{tikzpicture}[scale=.6,every node/.style={scale=.75}]
\node at (0,0) (a) {$i+1$};
\node at (0,1){$\vdots$};
\node at (0,2)(b)[scale=1.5]{$i$};
\node at (1,0)(c)[scale=1.5]{$x$};
\csquare{(a)}
\csquare{(b)}
\csquare{(c)}
\end{tikzpicture}}\;\;
\xleftarrow{e_i}\;\;
\hackcenter{\begin{tikzpicture}[scale=.6,every node/.style={scale=.75}]
\node at (0,0) (a) [scale=1.5]{$i$};
\node at (0,1){$\vdots$};
\node at (0,2)(b){$i+1$};
\node at (1,0)(c)[scale=1.5]{$x$};
\csquare{(a)}
\csquare{(b)}
\csquare{(c)}
\end{tikzpicture}}\]

\item Suppose $i+1$ lies in the top cell, $x$ in the bottom left cell, and $i$ in the bottom right cell. If in $T$ the cells above and right of $x$ were either both $i$ or both $i+1$, they would be attacking. Thus, $T$ must have an $i$ in the cell above $x$ and an $i+1$ in the cell right of $x$. This implies that in $T$ the cell containing $i$ is paired with an $i+1$ below it in the same column. By the definition of $e_i$, the $i+1$'s in the adjacent columns must lie in the same row so $x=i+1$. This is impossible since by assumption $x \neq i+1$.
\[
\hackcenter{\begin{tikzpicture}[scale=.6,every node/.style={scale=.75}]
\node at (0,0) (a) [scale=1.5]{$x$};
\node at (0,1){$\vdots$};
\node at (0,2)(b){$i+1$};
\node at (1,0)(c)[scale=1.5]{$i$};
\csquare{(a)}
\csquare{(b)}
\csquare{(c)}
\end{tikzpicture}}\;\;
\xleftarrow{e_i}\;\;
\hackcenter{\begin{tikzpicture}[scale=.6,every node/.style={scale=.75}]
\node at (0,0) (a) [scale=1.5]{$x$};
\node at (0,1){$\vdots$};
\node at (0,2)(b)[scale=1.5]{$i$};
\node at (1,0)(c)[scale=1.5]{$i$};
\csquare{(a)}
\csquare{(b)}
\csquare{(c)}
\end{tikzpicture}}\;\;
or
\;\;
\hackcenter{\begin{tikzpicture}[scale=.6,every node/.style={scale=.75}]
\node at (0,0) (a) [scale=1.5]{$x$};
\node at (0,1){$\vdots$};
\node at (0,2)(b){$i+1$};
\node at (1,0)(c){$i+1$};
\csquare{(a)}
\csquare{(b)}
\csquare{(c)}
\end{tikzpicture}}\;\;
or
\;\;
\hackcenter{\begin{tikzpicture}[scale=.6,every node/.style={scale=.75}]
\node at (0,0) (a) [scale=1.5]{$x$};
\node at (0,1){$\vdots$};
\node at (0,2)(b)[scale=1.5]{$i$};
\node at (1,0)(c){$i+1$};
\csquare{(a)}
\csquare{(b)}
\csquare{(c)}
\end{tikzpicture}}\]

\item Suppose $x$ lies in the top cell, $i$ in the bottom left cell, and $i+1$ in the bottom right cell. If $T$ contained an $i$ in both cells of the bottom row, then this would imply the right column of the triple was the leftmost affected column of $T$. However, by Proposition~\ref{prop:raising-distribution} (iii) there cannot be an unpaired $i$ left of the leftmost affected column, hence this situation is impossible.If instead, $T$ contained an $i$ in the left bottom cell and an $i+1$ in the right bottom cell, then by Definition~\ref{def:raise-tabloid} $e_i$ would not act on both columns. Thus this cannot be the preimage of $e_i(T)$.  Finally, if both cells on the bottom row of $T$ had value $i+1$ then the left column of the triple would be the rightmost affected column. By Proposition~\ref{prop:raising-distribution} (vi) this is impossible. Thus, for any $T \in \SSKD(a)$ its image $e_i(T)$ will never contain a co-inversion triple of this form.
\[
\hackcenter{\begin{tikzpicture}[scale=.6,every node/.style={scale=.75}]
\node at (0,0) (a) [scale=1.5]{$i$};
\node at (0,1){$\vdots$};
\node at (0,2)(b)[scale=1.5]{$x$};
\node at (1,0)(c){$i+1$};
\csquare{(a)}
\csquare{(b)}
\csquare{(c)}
\end{tikzpicture}}\;\;
\xleftarrow{e_i}\;\;
\hackcenter{\begin{tikzpicture}[scale=.6,every node/.style={scale=.75}]
\node at (0,0) (a)[scale=1.5]{$i$};
\node at (0,1){$\vdots$};
\node at (0,2)(b)[scale=1.5]{$x$};
\node at (1,0)(c)[scale=1.5]{$i$};
\csquare{(a)}
\csquare{(b)}
\csquare{(c)}
\end{tikzpicture}}\;\;
or
\;\;
\hackcenter{\begin{tikzpicture}[scale=.6,every node/.style={scale=.75}]
\node at (0,0) (a){$i+1$};
\node at (0,1){$\vdots$};
\node at (0,2)(b)[scale=1.5]{$x$};
\node at (1,0)(c)[scale=1.5]{$i$};
\csquare{(a)}
\csquare{(b)}
\csquare{(c)}
\end{tikzpicture}}\;\;
or
\;\;
\hackcenter{\begin{tikzpicture}[scale=.6,every node/.style={scale=.75}]
\node at (0,0) (a) {$i+1$};
\node at (0,1){$\vdots$};
\node at (0,2)(b)[scale=1.5]{$x$};
\node at (1,0)(c){$i+1$};
\csquare{(a)}
\csquare{(b)}
\csquare{(c)}
\end{tikzpicture}}\]

\end{itemize}

Now assume $e_i(T)$ contains a triple of \emph{type II}, and thus the top row is weakly longer than the bottom row of the triple.

\begin{itemize}
\item Suppose $i$ lies in the bottom cell, $i+1$ in the top left cell, and $x$ in the top right cell. If in $T$ the cells in the triple left and below $x$ both had value $i$ or $i+1$, then these cells would be attacking. Thus, $T$ must have an $i$ right of $x$ in the top row and an $i+1$ below $x$ in the bottom row. By Definition~\ref{def:raise-tabloid}, the $i$ in the top left cell must be $i$-paired with an $i+1$ below it and left of the $i+1$ below $x$. By Lemma~\ref{lem:B} since $i$ lies above $i+1$ then the bottom row must be strictly longer. This is a contradiction since by definition type II triples must have the top row weakly longer than the bottom row.
\[
\hackcenter{\begin{tikzpicture}[scale=.6,every node/.style={scale=.75}]
\node at (0,0) (a)[scale=1.5]{$i$};
\node at (0,1){$\vdots$};
\node at (0,2)(b)[scale=1.5]{$x$};
\node at (-1,2)(c){$i+1$};
\csquare{(a)}
\csquare{(b)}
\csquare{(c)}
\end{tikzpicture}}
\;\;
\xleftarrow{e_i}\;\;
\hackcenter{\begin{tikzpicture}[scale=.6,every node/.style={scale=.75}]
\node at (0,0) (a)[scale=1.5]{$i$};
\node at (0,1){$\vdots$};
\node at (0,2)(b)[scale=1.5]{$x$};
\node at (-1,2)(c)[scale=1.5]{$i$};
\csquare{(a)}
\csquare{(b)}
\csquare{(c)}
\end{tikzpicture}}
\;\; or
\hackcenter{\begin{tikzpicture}[scale=.6,every node/.style={scale=.75}]
\node at (0,0) (a){$i+1$};
\node at (0,1){$\vdots$};
\node at (0,2)(b)[scale=1.5]{$x$};
\node at (-1,2)(c){$i+1$};
\csquare{(a)}
\csquare{(b)}
\csquare{(c)}
\end{tikzpicture}}
\;\; or
\hackcenter{\begin{tikzpicture}[scale=.6,every node/.style={scale=.75}]
\node at (0,0) (a){$i+1$};
\node at (0,1){$\vdots$};
\node at (0,2)(b)[scale=1.5]{$x$};
\node at (-1,2)(c)[scale=1.5]{$i$};
\csquare{(a)}
\csquare{(b)}
\csquare{(c)}
\end{tikzpicture}}
\]

\item Suppose $x$ lies in the bottom cell, $i$ in the top left cell, and $i+1$ in the top right cell. If $T$ had an $i+1$ in the top left cell and an  $i$ in the top right cell, then by Definition~\ref{def:raise-tabloid} and Proposition~\ref{prop:raising-distribution}(v) $e_i$ would only act on left column, so this cannot be the preimage. If $T$ contained an $i$ in both cells of the top row then the right column of the triple would be the leftmost affected column of $T$. By Proposition~\ref{prop:raising-distribution} (iii) this cannot occur since the left column of the triple cannot have an unpaired $i$. Thus in $T$, both cells in the top row of the triple must have value $i+1$. However, this implies the left column of the triple is the rightmost affected column of $T$, so by Proposition~\ref{prop:raising-distribution} (vi) the column to its right cannot contain an unpaired $i+1$. Thus this co-inversion triple cannot be a part of the image under $e_i$ for any $T \in \SSKD(a)$.
\[
\hackcenter{\begin{tikzpicture}[scale=.6,every node/.style={scale=.75}]
\node at (0,0) (a)[scale=1.5]{$x$};
\node at (0,1){$\vdots$};
\node at (0,2)(b){$i+1$};
\node at (-1,2)(c)[scale=1.5]{$i$};
\csquare{(a)}
\csquare{(b)}
\csquare{(c)}
\end{tikzpicture}}
\;\;
\xleftarrow{e_i}\;\;
\hackcenter{\begin{tikzpicture}[scale=.6,every node/.style={scale=.75}]
\node at (0,0) (a)[scale=1.5]{$x$};
\node at (0,1){$\vdots$};
\node at (0,2)(b)[scale=1.5]{$i$};
\node at (-1,2)(c){$i+1$};
\csquare{(a)}
\csquare{(b)}
\csquare{(c)}
\end{tikzpicture}}
\;\; or
\hackcenter{\begin{tikzpicture}[scale=.6,every node/.style={scale=.75}]
\node at (0,0) (a)[scale=1.5]{$x$};
\node at (0,1){$\vdots$};
\node at (0,2)(b)[scale=1.5]{$i$};
\node at (-1,2)(c)[scale=1.5]{$i$};
\csquare{(a)}
\csquare{(b)}
\csquare{(c)}
\end{tikzpicture}}
\;\; or
\hackcenter{\begin{tikzpicture}[scale=.6,every node/.style={scale=.75}]
\node at (0,0) (a)[scale=1.5]{$x$};
\node at (0,1){$\vdots$};
\node at (0,2)(b){$i+1$};
\node at (-1,2)(c){$i+1$};
\csquare{(a)}
\csquare{(b)}
\csquare{(c)}
\end{tikzpicture}}
\]
\item Lastly, suppose $x$ lies in the top left cell, $i$ in the top right cell, and $i+1$ in the bottom cell. Then the only possible preimage has an $i+1$ in the top right cell and an $i$ in the cell below it. Since the top row is weakly longer than the bottom row, this implies $x=i+1$ which is a contradiction.
\[
\hackcenter{\begin{tikzpicture}[scale=.6,every node/.style={scale=.75}]
\node at (0,0) (a){$i+1$};
\node at (0,1){$\vdots$};
\node at (0,2)(b)[scale=1.5]{$i$};
\node at (-1,2)(c)[scale=1.5]{$x$};
\csquare{(a)}
\csquare{(b)}
\csquare{(c)}
\end{tikzpicture}}
\;\;
\xleftarrow{e_i}\;\;
\hackcenter{\begin{tikzpicture}[scale=.6,every node/.style={scale=.75}]
\node at (0,0) (a)[scale=1.5]{$i$};
\node at (0,1){$\vdots$};
\node at (0,2)(b){$i+1$};
\node at (-1,2)(c)[scale=1.5]{$x$};
\csquare{(a)}
\csquare{(b)}
\csquare{(c)}
\end{tikzpicture}}\]

\end{itemize}
\end{proof}

From these lemmas, we establish the following theorem.

\begin{theorem}
 For any integer $1 \leq i <n$, the raising operators $e_i : \SSKD(a) \rightarrow \SSKD(a) \cup \{0\}$ are well-defined $\maj$-preserving maps on $\SSKD(a)$.
  \label{thm:well-defined}
\end{theorem}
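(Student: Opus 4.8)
The plan is to assemble Theorem~\ref{thm:well-defined} directly from the four preceding lemmas, since together they establish exactly the three properties that ``well-defined $\maj$-preserving map'' requires. First I would fix an integer $1 \leq i < n$ and an arbitrary $T \in \SSKD(a)$, and split into the two cases $e_i(T) = 0$ and $e_i(T) \neq 0$. The first case is immediate: by Definition~\ref{def:raise-tabloid}, $e_i(T) = 0$ precisely when $T$ has no unpaired $i+1$, and $0$ is an allowed output in the codomain $\SSKD(a) \cup \{0\}$, so there is nothing to verify. Thus the entire content lies in the case $e_i(T) \neq 0$, where I must confirm that the output is a genuine element of $\SSKD(a)$ and that $\maj$ is unchanged.

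The key steps, in order, are as follows. Since $e_i$ only toggles entries between $i$ and $i+1$ and otherwise permutes existing values within a fixed set of cells, the output filling has the same multiset of entries as $T$; in particular $\wt(e_i(T))$ differs from $\wt(T)$ by $\mathbf{e}_i - \mathbf{e}_{i+1}$, and $e_i(T)$ is a filling of the diagram of the \emph{same} composition $a$. To confirm membership in $\SSKD(a)$, I invoke Definition~\ref{def:SSKD}, which requires that a semistandard key tabloid be a non-attacking filling with no co-inversion triples. Lemma~\ref{lem:raising-nonattacking} supplies the non-attacking condition, and Lemma~\ref{lem:raising-inversiontriples} supplies $\coinv(e_i(T)) = 0$, i.e.\ the absence of co-inversion triples. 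Together these two lemmas give $e_i(T) \in \SSKD(a)$. Finally, Lemma~\ref{lem:raising-majpreserving} gives $\maj(e_i(T)) = \maj(T)$, establishing the $\maj$-preserving claim. Since $T$ was arbitrary and both outputs ($0$ and an element of $\SSKD(a)$) lie in the stated codomain, the map $e_i \colon \SSKD(a) \rightarrow \SSKD(a) \cup \{0\}$ is well-defined.

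Because the substance has been delegated to the lemmas, I do not expect a genuine obstacle in the theorem's proof itself; the only care needed is to state clearly that the three required properties from Definition~\ref{def:SSKD} correspond to exactly the three lemmas, so that no gap is left. The real difficulty — the verification that the intricate column-swapping in Definition~\ref{def:raise-tabloid} cannot introduce a new attacking pair or a new co-inversion triple, relying heavily on the distributional constraints of Proposition~\ref{prop:raising-distribution} and the row-length comparisons of Lemmas~\ref{lem:A} and \ref{lem:B} — has already been absorbed into the earlier results. Thus the proof is a short synthesis: cite Lemma~\ref{lem:raising-nonattacking} and Lemma~\ref{lem:raising-inversiontriples} for well-definedness into $\SSKD(a)$, and Lemma~\ref{lem:raising-majpreserving} for $\maj$-preservation, concluding that the $e_i$ are well-defined $\maj$-preserving operators.
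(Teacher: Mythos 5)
Your proposal is correct and matches the paper's own proof essentially verbatim: the paper likewise concludes Theorem~\ref{thm:well-defined} by citing Lemma~\ref{lem:raising-nonattacking} and Lemma~\ref{lem:raising-inversiontriples} for membership in $\SSKD(a)$ and Lemma~\ref{lem:raising-majpreserving} for preservation of $\maj$. Your extra remarks (the trivial $e_i(T)=0$ case and the weight shift by $\mathbf{e}_i - \mathbf{e}_{i+1}$) are harmless additions that do not change the argument.
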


\begin{proof}
Recall that if $T$ is a semistandard key tabloid no cells of $T$ can form attacking pairs or co-inversion triples. By Lemmas~\ref{lem:raising-nonattacking} and \ref{lem:raising-inversiontriples} we have that for any $T \in \SSKD(a)$ with $e_i(T) \neq 0$ then $e_i(T)$ has no attacking cells and $\coinv(T)=0$. Thus $e_i(T) \in \SSKD(a)$. Moreover, by Lemma~\ref{lem:raising-majpreserving} we also have $\maj(e_i(T))=\maj(T)$ so $e_i$ is indeed $\maj$-preserving.
\end{proof}

In an entirely analogous manner for integers $1\leq i<n$ we can define \emph{lowering operators}, denoted by $f_i$, satisfying $f_i(T^{\prime})=T$ if and only if $e_i(T) = T^{\prime}$ for all $T, T^{\prime} \in \SSKD(a)$.

\begin{definition} \label{def:lower-key}
Given any $T \in \SSKD(a)$ and an integer $1\leq i<n$, define the \newword{lowering operator} $f_i$ on $\SSKD(a)$ whose action on $T$ is as follows:
\begin{itemize}
\item Set $f_i(T)=0$ whenever
\begin{itemize}
\item $T$ does not have any cells containing an unpaired $i$ or
\item the leftmost unpaired $i$ is in row $i$ and all columns to its left have an $i$ in the same row with an $i+1$ above them.
\end{itemize}
\item otherwise, $f_i$ changes the leftmost unpaired $i$ to $i+1$ and
\begin{itemize}
\item swaps the entries $i$ and $i+1$ in each of the consecutive columns left of this entry that have an $i$ in the same row and an $i+1$ above, and
\item swaps the entries $i$ and $i+1$ in each of the consecutive columns right of this entry that have an $i$ in the same row and an $i+1$ below.
\end{itemize}
\end{itemize}
\end{definition}

Whenever $T$ has an unpaired $i$ yet $f_i(T)=0$, we say that $T$ is subject to \newword{Demazure death}.

 We similarly say that $f_i$ \emph{flips} $T$ if within some column if $f_i$ changes $i+1$ above $i$ to become $i$ above $i+1$.

\begin{remark} \label{rem:complete-i-string}
Given any $T \in \SSKD(a)$ we note that if $f_i(T) \neq 0$ then the first column cannot have an unpaired $i$ in row $i$ (since $f_i$ will always act on the leftmost unpaired $i$). So if $N_i$ is the number of unpaired cells with value $i$ of $T$ then $f_i^s(T) \neq 0$ for all $1\leq s \leq N_i$ but $f_i^{N_i+1}(T)=0$. Thus for any $T$ with $e_i(T)=0$ either $f_i(T)=0$ or $f_i$ will act nontrivially exactly $N_i$ times.
\end{remark}

We now prove that these raising and lowering operators are inverse to one another when nonzero.

\begin{theorem}
  For $S,T\in\SSKD(a)$, we have $e_i(S) = T$ if and only if $f_i(T) = S$.
\end{theorem}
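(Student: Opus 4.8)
The plan is to establish the two composition identities $f_i(e_i(S)) = S$ whenever $e_i(S)\neq 0$ and $e_i(f_i(T)) = T$ whenever $f_i(T)\neq 0$, and to check that the loci where $e_i$ and $f_i$ vanish match up; together these yield the asserted equivalence for $S,T\in\SSKD(a)$. By Theorem~\ref{thm:well-defined} and its analogue for $f_i$, both operators land in $\SSKD(a)\cup\{0\}$, so it suffices to compare the two resulting fillings cell by cell. The whole argument is organized around the $i$-pairing of Definition~\ref{def:pair}.

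First I would record the combinatorial skeleton of the pairing. Since no column of a semistandard key tabloid may contain two equal entries, each column carries at most one $i$ and at most one $i+1$; a column containing both is $i$-paired internally, while the remaining columns each contribute a single lone symbol. Reading these lone symbols left to right and treating a lone $i$ as an opening and a lone $i+1$ as a closing bracket, Definition~\ref{def:pair} is exactly left-to-right bracket matching, so the unpaired symbols form a block of unpaired $i+1$'s followed by a block of unpaired $i$'s. In particular the rightmost unpaired $i+1$, the cell on which $e_i$ acts, sits immediately to the left of the leftmost unpaired $i$. Crucially, every swap performed by $e_i$ (resp.\ $f_i$) takes place inside a column that already contains both $i$ and $i+1$, hence inside an internally paired column, so these flips leave the bracket word untouched; the only change to the unpaired structure is at the trigger cell, where $e_i$ converts the rightmost unpaired $i+1$ into the cell that then becomes the leftmost unpaired $i$ of $e_i(S)$. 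Consequently $f_i$ applied to $e_i(S)$ selects precisely this cell and restores its value, and dually.

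Next I would match the flipped windows. Write $c_0$ for the trigger column and $r_0$ for its row. On $S$, the operator $e_i$ flips a maximal run of consecutive columns left of $c_0$ of the form ``$i$ above $i+1$ in row $r_0$'' and a maximal run right of $c_0$ of the form ``$i+1$ in row $r_0$, $i$ below''; after the flips these columns read ``$i+1$ above $i$ in row $r_0$'' and ``$i$ in row $r_0$, $i+1$ below'' respectively, which is exactly the shape that $f_i$ flips on $e_i(S)$, the two swaps being literally inverse. The one thing that must be verified is that the two runs have the same extent. That a run cannot stop short is immediate, since each flipped column continues to satisfy the opposite condition; that it cannot over-run is where Proposition~\ref{prop:raising-distribution} enters: parts (iii)--(iv) show that the cell just beyond the left boundary carries no $i$ in row $r_0$, and parts (v)--(vi), together with the non-attacking property of $S$, show that the cell just beyond the right boundary can be neither an $i$ with an $i+1$ below it nor an $i+1$ (a spurious rightward extension would force two $i+1$'s in adjacent columns with the left one strictly higher, an attacking pair). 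This boundary analysis, carried out in both directions and symmetrically for $e_i\circ f_i$ using the non-attacking property of $T$, is the main obstacle; Lemmas~\ref{lem:A} and \ref{lem:B} are the tools that keep the row-length comparisons under control within each run.

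Finally I would reconcile the vanishing loci, which is what makes the composition identities genuine (not merely ``equal up to possibly producing $0$''). The operator $e_i$ kills $T$ exactly when the bracket word has no closing symbol, i.e.\ $T$ is at the top of its $i$-string; so for the forward implication I must show that when $e_i(S)=T$ the new leftmost unpaired $i$ of $T$ is never subject to the Demazure death clause of Definition~\ref{def:lower-key}. By Proposition~\ref{prop:raising-distribution}(iv) the column immediately left of the affected run has no $i$ in row $r_0$, so the columns to the left of the trigger do not all exhibit the staircase pattern required for death, and the remaining edge cases (the run reaching the first column) are excluded by Remark~\ref{rem:complete-i-string} and the structural constraints on $\SSKD(a)$; hence $f_i(e_i(S))\neq 0$ and equals $S$. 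The dual statement shows $e_i(f_i(T))=T$ when $f_i(T)\neq 0$. With both composition identities established, specializing the first to $T=e_i(S)$ and the second to $S=f_i(T)$ gives the equivalence $e_i(S)=T\iff f_i(T)=S$.
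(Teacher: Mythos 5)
Your proposal follows the same route as the paper's proof: both reduce the statement to the two composition identities $f_i(e_i(S))=S$ and $e_i(f_i(T))=T$, both identify the trigger cells via the observation that the rightmost unpaired $i+1$ of $S$ becomes the leftmost unpaired $i$ of $e_i(S)$ (the paper phrases this as ``there is no unpaired $i$ left of the rightmost unpaired $i+1$,'' your bracket-matching description is the same fact), and both match the flipped windows by the boundary analysis of Proposition~\ref{prop:raising-distribution} --- your parenthetical attacking-pair argument for why the right run cannot over-extend is literally the paper's argument. The one place you genuinely go beyond the paper is the vanishing-locus discussion: the paper's proof of $f_i(e_i(T))=T$ never rules out the possibility that $f_i(e_i(T))=0$ by the Demazure death clause of Definition~\ref{def:lower-key}, so your insistence that the composition identities must be ``genuine, not merely equal up to possibly producing $0$'' is a real gain in rigor over the printed proof. (Note the issue is one-sided, as you observe: $e_i$ has no death clause, so the $e_i\circ f_i$ direction needs no such check.)

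That said, your disposal of the hardest sub-case is not yet a proof. When the left-affected run reaches column $1$ and the trigger lies in row $i$, the image $e_i(S)$ exhibits exactly the staircase pattern of the death clause, and Remark~\ref{rem:complete-i-string} cannot exclude this: that remark merely records a consequence of the death clause (what $f_i(T)\neq 0$ forces), so invoking it to show $f_i(e_i(S))\neq 0$ is circular. What actually rules the case out is a property of the \emph{pre-image}: if $e_i(S)$ were subject to death, then $S$ itself would have to contain, in every column $1,\dots,c-1$, an $i+1$ in row $i$ with an $i$ strictly above it (and, by Lemma~\ref{lem:B}, row $i$ strictly longer than the rows of those $i$'s), with an unpaired $i+1$ in row $i$ of column $c$. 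One must then show this configuration is incompatible with membership in $\SSKD(a)$; this does not follow from non-attacking and triple-freeness as naively read off the displayed definitions, but from the finer structural constraints on key tabloids inherited from the Haglund--Haiman--Loehr conventions (the same constraints that make $E_a(X_n;0,0)=\key_a$ come out right). Supplying that argument is the one genuine piece of work your outline leaves undone --- though, to be fair, the paper's own proof leaves it undone as well, since it skips the death check entirely.
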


\begin{proof}
Suppose $T \in \SSKD(a)$ and $i$ and integer such that $e_i(T) \neq 0$. Since $T$ contains an unpaired $i+1$, necessarily there is no unpaired $i$ left of the rightmost unpaired $i+1$. Consequently, the leftmost unpaired $i$ in $e_i(T)$ will be precisely the image of the rightmost unpaired $i+1$ in $T$. By Proposition~\ref{prop:raising-distribution}
(iv) it follows that the affected columns left of the rightmost unpaired $i+1$ in $T$ will be the same affected columns left of the leftmost unpaired $i$ in $e_i(T)$. Furthermore, if the $i+1$ in the rightmost affected column of $T$ has a cell immediately right with value $i$, then this $i$ will either be unpaired or paired with an $i+1$ above it and will thus be unaffected by the action of $e_i$ and $f_i$. Hence the columns right of the rightmost unpaired $i+1$ in $T$ affected by $e_i$ are the same as columns right of the leftmost unpaired $i$ in $e_i(T)$ affected by $f_i$. Thus $f_i(e_i(T))=T$.

Likewise, if $S \in \SSKD(a)$ with $f_i(S) \neq 0$ then since $f_i$ acts non-trivially only if there is no unpaired $i+1$ right of the leftmost unpaired $i$, then the rightmost unpaired $i+1$ in $f_i(S)$ must be the image of the leftmost unpaired $i$ in $S$. Since in the rightmost of column of $S$ affected by $f_i$ the cell containing an $i$ cannot contain an $i+1$ immediately to its left, then the columns in $S$ affected by $f_i$ right of the leftmost unpaired $i$ are the same as the columns in $f_i$ right of the rightmost unpaired $i+1$  in $f_i(S)$ affected by $e_i$. Furthermore, if in the leftmost column of $S$ affected by $f_i$ the cell containing an $i$ had a cell with an $i+1$ immediately to its left, then this $i+1$ cannot have an $i$ above it since that would create an attacking cell. Thus, the columns of $S$ affected by $f_i$ left of the leftmost unpaired $i$ are the same as the columns of $f_i(S)$ affected by $e_i$ left of the rightmost unpaired $i+1$. Hence, $e_i(f_i(S))=S$.
\end{proof}

\subsection{Rectification of key tabloids}
\label{sec:tabloid-rect}

Assaf and Schilling \cite{ASc18} defined an explicit Demazure crystal structure on semistandard key \emph{tableaux} \cite{Ass18}, the objects that correspond to Mason's semi-skyline augmented fillings \cite{Mas09}. As semistandard key tableaux are precisely the semistandard key tabloids with $\maj = 0$ \cite{Ass18}(Proposition~3.1), we can consider our operators restricted to this case, and in so doing we recover the constructions of Assaf and Schilling \cite{ASc18}.

\begin{proposition}
  The raising operators in Definition~\ref{def:raise-tabloid} restricted to semistandard key tableaux agree with the raising operators in \cite{ASc18}(Definition~3.7).
  \label{prop:ASc}
\end{proposition}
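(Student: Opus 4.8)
The plan is to show that, for a semistandard key tableau $T$ and any $1 \le i < n$, the operator $e_i$ of Definition~\ref{def:raise-tabloid} selects the same cell to decrement and performs the same accompanying column swaps as the operator of \cite{ASc18}(Definition~3.7). First I would record the structural reductions afforded by the hypothesis $\maj(T) = 0$. By \cite{Ass18}(Proposition~3.1) the semistandard key tableaux are exactly the elements of $\SSKD(a)$ with $\maj = 0$, and since $\maj(T) = 0$ forces every row of $T$ to weakly decrease from left to right (any strictly smaller entry with a larger right neighbor contributes a positive leg), the configurations that can occur in $T$ are far more rigid than in a general tabloid. Moreover, Theorem~\ref{thm:well-defined} guarantees $e_i$ is $\maj$-preserving, so $e_i(T)$ again has $\maj = 0$; hence the operator of Definition~\ref{def:raise-tabloid} genuinely restricts to a map on semistandard key tableaux. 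This makes the comparison meaningful, as both $e_i$'s are then weight-raising maps on the same finite set, so it suffices to check they agree pointwise.

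The comparison factors into two steps: matching the pairing rules and matching the local action. For the first step, I would prove that on a weakly-decreasing-row filling the $i$-pairing of Definition~\ref{def:pair} coincides with the pairing used in \cite{ASc18}(Definition~3.7). The leftward pairing of Definition~\ref{def:pair} (an unpaired $i+1$ pairs with an unpaired $i$ to its left) is precisely the natural convention for weakly decreasing rows, where within a single row every $i+1$ lies weakly to the left of every $i$. I would verify that the column-pairing step and the iterative leftward step select the same matched pairs, and hence the same set of unpaired $i+1$'s, as the Assaf--Schilling rule. In particular the rightmost unpaired $i+1$, which is the cell Definition~\ref{def:raise-tabloid} decrements, coincides with the distinguished cell of \cite{ASc18}(Definition~3.7).

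For the second step I would analyze the consecutive-column swaps of Definition~\ref{def:raise-tabloid}. Using the weakly-decreasing-row hypothesis together with the distribution of the affected cells recorded in Proposition~\ref{prop:raising-distribution} and the row-length constraints of Lemmas~\ref{lem:A} and \ref{lem:B}, I would show that the columns flagged for a swap---those having an $i+1$ in the active row with an $i$ above it (for columns to the left) or an $i$ below it (for columns to the right)---are exactly the columns that the Assaf--Schilling operator rearranges in order to keep its output a valid semistandard key tableau, and that the toggling of $i$ and $i+1$ produces identical entries. The point I would aim to establish is that, for a tableau, these swaps are forced by the no-co-inversion-triple and weak-decrease conditions: once the cell to be decremented is fixed, the remaining modifications are the unique completion to another semistandard key tableau, and both definitions realize this unique completion.

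The main obstacle I anticipate is precisely this second step: carrying out the column-swap bookkeeping and confirming, case by case, that the consecutive left- and right-swaps of Definition~\ref{def:raise-tabloid} reproduce the Assaf--Schilling rearrangement rather than merely some rearrangement of the same weight. This is a finite local check, but it requires care because both definitions must be unwound simultaneously; I expect the row-length inequalities of Lemmas~\ref{lem:A} and \ref{lem:B} to be the decisive tool, since they pin down exactly which adjacent columns can share an $i+1$ in the active row and thereby force the swap pattern. Once the pairing and the local action are matched, the two operators agree on every semistandard key tableau, completing the proof.
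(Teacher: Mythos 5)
You follow the same two-step skeleton as the paper---first match the distinguished cell by comparing the pairing of Definition~\ref{def:pair} with the Assaf--Schilling bracketing, then match the accompanying column modifications---but the second step is where the two arguments genuinely part ways. The paper's proof rests on a single observation you did not make: since the rows of a semistandard key tableau weakly decrease, the configuration of Lemma~\ref{lem:B} (a column to the left of the active cell carrying an $i+1$ in the active row with an $i$ above it) cannot occur in a key tableau. Hence on $\SSKT$ the operator of Definition~\ref{def:raise-tabloid} performs \emph{no} left swaps at all, the rightmost unpaired $i+1$ is itself the leftmost affected cell, it coincides with the entry where $q$ is maximal in \cite{ASc18}, and the whole comparison collapses to the remark that the Assaf--Schilling prescription of swapping $i$'s and $i+1$'s in the columns weakly right of that entry is exactly the right-swap clause of Definition~\ref{def:raise-tabloid}. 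Your plan keeps the full left/right bookkeeping symmetric and so commits you to considerably more case analysis than is necessary.

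The one genuine soft spot in your write-up is the closing claim that, once the decremented cell is fixed, the remaining modifications are ``the unique completion to another semistandard key tableau.'' That is not a consequence of the non-attacking and no-co-inversion-triple conditions in any obvious local way; it becomes rigorous only by invoking the proposition in \S\ref{sec:mac-SSKD} stating that a semistandard key tabloid is determined by the set of entries in each of its columns. With that citation your uniqueness route does close: both operators replace the distinguished $i+1$ by an $i$ and otherwise act only by exchanging an $i$ and an $i+1$ occupying a common column, so the two outputs have identical column sets; both outputs lie in $\SSKD(a)$ (Theorem~\ref{thm:well-defined} for Definition~\ref{def:raise-tabloid}, and \cite{ASc18} for theirs); hence they coincide. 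That repaired version is in some ways cleaner than the paper's argument, since it never unwinds the Assaf--Schilling swap rule column by column---but as written the uniqueness assertion is an unsupported step, and in either version you still owe the verification that the two pairing conventions single out the same cell, which is the translation of the $m_i(w(T))$ conditions that the paper carries out explicitly.
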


\begin{proof}
  Using notation and terminology from \cite{ASc18}, the condition $m_i(w(T))<0$ is precisely the statement that there exists unpaired $i$'s left of the rightmost unpaired $i+1$ and $m_i(w(T))=0$ is the statement that no unpaired $i+1's$ exists. Moreover, in an $\SSKT$ there can never be columns as in Lemma~\ref{lem:B} since the entry immediately left of the $i$ in column $c+1$ would have to be greater than $i+1$, and consequently, the entry to the left of it would need to be smaller than $i+1$, which contradicts rows being weakly decreasing. Thus, the entry where $q$ is maximal as in  \cite{ASc18} is precisely the $i+1$ in the leftmost affected column $c-n$. Since by definition in \cite{ASc18}, $e_i$ swaps $i$'s and $i+1$'s in all columns weakly right of this entry, then the columns flipped by the original definition of \cite{ASc18} and the one given here are exactly the same.
\end{proof}

In particular, by \cite{ASc18}(Theorem~3.14), the raising operators on semistandard key tableaux give a Demazure crystal. We aim to show this holds for semistandard key tabloids as well by comparing the latter with the former. To achieve this, we shift our paradigm from tabloids to \emph{diagrams}, arbitrary collections of unit cells in the first quadrant, based Kohnert's \cite{Koh91} elegant combinatorial algorithm for computing a Demazure character.

\begin{definition}[\cite{Koh91}]
  A \newword{Kohnert move} on a diagram selects the rightmost cell of a given row and moves the cell to the first available position below, jumping over other cells in its way as needed.
\label{def:move}
\end{definition}

Given a weak composition $a$, the \newword{key diagram} $a$ as the set of left justified cells with $a_i$ in row $i$, indexed in cartesian coordinates. Fig.~\ref{fig:kohnert} shows all diagrams that can be obtained via Kohnert moves from the key diagram of $(0,3,2)$.

\begin{figure}[ht]
  \begin{center}
    \begin{tikzpicture}[scale=.75, every node/.style={scale=.75}, xscale=2.3,yscale=1.3]
      \node at (0,5) (A) {$\vline\cirtab{ \  & \  \\ \  & \  & \  \\ & }$};
      \node at (1,4) (B) {$\vline\cirtab{ \  & \  \\ \  & \  \\ & & \ }$};
      \node at (2,3) (C) {$\vline\cirtab{ \  & \  \\ \  \\ & \  & \  }$};
      \node at (3,2) (D) {$\vline\cirtab{ \  & \  \\ \\ \  & \  & \  }$};
      \node at (1,2) (E) {$\vline\cirtab{ \  \\ \  & \  \\ & \  & \  }$};
      \node at (2,1) (F) {$\vline\cirtab{ \  \\ & \  \\ \  & \  & \  }$};
      \node at (1,0) (G) {$\vline\cirtab{ \\ \  & \  \\ \  & \  & \  }$};
      \node at (0,3) (H) {$\vline\cirtab{ \  \\ \  & \  & \  \\ & \  }$};
      \node at (0,1) (I) {$\vline\cirtab{ \\ \  & \  & \  \\ \  & \  }$};
      \draw[thin] (A) -- (H) ;
      \draw[thin] (A) -- (B) ;
      \draw[thin] (H) -- (I) ;
      \draw[thin] (H) -- (E) ;
      \draw[thin] (B) -- (E) ;
      \draw[thin] (B) -- (C) ;
      \draw[thin] (I) -- (G) ;
      \draw[thin] (E) -- (G) ;
      \draw[thin] (C) -- (E) ;
      \draw[thin] (C) -- (D) ;
      \draw[thin] (D) -- (F) ;
      \draw[thin] (F) -- (G) ;
    \end{tikzpicture}
    \caption{\label{fig:kohnert}Iterative construction of Kohnert diagrams for $(0,3,2)$, where an edge down indicates the lower diagram can be obtained from the higher via a single Kohnert move.}
  \end{center}
\end{figure}
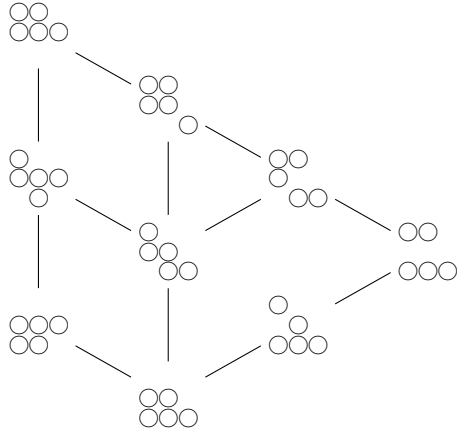

Denote the set of diagrams that can be obtained by Kohnert moves from the diagram of $a$ by $\KD(a)$. Note that there might be multiple ways to obtain a diagram from different Kohnert moves of a given diagram, but each resulting diagram is included in the set exactly once.

\begin{theorem}[\cite{Koh91}]
  The Demazure character $\key_a$ is given by
  \begin{equation}
    \key_{a} = \sum_{D \in \KD(a)} x_1^{\wt(D)_1} \cdots x_n^{\wt(D)_n} ,
    \label{e:key}
  \end{equation}
  where $\wt(D)$ is the weak compositions whose $i$th part is the number of cells in the $i$th row of $D$.
  \label{thm:key-SSKT}
\end{theorem}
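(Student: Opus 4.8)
The plan is to prove the identity by downward induction toward the partition case, matching the defining recurrence for Demazure characters through the isobaric divided difference operators $\pi_i$. Write $KD_a = \sum_{D \in \KD(a)} x_1^{\wt(D)_1}\cdots x_n^{\wt(D)_n}$ for the left-hand side of the claimed equation; the goal is $KD_a = \key_a$. Recall from Theorem~\ref{thm:key} and the surrounding discussion that the key polynomials are characterized by $\key_\lambda = x^\lambda$ for $\lambda$ a partition together with the recurrence $\key_a = \pi_i \key_{s_i a}$ whenever $a_i < a_{i+1}$ (and $\pi_i \key_b = \key_b$ when $b_i \ge b_{i+1}$). It therefore suffices to check that the Kohnert generating functions $KD_a$ satisfy these same properties, since they then agree with $\key_a$ on every weak composition by induction on $\ell(w(a))$.

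For the base case, suppose $a$ is a partition. Then its key diagram is a Young diagram whose columns are bottom-justified, so no Kohnert move is available: the rightmost cell of every nonempty row has its entire column beneath it already filled, leaving no landing position. Hence $\KD(a) = \{a\}$ and $KD_a = x^a = \key_a$. For the inductive step, fix $a$ with an ascent $a_i < a_{i+1}$, so that $s_i a$ is strictly closer to its sorting partition and $\ell(w(s_i a)) = \ell(w(a)) - 1$. By induction $KD_{s_i a} = \key_{s_i a}$, so the theorem reduces to the operator identity
\[ KD_a = \pi_i\, KD_{s_i a}. \]

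The heart of the argument, and the step I expect to be the main obstacle, is precisely this operator identity. In the diagram of $s_i a$ the longer row $i$ lies below the shorter row $i+1$, whereas in the diagram of $a$ these lengths are exchanged, and $\pi_i$ must convert one generating function into the other. My plan is to partition $\KD(s_i a)$ into maximal \emph{$i$-chains}: families of diagrams differing only by sliding cells between rows $i$ and $i+1$ while the remaining cells stay fixed. Since a single $\pi_i$ sends a monomial $x^b$ with $b_i \ge b_{i+1}$ to the interval $\sum_{j=b_{i+1}}^{b_i} x_i^{j} x_{i+1}^{b_i + b_{i+1} - j}$, the aim is to show that each $i$-chain maps under $\pi_i$ onto exactly the set of $(i,i+1)$-contents realized by the corresponding diagrams of $\KD(a)$, each Kohnert diagram of $a$ being accounted for once. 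The genuine difficulty is that Kohnert moves are non-local: a cell may jump over arbitrarily many cells, and the available positions in rows $i$ and $i+1$ depend on the full column structure below, so the chains are not products of local row swaps and the correspondence must be shown to respect all Kohnert moves simultaneously.

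A second technical point inside this step is cancellation, since $\pi_i$ applied to a monomial with $b_i < b_{i+1}$ contributes negatively; I would handle it by exhibiting a sign-reversing involution on the diagrams of $\KD(s_i a)$ whose $(i,i+1)$-content is strictly increasing, so that only the positive interval terms survive. An attractive alternative that avoids the non-locality is to construct instead a weight-preserving bijection $\KD(a) \longrightarrow \SSKT(a)$ between Kohnert diagrams and semistandard key tableaux, for instance by canonically labeling the cells of each Kohnert diagram; since the generating function of $\SSKT(a)$ is the key polynomial $\key_a$ by Mason \cite{Mas09} (as used in \cite{ASc18}), this would yield $KD_a = \key_a$ at once. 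I would attempt the recurrence proof first, as it stays entirely inside Kohnert's own combinatorial model, and fall back on the bijection should the chain and cancellation bookkeeping become unwieldy.
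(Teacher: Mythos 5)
The paper itself offers no proof of Theorem~\ref{thm:key-SSKT}: it is imported verbatim from Kohnert \cite{Koh91}, so your attempt can only be measured against the arguments in the literature that the paper leans on. Your overall architecture is sound --- characterize $\key_a$ by the base case $\key_\lambda = x^\lambda$ for partitions together with the recurrence $\key_a = \pi_i \key_{s_i a}$ when $a_i < a_{i+1}$, then verify both properties for the Kohnert generating function --- and your base case is correct: in a partition diagram the column below the rightmost cell of every row is full, so no Kohnert move applies and $\KD(\lambda)=\{\lambda\}$. (One small slip: the fixed-point property is $\pi_i\key_b=\key_b$ when $b_i \le b_{i+1}$, not $b_i \ge b_{i+1}$; at a descent $\pi_i\key_b=\key_{s_ib}\neq\key_b$.) The genuine gap is the inductive step $KD_a = \pi_i\, KD_{s_i a}$, which you correctly flag as the main obstacle but do not resolve --- and that identity \emph{is} the theorem. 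The proposed device of maximal $i$-chains plus a sign-reversing involution runs into exactly the non-locality you name: $\KD(s_i a)$ is not closed under redistributing cells between rows $i$ and $i+1$, since Kohnert moves let cells from higher rows land in rows $i$ and $i+1$ and let cells from those rows fall below row $i$, so a ``chain'' neither stays inside $\KD(s_i a)$ nor matches up cleanly with the $(i,i+1)$-contents realized in $\KD(a)$; which redistributions are reachable depends on the full column structure of the remaining rows, and distinct chains can collide on the same diagram. Taming precisely this bookkeeping is why Kohnert's original argument is long and delicate, and no clean published proof along the direct $\pi_i$-recurrence line is known.

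Your fallback is the correct modern route, and it is the one this paper implicitly relies on elsewhere: the diagram map $\D$ of Definition~\ref{def:diagram-map} restricts to a weight-preserving bijection between $\KD(a)$ and $\SSKT(a)$ by \cite{Ass-W}(Theorem~3.15), and the generating function of $\SSKT(a)$ is $\key_a$ by Mason \cite{Mas09} (as used in \cite{ASc18}); composing the two gives Eq.~\eqref{e:key} at once. But note that ``canonically labeling the cells of each Kohnert diagram'' is not a routine finishing move: constructing that labeling and proving bijectivity is the content of an entire paper of Assaf and Searles, not an exercise one can leave implicit. So as written, your proposal is an honest and well-oriented plan, with the decisive step unproved in the primary route and outsourced to cited results in the fallback; it is not yet a proof.
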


The poset structure on Kohnert diagrams that arise for a key diagram is not a crystal structure, and Kohnert moves do not generally correspond to crystal moves. However, the Demazure crystal structure from \cite{ASc18} has a natural analog on Kohnert diagrams through the correspondence between diagrams and tableaux based on \cite{Ass-W}(Definition~3.14).

\begin{definition}
  The \newword{diagram map} $\D$ sends a nonattacking filling to a diagram by letting $\D(T)$ be the diagram with a cell in row $r$ and column $c$ if and only if $T$ has a cell with entry $r$ in column $c$.
  \label{def:diagram-map}
\end{definition}

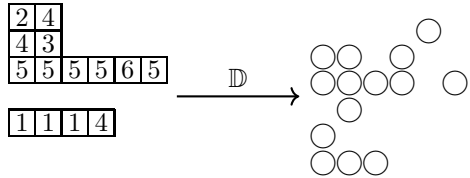
\begin{figure}[ht]
  \begin{center}
    \begin{tikzpicture}[xscale=4,yscale=1]
      \node at (1,1)   (D1) {$\vline\tableau{2 & 4 \\ 4 & 3 \\ 5 & 5 & 5 & 5 & 6 & 5 \\ \\ 1 & 1 & 1 & 4 \\ \\ & }$};
      \node at (2,1)   (D2) {$\vline\cirtab{ & & & & \ \\ \ & \ & & \ \\ \ & \ & \ & \ & & \ \\ & \ \\ \ \\ \ & \ & \  }$};
      \draw[thick,,->] (D1) -- (D2) node[midway,above] {$\D$} ;
    \end{tikzpicture}
  \end{center}
  \caption{\label{fig:diagram}An example of the diagram map $\D$ on a semistandard key tabloid.}
\end{figure}

Assaf \cite{Ass-W}(Theorem~3.15) shows that the diagram map is a bijection between Kohnert diagrams for $a$ and semistandard key tableaux for $a$. We translate the crystal operators under this bijection as follows.

\begin{definition}\label{def:Dpair}
  Given any diagram $D$ with $n\geq 1$ rows and integer $1\leq i < n$, define the \newword{vertical i-pairing} of $D$ as follows: $i$-pair any boxes in rows $i$ and $i+1$ that are located in the same column and then iteratively vertically $i$-pair any unpaired boxes in row $i+1$ with the leftmost unpaired box in row $i$ located in a column to its left whenever all the boxes in rows $i$ and $i+1$ in the columns between them are already vertically $i$-paired.
\end{definition}

\begin{example}
  The vertical $2$-pairing for the diagram in Fig.~\ref{fig:pairing-diagram} is indicated by shading in blue the cells in row $2$ that are $2$-paired with a cell in row $3$ strictly to its right, and the latter cells are shaded in red. The purple cell is the rightmost unpaired. Note that this diagram is precisely $\D(T)$ for $T$ the diagram in Fig.~\ref{fig:pairing}, and the two pairing rules correspond.
\end{example}

\begin{figure}[ht]
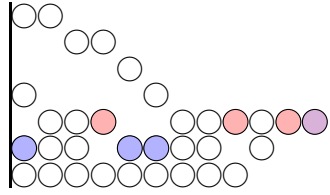

  \begin{center}
    \begin{displaymath}
      \vline\cirtab{ \ & \ \\ & & \ & \ \\ & & & & \ \\ \ & & & & & \ \\
        & \ & \ & \rball & & & \ & \ & \rball & \ & \rball & \vball \\
        \bball & \ & \ & & \bball & \bball & \ & \ & & \ \\
        \ & \ & \ & \ & \ & \ & \ & \ & \ }
    \end{displaymath}
  \end{center}
  \caption{\label{fig:pairing-diagram}An illustration of the pairing rule on diagrams.}
\end{figure}

\begin{definition}\label{def:Draise}
Given any integer $n\geq 0$ and any diagram $D$ with at most $n$ rows, for any integer $1\leq i < n$ define the \newword{raising operator} $\De_i$ on the space of diagrams as the operator that pushes the rightmost vertically unpaired box in row $i+1$ of $D$ down to row $i$. If $D$ has no vertically unpaired boxes in row $i+1$ then $\De_i(D)=0$.
\end{definition}

\begin{example}
  The leftmost diagram in Fig.~\ref{fig:raise-diagram} has two vertically unpaired cells in row $5$ (indicated as $\rball$), and so $\De_4$ acts by lowering these cells (resulting in $\bball$) from the right until none remains. These diagrams are precisely the images of the semistandard key tabloids in Fig.~\ref{fig:raise-ex} under the diagram map.
\end{example}

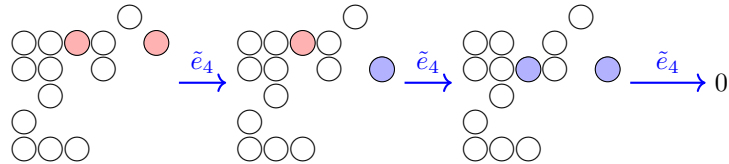
\begin{figure}[ht]
  \begin{center}
    \begin{tikzpicture}[xscale=3,yscale=3]
      \node at (0,1)   (D0) {$\vline\cirtab{ & & & & \ \\ \ & \ & \rball & \ & & \rball \\ \ & \ & & \ & & \\ & \ \\ \ \\ \ & \ & \ }$};
      \node at (1,1)   (D1) {$\vline\cirtab{ & & & & \ \\ \ & \ & \rball & \ \\ \ & \ & & \ & & \bball \\ & \ \\ \ \\ \ & \ & \  }$};
      \node at (2,1)   (D2) {$\vline\cirtab{ & & & & \ \\ \ & \ & & \ \\ \ & \ & \bball & \ & & \bball \\ & \ \\ \ \\ \ & \ & \  }$};
      \node at (2.8,1)   (D3) {$0$};
      \draw[thick,color=blue  ,->] (D0) -- (D1) node[midway,above] {$\De_4$} ;
      \draw[thick,color=blue  ,->] (D1) -- (D2) node[midway,above] {$\De_4$} ;
      \draw[thick,color=blue  ,->] (D2) -- (D3) node[midway,above] {$\De_4$} ;
    \end{tikzpicture}
  \end{center}
  \caption{\label{fig:raise-diagram}The images of the semistandard key tabloids in Fig.~9/Example~4.2 under the diagram map.}
\end{figure}

\begin{proposition}
  Let $T \in \SSKD(a)$ and suppose $e_i(T) \neq 0$. Then $\D(e_i(T)) = \De_i(\D(T))$. That is, the raising operators on tabloids and on diagrams coincide.
  \label{prop:commute}
\end{proposition}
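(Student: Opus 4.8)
The plan is to compare $\D(e_i(T))$ and $\De_i(\D(T))$ column by column, exploiting the crucial feature of the diagram map (Definition~\ref{def:diagram-map}): $\D$ records only the \emph{set} of values occurring in each column of $T$ and discards their vertical arrangement, placing a box in row $r$, column $c$ exactly when the value $r$ occurs in column $c$ of $T$. With this in mind, it suffices to carry out three steps: (i) identify which column value-sets change when $e_i$ acts, (ii) show that the tabloid $i$-pairing (Definition~\ref{def:pair}) and the vertical $i$-pairing (Definition~\ref{def:Dpair}) induce the same matching on columns, and (iii) combine these to match the single box moved by $\De_i$ with the single set-theoretic change produced by $e_i$.

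For step (i), I would recall from Definition~\ref{def:raise-tabloid} that $e_i$ modifies $T$ in only two ways: it turns the rightmost unpaired $i+1$, say in column $c_0$, into an $i$, and it swaps $i \leftrightarrow i+1$ in certain adjacent columns. By the defining ``$i+1$ in the same row with an $i$ above/below'' conditions, each such adjacent column already contains \emph{both} an $i$ and an $i+1$, so the swap merely permutes entries \emph{within} the column and leaves its value-set unchanged; under $\D$ these columns are untouched. The only column whose value-set changes is $c_0$, which by Proposition~\ref{prop:raising-distribution}(i) contains no $i$. Hence passing from $T$ to $e_i(T)$ replaces the value $i+1$ by $i$ in column $c_0$ and alters no other column set-theoretically. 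Translating through $\D$, the diagram $\D(e_i(T))$ is obtained from $\D(T)$ by deleting the box in row $i+1$, column $c_0$, and inserting a box in row $i$, column $c_0$; the target cell is empty in $\D(T)$, again by Proposition~\ref{prop:raising-distribution}(i).

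For step (ii), I would observe that the tabloid $i$-pairing depends only on which columns contain an $i$, which contain an $i+1$, and which contain both: both the same-column step and the bracketing step (``an unpaired $i+1$ with an unpaired $i$ to its left, all entries between already paired'') refer solely to column positions and never to the row occupied within a column. The vertical $i$-pairing is the identical bracketing rule applied to the boxes of $\D(T)$ in rows $i$ and $i+1$, which by construction of $\D$ sit in exactly the columns carrying an $i$, respectively $i+1$, in $T$. Reading $i$-columns as opening and $(i+1)$-columns as closing brackets left to right, with both-columns cancelling in place, the ``all between paired'' condition forces each closing bracket to match the nearest available opening bracket in both rules, so the two matchings coincide. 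In particular the unpaired $i+1$'s of $T$ occupy the same columns as the vertically unpaired row-$(i+1)$ boxes of $\D(T)$, and the rightmost of these is column $c_0$ on both sides.

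Finally, for step (iii): by Definition~\ref{def:Draise}, $\De_i$ pushes the rightmost vertically unpaired box of row $i+1$ of $\D(T)$ down into row $i$; by step (ii) this box lies in column $c_0$ and its target is empty, so $\De_i(\D(T))$ is precisely $\D(T)$ with the row-$(i+1)$ box at column $c_0$ replaced by a row-$i$ box, which is exactly the diagram computed in step (i). Thus $\D(e_i(T)) = \De_i(\D(T))$. I expect step (ii) to be the main obstacle, since it is where the slightly different phrasings of the two pairing rules (``an unpaired $i$ to its left'' versus ``the leftmost unpaired box in a column to its left'') must be reconciled as the same left-to-right bracket matching, and where one must verify carefully that the tabloid pairing is genuinely insensitive to the vertical order of entries within a column; steps (i) and (iii) are then routine bookkeeping once the pairing correspondence is established.
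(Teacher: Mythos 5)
Your proposal is correct and follows essentially the same route as the paper's proof: both rest on the observations that $\D$ records only column value-sets (so the ``flip'' columns of $e_i$ are invisible to $\D$), that the tabloid $i$-pairing of Definition~\ref{def:pair} and the vertical $i$-pairing of Definition~\ref{def:Dpair} match the same columns, and hence that the unique set-theoretic change (at the column of the rightmost unpaired $i+1$) is exactly the box move performed by $\De_i$. The only difference is one of detail: where the paper dismisses the pairing correspondence as ``a quick comparison of the definitions,'' you spell it out via the left-to-right bracket-matching argument, which is a worthwhile elaboration but not a different proof.
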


\begin{proof}
  Suppose $T \in \SSKD(a)$ and $e_i(T)\neq 0$. Then $e_i$ will send the rightmost $i+1$ in $T$ to $i$ and flip all the entries with values $i$ and $i+1$ in certain affected columns of $T$. Since $\D$ sorts the cells in any fixed column of $T$ based on the value of their entries and not the row in which they are situated, then if $T, T' \in \SSKD(a)$ differ only by a flip of an $i$ and $i+1$ in a given column then $\D(T)=\D(T')$. Now, a quick comparison of Definitions~\ref{def:pair} and \ref{def:Dpair} makes it apparent that the rightmost unpaired $i+1$ in $T$ corresponds to the rightmost unpaired box in $\D(T)$. Thus, the column on which $\D$ acts corresponds to the only affected column of $T$ on which $e_i$ does not act by a flip but by sending the rightmost unpaired $i+1$ to an $i$. Thus, $\D(e_i(T))=\De_i(\D(T))$.
\end{proof}

In order to make use of the known Demazure crystal structure on Kohnert diagrams for a key diagram, we introduce a \newword{rectification map} that sends an arbitrary diagram to a Kohnert diagram for some key diagram. On the level of tabloids, rectification sends a semistandard key tabloid to a semistandard key tableau.

The key to understanding the rectification map is the following characterization stated in \cite{AS18}(Lemma~2.2).

\begin{lemma}[\cite{AS18}]
  A diagram $D$ can be obtained via a series of Kohnert moves on a key diagram if and only if for every position $(r,c)\in\mathbb{N} \times \mathbb{N}$ with $c>1$, we have
  \begin{equation}
    \#\{ (s,c-1) \in D \mid s \geq r \} \geq \#\{ (s,c) \in D \mid s \geq r \}.
    \label{e:rectified}
  \end{equation}
  \label{lem:diagram}
\end{lemma}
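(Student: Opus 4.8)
The plan is to prove the two implications separately, using throughout the abbreviation $N_r(c)=\#\{(s,c)\in D : s\ge r\}$ for the number of cells of $D$ in column $c$ lying weakly above row $r$, so that the asserted condition reads
\[ N_r(c-1)\ \ge\ N_r(c)\qquad\text{for all } r\ge 1,\ c\ge 2. \tag{$\star$}\]
The forward implication is an invariance statement: the key diagram of a composition satisfies $(\star)$, and a single Kohnert move preserves $(\star)$. The backward implication is a straightening statement: any diagram satisfying $(\star)$ can be lifted back to a key diagram by reverse Kohnert moves.

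For the forward direction I would first note that the key diagram of $a$ is left justified, so $(s,c)\in D$ forces $(s,c-1)\in D$; this injects the cells of column $c$ in rows $\ge r$ into those of column $c-1$, giving $(\star)$. Then I would show a single Kohnert move (Definition~\ref{def:move}) preserves $(\star)$. Such a move drops the rightmost cell of some row $r_0$, say in column $c_0$, straight down to the first empty position $r_1<r_0$ of that column; only column $c_0$ changes, and $N_r(c_0)$ decreases by exactly $1$ for $r_1<r\le r_0$ and is unchanged otherwise. The inequalities $N_r(c_0-1)\ge N_r(c_0)$ survive because only their right-hand sides drop. For $N_r(c_0)\ge N_r(c_0+1)$ I would set $g(r)=N_r(c_0)-N_r(c_0+1)\ge 0$ and use that the moved cell was \emph{rightmost} in its row, so $(r_0,c_0+1)\notin D$: then $g(r_0)=g(r_0+1)+1\ge 1$, and since every position $(s,c_0)$ with $r_1<s<r_0$ is occupied one gets $g(r)\ge g(r_0)\ge 1$ for all $r_1<r\le r_0$; hence after the move $g(r)-1\ge 0$, so $(\star)$ persists. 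Induction on the number of moves then finishes this direction.

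For the backward direction I would first reformulate $(\star)$ as a column-dominance condition. Writing $\rho_1(c)>\rho_2(c)>\cdots$ for the rows occupied in column $c$ and $\mu_c=N_1(c)$ for the column count, $(\star)$ is equivalent to $\mu_c\le\mu_{c-1}$ together with $\rho_k(c)\le\rho_k(c-1)$ for every $k$ (take $r=\rho_k(c)$ in $(\star)$ for one direction, and compare the sorted row sequences for the other). In particular the column counts weakly decrease, so a left justified diagram with these column counts exists, i.e. a genuine key diagram. From $D$ I would build such a target $K$ greedily: keep column $1$ of $D$, and for $c\ge 2$ place the $\mu_c$ cells of column $c$ in the rows of the top $\mu_c$ cells of the already-built column $c-1$. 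By construction $K$ is left justified, has the same column counts as $D$, and by the dominance reformulation every cell of $D$ lies weakly below its counterpart in $K$. The claim is then that $D\in\KD(K)$, realized by lowering the cells of $K$ to their $D$-positions by legal Kohnert moves, scheduled column by column from right to left so that, when a cell of column $c$ is moved, all columns to its right already occupy their low $D$-positions and the cell is therefore rightmost in its row.

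The main obstacle is exactly this last backward step: converting the coordinate comparison ``$D$ lies weakly below $K$'' into a genuine sequence of \emph{legal} Kohnert moves. A Kohnert move is doubly constrained — the cell moved must be rightmost in its row, and it falls only to the first empty cell below it — so a single target lowering may require several moves, and one must show the scheduling never strands a cell that is not rightmost. I expect the right bookkeeping to be an induction that repeatedly lowers the rightmost not-yet-placed cell, using the dominance inequalities to certify that every intermediate diagram still satisfies $(\star)$ (so the rightmost-in-row hypothesis remains available at each step) and a monotone quantity such as $\sum_{(r,c)\in D} r$ to guarantee termination at $K$.
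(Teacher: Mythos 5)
The paper itself supplies no proof to compare against: the lemma is quoted from \cite{AS18} (their Lemma~2.2). Judged on its own, your forward direction is correct and complete. Key diagrams satisfy your condition $(\star)$ (i.e.\ Eq.~\eqref{e:rectified}) by left-justification, and your argument for invariance under a Kohnert move is sound: with $g(r)=N_r(c_0)-N_r(c_0+1)$, the rightmost-in-row hypothesis gives $(r_0,c_0+1)\notin D$, hence $g(r_0)=g(r_0+1)+1\geq 1$, and full occupancy of column $c_0$ strictly between $r_1$ and $r_0$ gives $g(r)\geq g(r_0)\geq 1$ on the affected interval, so both neighboring families of inequalities survive. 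Your dominance reformulation of $(\star)$ and the construction of the target key diagram $K$ with $\rho_k^D(c)\leq\rho_k^K(c)$ are also correct.

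The backward direction, however, contains a step that fails, not merely a step left unfinished. The scheduling claim --- process columns right to left ``so that, when a cell of column $c$ is moved, \ldots the cell is therefore rightmost in its row'' --- is false, because a $D$-cell already placed in a column to the right can sit in the very row from which the column-$c$ cell must now fall. Concretely (rows indexed from the bottom), take $D=\{(5,1),(4,1),(5,2),(2,2),(4,3)\}$. This satisfies $(\star)$, and your construction gives $K=\{(5,1),(4,1),(5,2),(4,2),(5,3)\}$, the key diagram of $(0,0,0,2,3)$. Right-to-left scheduling first drops $(5,3)$ to its $D$-position $(4,3)$; but then the cell $(4,2)$, which must still fall to $(2,2)$, is not rightmost in row $4$ --- it is blocked by $(4,3)$ --- and since Kohnert moves only go downward, $(4,3)$ can never be vacated without permanently ruining column $3$. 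So $D$ is unreachable from this intermediate diagram, even though $D\in\KD(K)$ (process column $2$ first, then column $3$). Note that the stuck intermediate diagram still satisfies $(\star)$, being a Kohnert diagram of $K$; this shows your fallback plan --- certify $(\star)$ on intermediate diagrams so that ``the rightmost-in-row hypothesis remains available'' --- is a non sequitur, since $(\star)$ constrains column counts and says nothing about which cell is rightmost in a given row. Indeed, the natural reading of your fallback (``repeatedly lower the rightmost not-yet-placed cell'') also picks $(5,3)$ first in this example and gets stuck the same way. What is missing is the actual content of the lemma: an induction producing, for every diagram satisfying $(\star)$ that is not yet a key diagram, one \emph{legal} move (most cleanly, a reverse Kohnert move lifting a suitable cell) that preserves $(\star)$, plus a termination argument; that is the part your outline defers and the part \cite{AS18} actually prove.
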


Recall the crystal flip map $\mathcal{F}$ from Definition~\ref{def:crystalflip} and consider the map from diagrams satisfying Lemma~\ref{lem:diagram} to semistandard Young tableaux that gives a partial inverse of the diagram map, based on \cite{Ass-W}(Definition~3.14).

\begin{definition}
  For fixed $n$, define the \newword{tableau map} $\T$ on diagrams $D$ with no cells above row $n$ satisfying Lemma~\ref{lem:diagram} as follows. Place entry $n-r+1$ in each cell of row $r$; drop and sort the cells of each columns to be bottom-justified and to increase from bottom to top; apply the crystal flip map $\mathcal{F}$. 
  \label{def:tableau-map}
\end{definition}

Note the first two steps of Definition~\ref{def:tableau-map} are equivalent to the \emph{column sorting map} of \cite{ASc18}(Definition~3.5), and so by \cite{ASc18}(Proposition~3.6), the result is a semistandard Young tableau. Therefore the crystal flip applies, making the tableau map of Definition~\ref{def:tableau-map} well-defined. For an example, see Fig.~\ref{fig:tabmap}.

\begin{figure}[ht]
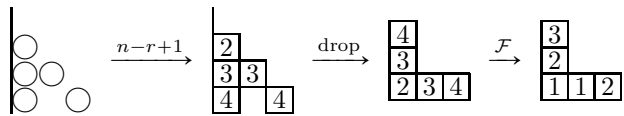

  \begin{displaymath}
    \hackcenter{\vline \cirtab{ \\   \ & &   \\ \ & \ & \\  \ &  & \  \\}}  \hspace{1ex} \xrightarrow{n-r+1}      \hspace{1ex} 
    \hackcenter{\vline \tableau{ \\   2 & &   \\ 3 & 3 & \\  4 &  & 4  \\}} \hspace{1ex} \xrightarrow{\text{drop}}\hspace{1ex} 
    \hackcenter{\vline \tableau{  4 \\ 3\\ 2&3&4\\}}                        \hspace{1ex} \xrightarrow{\mathcal{F}}\hspace{1ex} 
    \hackcenter{\vline \tableau{  3 \\ 2\\ 1&1&2\\}}
  \end{displaymath}
  \caption{\label{fig:tabmap}An illustration of the tableau map $\T$ with $n=4$.}
\end{figure}


An equivalent reformulation of Lemma~\ref{lem:diagram} is that the image under the tableau map of any diagram satisfying Eq.~\eqref{e:rectified} is a semistandard Young tableau of partition shape.
\smallskip

Recall from \S~\ref{sec:crystal-tableaux} that $\Ye$ denotes the raising operators on semistandard Young tableaux.

\begin{proposition}
  For any diagram $D$ satisfying Eq.~\eqref{e:rectified}, $\T(D)$ is a semistandard Young tableau of partition shape. Moreover, if $\tilde{e}_i(D) \neq 0$, then $\T(\tilde{e}_i(D)) = \Ye_i(\T(D))$.
  \label{prop:tableau-map}
\end{proposition}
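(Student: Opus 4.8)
The first assertion is essentially already in place: the first two steps of $\T$ form the column-sorting map of \cite{ASc18}(Definition~3.5), so by \cite{ASc18}(Proposition~3.6) together with the reformulation of Lemma~\ref{lem:diagram} noted above, applying them to a diagram $D$ satisfying Eq.~\eqref{e:rectified} yields a semistandard Young tableau of partition shape, say of shape $\lambda$. Write $\T'$ for the composite of these first two steps. The crystal flip $\mathcal{F}$ is a bijection $\B(\lambda)\to\B(\lambda)$ and hence preserves shape, so $\T(D)=\mathcal{F}(\T'(D))$ is again a semistandard Young tableau of shape $\lambda$.

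The plan for the intertwining is to factor $\T=\mathcal{F}\circ\T'$ and prove the cleaner identity $\T'(\De_i(D))=\Yf_{n-i}(\T'(D))$, after which the crystal flip does the rest. Set $m=n-i$. Under the complementation of the first step a box in row $i+1$ acquires the value $m$ and a box in row $i$ acquires the value $m+1$, and these rows are the only sources of the values $m$ and $m+1$; thus the value-$m$ cells of $\T'(D)$ correspond bijectively, and with the same column, to the row-$(i+1)$ boxes of $D$, and likewise the value-$(m+1)$ cells to the row-$i$ boxes. The first step I would carry out is to show that the vertical $i$-pairing of Definition~\ref{def:Dpair} matches the $m$-pairing of Definition~\ref{def:SSYT-pair} under this correspondence. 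Since the column-sorting of the second step moves cells only within their columns, it preserves the column of every cell and hence the ``same column'' and ``lies between'' relations that drive both pairing rules. Moreover, the diagram rule pairs a row-$(i+1)$ box (value $m$) with a row-$i$ box (value $m+1$) to its left, while the tableau rule pairs an $m+1$ with an $m$ to its right; because of the ``all boxes between already paired'' clause both are innermost (nested) matchings, and a nested matching is the same whether described from the left or from the right. Hence a row-$(i+1)$ box is vertically $i$-paired if and only if its value-$m$ image is $m$-paired.

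With the pairings identified, the rightmost unpaired box in row $i+1$ of $D$ corresponds to the rightmost unpaired $m$ in $\T'(D)$, lying in a common column $c$. I would then check that $\De_i$ and $\Yf_m$ have the same effect after column-sorting. The move of $\De_i$ is legal because $c$ contains no box in row $i$ (otherwise the moved box would be same-column $i$-paired), so in $\T'$-values column $c$ simply has its $m$ replaced by $m+1$; meanwhile $\Yf_m$ changes that same $m$ to $m+1$, and since column $c$ contains no $m+1$ the result is a valid column agreeing with the sorted image of the modified column of $\De_i(D)$. All other columns are untouched by both operations, so $\T'(\De_i(D))=\Yf_{n-i}(\T'(D))$; in particular this confirms that $\De_i(D)$ is again rectifiable, since the right-hand side lies in $\B(\lambda)$. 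Finally, by Definition~\ref{def:crystalflip} the flip satisfies $\mathcal{F}\circ\Yf_{n-i}=\Ye_{i}\circ\mathcal{F}$, giving
\[ \T(\De_i(D))=\mathcal{F}\bigl(\Yf_{n-i}(\T'(D))\bigr)=\Ye_i\bigl(\mathcal{F}(\T'(D))\bigr)=\Ye_i(\T(D)). \]

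The main obstacle I anticipate is the pairing identification in the second paragraph: carefully verifying that the opposite handedness of the diagram rule (pairing to the left) and the tableau rule (pairing to the right) really produce the same matching once the complementation $r\mapsto n-r+1$ and the column-sorting are taken into account, and that sorting genuinely leaves the matching intact. Everything after that is a local, column-by-column bookkeeping argument, and the passage through $\mathcal{F}$ is purely formal from the definition of the crystal flip.
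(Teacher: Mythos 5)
Your proposal is correct, and its skeleton matches the paper's: both prove the first assertion by identifying the first two steps of $\T$ with the column sorting map via \cite{ASc18}(Proposition~3.6) together with Lemma~\ref{lem:diagram}, and both obtain the intertwining by factoring $\T = \mathcal{F}\circ\T^{\prime}$, proving a lowering-operator identity for $\T^{\prime}$, and conjugating by the crystal flip at the end. The genuine difference is how that lowering-operator identity is established. The paper identifies a rectified diagram with a semistandard key tableau (via \cite{Ass-W}(Theorem~3.15) and Lemma~\ref{lem:diagram}) and then simply cites \cite{ASc18}(Lemma~3.9) for $\phi(e_i(T)) = \Yf_{n-i}(\phi(T))$, whereas you prove the corresponding identity $\T^{\prime}(\De_i(D)) = \Yf_{n-i}(\T^{\prime}(D))$ from scratch, directly on diagrams: you match the vertical $i$-pairing of Definition~\ref{def:Dpair} with the $(n-i)$-pairing of Definition~\ref{def:SSYT-pair} under the complementation $r\mapsto n-r+1$, correctly disposing of the opposite handedness of the two rules by the observation that both compute the same nested (parenthesis) matching, and then check the purely local, single-column effect of the two operators. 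Your route is self-contained (it essentially re-derives the content of the cited lemma) and has the additional merit of making explicit why $\De_i(D)$ again satisfies Eq.~\eqref{e:rectified}, so that $\T(\De_i(D))$ is even defined --- a point the paper absorbs silently into the key-tableau identification; note that this step uses the converse direction of the ``equivalent reformulation'' of Lemma~\ref{lem:diagram}, which does hold because in a semistandard Young tableau of partition shape each column dominates the previous one entrywise. The paper's route is shorter and stays aligned with the established machinery on key tableaux; yours trades brevity for transparency and avoids passing through $\SSKT$ altogether.
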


\begin{proof}
  Generalizing \cite{AS18}(Definition~4.5) to the semistandard case, the column sorting map $\phi$ that lets cells of a semistandard key tableau fall vertically until it has partition shape, reverses entries by $i \mapsto n-i+1$, and sorts columns is an injection from $\SSKT(a)$ to $\SSYT(\lambda)$, where $\lambda$ is the partition rearrangement of $a$, by \cite{ASc18}(Proposition~3.6). By \cite{Ass-W}(Theorem~3.15), the diagram map $\D$ is a bijection between Kohnert diagrams for $a$ and semistandard key tableaux for $a$. Then the column sorting map may be factored as $\T\circ\D$, and so by Lemma~\ref{lem:diagram}, since any diagram $D$ satisfying Eq.~\eqref{e:rectified} may be identified with a semistandard key tableau of some shape $a$, we have $\T(D)$ is a semistandard Young tableau of shape the partition rearrangement of $a$. By \cite{ASc18}(Lemma~3.9), we have $\phi(e_i(T)) = \Yf_{n-i}(\phi(T))$, so composing with the crystal-reversing flip map gives the desired interwining.
\end{proof}

We utilize the characterization in Lemma~\ref{lem:diagram} to define a map from semistandard key tabloids to semistandard key tableaux and ultimately, by Proposition~\ref{prop:tableau-map}, to semistandard Young tableaux.

\begin{definition}\label{def:horiz-pair}
  Given any diagram $D$ with $n\geq 1$ columns and integer $1\leq i < n$, define the \newword{horizontal i-pairing} of $D$ as follows: $i$-pair any boxes in columns $i$ and $i+1$ that are located in the same row and then iteratively $i$-pair any unpaired box in column $i+1$ with the topmost unpaired box in column $i$ located in a row above it whenever all the boxes in columns $i$ and $i+1$ in the rows between them are already horizontally $i$-paired.
\end{definition}

\begin{remark}\label{rem:i-pairing}Notice that horizontal $i$-pairing is nothing more than a ``transposed" version of vertical $i$-pairing in Definition~\ref{def:Dpair} with the concept of rows and columns exchanged. That is, the concepts are equal under the mapping that sends a cell in position $(r,c)$ to $(c,r)$.
\end{remark}

\begin{definition}\label{def:Rect-raise}
Given any integer $n\geq 0$ and any diagram $D$ with at most $n$ columns, for any integer $1\leq i < n$, define the \newword{rectification operator} $\E_i$ on the space of diagrams as the operator which pushes the bottom-most horizontally unpaired box in column $i+1$ of $D$ left to column $i$. If $D$ has no unpaired boxes in column $i+1$ then $\E_i$ acts by zero.
\end{definition}

Unlike the raising operators, we will use the rectification operators in a prescribed way to map a given diagram to one that can be obtained from a composition diagram by a sequence of Kohnert moves.

\begin{definition}\label{def:rectification}
  Given a diagram $D$, define the \newword{rectification of $D$}, denoted by $\rectify(D)$, as follows. If $\E_i(D) = 0$ for all $i\geq 1$, then set $\rectify(D)=D$. Otherwise, finding the minimal column index $i \geq 1$ such that $\E_i(D)\neq 0$, replacing $D$ with $\E_i(D)$, and repeat.
\end{definition}


Key diagrams are a special case of diagrams that can arise, and we remark with the result below that they often correspond to extremal elements.

\begin{proposition}\label{prop:leftjustified}
  Given any key diagram $D$ with $\De_i(D) =0$ and $\Df_i(D)\neq 0$ for some $i$, then $\Df_i^*(D)$ is also a key diagram.
\end{proposition}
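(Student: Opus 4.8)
The plan is to reduce everything to the two rows $i$ and $i+1$ and to compute the full lowering string $\Df_i^*$ by directly tracking the vertical $i$-pairing. Write $a$ for the weak composition whose key diagram is $D$, so that row $r$ occupies columns $1,\ldots,a_r$. First I would record the decisive feature of the vertical $i$-pairing (Definition~\ref{def:Dpair}) on a left-justified diagram: the same-column step pairs exactly columns $1,\ldots,\min(a_i,a_{i+1})$, and the iterative step never fires, because any unpaired box of row $i+1$ lies strictly to the left of any unpaired box of row $i$, whereas the rule pairs a row-$(i{+}1)$ box only with a row-$i$ box in a column to its left. Consequently $\varepsilon_i(D)=0$ forces $a_{i+1}\le a_i$ (otherwise columns $a_i+1,\ldots,a_{i+1}$ of row $i+1$ would be unpaired, contradicting $\De_i(D)=0$), while $\varphi_i(D)>0$ forces $a_i>a_{i+1}$ (the unpaired boxes of row $i$ occupy columns $a_{i+1}+1,\ldots,a_i$, which must be nonempty). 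Thus the hypotheses pin down $a_i>a_{i+1}$, and the only unpaired boxes lie in row $i$, in columns $a_{i+1}+1,\ldots,a_i$.

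Next I would prove by induction on $k$ that for $0\le k\le a_i-a_{i+1}$ the diagram $\Df_i^{\,k}(D)$ agrees with $D$ outside rows $i,i+1$ and has row $i+1$ occupying columns $1,\ldots,a_{i+1}+k$ and row $i$ occupying columns $1,\ldots,a_{i+1}$ together with $a_{i+1}+k+1,\ldots,a_i$. The inductive step is the crux: in $\Df_i^{\,k}(D)$ the same-column pairing covers columns $1,\ldots,a_{i+1}$, the boxes of row $i+1$ in columns $a_{i+1}+1,\ldots,a_{i+1}+k$ are unpaired, and the boxes of row $i$ in columns $a_{i+1}+k+1,\ldots,a_i$ are unpaired; since the unpaired row-$(i{+}1)$ boxes again sit strictly left of the unpaired row-$i$ boxes, the iterative pairing clause still never fires, so the leftmost unpaired box of row $i$ is the one in column $a_{i+1}+k+1$. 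Applying $\Df_i$ pushes this box straight up into the empty cell of row $i+1$ in column $a_{i+1}+k+1$, producing exactly the claimed diagram for $k+1$; in particular each such step is a genuine (nonzero) application.

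Finally I would read off termination and identify the result. After $k=a_i-a_{i+1}$ steps, row $i$ occupies precisely columns $1,\ldots,a_{i+1}$, so every box of row $i$ is same-column paired and no unpaired box of row $i$ remains; hence $\Df_i$ annihilates this diagram, $\varphi_i(D)=a_i-a_{i+1}$, and $\Df_i^*(D)=\Df_i^{\,a_i-a_{i+1}}(D)$ is the diagram just described. This diagram is left-justified, with rows $i$ and $i+1$ holding $a_{i+1}$ and $a_i$ cells respectively and all other rows unchanged; that is exactly the key diagram of the composition obtained from $a$ by interchanging $a_i$ and $a_{i+1}$, proving the claim. The one point demanding care throughout — and the step I expect to be the main obstacle — is the verification that the iterative clause of the vertical $i$-pairing never contributes at any intermediate stage; everything else is bookkeeping once that is established, and it also furnishes the cross-check $\varphi_i(D)-\varepsilon_i(D)=a_i-a_{i+1}=\wt(D)_i-\wt(D)_{i+1}$ demanded by the crystal axioms.
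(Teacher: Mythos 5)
Your proof is correct and follows essentially the same route as the paper's: both deduce from the hypotheses that row $i$ is strictly longer than row $i+1$, then apply $\Df_i$ exactly $\wt(D)_i-\wt(D)_{i+1}$ times, pushing the unpaired cells of row $i$ up to row $i+1$ sequentially from left to right, so that $\Df_i^*(D)$ is the key diagram with rows $i$ and $i+1$ interchanged. Your explicit induction checking that the iterative clause of the vertical $i$-pairing never fires at any intermediate stage merely supplies detail that the paper's terser argument leaves implicit.
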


\begin{proof}
If $\De_i(D)=0$ and $\Df_i(D)\neq 0$ for some $i$ then necessarily $\wt(D)_i>\wt(D)_{i+1}$. Since $D$ is a key diagram, it is left justified so row $i$ must be strictly longer than row $i+1$. Thus all cells in row $i$ in columns with index greater than $\wt(D)_{i+1}$ will be vertically unpaired. If $k=\wt(D)_i-\wt(D)_{i+1}$, since $\Df_i$ acts on the leftmost vertically unpaired box in row $i$, then $\Df_i$ will act nontrivially on $D$ exactly $k$ times, pushing each of the cells in row $i$ and columns $\wt(D)_{i+1}+1$ through $\wt(D)_i$ up to row $i+1$ sequentially from left to right. Since $\Df_i$ will not affect any rows with index $j \neq i,i+1$ then $\Df_i^k(D)= \Df_i^*(D)$ will have $\wt(D)_i$ cells in row $i+1$ and $\wt(D)_{i+1}$ cells in row $i$ and, consequently, be left justified.
\end{proof}

The following lemma is the precursor to showing that rectification commutes with the crystal operators by showing that the pairing structures are respected.

\begin{lemma}\label{lem:PairingPreserve}
Rectification operators preserve vertical $i$-pairing. That is, given any diagram $D$ with $n$ columns, if $N_i(D)$ is the number of cells in row $i+1$ that are not vertically $i$-paired, then $N_i(D) = N_i$(\emph{$\E_c$}(D)) for any $1\leq c < n$.

Likewise, raising operators on diagrams preserve horizontal $i$-pairing. That is, given any diagram $D$ with $m$ rows, if $M_i(D)$ is the number of cells in column $i+1$ that are not horizontally $i$-paired, then $M_i(D) = M_i$(\emph{$\De_r$}(D)) for any $1\leq r < m$.
\end{lemma}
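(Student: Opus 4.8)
The plan is to prove only the first assertion, that each rectification operator $\E_c$ preserves the vertical $i$-pairing; the second assertion, that each raising operator $\De_r$ preserves the horizontal $i$-pairing, then follows by the transpose symmetry recorded in Remark~\ref{rem:i-pairing}, running the identical argument with the roles of rows and columns (and of ``bottom-most/above'' versus ``rightmost/left'') interchanged. Fix $D$ and recall that $\E_c$ relocates a single cell from $(s,c+1)$ to the empty position $(s,c)$, where $(s,c+1)$ is the bottom-most horizontally unpaired cell of column $c+1$. Since this move leaves the number of cells in every row unchanged and alters the contents of rows $i$ and $i+1$ only when $s\in\{i,i+1\}$, the vertical $i$-pairing, which by Definition~\ref{def:Dpair} depends only on the cells in rows $i$ and $i+1$, is literally unchanged whenever $s\notin\{i,i+1\}$; so I may assume $s=i$ or $s=i+1$.

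First I would encode the vertical $i$-pairing as a bracket word: delete every column carrying cells in both rows $i$ and $i+1$ (these are paired in-column and play no further role), and read the remaining columns left to right, recording a symbol $($ for a column carrying a cell in row $i$ only and a symbol $)$ for a column carrying a cell in row $i+1$ only. Definition~\ref{def:Dpair} is then precisely the statement that each $)$ matches the nearest unmatched $($ to its left, so $N_i(D)$ equals the number of unmatched $)$, whose value is governed by the usual prefix-minimum formula. The move changes tokens only at columns $c$ and $c+1$, so I would run the eight cases indexed by $s\in\{i,i+1\}$ and by the occupancy of the unaffected row at columns $c,c+1$. In six of these the word either is literally unchanged (the single affected bracket merely slides between the adjacent slots $c,c+1$, whose remaining positions are empty) or changes by the insertion or deletion of an adjacent balanced factor $()$; since inserting or deleting $()$ leaves every prefix minimum, and hence the count of unmatched $)$, invariant, $N_i$ is preserved in all six.

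The crux is the remaining two cases, namely the ones in which the word would change by the insertion or deletion of a factor $)($, an operation that genuinely can alter the number of unmatched $)$. I would rule these out using that the moved cell $(s,c+1)$ is horizontally unpaired and bottom-most in its column, together with the rule of Definition~\ref{def:horiz-pair} that a cell in column $c+1$ pairs with a cell in column $c$ strictly \emph{above} it. When $s=i$ the offending configuration has an occupied cell at $(i+1,c)$ immediately above the empty target: were it unpaired the moved cell would pair with it, and were it paired its horizontal partner would force the moved cell to be paired as well, a contradiction either way. When $s=i+1$ the offending configuration has an occupied cell at $(i,c+1)$ below the moved cell; being bottom-most forces that lower cell to be horizontally paired, and since $(i+1,c)$ is empty its partner lies at least two rows above, which forces the moved cell, lying strictly between, to be paired, again a contradiction. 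Hence neither $)($ case occurs and $N_i(\E_c(D))=N_i(D)$ in every case. The main obstacle is exactly this last step: confirming that the only bracket changes capable of altering the count are incompatible with the horizontal-unpairedness and bottom-most conditions built into $\E_c$.
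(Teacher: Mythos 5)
Your proof is correct, and at bottom it is the same argument as the paper's: a local case analysis of the four positions $(i,c)$, $(i,c+1)$, $(i+1,c)$, $(i+1,c+1)$, with the defining properties of the moved cell (horizontally unpaired, bottom-most in column $c+1$) used to kill the dangerous configurations, and with the second assertion deferred to the row/column interchange of Remark~\ref{rem:i-pairing}, exactly as the paper does. The differences are in rigor rather than in route, and they favor you. Where the paper enumerates arrangements by the number of occupied cells and asserts that in each surviving case the number of unpaired cells is unchanged, your bracket-word encoding (delete doubly occupied columns, write $($ for a row-$i$ cell and $)$ for a row-$(i+1)$ cell, count unmatched $)$ via prefix minima) reduces those assertions to the observation that slides and insertions or deletions of an adjacent balanced factor $()$ cannot change the count. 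More importantly, your two crux cases are precisely the configurations the paper treats asymmetrically: your $s=i+1$ case (insertion of $)($) is what the paper addresses with its one substantive claim, namely that the bottom cell of column $c+1$ can be horizontally paired only if the top cell is; but your $s=i$ case (deletion of $)($) is the two-cell arrangement $\{(i,c+1),(i+1,c)\}$, which the paper's dichotomy ``either both cells lie in column $c+1$ or they lie in positions $(i+1,c+1)$ and $(i,c)$'' excludes with no justification at all. Your argument for that exclusion (if $(i+1,c)$ were unpaired, the moved cell would pair with it; if it is paired, its partner in column $c+1$ lies weakly below the moved cell, forcing the moved cell itself to be paired) is exactly the missing verification, so your write-up is more complete than the paper's at the one point where the lemma has real content.
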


\begin{proof}
Since rectification operators act by pushing a cell one space to its left,  it suffices to consider all possible arrangements of cells in positions $(i,c),(i,c+1),(i+1,c)$, and $(i+1,c+1)$ on which $\E_c$ acts non-trivially and show that in each case the total number of vertically $i$-paired boxes is invariant under $\E_c$ for any $1 \leq c < n$.

If only one position in the arrangement is nonempty and $\E_c$ acts nontrivially, it is clear that pushing this cell left to column $c$ will not modify any existing vertical $i$-pairings.

If the arrangement consists of two nonempty cells, then either both cells lie in column $c+1$ or they lie in positions $(i+1,c+1)$ and $(i,c)$. If the first, then by Definition~\ref{def:horiz-pair} the bottom cell is horizontally paired only if the top cell is also, in which case $\E_c$ pushes the cell in position $(i,c+1)$ left which leaves its vertical $i$-pairing with the cell in $(i+1,c+1)$ unaltered.  The second situation follows from a dual argument to the first.
\[
\begin{tikzpicture}
\node at (-1,0.25)[scale=.75]{i+1};
\node at (-1,-0.25)[scale=.75]{i};
\node at (0,.75)[scale=.75]{c \; c+1};
\node at (0,0)[scale=1.25]{$\cirtab{ \lball & \ \\ \lball & \ }$};
\node at (1.5,.25)[scale=1]{$\E_c$};
\draw[->, very thick] (1,0)--(2,0);
\begin{scope}[shift={(.75,0)}]
\node at (2,0.25)[scale=.75]{i+1};
\node at (2,-0.25)[scale=.75]{i};
\node at (3,.75)[scale=.75]{c \; c+1};
 \node at (3,0)[scale=1.25]{$\cirtab{ \lball & \ \\ \ & \lball }$};
 \end{scope}
 \node at (5.5,.25)[scale=1]{$\E_c$};
 \draw[->, very thick] (5,0)--(6,0);
 \begin{scope}[shift={(7.75,0)}]
 \node at (-1,0.25)[scale=.75]{i+1};
\node at (-1,-0.25)[scale=.75]{i};
\node at (0,.75)[scale=.75]{c \; c+1};
\node at (-.25,0)[scale=1.25]{$\cirtab{  \ & \lball \\  \ & \lball}$};
\end{scope}
 \end{tikzpicture}
\]
If the arrangement consists of three nonempty cells then either the cell in position $(i,c+1)$ or in position $(i+1,c+1)$ can be pushed left. Although the specific cells that are paired with each other changes, the number of cells that were vertically paired or unpaired does not.  Thus, $N_i(D) = N_i(\E_c(D))$ for any $1 \leq c < n$.
\[
\hackcenter{
\begin{tikzpicture}
\node at (-1,0.25)[scale=.75]{i+1};
\node at (-1,-0.25)[scale=.75]{i};
\node at (0,.75)[scale=.75]{c \; c+1};
\node at (0,0)[scale=1.25]{$\cirtab{  \ & \ \\  \lball & \ \\ }$};
\node at (1.5,.25)[scale=1]{$\E_c$};
\draw[->, very thick] (1,0)--(2,0);
\begin{scope}[shift={(.75,0)}]
\node at (2,0.25)[scale=.75]{i+1};
\node at (2,-0.25)[scale=.75]{i};
\node at (3,.75)[scale=.75]{c \; c+1};
 \node at (3,0)[scale=1.25]{$\cirtab{  \ & \ \\  \ & \lball}$};
 \end{scope}
 \end{tikzpicture}}
 \;\;\;\; , \;\;\;\;
\hackcenter{ \begin{tikzpicture}
\node at (-1,0.25)[scale=.75]{i+1};
\node at (-1,-0.25)[scale=.75]{i};
\node at (0,.75)[scale=.75]{c \; c+1};
\node at (0,0)[scale=1.25]{$\cirtab{  \lball & \ \\  \ & \ \\}$};
\node at (1.5,.25)[scale=1]{$\E_c$};
\draw[->, very thick] (1,0)--(2,0);
\begin{scope}[shift={(.75,0)}]
\node at (2,0.25)[scale=.75]{i+1};
\node at (2,-0.25)[scale=.75]{i};
\node at (3,.75)[scale=.75]{c \; c+1};
 \node at (3,0)[scale=1.25]{$\cirtab{  \ & \lball \\  \ & \ \\}$};
 \end{scope}
 \end{tikzpicture}}
\]

By Remark~\ref{rem:i-pairing} the statement that for any $i$ raising operators preserve the cells that are not horizontally $i$-paired follows identically from the work above by exchanging rows and columns.
\end{proof}

The following theorem is the key to establishing a Demazure crystal structure for nonsymmetric Macdonald polynomials, essentially by pulling the structure of semi-standard key tabloids back to semi-standard key tableaux.

\begin{theorem}
  The rectification operators and the raising operators on diagrams commute. That is, given any diagram $D$ for which $\E_c(D) \neq 0$ then for any row index $r \geq 1$, $\De_r(D) \neq 0$ if and only if $\De_r(\E_c(D)) \neq 0$. Likewise, if $\De_r(D) \neq 0$ then for any column index $c \geq 1$, $\E_c(D) \neq 0$ if and only if $\E_c(\De_r(D)) \neq 0$. In this case, we have $\E_c(\De_r(D)) = \De_r(\E_c(D))$ for all values of $r$ and $c$ for which $\E_c(D) \neq 0$ and $\De_r(D) \neq 0$.
  \label{thm:commute}
\end{theorem}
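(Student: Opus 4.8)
The plan is to extract everything from the two halves of Lemma~\ref{lem:PairingPreserve}, which decouple the vertical data that governs $\De_r$ from the horizontal data that governs $\E_c$. First I would settle the two existence equivalences. Whether $\De_r(D)\neq 0$ depends only on whether row $r+1$ has a vertically unpaired cell, that is, on the quantity $N_r(D)$; since Lemma~\ref{lem:PairingPreserve} gives $N_r(D)=N_r(\E_c(D))$, we get $\De_r(D)\neq 0$ if and only if $\De_r(\E_c(D))\neq 0$. Dually, $\E_c(D)\neq 0$ depends only on $M_c(D)$, and the second half of the lemma gives $M_c(D)=M_c(\De_r(D))$, which yields the companion equivalence. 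These are immediate and require no case work.

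For the identity $\E_c(\De_r(D))=\De_r(\E_c(D))$, the key structural observation is that $\De_r$ moves a single cell between rows $r$ and $r+1$ while $\E_c$ moves a single cell between columns $c$ and $c+1$; hence the two operators can influence one another only through the $2\times 2$ window $W$ on rows $r,r+1$ and columns $c,c+1$. Let $\gamma$ be the column of the rightmost vertically unpaired cell in row $r+1$ (the cell $\De_r$ transports) and let $\rho$ be the row of the bottom-most horizontally unpaired cell in column $c+1$ (the cell $\E_c$ transports). Because $\E_c$ preserves the vertical $r$-pairing and $\De_r$ preserves the horizontal $c$-pairing, the candidate sets for the two operators are unchanged by applying the other first; moreover the local analysis underlying Lemma~\ref{lem:PairingPreserve} shows that the paired/unpaired status of each individual cell is preserved as it is relocated, so the selections $\gamma$ and $\rho$ are computed consistently in either order. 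When $\gamma\notin\{c,c+1\}$ and $\rho\notin\{r,r+1\}$ the two moves take place in disjoint rows and columns and commute trivially, so it remains to treat the finitely many configurations in which an active cell meets $W$.

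The main obstacle is precisely this local case analysis, and its most delicate instance is when $\De_r$ and $\E_c$ transport the same cell, namely the cell at $(r+1,c+1)$ when it is simultaneously vertically and horizontally unpaired. In that situation $(r,c+1)$ and $(r+1,c)$ are both empty, and applying $\De_r$ then $\E_c$ sends the cell along $(r+1,c+1)\mapsto(r,c+1)\mapsto(r,c)$, whereas applying $\E_c$ then $\De_r$ sends it along $(r+1,c+1)\mapsto(r+1,c)\mapsto(r,c)$. Both orders deposit the cell at $(r,c)$, the two one-step moves composing to a single diagonal move; here I would use pairing preservation to confirm that after the first move the transported cell remains the one selected by the second operator, that is, still the bottom-most horizontally unpaired cell of its column, respectively the rightmost vertically unpaired cell of its row. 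The remaining adjacency configurations, in which exactly one of $\gamma\in\{c,c+1\}$ or $\rho\in\{r,r+1\}$ holds, are handled by the same bookkeeping: one checks that the source and target cells of the two moves are distinct, so the moves commute as set operations, and that Lemma~\ref{lem:PairingPreserve} fixes the two selections, so the outputs agree. Assembling these cases yields $\E_c(\De_r(D))=\De_r(\E_c(D))$ whenever both operators act nontrivially, completing the proof.
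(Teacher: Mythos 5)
Your treatment of the existence equivalences is correct and identical to the paper's: both follow directly from Lemma~\ref{lem:PairingPreserve}. Your handling of the ``same cell'' configuration (the cell at $(r+1,c+1)$ simultaneously vertically and horizontally unpaired) also matches the paper's Case 1, and the disjoint and ``exactly one selection inside the window'' configurations are indeed routine.

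However, your case analysis is not exhaustive, and the configuration it omits is precisely the delicate one. Nothing prevents \emph{both} $\gamma=c$ and $\rho=r$ from holding with the two selected cells distinct: take $D$ with cells at $(r+1,c)$, $(r+1,c+1)$ and $(r,c+1)$, and with $(r,c)$ empty (this is the right-hand diagram of Fig.~\ref{fig:commute}; note that a cell at $(r+1,c+1)$ is in fact forced in this situation, since otherwise $(r,c+1)$ would pair horizontally with $(r+1,c)$). Then $(r+1,c)$ is vertically unpaired, so $\De_r$ selects it and sends it to $(r,c)$, while $(r+1,c)$ pairs horizontally with $(r+1,c+1)$ in row $r+1$, leaving $(r,c+1)$ horizontally unpaired, so $\E_c$ selects it and also sends it to $(r,c)$. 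Here the sources are distinct but the \emph{targets coincide}, so your claim that the moves ``commute as set operations'' fails; and the selections are emphatically not computed consistently in either order: once $\E_c$ fills $(r,c)$, the cell at $(r+1,c)$ becomes vertically paired and $\De_r$ instead selects the corner cell $(r+1,c+1)$, pushing it down to $(r,c+1)$; symmetrically, after $\De_r$ acts first, $\E_c$ selects $(r+1,c+1)$ and pushes it left to $(r+1,c)$. The two orders still produce the same diagram --- cells at $(r,c)$, $(r,c+1)$, $(r+1,c)$ --- but only because \emph{different} cells get moved in the two orders, which is exactly what the paper's Case 2 verifies. Relatedly, your appeal to Lemma~\ref{lem:PairingPreserve} to ``fix the two selections'' overstates what that lemma gives: it preserves the \emph{number} of unpaired cells in the relevant row or column, not the identity of the extremal (rightmost or bottom-most) unpaired cell, and it is precisely in this interlocking case that the identity changes. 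Without this case your proof is incomplete.
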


\begin{proof}
Suppose $\E_c$ acts on a diagram $D$ by pushing cell $(r+1,c+1)$ to $(r+1,c)$ and $\De_{r'}$ acts of $D$ by pushing cell $(r'+1,c'+1)$ to $(r',c'+1)$. Since the raising operators and rectification operators preserve the horizontal and vertical pairings, respectively, then the statements that $\De_r(D) \neq 0$ if and only if $\De_r(\E_c(D)) \neq 0$ and $\E_c(D) \neq 0$ if and only if $\E_c(\De_r(D)) \neq 0$ follow immediately from Lemma~\ref{lem:PairingPreserve}.

Since $\E_c$ only affects cells $(r+1,c+1)$ and $(r+1,c)$, and $\De_{r'}$ only affects cells $(r'+1,c'+1)$ and $(r', c'+1)$, and by Lemma~\ref{lem:PairingPreserve} their actions do not modify the respective pairings, then it is clear that if these four cells do not overlap in any way then the raising and rectification operators will commute. Thus, it suffices to check the following two cases.
\begin{itemize}
\item[Case 1:] Suppose $r=r'$ and $c=c'$ (see left diagram in Fig.~\ref{fig:commute}). That is, $\E_c$ and $\De_r$ both act on $D$ by pushing cell $(r+1,c+1)$ left and down, respectively. Since $\E_c(D) \neq 0$ and $\De_r(D) \neq 0$ then positions $(r,c+1)$ and $(r+1,c)$ must both be empty. In particular, this implies that $(r,c)$ must also be empty, since otherwise the cell in $(r+1,c+1)$ would pair vertically with it. Thus, $\E_c(D)$ sends $(r+1,c+1)$ to $(r+1,c)$. Since $(r,c)$ is empty and all cells in row $r$ to the left of column $c$ were vertically $r$-paired, then $(r+1,c)$ is actually the rightmost vertically unpaired cell in row $r+1$. Thus, $\De_r(\E_c(D))$ pushes $(r+1,c)$ down to position $(r,c)$. If instead we act of $D$ by $\De_r$ first, then the cell in $(r+1,c+1)$ is first sent to $(r,c+1)$. Once again, since $(r,c)$ is empty and all cells in column $c$ in rows higher than $r$ are horizontally $c$-paired then the cell in $(r,c+1)$ is the bottom-most cell in column $c+1$ that is horizontally unpaired. Thus, $E_c$ pushes $(r,c+1)$ left to position $(r,c)$. Since all other cells of $D$ remain in the same exact positions after applying $E_c$ and $\De_r$, then we see that in this situation $\E_c\De_r(D)=\De_r\E_c(D)$.

\item[Case 2:] Suppose instead that $r=r'+1$ and $c=c'+1$ (see right diagram in Fig.~\ref{fig:commute}). That is, $\E_c$ sends $(r,c+1)$ to $(r,c)$ and $\De_r$ sends $(r,c+1)$ to $(r,c)$. Since $\E_c(D) \neq 0$ then there must be a cell in position $(r+1,c+1)$. Otherwise, $(r,c+1)$ would be horizontally $c$-paired with $(r+1,c)$ which contradicts our assumptions. Now, $\E_c$ acts on $D$ by pushing $(r,c+1)$ left to position $(r,c)$. Since in $D$ the cell in $(r+1,c)$ was the rightmost vertically unpaired cell in row $r+1$, then every cell in row $r$ in a column left of $c$ must be vertically $r$-paired. Hence, in $\E_c(D)$ the rightmost unpaired cell in row $r+1$ is located in position $(r+1,c+1)$. Consequently, $\De_r$ acts on $\E_c(D)$ by pushing $(r+1,c+1)$ down to position $(r,c+1)$. If instead $\De_r$ acts first, then $(r+1,c)$ is pushed down to $(r,c)$. This time, since every cell in column $c$ and row higher than $r+1$ is horizontally $c$-paired, then the bottom most horizontally unpaired cell in column $c+1$ of $\De_r(D)$ is in position $(r+1,c+1)$. Thus, $\E_c$ acts on $\De_r(D)$ by pushing this cell left to $(r+1,c)$. Since in both compositions the resulting diagrams have cells in positions $(r+1,c), (r,c)$ and $(r,c+1)$ and no cell in position $(r+1,c+1)$, then as before $\E_c\De_r(D)=\De_r\E_c(D)$.

\end{itemize}
\end{proof}

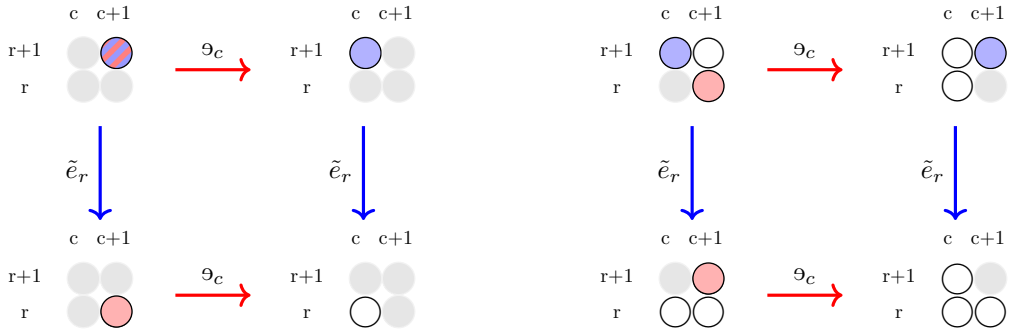
\begin{figure}[ht]
\begin{center}
\begin{tikzpicture}
\node at (-1,0.25)[scale=.75]{r+1};
\node at (-1,-0.25)[scale=.75]{r};
\node at (0,.75)[scale=.75]{c \; c+1};
\node at (0,0)[scale=1.25]{$\cirtab{ \lball & \sball \\ \lball & \lball }$};
\node at (1.5,.25)[scale=1]{$\E_c$};
\draw[->, red, very thick] (1,0)--(2,0);
\begin{scope}[shift={(.75,0)}]
\node at (2,0.25)[scale=.75]{r+1};
\node at (2,-0.25)[scale=.75]{r};
\node at (3,.75)[scale=.75]{c \; c+1};
 \node at (3,0)[scale=1.25]{$\cirtab{ \bball & \lball \\ \lball&\lball }$};
 \end{scope}
 \draw[->, blue, very thick] (0,-.75)--(0,-2);
\node at (-.3,-1.35)[scale=1]{$\De_r$};
\draw[->,blue, very thick] (3.5,-.75)--(3.5,-2);
\node at (3.2,-1.35)[scale=1]{$\De_r$};
 \begin{scope}[shift={(0,-3)}]
\node at (-1,0.25)[scale=.75]{r+1};
\node at (-1,-0.25)[scale=.75]{r};
\node at (0,.75)[scale=.75]{c \; c+1};
\node at (0,0)[scale=1.25]{$\cirtab{ \lball& \lball \\ \lball & \rball }$};
\node at (1.5,.25)[scale=1]{$\E_c$};
\draw[->,red, very thick] (1,0)--(2,0);
\begin{scope}[shift={(.75,0)}]
\node at (2,0.25)[scale=.75]{r+1};
\node at (2,-0.25)[scale=.75]{r};
\node at (3,.75)[scale=.75]{c \; c+1};
 \node at (3,0)[scale=1.25]{$\cirtab{ \lball &\lball \\ \ & \lball }$};
 \end{scope}
 \end{scope}
 \end{tikzpicture}
 \qquad \qquad\qquad
 \begin{tikzpicture}
\node at (-1,0.25)[scale=.75]{r+1};
\node at (-1,-0.25)[scale=.75]{r};
\node at (0,.75)[scale=.75]{c \; c+1};
\node at (0,0)[scale=1.25]{$\cirtab{ \bball & \ \\ \lball & \rball }$};
\node at (1.5,.25)[scale=1]{$\E_c$};
\draw[->, red, very thick] (1,0)--(2,0);
\begin{scope}[shift={(.75,0)}]
\node at (2,0.25)[scale=.75]{r+1};
\node at (2,-0.25)[scale=.75]{r};
\node at (3,.75)[scale=.75]{c \; c+1};
 \node at (3,0)[scale=1.25]{$\cirtab{ \ & \bball \\ \ & \lball }$};
 \end{scope}
 \draw[->, blue, very thick] (0,-.75)--(0,-2);
\node at (-.3,-1.35)[scale=1]{$\De_r$};
\draw[->,blue, very thick] (3.5,-.75)--(3.5,-2);
\node at (3.2,-1.35)[scale=1]{$\De_r$};
 \begin{scope}[shift={(0,-3)}]
\node at (-1,0.25)[scale=.75]{r+1};
\node at (-1,-0.25)[scale=.75]{r};
\node at (0,.75)[scale=.75]{c \; c+1};
\node at (0,0)[scale=1.25]{$\cirtab{ \lball & \rball \\ \ & \ }$};
\node at (1.5,.25)[scale=1]{$\E_c$};
\draw[->,red, very thick] (1,0)--(2,0);
\begin{scope}[shift={(.75,0)}]
\node at (2,0.25)[scale=.75]{r+1};
\node at (2,-0.25)[scale=.75]{r};
\node at (3,.75)[scale=.75]{c \; c+1};
 \node at (3,0)[scale=1.25]{$\cirtab{ \ & \lball \\ \ & \ }$};
 \end{scope}
 \end{scope}
 \end{tikzpicture}
 \caption{Diagrams depicting the operations described in Case 1 (left) and Case 2 (right) of the proof of Theorem~\ref{thm:commute}. At each step the cells on which the raising and rectification operators act are colored blue and red, respectively. The light gray circles are meant to denote empty spaces and are included only to clarify the relative position of the filled cell.} \label{fig:commute}
 \end{center}
 \end{figure}

\begin{corollary}
  The raising operators commute with rectification.
  That is, given any diagram $D$ and any row index $r \geq 1$, $\De_r(D) \neq 0$ if and only if $\De_r(\rectify(D)) \neq 0$. In this situation, we have $\rectify(\De_r(D)) = \De_r(\rectify(D))$.
  \label{cor:commute}
\end{corollary}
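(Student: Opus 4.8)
The plan is to reduce the corollary to the single-operator commutation of Theorem~\ref{thm:commute}, together with the pairing invariance of Lemma~\ref{lem:PairingPreserve}, via an induction on the number of rectification operators composed inside $\rectify$. First I would record the structural facts that drive everything. By Definition~\ref{def:rectification}, computing $\rectify(D)$ amounts to a finite composition $\E_{c_m}\circ\cdots\circ\E_{c_1}$ of rectification operators applied to $D$, where $c_1$ is the minimal column index with $\E_{c_1}(D)\neq 0$, and so on; this is finite since each $\E_c$ moves a cell from column $c+1$ to column $c$, strictly decreasing the sum of the column indices of all cells. Next, Lemma~\ref{lem:PairingPreserve} says each $\E_c$ preserves the number $N_r$ of vertically $r$-unpaired cells in row $r+1$, hence $N_r$ is invariant under $\rectify$ and under every intermediate diagram occurring in the rectification of $D$. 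Since $\De_r(D)\neq 0$ precisely when $N_r(D)>0$, this invariance immediately yields the stated equivalence $\De_r(D)\neq 0 \iff \De_r(\rectify(D))\neq 0$, and guarantees that $\De_r$ stays nonvanishing at every stage of the rectification.

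For the identity $\rectify(\De_r(D))=\De_r(\rectify(D))$, assume $\De_r(D)\neq 0$ (the case $\De_r(D)=0$ being trivial, both sides vanishing by the equivalence just proved) and induct on $m$, the number of rectification operators in $\rectify(D)$. When $m=0$ we have $\E_i(D)=0$ for all $i$; the non-vanishing equivalences of Theorem~\ref{thm:commute}, applicable because $\De_r(D)\neq 0$, force $\E_i(\De_r(D))=0$ for all $i$ as well, so $\rectify(\De_r(D))=\De_r(D)=\De_r(\rectify(D))$. For $m>0$, let $c_1$ be the minimal column index with $\E_{c_1}(D)\neq 0$. By Theorem~\ref{thm:commute} the equivalence $\E_c(D)\neq 0 \iff \E_c(\De_r(D))\neq 0$ holds for every column $c$, so the rectification of $\De_r(D)$ begins with the same operator $\E_{c_1}$. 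Applying the commutation identity $\E_{c_1}(\De_r(D))=\De_r(\E_{c_1}(D))$ from Theorem~\ref{thm:commute}, I then replace $D$ by $\E_{c_1}(D)$, which is rectified in $m-1$ steps and on which $\De_r$ is still nonvanishing by the $N_r$ invariance; the inductive hypothesis together with the recursive identity $\rectify(D)=\rectify(\E_{c_1}(D))$ give
\[ \rectify(\De_r(D))=\rectify(\De_r(\E_{c_1}(D)))=\De_r(\rectify(\E_{c_1}(D)))=\De_r(\rectify(D)), \]
completing the induction.

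The point demanding the most care --- and the main obstacle --- is that $\rectify$ uses the \emph{deterministic} column-selection rule of Definition~\ref{def:rectification}: at each step it applies the operator at the minimal active column. If applying $\De_r$ first were to change which columns are active, the two compositions $\rectify\circ\De_r$ and $\De_r\circ\rectify$ would select different operators and could not be matched term by term, and the induction would break. The non-vanishing equivalences in Theorem~\ref{thm:commute} are exactly what rule this out: at every stage the set of columns on which some $\E_c$ acts nontrivially is the same for a diagram and for its image under $\De_r$, so the minimal active column, and hence the entire rectification path, coincides. Structuring the induction on $\E_{c_1}(D)$ rather than on $\De_r(\E_{c_1}(D))$ keeps the inductive hypothesis attached to a diagram one genuine rectification step closer to its normal form, with $\De_r$ provably still nonzero on it by the invariance of $N_r$.
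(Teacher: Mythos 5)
Your proof is correct and takes essentially the same route as the paper's: both reduce the statement to the single-operator commutation of Theorem~\ref{thm:commute} applied along the rectification sequence, with Lemma~\ref{lem:PairingPreserve} supplying the nonvanishing/base case. Your version merely makes explicit the induction on the length of the rectification sequence and the matching of minimal active columns (and the ``if and only if'' clause via the invariance of $N_r$), which the paper compresses into ``it immediately follows.''
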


\begin{proof}
Suppose $\De_r$ acts on $D$ by moving the cell in position $(r+1,c)$ to position $(r,c)$.  If $\rectify(D)=D$ then it suffices to show that $\rectify(\De_r(D))=\De_r(D)$. In particular, by Lemma~\ref{lem:PairingPreserve} we know that raising operators preserve horizontal pairing. Thus, if $\E_c(D)=0$ for all $c\geq 1$, then $\E_c(\De_r(D))=0$ for all $c\geq 1$. Consequently, $\rectify(\De_r(D))=\De_r(D)$.

If instead $\rectify(D)\neq D$, then there must exist some $c\geq 1$ such that $\E_c(D) \neq 0$. By Theorem~\ref{thm:commute} we know that whenever $\E_c(D) \neq 0$ and $\De_r(D) \neq 0$ then these operators commute. Thus, it immediately follows that $\rectify(\De_r(D)) = \De_r(\rectify(D))$, as desired.
\end{proof}

For the sake of conciseness, we introduce the following notation.

\begin{definition}
  The \newword{embedding map} $\embed: \SSKD \to \SSYT$ is the composition of the maps $\T \circ \rectify \circ \D$.
  \label{def:ram}
\end{definition}

With this in hand, we combine the previous results and show that $\embed$ is in fact a crystal homomorphism from $\SSKD(a)$ into $\SSYT$, that is, $\embed$ preserves the crystal structures. 

\begin{corollary}
  Let $a$ be a weak composition of length $n$, and let $\mathcal{C}\subseteq\SSKD(a)$ be any subset closed under the raising and lowering operators on semistandard key tabloids. Then there exists a partition $\lambda$ such that $\embed(\mathcal{C})\subseteq\SSYT_n(\lambda)$. Moreover, for any $T \in \SSKD(a)$ such that $e_i(T) \neq 0$, we have
  \[\embed(e_i(T))= \Ye_i(\embed(T)).\]
  \label{cor:commute2}
\end{corollary}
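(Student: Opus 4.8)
The plan is to prove the intertwining first and then read off the shape statement from it together with Proposition~\ref{prop:tableau-map}. For the intertwining, I would chain the three commuting results already in hand. Fix $T\in\SSKD(a)$ with $e_i(T)\neq 0$. By Proposition~\ref{prop:commute} the raising operator on tabloids agrees with the raising operator on diagrams, so $\D(e_i(T))=\De_i(\D(T))$, and in particular $\De_i(\D(T))\neq 0$. By Corollary~\ref{cor:commute} the raising operators on diagrams commute with rectification, giving
\[ \rectify(\D(e_i(T))) = \rectify(\De_i(\D(T))) = \De_i(\rectify(\D(T))) , \]
which is again nonzero. Finally, since $\rectify(\D(T))$ satisfies Eq.~\eqref{e:rectified}, Proposition~\ref{prop:tableau-map} applies and yields $\T(\De_i(\rectify(\D(T)))) = \Ye_i(\T(\rectify(\D(T))))$. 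Unwinding the definition $\embed=\T\circ\rectify\circ\D$ of Definition~\ref{def:ram}, this reads exactly $\embed(e_i(T)) = \Ye_i(\embed(T))$, proving the ``moreover'' clause for every $T\in\SSKD(a)$.

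For the shape statement, I would first record that rectification terminates (each $\E_c$ displaces a cell strictly leftward, and the total horizontal displacement is bounded) and that its output $\rectify(\D(T))$ satisfies Eq.~\eqref{e:rectified}: by Definition~\ref{def:rectification} the process halts only when $\E_c=0$ for every $c$, which by Remark~\ref{rem:i-pairing} is precisely the horizontal-pairing reformulation of Eq.~\eqref{e:rectified}. Hence Proposition~\ref{prop:tableau-map} guarantees that $\embed(T)=\T(\rectify(\D(T)))$ is a semistandard Young tableau of partition shape for every $T$. The key point is that this shape is always $\mathrm{sort}(a)$: identifying $\rectify(\D(T))$ with $\D(U)$ for a semistandard key tableau $U$ via the bijection of \cite{Ass-W}(Theorem~3.15), the embedding factors through the column-sorting map $\phi=\T\circ\D$ of \cite{ASc18}, which is known to land in $\SSYT_n(\mathrm{sort}(a))$. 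Taking $\lambda=\mathrm{sort}(a)$ then gives $\embed(\mathcal{C})\subseteq\SSYT_n(\lambda)$ for every $\mathcal{C}\subseteq\SSKD(a)$; the closure hypothesis on $\mathcal{C}$, combined with the intertwining just proved (and the fact from Definition~\ref{def:SSYT-lower} that $f_i,\Yf_i$ invert $e_i,\Ye_i$), further ensures $\embed(\mathcal{C})$ is closed under both $\Ye_i$ and $\Yf_i$, so that it is genuinely a union of full subcrystals sitting inside this single Schur crystal.

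I expect the main obstacle to be the shape-constancy claim, namely verifying that rectification never changes the sorted shape, so that every $T\in\SSKD(a)$ really does land in the same $\SSYT_n(\mathrm{sort}(a))$. The intertwining itself is bookkeeping on top of Propositions~\ref{prop:commute} and~\ref{prop:tableau-map} and Corollary~\ref{cor:commute}, the only subtlety being the repeated checks that the intermediate operator $\De_i$ remains nonzero so that each cited commuting identity actually applies. The shape argument is more delicate because $\E_c$ does alter individual column heights; the safe route is to avoid tracking column heights during the rectification and instead to argue through the semistandard key tableau $U=\D^{-1}(\rectify(\D(T)))$ and the established shape of the column-sorting map, pinning down that the relevant rearrangement is $\mathrm{sort}(a)$ rather than attempting to control it combinatorially step by step.
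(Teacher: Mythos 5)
Your proof of the intertwining $\embed(e_i(T))=\Ye_i(\embed(T))$ is correct, and it is exactly the chain the paper intends: Proposition~\ref{prop:commute}, then Corollary~\ref{cor:commute}, then Proposition~\ref{prop:tableau-map}, with the nonvanishing of the intermediate $\De_i$ supplied by those same statements. The gap is in the shape claim. It is \emph{not} true that $\embed$ lands in $\SSYT_n(\mathrm{sort}(a))$: rectification genuinely changes column heights, and the partition shape of $\embed(T)$ depends on the connected component containing $T$, not only on $a$. The paper's own running example refutes your claim: for $a=(0,3,0,2)$ one has $\mathrm{sort}(a)=(3,2,0,0)$, yet the tabloids in Fig.~\ref{fig:rectify} (the component whose highest weight element has weight $(3,1,1,0)$) embed into $\SSYT_4(3,1,1)$. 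Indeed the six components of $\SSKD(0,3,0,2)$ embed into crystals of shapes $(3,2)$, $(3,1,1)$, $(2,2,1)$, $(2,2,1)$, $(2,1,1,1)$, $(2,1,1,1)$ respectively, which is precisely why $E_{(0,3,0,2)}(X;q,0)$ expands into six Demazure characters of distinct weights rather than into $q$-multiples of a single Schur polynomial. Your proposed repair---identifying $\rectify(\D(T))$ with $\D(U)$ for a semistandard key tableau $U$ and invoking the known shape of the column sorting map---fails at exactly the ``pinning down'' step you flagged: $U$ is a key tableau of some shape $a^{\prime}$ that is \emph{created by} the rectification, and $a^{\prime}$ is in general not a rearrangement of $a$ (in the example above $a^{\prime}$ sorts to $(3,1,1)$, not $(3,2)$). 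So the shape-constancy across all of $\SSKD(a)$ that your argument needs is simply false.

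The shape statement should instead be deduced \emph{from} the intertwining. By Proposition~\ref{prop:tableau-map}, each $\embed(T)$ is a semistandard Young tableau of some partition shape (your termination argument for $\rectify$ and the equivalence of the stopping condition with Eq.~\eqref{e:rectified} are fine and are the right ingredients here). The operators $\Ye_i$ and $\Yf_i$ on semistandard Young tableaux change entries but never the shape. A subset $\mathcal{C}$ closed under the $e_i$ and $f_i$ is a union of connected components, and within a component any two elements are joined by a chain of raising and lowering operators; pushing such a chain through the intertwining shows that all elements of $\embed(\mathcal{C})$ share one shape $\lambda$, which by Lemma~\ref{lem:hwPartition} is the partition weight of the component's unique highest weight element---a quantity that varies with the component and encodes the $\maj$ grading. (For a disconnected closed subset one gets one $\lambda$ per component; this per-component reading is how the corollary is used in the paper, whose very next assertion concerns connected components.)
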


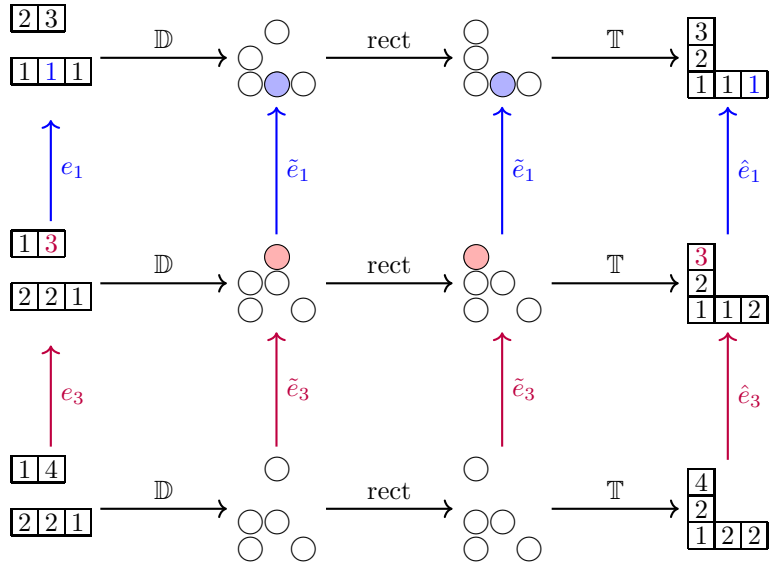
\begin{figure}[ht]
\begin{tikzpicture}
\node at (0,0) (a){\vline\tableau{2& 3 \\ & \\ 1&\color{blue}1&1\\ & }};
\node at (0,-3) (b){\vline\tableau{1& \color{purple}3 \\ & \\ 2&2&1\\ & }};
\node at (0,-6) (c){\vline\tableau{1& 4 \\ & \\ 2&2&1\\ & }};
\draw[thick, blue, <-] (a)--(b) node[midway,right] {$e_1$};
\draw[thick, purple, <- ](b)--(c) node[midway,right] {$e_3$};

\node at (3,0) (a2){\vline\cirtab{& \ \\ \ \\ \ & \bball &\ \\}};
\node at (3,-3) (b2){\vline\cirtab{& \rball \\ \ & \ \\ \ & & \ \\}};
\node at (3,-6) (c2){\vline\cirtab{& \ \\ \\ \ & \ \\ \ & & \ \\ }};
\draw[thick, blue, <-] (a2)--(b2) node[midway,right] {$\De_1$};
\draw[thick, purple, <-](b2)--(c2) node[midway,right] {$\De_3$};

\node at (6,0) (a3){\vline\cirtab{ \ \\ \ \\ \ & \bball &\ \\}};
\node at (6,-3) (b3){\vline\cirtab{ \rball \\ \ & \ \\ \ & & \ \\}};
\node at (6,-6) (c3){\vline\cirtab{ \ \\ \\ \ & \ \\ \ & & \ \\ }};
\draw[thick, blue, <-] (a3)--(b3) node[midway,right] {$\De_1$};
\draw[thick, purple, <-](b3)--(c3) node[midway,right] {$\De_3$};

\node at (9,0) (a4){\tableau{3\\2\\ 1&1& \color{blue}1}};
\node at (9,-3) (b4){\tableau{\color{purple}3 \\2\\ 1&1&2}};
\node at (9,-6) (c4){\tableau{4\\2\\ 1&2&2}};
\draw[thick, blue, <-] (a4)--(b4) node[midway,right] {$\Ye_1$};
\draw[thick, purple, <-](b4)--(c4) node[midway,right] {$\Ye_3$};

\draw[thick, ->] (a)--(a2) node[midway,above] {$\D$};
\draw[thick, ->] (b)--(b2) node[midway,above] {$\D$};
\draw[thick, ->] (c)--(c2) node[midway,above] {$\D$};

\draw[thick, ->] (a2)--(a3) node[midway,above] {$\rectify$};
\draw[thick, ->] (b2)--(b3) node[midway,above] {$\rectify$};
\draw[thick, ->] (c2)--(c3) node[midway,above] {$\rectify$};

\draw[thick, ->] (a3)--(a4) node[midway,above] {$\T$};
\draw[thick, ->] (b3)--(b4) node[midway,above] {$\T$};
\draw[thick, ->] (c3)--(c4) node[midway,above] {$\T$};


\end{tikzpicture}
\caption{\label{fig:rectify}Examples of the map $\embed$ given by the rectification algorithm from semi-standard key tabloids, to diagrams, to Kohnert diagrams (via rectification), to semi-standard Young tableaux.}
\end{figure}

In particular, each connected component of the graph determined by the raising operators on semistandard key tabloids is a subset of a normal crystal.

\begin{example} In Fig.~\ref{fig:rectify} we see a detailed example of the embedding map $\embed$ acting on some of the elements of the Demazure crystal $\B_{4321}(3,1,1,0)$. The colored entries and balls denote the unpaired cells on which the raising operators act. For a detailed example of the entire Demazure crystal $\B_{4321}(3,1,1,0)$ we refer the reader to Fig.~\ref{fig:embedExample} in the Appendix.
\end{example}

\subsection{Demazure property}
\label{sec:tabloid-lowest}

We leverage the tools developed in Section~\ref{sec:demazure} to show that rectification is, in fact, a crystal isomorphism between the graph determined by raising operators on semistandard key tabloids and the Demazure crystal on semistandard key tableaux.

To begin, we must show the graph is extremal, as in Definition~\ref{def:extremal}. In particular, we must show each component contains the necessary highest weight element. To that end, we have the following.

\begin{lemma}
  If $T \in \SSKD(a)$ is such that $e_i(T)=0$ for all $i$, then $\rectify(\D(T))$ is a key diagram with partition weight.
  \label{lem:hwPartition}
\end{lemma}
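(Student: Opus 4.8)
The plan is to transport the hypothesis to the diagram side, reduce to two cumulative ``domination'' conditions on $D^\ast := \rectify(\D(T))$, and reconcile them by an induction on rows. By the correspondence of pairing rules established in the proof of Proposition~\ref{prop:commute}, the rightmost unpaired $i+1$ of $T$ is the rightmost vertically unpaired box of $\D(T)$; hence $e_i(T)=0$ for all $i$ is equivalent to $\De_i(\D(T))=0$ for all $i$. By Corollary~\ref{cor:commute}, $\De_i(\D(T))\neq 0$ if and only if $\De_i(\rectify(\D(T)))\neq 0$, so in fact $\De_i(D^\ast)=0$ for all $i$. Thus it suffices to prove that any diagram $D^\ast$ which is the output of rectification and satisfies $\De_i(D^\ast)=0$ for all $i$ is the key diagram of a partition.

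Next I would record two structural properties of $D^\ast$. On one hand, rectification halts precisely when $\E_i(D^\ast)=0$ for every $i$, that is, when every box of column $c$ is horizontally $i$-paired with a box of column $c-1$ weakly above it; this injective, row-weakly-increasing matching yields $\#\{(s,c-1)\in D^\ast \mid s\geq r\}\geq \#\{(s,c)\in D^\ast \mid s\geq r\}$ for all $r$ and $c$, which is exactly Eq.~\eqref{e:rectified}, so by Lemma~\ref{lem:diagram} the diagram $D^\ast$ is a Kohnert diagram of a key diagram. On the other hand, $\De_i(D^\ast)=0$ says no box of row $i+1$ is vertically unpaired, and since the vertical pairing matches each row-$(i+1)$ box to a row-$i$ box weakly to its left, this is equivalent to the prefix inequality $\#\{c'\leq c \mid (i,c')\in D^\ast\}\geq \#\{c'\leq c \mid (i+1,c')\in D^\ast\}$ for all $c$. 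In particular, taking $c$ large, the row lengths weakly decrease, so $\wt(D^\ast)=\lambda$ is a partition.

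The crux is to combine these into left-justification by downward induction on rows. Writing $N(r,c)=\#\{(s,c)\in D^\ast \mid s\geq r\}$, which is weakly decreasing in $c$ by Eq.~\eqref{e:rectified}, and $M(r,c)=\#\{c'\leq c \mid (r,c')\in D^\ast\}$, which is weakly decreasing in $r$ by $\De_i=0$, the topmost nonempty row is left-justified because a box there forces $N(r,c')\geq 1$ in all earlier columns $c'$ of that row. For the inductive step, assuming all higher rows are left-justified, so that their column counts are explicit in terms of $\lambda$, a leftmost gap in row $r$ followed by a later box in the same row would force $M(r,c_0)=c_0-1$; comparing with $M(r+1,c_0)=\min(c_0,\lambda_{r+1})$ and using $N(r,c_0)\geq N(r,c_1)$ at the later box yields $c_0-1\geq c_0$ in one case and $\lambda_{r+1}\geq c_0 > \lambda_{r+1}$ in the other, a contradiction either way. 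Hence each row is an initial segment of columns of length $\lambda_r$, so $D^\ast$ is the key diagram of $\lambda$.

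I expect the main obstacle to be exactly this last reconciliation. Partition weight alone does not force left-justification: one can exhibit diagrams of partition weight satisfying Eq.~\eqref{e:rectified} that are not left-justified, so it is essential to use the full strength of $\De_i(D^\ast)=0$ through the row-prefix domination encoded in $M$, rather than only the weak monotonicity of row lengths. A secondary technical point is to keep the two pairing rules' directions (left versus below, top versus bottom) straight when converting $\E_i=0$ and $\De_i=0$ into the monotonicity statements for $N$ and $M$, since the horizontal and vertical pairings are related by a transpose only up to a reflection.
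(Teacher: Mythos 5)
Your proof is correct, and it takes a genuinely different route from the paper's. The paper argues by tracing the execution of the rectification algorithm: after getting partition weight directly from the pairing condition on $T$, it uses Lemma~\ref{lem:PairingPreserve} to see that every row stays fully vertically paired, derives from this a block decomposition of the columns of $\D(T)$, and then explicitly determines which operators $\E_c$ fire, in what order and how many times, concluding that rectification left-justifies the columns one at a time. You instead argue purely with invariants of the output $D^\ast=\rectify(\D(T))$: the halting condition $\E_i(D^\ast)=0$ for all $i$ is exactly the column-suffix domination of Eq.~\eqref{e:rectified} (so $D^\ast$ is a Kohnert diagram by Lemma~\ref{lem:diagram}), Corollary~\ref{cor:commute} transfers $\De_i=0$ from $\D(T)$ to $D^\ast$, which is exactly row-prefix domination, and your downward row induction shows these two dominations together force a left-justified diagram with partition row lengths. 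Your approach is more modular: it never needs to know the order in which rectification operators act, and it isolates a clean standalone combinatorial fact about diagrams, at the price of invoking Corollary~\ref{cor:commute} (which the paper's proof does not need, though it is available at this point in the paper); the paper's trace argument is heavier but yields extra information, namely an explicit factorization of the rectification sequence into column-justifying blocks. Finally, your closing caveat is accurate and worth keeping: partition weight together with Eq.~\eqref{e:rectified} does not by itself force left-justification --- for instance the diagram with cells $(1,1),(1,3),(2,1),(2,2)$, obtained from the key diagram of $(1,3)$ by a single Kohnert move, has partition weight $(2,2)$ and satisfies Eq.~\eqref{e:rectified} but is not left-justified --- so the row-prefix domination coming from $\De_i(D^\ast)=0$ is essential, exactly as you use it in both cases of your inductive step.
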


\begin{proof}
If $e_i(T)=0$ for all $i$ then clearly all cells with entries $i+i$ are $i$-paired and so $\wt(T)_{i+1}\leq \wt(T)_i$ for all $i$.  Thus, $\wt(T)$ is a partition.  Consequently, it suffices to show that $\rectify(\D(T))$ is left justified.

By Lemma~\ref{lem:PairingPreserve} rectification preserves the number of vertical $i$-pairs, therefore all the cells in row $i+1$ of $\rectify(\D(T))$ must be vertically $i$-paired. That is to say, for any cell in row $i+1$ there is a cell in row $i$ located either below it or in a column to its left.  In particular, in column one this implies all cells must lie in a consecutive block of rows with indexes $1\leq \dots \leq R_{1_1}$. This in turn forces the cells in column two to lie in at most two blocks of consecutive rows with indexes $1\leq \dots \leq R_{2_1}$ and $R_{1_1}+1\leq \dots \leq R_{2_2}$ satisfying $R_{2_1}\leq R_{1_1}$. Iterating this procedure we find that column $c+1$ must have cells in at most $c+1$ blocks of consecutive rows with indexes $1\leq \dots \leq R_{(c+1)_1}$, $R_{c_1}+1\leq \dots \leq R_{(c+1)_2}$, $\dots$, $R_{c_{c}}+1 \leq \dots \leq R_{(c+1)_{(c+1)}}$ satisfying $R_{(c+1)_j} \leq R_{c_j}$ for all $1\leq j \leq c$ (see Fig.~\ref{fig:HWrectification}).

Since rectification acts on the column with the lowest possible index and on the lowest row of the affected column, then if $\D(T)$ has cells in column two that have row index higher than $R_{1_1}$ then the first rectification operator to act will be $\E_1$. Specifically, since column one has no cells in rows higher than $R_{1_1}$ then every cell in column two in rows $R_{1_1}+1\leq \dots \leq R_{2_2}$ is not horizontally 1-paired. Thus, $\E_1$ will act $R_{2_2}-R_{1_1}$ times on $\D(T)$ and push all the cells of column two in rows $R_{1_1}+1\leq \dots \leq R_{2_2}$ left to column one, left justifying the first two columns in the process. Since all the cells in column two now lie in consecutive rows $1 \leq \dots \leq R_{2_1}$,  rectification will now act by applying $\E_2$ to $\E_1^{R_{2_2}-R_{1_1}}(\D(T))$ exactly $(R_{3_2}-R_{2_1}) + (R_{3_3}-R_{2_2})$ times and left justifying columns two and three. Iterating this procedure if we set $m_s(c+1):= \sum_{i=1}^{s} (R_{(c+1)_{i+1}}-R_{c_i})$ and $\reflectbox{R}_c:= \E_1^{m_1(c+1)}\E_s^{m_2(c+1)} \dots \E_c^{m_c(c+1)}$, then rectification will act on $\D(T)$ in the following manner:
\[\rectify(\D(T))  = \reflectbox{R}_{M-1} \reflectbox{R}_{M-2} \dots \reflectbox{R}_{1} (\D(T)),\]
where $M$ is the number of columns of $\D(T)$. Thus for each $1\leq c \leq M-1$, the diagram $\reflectbox{R}_{c} \reflectbox{R}_{M-2} \dots \reflectbox{R}_{1} (\D(T))$ has the same cells as $\D(T)$ but with the first $c+1$ columns left justified and all columns to the right of column $c+1$ identical to those of $\D(T)$.  Hence, rectification will  sequentially left justify the first $c$ columns of $\D(T)$ with $c$ increasing one step at a time and so $\D(T)$  is rectified to a key diagram of partition weight.
\end{proof}

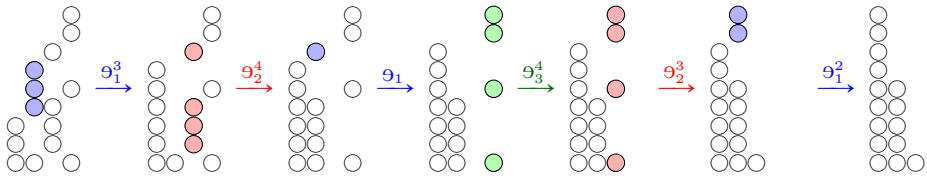
\begin{figure}[ht]
\[
\hackcenter{
\begin{tikzpicture}[scale=.7,every node/.style={scale=.7}]
\node at (0,0){$\vline \cirtab{ &&&\ \\ &&& \ \\ && \ \\ & \bball \\ & \bball & & \ \\ & \bball & \ \\ \ && \ \\ \ & & \ \\ \ & \ & & \ \\}$};
\end{tikzpicture}}
\xrightarrow{\color{blue}\E_1^3}
\hackcenter{
\begin{tikzpicture}[scale=.7,every node/.style={scale=.7}]
\node at (0,0){$\vline \cirtab{ &&&\ \\ &&& \ \\ && \rball \\  \ & \\  \ & & & \ \\  \ & & \rball \\ \ && \rball \\ \ & & \rball \\ \ & \ & & \ \\}$};
\end{tikzpicture}}
\xrightarrow{\color{red}\E_2^4}
\hackcenter{
\begin{tikzpicture}[scale=.7,every node/.style={scale=.7}]
\node at (0,0){$\vline \cirtab{ &&&\ \\ &&& \ \\ & \bball & \\  \ & \\  \ & & & \ \\  \ &  \ & \\ \ & \ & \\ \ &  \ & \\ \ & \ & & \ \\}$};
\end{tikzpicture}}
\xrightarrow{\color{blue}\E_1}
\hackcenter{
\begin{tikzpicture}[scale=.7,every node/.style={scale=.7}]
\node at (0,0){$\vline \cirtab{ &&& \gball \\ &&& \gball \\  \ & & \\  \ & \\  \ & & & \gball \\  \ &  \ & \\ \ & \ & \\ \ &  \ & \\ \ & \ & & \gball \\}$};
\end{tikzpicture}}
\xrightarrow{\color{black!60!green}\E_3^4}
\hackcenter{
\begin{tikzpicture}[scale=.7,every node/.style={scale=.7}]
\node at (0,0){$\vline \cirtab{ && \rball & \\ && \rball & \\  \ & & \\  \ & \\  \ & & \rball & \\  \ &  \ & \\ \ & \ & \\ \ &  \ & \\ \ & \ &  \rball &\\}$};
\end{tikzpicture}}
\xrightarrow{\color{red}\E_2^3}
\hackcenter{
\begin{tikzpicture}[scale=.7,every node/.style={scale=.7}]
\node at (0,0){$\vline \cirtab{ & \bball & & \\ & \bball & & \\  \ & & \\  \ & \\  \ &  \ & & \\  \ &  \ & \\ \ & \ & \\ \ &  \ & \\ \ & \ & \ & &\\}$};
\end{tikzpicture}}
\xrightarrow{\color{blue}\E_1^2}
\hackcenter{
\begin{tikzpicture}[scale=.7,every node/.style={scale=.7}]
\node at (0,0){$\vline \cirtab{ \ & & & \\  \ & & & \\  \ & & \\  \ & \\  \ &  \ & & \\  \ &  \ & \\ \ & \ & \\ \ &  \ & \\ \ & \ & \ & &\\}$};
\end{tikzpicture}}\]
\caption{Example of rectification of a diagram where $\De_i$ acts trivially for all $i$.}\label{fig:HWrectification}
\end{figure}

\begin{example}
  In Fig.~\ref{fig:HWrectification} we can see how a diagram $Y$ satisfying $\De_i(Y)=0$ for all $i$ is rectified to a partition diagram. In particular, $\reflectbox{R}_1=\E_1^3$, $\reflectbox{R}_2=\E_1\E_2^4$, $\reflectbox{R}_3=\E_1^2\E_2^3\E_3^4$ so that $\rectify(\D(Y))= \reflectbox{R}_3\reflectbox{R}_2\reflectbox{R}_1(Y)$.
\end{example}

\begin{lemma}
  Each connected component of the graph on $\SSKD(a)$ defined by the raising operators $e_i$ has a unique tabloid $Y$ such that $e_i(Y)=0$ for all $i$.
  \label{lem:hwUnique}
\end{lemma}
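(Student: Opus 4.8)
The plan is to transfer the statement to the normal crystal on semistandard Young tableaux through the embedding $\embed=\T\circ\rectify\circ\D$, exploiting that $\embed$ is a weight-preserving injection intertwining the crystal operators (Corollary~\ref{cor:commute2}) together with the structural description in Lemma~\ref{lem:hwPartition}. Throughout, fix a connected component $\mathcal{C}$ of the graph on $\SSKD(a)$ determined by the $e_i$; since $e_i$ and $f_i$ are mutually inverse, $\mathcal{C}$ is closed under both families of operators.

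For existence, I would argue by a monovariant. By Theorem~\ref{thm:well-defined} the operators $e_i$ preserve $\SSKD(a)$, and by the crystal weight rule $\wt(e_i(T))=\wt(T)+\mathbf{e}_i-\mathbf{e}_{i+1}$, so the quantity $s(T)=\sum_{i}i\,\wt(T)_i$ strictly decreases by $1$ under every nontrivial raising step. As $\mathcal{C}$ is finite and nonempty, choose $Y\in\mathcal{C}$ minimizing $s$; were $e_i(Y)\neq 0$ for some $i$, then $e_i(Y)\in\mathcal{C}$ would satisfy $s(e_i(Y))=s(Y)-1<s(Y)$, contradicting minimality. Hence $e_i(Y)=0$ for all $i$, so a highest weight tabloid exists in $\mathcal{C}$.

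For uniqueness, Corollary~\ref{cor:commute2} supplies a partition $\lambda$ with $\embed(\mathcal{C})\subseteq\SSYT_n(\lambda)$, the map being injective and intertwining $e_i$ with $\Ye_i$. Let $Y\in\mathcal{C}$ satisfy $e_i(Y)=0$ for all $i$, and set $\mu=\wt(Y)$. By Lemma~\ref{lem:hwPartition}, $\rectify(\D(Y))$ is a key diagram of partition weight, namely the Young diagram of $\mu$. I would then run this diagram through the tableau map (Definition~\ref{def:tableau-map}): assigning $n-r+1$ to row $r$ and column-sorting produces the tableau whose $c$-th column is $\{n-\mu'_c+1,\dots,n\}$, which is the unique lowest weight element of $\SSYT_n(\mu)$, and applying the crystal flip $\mathcal{F}$ sends it to the unique highest weight element $u_\mu$. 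Thus $\embed(Y)=u_\mu$ has shape $\mu$; since $\embed(Y)\in\SSYT_n(\lambda)$ has shape $\lambda$, we conclude $\mu=\lambda$ and $\embed(Y)=u_\lambda$. Therefore every highest weight tabloid of $\mathcal{C}$ shares the single image $u_\lambda$, and injectivity of $\embed$ forces all of them to coincide.

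The main obstacle is the middle step of the uniqueness argument: confirming that the image of a highest weight tabloid is exactly the highest weight tableau $u_\lambda$. This rests on Lemma~\ref{lem:hwPartition} (the genuine combinatorial content, that rectification straightens $\D(Y)$ to a Young diagram) and on carefully tracking the tableau map and the crystal flip so as to recognize the output as $u_\mu$ rather than some other tableau; the shape-matching via Corollary~\ref{cor:commute2} and the invocation of injectivity of $\embed$ then close the argument. The existence half is routine by comparison.
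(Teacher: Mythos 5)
Your existence argument via the monovariant $s(T)=\sum_i i\,\wt(T)_i$ is fine (the paper takes existence for granted), and your identification of the image of a highest weight tabloid is also correct: by Lemma~\ref{lem:hwPartition} and Definition~\ref{def:tableau-map}, any $Y$ with $e_i(Y)=0$ for all $i$ satisfies $\embed(Y)=u_{\wt(Y)}$, and comparing shapes via Corollary~\ref{cor:commute2} forces $\wt(Y)=\lambda$. The gap is your final step. You attribute injectivity of $\embed$ to Corollary~\ref{cor:commute2}, but that corollary asserts only the containment $\embed(\mathcal{C})\subseteq\SSYT_n(\lambda)$ and the intertwining $\embed(e_i(T))=\Ye_i(\embed(T))$; no injectivity is proved anywhere in the body of the paper, and as a statement about all of $\SSKD(a)$ it is in fact \emph{false}. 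Concretely, for $a=(0,3,0,2)$ take the third and fourth tabloids of Fig.~\ref{fig:highest-Dem}: $S$ with row-$4$ entries $2,1$ and row-$2$ entries $1,3,2$, and $T$ with row-$4$ entries $2,2$ and row-$2$ entries $1,1,3$, both of weight $(2,2,1,0)$. Then $\D(S)$ has cells $(1,1),(2,1);(1,2),(3,2);(2,3)$ and $\D(T)$ has cells $(1,1),(2,1);(1,2),(2,2);(3,3)$, and running Definition~\ref{def:rectification} on each (for $S$: push $(3,2)$ to $(3,1)$, then $(2,3)$ to $(2,2)$; for $T$: push $(3,3)$ to $(3,2)$, then to $(3,1)$) lands both at the key diagram of $(2,2,1,0)$. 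Hence $\embed(S)=\embed(T)=u_{(2,2,1,0)}$ while $S\neq T$; these two tabloids are highest weights of \emph{different} components (the ones contributing $\key_{(0,1,2,2)}$ and $\key_{(0,2,1,2)}$), which is exactly the configuration your argument must exclude inside a single component.

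So what you actually need is injectivity of $\embed$ restricted to one connected component $\mathcal{C}$, and that is not available at this stage: it is a consequence of the uniqueness you are trying to prove, not a tool for it. Indeed, granting uniqueness, if $\embed(S)=\embed(T)$ for $S,T\in\mathcal{C}$, raise $S$ to the unique highest weight $Y$ by a sequence of $e_i$'s; since $\embed$ preserves the $i$-pairing statistics (hence the string lengths $\varepsilon_i$), the same sequence applied to $T$ is nonzero at every step and terminates at a highest weight tabloid, necessarily $Y$, and inverting with the $f_i$'s gives $S=T$. Attempting to prove component-wise injectivity first runs straight into this circle. This is why the paper argues differently: assuming two highest weights in $\mathcal{C}$, it chooses a vertex $X$ on a connecting path with $e_i(X)=Y$ and a second raising edge $e_j(X)\neq 0$, transports the configuration into the ambient normal crystal via Theorem~\ref{thm:commute} and Corollary~\ref{cor:commute2}, and invokes Stembridge's local axioms (commuting squares and $\mathfrak{sl}_2$ strings) to conclude that the ambient operator $\Ye_j$ does not annihilate $\embed(Y)$, whence $e_j(Y)\neq 0$ by the pairing correspondence --- contradicting that $Y$ is a highest weight. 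Replacing your injectivity appeal with such a propagation argument (or any independent proof of component-wise injectivity) would close the gap; as written, the decisive step is unsupported.
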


\begin{proof}
Assume $Y$ we not unique. Since the graph is connected there is a vertex $X$ and index $i$ such that $e_i(X)=Y$. Suppose there exists $e_j(X) \neq 0$ for some $j\neq i$. Recall that we can apply rectification and diagram maps to each vertex and embed the graph into a crystal. Denote by $D_X=\rectify(\D(X))$, then by Theorem~\ref{thm:commute}
we know that $\De_i(D_X) = D_Y$ and $\De_j(D_X)=D_{e_j(X)}$. Moreover, by Stembridge's crystal axioms the vertices $D_Y$ and $D_{e_j(X)}$ will be part of either a commuting square or an $sl_2$ relation. Either way, $\De_j(D_Y)\neq 0$. By Corollary~\ref{cor:commute2} there exists $W \in \SSKD(a)$ such that $D_W = \De_j(D_Y)$ and thus $e_j(Y)= W$ which contradicts the original assumption.

More generally, since the graph is connected we can assume there exists an a vertex $X$ with $\wt(X)<wt(Y)$ such that $e_{i_k}^{m_{i_k}}\dots e_{i_1}^{m_{i_1}}(X)=Y$ for some indexes $i_s$ and such that $e_j(X)\neq 0$ for some $j\neq i_1$. By considering a vertex $X$ with the previous properties and $k$ minimal we can iteratively apply the previous argument and obtain an identical contradiction.
\end{proof}

\begin{theorem}
  Each connected component of the graph on $\SSKD(a)$ defined by the raising operators $e_i$ is an extremal subcrystal of a normal crystal.
  \label{thm:extremal}
\end{theorem}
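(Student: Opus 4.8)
The plan is to transport the problem, via the embedding $\embed=\T\circ\rectify\circ\D$, into an ambient normal crystal and there verify the three axioms of Definition~\ref{def:extremal}. Fix a connected component $\mathcal{C}$ of the graph on $\SSKD(a)$. Since $\mathcal{C}$ is a connected component it is closed under every nonzero $e_i$ and $f_i$, so Corollary~\ref{cor:commute2} applies: $\embed$ maps $\mathcal{C}$ injectively and weight-preservingly into a single connected normal crystal $\SSYT_n(\lambda)=\B(\lambda)$, with $\embed(e_i(T))=\Ye_i(\embed(T))$ whenever $e_i(T)\neq 0$. Writing $X=\embed(\mathcal{C})$ and identifying $\mathcal{C}$ with $X$ through this injection, it suffices to check that $X$ satisfies conditions (1)--(3) of Definition~\ref{def:extremal} inside $\B(\lambda)$.

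For condition (1) I would invoke Lemma~\ref{lem:hwUnique} to obtain the unique $Y\in\mathcal{C}$ with $e_i(Y)=0$ for all $i$. By Lemma~\ref{lem:hwPartition}, $\rectify(\D(Y))$ is a key diagram of partition shape, so $\embed(Y)$ lies in $\SSYT_n(\lambda)$ and has weight $\wt(Y)=\lambda$, the highest weight of $\B(\lambda)$. As the $\lambda$-weight space of $\B(\lambda)$ is one-dimensional and occupied solely by $u_\lambda$, this forces $\embed(Y)=u_\lambda\in X$.

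The heart of the argument, and the main obstacle, is a string-length comparison: for every $T$ and every $i$, $\varepsilon_i(\embed(T))$ equals the number of unpaired $i+1$'s of $T$, i.e. $\max\{k:e_i^k(T)\neq 0\}$; equivalently $e_i(T)=0$ if and only if $\Ye_i(\embed(T))=0$. I would establish this by tracking the $i$-pairing through the three stages of $\embed$: the key $i$-pairing of $T$ (Definition~\ref{def:pair}) matches the vertical $i$-pairing of $\D(T)$ (Definition~\ref{def:Dpair}), as exploited in Proposition~\ref{prop:commute}, so the unpaired $(i+1)$-count is preserved by $\D$; Lemma~\ref{lem:PairingPreserve} shows $\rectify$ preserves the number of vertically $i$-unpaired boxes in row $i+1$; and Proposition~\ref{prop:tableau-map}, together with the Assaf--Schilling identification of the column-sorting map, shows $\varepsilon_i(\T(D))$ equals that unpaired count for any rectified diagram $D$. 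The delicate direction is the inequality $\varepsilon_i(\embed(T))\le(\text{unpaired count})$, the reverse being immediate from iterating $\Ye_i\circ\embed=\embed\circ e_i$; this bound is exactly what prevents the image of a key $i$-string from landing as a forbidden interior segment of an ambient $i$-string.

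Granting this string-length identity, conditions (2) and (3) follow readily. For (2), if $x=\embed(T)\in X$ and $\Ye_i(x)\neq 0$, the identity gives $e_i(T)\neq 0$, whence $\Ye_i(x)=\embed(e_i(T))\in X$. For (3), the identity places the top of each key $i$-string precisely at the top of the ambient $i$-string, and Remark~\ref{rem:complete-i-string} supplies the dichotomy that, starting from a top element $T_0$ with $e_i(T_0)=0$, either $f_i(T_0)=0$, so that $X$ meets this string only in its top, or $f_i$ acts nontrivially exactly $N_i(T_0)=\wt(T_0)_i-\wt(T_0)_{i+1}=\varphi_i(\embed(T_0))$ times, exhausting the entire string. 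Thus $X$ meets each $i$-string of $\B(\lambda)$ in the whole string, in its top element alone, or not at all, which is exactly the content of conditions (2) and (3). Hence $X$, and therefore $\mathcal{C}$, is an extremal subset of the normal crystal $\B(\lambda)$, as desired.
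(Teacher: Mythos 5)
Your proposal is correct and follows essentially the same route as the paper's proof: embed $\mathcal{C}$ into $\SSYT_n(\lambda)$ via Corollary~\ref{cor:commute2}, obtain the highest weight element from Lemmas~\ref{lem:hwUnique} and \ref{lem:hwPartition}, and verify the string conditions by tracking the $i$-pairing through $\D$, $\rectify$ (Lemma~\ref{lem:PairingPreserve}), and $\T$. Your explicitly isolated string-length identity (that $e_i(T)=0$ if and only if $\Ye_i(\embed(T))=0$) is exactly what the paper establishes implicitly inside its condition-(3) argument via the Demazure-death case analysis, so the two proofs differ only in bookkeeping, with yours making condition (2) — which the paper dispatches rather tersely as ``by definition'' — rest cleanly on that identity.
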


\begin{proof}
  Let $\mathcal{C}$ denote a connected component of the graph on $\SSKD(a)$ defined by the raising operators $e_i$. By Corollary~\ref{cor:commute2}, $\embed(\mathcal{C}) \subset \SSYT_n(\lambda)$ for some partition $\lambda$. We will show that $\mathcal{C}$ is an extremal subset of $\B(\lambda)$. By Lemma~\ref{lem:hwUnique}, there is a unique $Y\in\mathcal{C}$ such that $e_i(Y)=0$ for all $i$. By Lemma~\ref{lem:hwPartition}, $\embed(Y)$ is the highest weight in $\B(\lambda)$, where $\lambda = \wt(Y)$ is the partition weight of $\rectify(\D(Y))$. In particular, $\mathcal{C}$ contains the highest weight of $\B(\lambda)$, proving condition (1) of Definition~\ref{def:extremal}. By definition, $\mathcal{C}$ is closed under $e_i$, proving condition (2).

  Finally, to show condition (3) we note that if $x \in \mathcal{C}$ and $f_i(x) \neq 0$, then by definition $f_i(x) \in \mathcal{C}$. Thus, suppose $f_i(x) =0$ but that both $\Yf_i(\embed(x))\neq 0$ and $\Ye_i(\embed(x)) \neq 0$. If every cell with value $i$ of $x$ is $i$-paired, then every box in row $i$ of $\D(x)$ will be vertically $i$-paired. By Lemma~\ref{lem:PairingPreserve} it follows that every cell in row $i$ of $\rectify(\D(x))$ will also be vertically $i$-paired. Finally, it is straight forward to see that the tableau map $\T$ also preserves the number of $i$-paired entries with value $i$. Hence, if $f_i(x) =0$ because $x$ contains no unpaired $i$, then $\Yf_i(\embed(x))=0$ which contradicts the assumptions. Hence, $f_i(x) =0$ due to the Demazure condition. In this case, the leftmost unpaired $i$ of $x$ must lie in column $1$ and row $i$. However, this implies that $x$ contains no cells with value $i+1$ which are not $i$-paired. Thus $e_i(x) =0$. However, by Lemma~\ref{lem:PairingPreserve} and an analogous argument to the one above, this implies that $\Ye_i(\embed(x)) = 0$, which cannot be. Thus, if both $\Yf_i(\embed(x)) \neq 0$ and $\Ye_i(\embed(x)) \neq 0$, then there exists $e_i(x), f_i(x) \in \mathcal{C}$ such that $f_i(x)=\Yf_i(\embed(x))$ and $e_i(x)=\Ye_i(\embed(x))$.
\end{proof}

Finally, we prove the embedded subset is Demazure.

\begin{theorem}
  Each connected component of the graph on $\SSKD(a)$ defined by the raising operators $e_i$ is a Demazure subcrystal of a normal crystal.
  \label{thm:demazure}
\end{theorem}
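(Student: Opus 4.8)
The plan is to combine the extremality established in Theorem~\ref{thm:extremal} with the local characterization of Demazure subsets from Section~\ref{sec:demazure}. Fix a connected component $\mathcal{C}$ of the graph on $\SSKD(a)$ defined by the raising operators. By Corollary~\ref{cor:commute2} the embedding map $\embed$ is a weight-preserving injection that intertwines the crystal operators, and by Theorem~\ref{thm:extremal} its image $\embed(\mathcal{C})$ is an extremal subset of a normal crystal $\B(\lambda)$. Consequently, by Theorem~\ref{thm:main-dem} it suffices to verify that $\embed(\mathcal{C})$ satisfies the remaining Demazure conditions (4), (5a), (5b) and (6) of Definition~\ref{def:demazure}; once this is done, $\embed(\mathcal{C}) = \B_w(\lambda)$ for some $w$, and since $\embed$ is a crystal isomorphism onto its image, $\mathcal{C}$ itself is a Demazure crystal.

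My first step would be to reduce the verification of these axioms to the extremal locus. Each of conditions (4)--(6) concerns a pair of extremal elements $x,y \in \embed(\mathcal{C})$ with common ancestor $z = e_i^*(x) = e_j^*(y)$, which is again extremal exactly as in the proof of Theorem~\ref{thm:dem-well-def}, and asks only about membership in $\embed(\mathcal{C})$ of elements obtained by applying one or two lowering operators to $x$ and $y$. Pulling back along $\embed$, the preimages $\embed^{-1}(x)$, $\embed^{-1}(y)$, $\embed^{-1}(z) \in \mathcal{C}$ are extremal semistandard key tabloids. Generalizing Lemma~\ref{lem:hwPartition} by means of Proposition~\ref{prop:leftjustified}, I would show that every extremal tabloid rectifies to a key diagram: the unique highest weight rectifies to a partition diagram by Lemma~\ref{lem:hwPartition}, and applying the star lowering operators keeps the diagram left justified by Proposition~\ref{prop:leftjustified}, while rectification commutes with these operators by Corollary~\ref{cor:commute}. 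Thus the entire local $\{i,j\}$-configuration around $z$ is governed by its rectified key-diagram picture.

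Next I would transport the problem to semistandard key tableaux, where the answer is already known. Because rectification commutes with the raising operators (Corollary~\ref{cor:commute}) and preserves both the vertical and horizontal pairings (Lemma~\ref{lem:PairingPreserve}), the $\{i,j\}$-neighborhood of $z$ in $\embed(\mathcal{C})$ matches the corresponding neighborhood in the Demazure crystal on semistandard key tableaux. By Proposition~\ref{prop:ASc} our operators restrict on key tableaux to the Assaf--Schilling operators, and by \cite{ASc18}(Theorem~3.14) these generate a genuine Demazure crystal; hence by Theorem~\ref{thm:dem-well-def} that crystal satisfies conditions (4)--(6). Transporting the resulting containments back through $\embed$ then gives conditions (4)--(6) for $\embed(\mathcal{C})$, and Theorem~\ref{thm:main-dem} finishes the argument.

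The hard part will be the bookkeeping in the last paragraph. The Demazure axioms reference not only the extremal elements but specific one- and two-step descendants such as $f_j(x)$, $f_i(y)$, and $f_i^* f_j^*(x) = f_j^* f_i^*(y)$, and these may lie off the extremal locus, where the clean correspondence with key diagrams is no longer immediate. The crux will be to show that rectification still commutes with exactly these lowering applications, so that membership in $\embed(\mathcal{C})$ is correctly detected; I expect this to follow from combining the grid and $\mathfrak{sl}_2$ structure of Proposition~\ref{obv:sl2} with the pairing preservation of Lemma~\ref{lem:PairingPreserve} to control how a single descent interacts with the rectification moves. Axiom (4), the grid-closure condition for $|i-j|\geq 2$, and axiom (6), the path condition for $|i-j|=1$, will demand the most care, since they involve the longest chains of lowering operators and hence the most interaction between the descent and rectification.
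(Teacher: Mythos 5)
Your high-level skeleton coincides with the paper's: invoke Theorem~\ref{thm:extremal} for extremality, verify conditions (4)--(6) of Definition~\ref{def:demazure} for $\embed(\mathcal{C})$, and conclude via Theorem~\ref{thm:main-dem}. The gap is in the middle step, and it is not bookkeeping. Via Corollary~\ref{cor:commute2}, each membership assertion in axioms (4)--(6) --- e.g.\ ``$f_i(x)\neq 0$ implies $f_i(x)\in X$'' --- translates into the assertion that the corresponding tabloid lowering operator of Definition~\ref{def:lower-key} does not suffer Demazure death. Your plan is to deduce these assertions by matching the neighborhood of an extremal element of $\embed(\mathcal{C})$ with a neighborhood in the Assaf--Schilling crystal on semistandard key tableaux, which is known to be Demazure. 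But matching with \emph{which} key tableaux crystal? For a component with $\maj>0$ there is no a priori identification of $\mathcal{C}$ with any $\SSKT(b)$; the candidate is $\B_w(\lambda)$ for the permutation $w$ recording the component's pattern of Demazure deaths, and determining that pattern is precisely the theorem being proved. The circularity is visible in the example of $E_{(0,3,0,2)}(X;q,0)$: the two components with highest weight $(2,2,1,0)$ have highest weight tabloids that rectify to the same partition key diagram, so locally near the top their rectified pictures agree, yet they are different Demazure crystals (their characters are $\key_{(0,1,2,2)}$ and $\key_{(0,2,1,2)}$). Hence the rectified diagram data you propose to transport cannot detect membership in $\embed(\mathcal{C})$.

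The tools you cite cannot close this gap. Corollary~\ref{cor:commute} and Lemma~\ref{lem:PairingPreserve} control only the raising operators and the pairing statistics under rectification; Demazure death is a \emph{positional} condition on the tabloid (the leftmost unpaired $i$ sits in row $i$ and every column to its left carries an $i$ in that row with an $i+1$ above it) which is nowhere shown to be preserved or reflected by rectification, and Proposition~\ref{obv:sl2} lives entirely in the untruncated normal crystal, so it is blind to where the truncation happens. This is exactly why the paper's proof of Theorem~\ref{thm:demazure} does not pass through key tableaux at all: it verifies each of (5a), (5b), (4a), (4b), and (6) by a direct analysis of when death can occur on the tabloids themselves, using Proposition~\ref{prop:raising-distribution} together with column-content arguments comparing $\mathrm{col}_{1,\dots,c}$ of $x$, $y$, and $u$. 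That case analysis is the substantive content of the theorem; your proposal defers it to a hoped-for commutation statement between rectification and the truncated lowering operators that the paper's machinery does not provide.
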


\begin{proof}
Let $\mathcal{C}$ denote the connected component of the graph on $\SSKD(a)$ defined by raising operators $e_i$. By Theorem~\ref{thm:extremal} we know $\mathcal{C}$ is an extremal subset. Thus, it remains to show that $\mathcal{C}$ satisfies conditions $(4)-(6)$ of Definition~\ref{def:demazure}. Since $\mathcal{C} \subset \B(\lambda)$, for any $x \in \mathcal{C}$ it makes sense to consider the operator $\varphi_i(x)$ from Definition~\ref{def:base}. Recall from equation \eqref{eq:Fstring} that $\varphi_i(x)$ equals the number of cells with value $i$ which are not $i$-paired.

In particular, if $\varphi_i(x)>0$ and $f_i(x) \neq 0$ then
\begin{align*}
\varphi_{j}(f_i(x))= \begin{cases} \varphi_j(x) & |i-j|\geq 2\\
 \varphi_j(x) \;\text{or} \; \varphi_j(x)+1  & |i-j|=1\\
\varphi_j(x)-1 & i=j. \end{cases}
\end{align*}

Moreover, since $\mathcal{C}$ is an extremal subset of $\B(\lambda)$, then by condition $(3)$ of Definition~\ref{def:extremal}, if $|i-j|=1$ and both $\varphi_i(x), \varphi_j(x)>0$ then $\varphi_i(f_{j}^*(x))=\varphi_i(x)+\varphi_j(x)>1.$ We prove each condition of Definition~\ref{def:demazure} individually.
\begin{itemize}

\item[5a)] First, suppose that $x \in \mathcal{C}$ is extremal and $f_i^*f_{i+1}^*(x) \in \mathcal{C}$. If $\varphi_1(x)>0$ then $f_i(x)=0$ only if $x$ has a Demazure death for $i$. Hence, there is a column $c$ of $x$ containing the leftmost $i$ not paired with an $i+1$ such that all columns strictly to its left have an $i$ in row $i$ and an $i+1$ in a row above it. However, by $(iii)$ of Proposition~\ref{prop:raising-distribution} this immediately yields a contradiction. This is because even if there exists an $i+1$ that is not $i+2$-paired in a column $c'\leq c$ of $x$, the resulting element $f_{i+1}^*(x)$ will still contain an $i$ in row $i$ and column $c'$ that is not $i+1$ paired and will cause $f_if_{i+1}^*(x)=0$. Hence, $f_i(x)\neq0$. An analogous argument shows that if $f_i^*f_{i-1}^*(x) \in \mathcal{C}$ and $\varphi_i(x)>0$ then also $f_i(x) \in \mathcal{C}$. This proves condition $(5a)$ in Definition~\ref{def:demazure}.

\item[5b)] Now suppose that $x,y \in \mathcal{C}$ are extremal and that $e_i^*(x)=e_{i+1}^*(y) = u$ for some $u \in \mathcal{C}$ also extremal. Since $\varphi_i(u),\varphi_{i+1}(u)>0$ we know that $\varphi_{i+1}(x), \varphi_i(y)>1$. Thus, if $f_{i+1}(x)=0$ it must be due to Demazure death. We will show that if this is the case then $f_i(y)\neq 0$. So suppose $x$ has a continuous sequence of $i+1$'s in row $i+1$ and columns $1,\dots, c$, all of which are $i+1$-paired with an $i+2$ above them except for the $i+1$ in column $c$. Since $e_i^*(x)=u$ and $f_{i+1}(u) \neq 0$ then $x$ contains a consecutive sequence of $i's$ in situated above the $i+1'$ in columns $1,\dots, c'-1$ for some $c'\leq c$. If $c'<c$ then this would imply that column $c'$ of $u$ contains an $i$ that is not paired with an $i+1$ and thus, an $i+2$ that is also not $i+1$-paired. But $u$ is extremal and since $f_{i+1}(u) \neq 0$ then $e_{i+2}(u)=0$. Thus $c'=c$ and so column $c$ of $u$ has an $i$ that is not $i+1$-paired and whose columns $1,\dots, c-1$ each contain an $i$ in row $i+1$ which are paired with an $i+1$ above them, which in turn is also paired with an $i+2$ in a higher row. Therefore, $f_{i+1}(u)$ leaves columns $1,\dots,c$ untouched, and thus $f_i(y)=f_i(f_{i+1}^*(u))\neq 0$. The remaining case when $f_i(y)=0$ instead follows analogously. This proves the first part of $(5b)$ from Definition~\ref{def:demazure}.

Now suppose that neither $f_i(y)$ nor $f_{i+1}(x)$ are zero. Since $\mathcal{C} \subset \B(\lambda)$ is an extremal subset, then $\varphi_{i+1}(f_i^*(y))=\varphi_i(u)>0$ and $\varphi_i(f_{i+1}^*(x))=\varphi_{i+1}(u)>0$. Thus, if either $f_if_{i+1}^*(x)=0$ or $f_{i+1}f_i^*(y)=0$ it is due to Demazure death. As before, suppose this is the case and columns $1,\dots, c-1$ of $f_if_{i+1}^*(x)$ have a continuous sequence of $i$'s in row $i$ which are all $i$-paired with $i+1$'s above them and that column $c$ contains the leftmost $i$ which is not $i+1$-paired. If $col_{1,\dots, c}(x)=col_{1,\dots,c}(f_{i+1}^*(x))$ then $f_i(x)=0$ due to Demazure death, which is impossible since $e_i(x)\neq 0$ and $\mathcal{C}$ contains full $i$-strings. Hence, $col_{1,\dots, c'}(x)\neq col_{1,\dots,c'}(f_{i+1}^*(x)$ for some column $c'\leq c$. By an identical argument to the one above, it follows that $c'=c$ and so columns $1,\dots,c-1$ of $x$ have a continuous sequence of $i$'s in row $i$, a continuous sequence of $i+1$'s in row $r>i$, a continuous sequence of $i+2$'s above the $i+1$'s, and an $i$ and an $i+2$ in column $c$ both of which are not paired with an $i+1$. In particular, this implies that $col_{1,\dots,c}(u) = col_{1,\dots,c}(x)$ and thus $col_{1,\dots,c}(y)=col_{1,\dots,c}(f_{i+1}^*(x))$. However, this means $f_i(y)=0$ which contradicts the initial assumptions. If instead we assume that $f_{i+1}f_i^*(y)=0$ then an analogous contradiction can be derived. This proves the second part of $(5b)$ from Definition~\ref{def:demazure}.

\item[4a)] Suppose $|i-j|\geq 2$, $x,y \in \mathcal{C}$ are extremal, and $e_i^*(x)=e_j^*(y)=u$ for some $u \in \mathcal{C}$ also extremal. To prove $(4a)$ of Definition~\ref{def:demazure} we note that since $f_j(u) \neq 0$ and $f_i(u)$ will not affect any cells with values $j,j+1$ then all cells, paired an unpaired alike, with values $j,j+1$ of $u$ will remain the same in $x=f_i^*(u)$. Thus, $f_j(u)\neq 0$ implies $f_j(x) \neq 0$. Clearly, if $f_i(u) \neq 0$ then also $f_i(y) \neq 0$.

\item[4b)] Now suppose that $j=i+2$ and that as before $e_i^*(x)=e_{i+2}^*(y)=u$ for some extremal $u \in \mathcal{C}$. By condition $(4a)$ and the fact that $\mathcal{C}$ is an extremal subset of $\B(\lambda)$ we know this implies that $f_{i+2}^*(x)=f_i^*(y)=z$ for some $z\in \mathcal{C}$. Suppose also that $f_{i+1}(x) \neq 0$  and $f_{i+1}(z) \neq 0$. Since $f_{i+1}f_i(u) \neq 0$ and $e_{i+2}(u)=0$ then $\varphi_{i+1}(x) \geq \varphi_i(u)$. If $\varphi_{i+1}(y)=0$ then $\varphi_{i+1}(x)<\varphi_{i+1}(z)=\varphi_i(y)=\varphi_i(u)\leq \varphi_{i+1}(x)$, which is clearly nonsense. Hence, $\varphi_{i+1}(y)>0$ and so $f_{i+1}(y)=0$ only if $y$ has a Demazure death. So suppose this is the case and $y$ has a consecutive sequence of $i+1$'s in row $i+1$ with $i+2$'s above them in columns $1,\dots, c-1$ and whose leftmost unpaired $i+1$ lies in column $c$. If $col_{1,\dots,c}(u) \neq col_{1,\dots, c}(y)$ then $y$ contains a consecutive sequence of $i+3$'s in columns $1,\dots,c$ in a row between the $i+1's$ and $i+2's$, so that in $u=e_{i+2}^*(y)$ the $i+2$'s and $i+3's$ in these columns are swapped. The only way that $col_{1,\dots, c}(u) \neq col_{1,\dots, c}(x)$ is if $u$ has a consecutive sequence of $i's$ in row $i$ and columns $1,\dots,c+1$ that flip with the $i+1's$ in row $i+1$, but since $x=f_i^*(u)$ this would mean $x=0$. Thus $col_{1,\dots, c}(u) = col_{1,\dots, c}(x)$. Since $z=f_{i+2}^*(x)$, this implies that $col_{1,\dots,c}(z)=col_{1,\dots,c}(y)$ and thus $f_{i+1}(z)=0$ which contradicts our initial assumptions. Hence, it must be that $col_{1,\dots,c}(u) = col_{1,\dots, c}(y)$. By the same argument as above, $col_{1,\dots,c}(x) \neq col_{1,\dots, c}(u)$. However, this implies that $f_{i+1}(x)=0$ which is false by assumption. Hence, if $f_{i+1}(x)$ and $f_{i+1}(z)\neq 0$ then also $f_{i+1}(y) \neq 0$.

To prove the other direction suppose that instead $f_{i+1}(x)$ and $f_{i+1}(y)\neq 0$ but that $f_{i+1}(z) = 0$. As before, since $f_i(y),f_{i+1}(y) \neq 0$ then $\varphi_{i+1}(z)>1$. Thus, if $f_{i+1}(z)=0$ it is due to Demazure death. By condition $(5a)$ we know that if $f_{i+1}(z)=0$ then $f_{i}f_{i+1}^*(y)$ and $f_{i+2}f_{i+1}^*(x)$ are both nonzero. Now, by the proof of $(5a)$ above we know that $y$ must have its leftmost $i$ that is not paired with an $i+1$ in row $i$ and column $c$, such hat every column to its left has an $i$ in the same row and $i+1$ above it. Moreover, since $e_{i+2}^*(y) =u$, $f_{i+1}(x) \neq 0$, and $f_{i+1}(z)=0$ then by analogous arguments to those above we can deduce that columns $1,\dots, c-1$ of $y$ also contain a sequence of $i+2$-paired $i+3's$ with column $c$ containing the leftmost $i+3$ of $y$ not paired with an $i+2$. In particular, this means that every $i+2$ in columns $1,\dots, c$ is paired with an $i+3$. Since column $c$ of $y$ contains no $i+2$ and $f_{i+2}(y) \neq 0$ but $e_{i}(y) =0$ then $y$ must have some other cell right of column $c$ containing an $i+1$ that is not paired with and $i+2$ but is paired with an $i$. Furthermore, since $u=e_{i+2}^*(y)$ then this unpaired $i+1$ right of column $c$ is unaltered by $e_{i+2}$ and thus, remains unpaired with an $i$ in $u$. However, this means that $e_i(u) \neq 0$ which contradicts $u$ being extremal since $f^*_i(u) =x$ and so $u$ must be at the top of the $i$-string. Consequently, if both $f_{i+1}(x)$ and $f_{i+1}(y) \neq 0$ then also $f_{i+1}(z) \neq 0$.  This completes the proof for condition $(4b)$ of Definition~\ref{def:demazure}.

\item[6)] Finally, suppose that $x,y \in \mathcal{C}$ are extremal elements satisfying $e_i^*(x)=e_{i+1}^*e_i^*(y)=u$ for some $u\in \mathcal{C}$ also extremal and that, in addition, $f_k(x) \neq 0$ for some $k \neq i+1$.  Moreover, recall that $\varphi_{j}(f_i^*(u))\geq \varphi_{j}(u)$ whenever $|i-j|=1$ and $\varphi_j(f_{i}(u))=\varphi_j(u)$ whenever $|i-j|=2$. In order to show that $f_k(y)$ is also nonzero we observe how the columns of $u$ behave locally under the action of $f_i$ and $f_{i+1}$.  If a column $c$ of $u$ contains an $i$ that is not paired with an $i+1$ then under the action of $f_i^*$ the $i$ in column $c$ of $u$ will become an $i+1$-paired/unpaired $i+1$, depending on whether or not an unpaired $i+2$ exists in a column weakly right of $c$. Hence, either both or neither $f_{i+1}(x)$ and $f_{i+1}(y)$ are zero. If additionally, the $i$ in column $c$ of $u$ is $i-1$-paired, then in both $f_i^*(u)$ and $f_i^*f_{i+1}^*(u)$ the cell containing the $i-1$ with which this $i$ was paired will become unpaired.  Thus, $f_{i-1}(x)$ and $f_{i-1}(y)$ will be nonzero. Since the cells containing entries $k<i-1$ or $i+2<k$ are entirely unaffected by $f_i$ and $f_{i+1}$, then clearly if $f_k(x) \neq 0$ then $f_y(k) \neq 0$ also.

In an analogous manner as above, if we consider the action of $f_i$ and $f_{i+1}$ on a column $c$ of $u$ with an $i+1$ which is not paired with an $i+2$, we can see that if $f_k(x) \neq 0$ for $k\neq  i+1$, then $f_{i+1}(y)$ is also nonzero. Moreover, since the lowering operator simply swaps the rows of cells that are paired within the same column, then any cells with values $i-1,i,i+1, i+2$ that are paired with each other in the same column remain paired with each other after applying $f_i$ and $f_{i+1}$. Thus, if $f_{j_1}\dots f_{j_n}(x) \neq 0$ for some path $j_1, \dots, j_n$ for which the path $f_{j'_1}\dots f_{j'_{n-1}}f_{i+1}(x)$ for some other $j'_1,\dots,j'_{n-1}$ either does not exist or is not equal to $ f_{j_1}\dots f_{j_n}(x) $, then the path $f_{j_1}\dots f_{j_n}(y)$ is also nonzero. Thus, condition $(6)$ of Definition~\ref{def:demazure} holds for $\mathcal{C}$.
\end{itemize}
\end{proof}

In particular, we have a new proof of Theorem~\ref{thm:nskostka} that yields an explicit formula for the \newword{nonsymmetric Kostka--Foulkes polynomials} $K_{a,b}(q)$ defined by $E_b(X_n;q,0) = \sum_{a} K_{a,b}(q) \key_a(X_n)$.

\begin{corollary}
  For weak compositions $a,b$, we have
  \begin{equation}
    K_{a,b}(q) = \sum_{\substack{T \in \SSKD(b) \\ T \text{Demazure lowest weight} \\ \wt(T) = a }} q^{\maj(T)} .
    \label{e:nskostka-Dlw}
  \end{equation}
  In particular, $K_{a,b}(q) \in \mathbb{N}[q]$ and so nonsymmetric Macdonald polynomials specialized at $t=0$ are a nonnegative $q$-graded sum of Demazure characters.
  \label{cor:nskostka}
\end{corollary}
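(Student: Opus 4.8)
The plan is to assemble the corollary directly from the Haglund--Haiman--Loehr formula together with the Demazure crystal structure established in Theorem~\ref{thm:demazure}, with essentially all the hard work already discharged. First I would specialize Theorem~\ref{thm:mac} at $t=0$: the product over cells collapses to $1$, and since every surviving summand carries a factor $t^{\coinv(T)}$, only non-attacking fillings with $\coinv(T)=0$ persist. By Definition~\ref{def:SSKD} these are exactly the semistandard key tabloids, so
\[ E_b(X_n;q,0) = \sum_{T\in\SSKD(b)} q^{\maj(T)}\, X_n^{\wt(T)}, \]
reducing the problem to understanding this generating function.

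Next I would partition $\SSKD(b)$ into the connected components $\mathcal{C}$ of the graph defined by the raising operators $e_i$. By Lemma~\ref{lem:raising-majpreserving} (equivalently Theorem~\ref{thm:well-defined}), $\maj$ is constant on each component, so writing $\maj(\mathcal{C})$ for this common value and $\mathrm{ch}(\mathcal{C})=\sum_{T\in\mathcal{C}}X_n^{\wt(T)}$, the expansion above regroups as
\[ E_b(X_n;q,0) = \sum_{\mathcal{C}} q^{\maj(\mathcal{C})}\, \mathrm{ch}(\mathcal{C}). \]
The key input is then Theorem~\ref{thm:demazure}, which shows each $\mathcal{C}$ is a Demazure subset of a normal crystal; by Theorem~\ref{thm:main-dem} this upgrades to $\mathcal{C}=\B_w(\lambda)$ for suitable $\lambda$ and $w$. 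By Kashiwara's character theorem \cite{Kas93}, $\mathrm{ch}(\mathcal{C})=\key_{w\cdot\lambda}$, and by Proposition~\ref{prop:Dlw} the unique Demazure lowest weight element $z$ of $\mathcal{C}$ satisfies $\wt(z)=w\cdot\lambda$, whence $\mathrm{ch}(\mathcal{C})=\key_{\wt(z)}$. Since $\maj(\mathcal{C})=\maj(z)$, and since every element of $\SSKD(b)$ lies in exactly one component while each component contributes exactly one Demazure lowest weight element, I can reindex the sum over components by their Demazure lowest weight elements to get
\[ E_b(X_n;q,0) = \sum_{\substack{T\in\SSKD(b)\\ T\text{ Demazure lowest weight}}} q^{\maj(T)}\,\key_{\wt(T)}. \]
Comparing with $E_b(X_n;q,0)=\sum_a K_{a,b}(q)\,\key_a$ and invoking the linear independence of Demazure characters (a basis of the polynomial ring) extracts the claimed formula for $K_{a,b}(q)$; nonnegativity and integrality in $\mathbb{N}[q]$ are then immediate because $\maj$ is a nonnegative integer statistic.

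The substantive content having been front-loaded into Theorems~\ref{thm:demazure} and \ref{thm:main-dem}, the main point requiring care is purely bookkeeping: distinct components may share the same Demazure lowest \emph{weight} $a$, which is precisely why $K_{a,b}(q)$ is a genuine polynomial rather than a single monomial, so the restriction $\wt(T)=a$ in the final sum must be understood to accumulate all such contributions with their respective powers of $q$. The only genuinely new verification beyond the cited results is the constancy of $\maj$ on each component, and this follows at once from the $\maj$-preservation of the crystal operators; I therefore expect no serious obstacle, only the need to keep the indexing honest when passing from components to their lowest weight representatives.
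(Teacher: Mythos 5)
Your proposal is correct and follows essentially the same route the paper intends: the corollary is stated immediately after Theorem~\ref{thm:demazure} precisely because it follows by specializing the Haglund--Haiman--Loehr formula at $t=0$ to get $E_b(X_n;q,0)=\sum_{T\in\SSKD(b)}q^{\maj(T)}X_n^{\wt(T)}$, decomposing $\SSKD(b)$ into connected components (on which $\maj$ is constant by Theorem~\ref{thm:well-defined}), identifying each component as a Demazure crystal via Theorems~\ref{thm:demazure} and \ref{thm:main-dem}, reading off its character as $\key_{\wt(z)}$ for its unique Demazure lowest weight element $z$ (Proposition~\ref{prop:Dlw} and Kashiwara's character theorem), and comparing coefficients against the basis of Demazure characters. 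Your bookkeeping remark --- that distinct components may share the same lowest \emph{weight} $a$, which is why $K_{a,b}(q)$ accumulates several powers of $q$ --- is exactly the right point of care, and nothing is missing.
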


%
\section{Combinatorial formulas}
%
\label{sec:formulas}

Sanderson \cite{San00} first made the connection between specializations of Macdonald polynomials and Demazure characters by using the theory of nonsymmetric Macdonald polynomials in type A to construct an \emph{affine} Demazure module with graded character $P_{\mu}(X;q,0)$, parallel to the construction of Garsia and Procesi \cite{GP92} for Hall-Littlewood symmetric functions $H_{\mu}(X;0,t)$. Ion \cite{Ion03} generalized this result to nonsymmetric Macdonald polynomials in general type using the method of intertwiners in double affine Hecke algebras to realize $E_{a}(X;q,0)$ as an \emph{affine} Demazure character. Assaf \cite{Ass18} used the machinery of weak dual equivalence \cite{Ass-W} to realize $E_{a}(X;q,0)$ as a \emph{finite} Demazure character in type A. Corollary~\ref{cor:nskostka} gives an explicit formula for this expansion. In this final section, we consider consequences of the formula in Eq.~\eqref{e:nskostka-Dlw} in both the symmetric and nonsymmetric settings.

In \S\ref{sec:formulas-HL}, we review the Schur expansion of Hall--Littlewood symmetric functions. We also show how the highest weights of our Demazure crystals can be used to give an alternate formulation that uses the simple major index statistic instead of the intricate charge statistic. In \S\ref{sec:formulas-dem}, we use our explicit algorithm in Definition~\ref{def:dem-low} to generate the Demazure lowest weight of a component from the highest weight, making Eq.~\eqref{e:nskostka-Dlw} easy to compute. We also relate the symmetric and general cases to give a refinement of the Kostka--Foulkes coefficients in terms of the nonsymmetric Kostka--Foulkes polynomials.

\subsection{Hall--Littlewood polynomials}
\label{sec:formulas-HL}

The Hall--Littlewood symmetric functions $P_{\mu}(X;t)$ may be regarded as the $q=0$ specialization of Macdonald symmetric functions, i.e. $P_{\mu}(X;t) = P_{\mu}(X;0,t)$. The \newword{Kostka--Foulkes polynomials}, denoted $K_{\lambda,\mu}(t)$, give the transition coefficients between Hall--Littlewood symmetric functions and the Schur functions by
\begin{equation}
  s_{\lambda}(X) = \sum_{\mu} K_{\lambda,\mu}(t) P_{\mu}(X;t)
  \hspace{1em} \text{and} \hspace{1em}
  H_{\mu}(X;t) = \sum_{\lambda} K_{\lambda,\mu}(t) s_{\lambda}(X),
\end{equation}
where the modified version $H_{\mu}(X;t) = H_{\mu}(X;0,t)$ is defined analogously to Eq.~\eqref{e:modified}.

One readily observes that $K_{\lambda,\mu}(0)=\delta_{\lambda,\mu}$, equivalently $P_{\mu}(X;0) = s_{\mu}(X)$. It is also easy to verify that $P_{\mu}(X;1) = m_{\mu}(X)$, from which it follows that $K_{\lambda,\mu}(1)=K_{\lambda,\mu}$. That is, the Kostka--Foulkes polynomials are a $t$-graded version of the Kostka numbers, which have representation theoretic and geometric significance.

Hall--Littlewood polynomials arise in similar contexts as Schur functions, from which the representation theoretic and geometric importance of the Kostka--Foulkes polynomials becomes apparent. For $\chi_{\lambda}$ a unipotent character of $\mathrm{GL}_n(\mathbb{F}_t)$ and $\mu$ a conjugacy class, the evaluation of $\chi_{\lambda}$ at $\mu$ is given by $\chi_{\lambda}(\mu) = t^{n(\mu)} K_{\lambda,\mu}(1/t)$. For $R_{\mu}$ the $t$-graded $\mathcal{S}_n$-module constructed by Garsia and Procesi \cite{GP92}, the Frobenius character of $R_{\mu}$ is given by $\mathrm{ch}(R_{\mu}) = t^{n(\mu)}H_{\mu}(X;1/t)$. Geometrically, if we consider the Springer action of $\mathcal{S}_n$ on the cohomology ring $H^{*}(B_{\mu})$ of a Springer fiber $B_{\mu}$, then the cohomology ring $H^{*}(B_{\mu})$ has Frobenius series $t^{n(\mu)}H_{\mu}(X;1/t)$. For details of these connections, see Shoji \cite{Sho88}.

Recall the Kostka numbers $K_{\lambda,\mu}$ enumerate semistandard Young tableaux of shape $\lambda$ and partition weight $\mu$. Lascoux and Sch\"{u}tzenberger \cite{LS78} defined a statistic called \newword{charge} on these objects that precisely gives the $t$-grading of the Kostka--Foulkes polynomials $K_{\lambda,\mu}(t)$. More generally, we consider tableaux with partition weight and \emph{skew} shape, that is, of shape given by the set theoretic difference $\lambda\setminus\nu$ for $\nu \subset \lambda$. Sch\"{u}tzenberger \cite{Sch77} introduced the notion of \newword{jeu-de-taquin} slides that map skew tableaux to straight shapes. For details on \emph{jeu-de-taquin}, see Stanley \cite{EC2}(Appendix A).

\begin{definition}
  The \newword{cocharge} of a tableau $T$ with partition weight $\mu$ is the integer $cc(T)$ uniquely characterized by the following properties:
  \begin{enumerate}
  \item if $T$ is a single row, then $cc(T)=0$;
  \item if $S,T$ are \emph{jeu-de-taquin} equivalent, then $cc(S) = cc(T)$;
  \item if $T = R \cup S$ is a disjoint union of shapes with $R$ above and left of $S$ such that $R$ has no entry equal to $1$, then $cc(T) = cc(S \cup R) + \# R$, where $S \cup R$ has $S$ above and left of $R$.
  \end{enumerate}
  The \newword{charge} of $T$ is $c(T) = n(\mu)-cc(T)$, where $n(\mu)=\sum_i (i-1)\mu_i$.
  \label{def:charge}
\end{definition}

It is a theorem that such a statistic exists, but from this definition one obtains an algorithmic procedure, called \emph{catabolism}, for computing it. The main result, first asserted by Lascoux and Sch\"{u}tzenberger \cite{LS78} with omitted proof details supplied by Butler \cite{But86} is the following.

\begin{theorem}[\cite{LS78,But86}]
  The Kostka--Foulkes polynomials $K_{\lambda,\mu}(t)$ are given by
  \begin{equation}
    K_{\lambda,\mu}(t) = \sum_{\substack{T\in\SSYT(\lambda) \\ \wt(T) = \mu}} t^{c(T)} .
    \label{e:charge}
  \end{equation}
  \label{thm:charge}
\end{theorem}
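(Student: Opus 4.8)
The plan is to show that the charge generating function on the right-hand side of Eq.~\eqref{e:charge} satisfies the same triangularity and recursion that uniquely determine the Kostka--Foulkes polynomials, so that the two families of polynomials coincide. Write $\tilde{K}_{\lambda,\mu}(t) = \sum_{T} t^{c(T)}$ for the sum over $T \in \SSYT(\lambda)$ with $\wt(T)=\mu$. Since the transition matrix $(K_{\lambda,\mu}(t))$ between $\{s_\lambda\}$ and $\{P_\mu(X;t)\}$ is upper unitriangular with respect to dominance order --- that is, $K_{\lambda,\lambda}(t)=1$ and $K_{\lambda,\mu}(t)=0$ unless $\lambda \geq \mu$ --- it suffices to verify that $\tilde{K}$ obeys the same triangularity together with one recursion that reduces the weight $\mu$.

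First I would establish that charge is well defined, i.e.\ that $c(T)$ depends only on the data of $T$ in the manner asserted by Definition~\ref{def:charge}. The cleanest route is to pass from tableaux to their reading words and prove that charge (equivalently cocharge) is invariant under the Knuth relations, hence under \emph{jeu-de-taquin} slides; this yields property (2) and turns the catabolism rule of property (3) into a genuine recursion rather than an ambiguous prescription. Having this invariance, I would introduce the cyclage operation on tableaux of fixed weight $\mu$ and show that cocharge increases by exactly one under an elementary cyclage. The resulting cyclage poset has well-controlled covering relations, and summing $t^{cc}$ over its elements of each shape repackages $\tilde{K}_{\lambda,\mu}(t)$ in a form amenable to induction on $|\mu|$.

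The matching of recursions is the crux. On the algebraic side, the Hall--Littlewood polynomials satisfy the Morris recurrence, which expresses $P_\mu$ (or its dual) in terms of $P_{\hat\mu}$, where $\hat\mu$ denotes $\mu$ with its first part deleted; transcribing this through $s_{\lambda}(X) = \sum_{\mu} K_{\lambda,\mu}(t) P_{\mu}(X;t)$ yields a recursion for $K_{\lambda,\mu}(t)$ linking weight $\mu$ to weight $\hat\mu$. On the combinatorial side, property (3) of Definition~\ref{def:charge} is precisely a \emph{catabolism} step: it strips the letters forming a row of the weight and records their contribution to cocharge, reducing to tableaux of weight $\hat\mu$. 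I would prove that these two reductions agree term by term, so that $\tilde{K}_{\lambda,\mu}(t)$ and $K_{\lambda,\mu}(t)$ satisfy the same recursion. The base case, $\mu$ a single part $(n)$, forces $\lambda=(n)$ with its unique row of charge $0$, matching $K_{(n),(n)}=1$, and the triangularity $\tilde{K}_{\lambda,\mu}=0$ for $\lambda \not\geq \mu$ then closes the induction.

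The main obstacle is exactly the step that Butler~\cite{But86} supplied after Lascoux and Sch\"{u}tzenberger~\cite{LS78}: verifying rigorously that charge is invariant under cyclage and that the catabolism recursion matches the Morris recurrence coefficient by coefficient. Both require careful bookkeeping of how cocharge changes when rows are extracted and reinserted, and of the interaction between cyclage and Knuth equivalence; this combinatorial control, rather than any further symmetric-function identity, is the technical heart of the argument.
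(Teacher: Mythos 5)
The paper does not actually prove this statement: Theorem~\ref{thm:charge} is quoted from Lascoux--Sch\"{u}tzenberger \cite{LS78} with the proof details attributed to Butler \cite{But86}, and the surrounding text only records the defining properties of charge and the catabolism algorithm. So there is no in-paper argument to compare yours against; the relevant comparison is with the cited classical proof, and your proposal is a faithful reconstruction of exactly that route: uniqueness of the $K_{\lambda,\mu}(t)$ via triangularity plus a weight-reducing recursion, Knuth/jeu-de-taquin invariance of charge, the cyclage poset graded by cocharge, and the matching of the combinatorial catabolism step against the Morris recurrence for Hall--Littlewood polynomials. The skeleton, the base case $\mu=(n)$, and the triangularity $\tilde{K}_{\lambda,\mu}=0$ unless $\lambda\geq\mu$ are all correct.

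That said, as a standalone proof your proposal is incomplete in precisely the places you flag yourself: the invariance of charge under the Knuth relations (needed even to make Definition~\ref{def:charge} well posed), the statement that elementary cyclage shifts cocharge by exactly one, and the term-by-term agreement of the catabolism recursion with the recursion extracted from the Morris recurrence are all asserted as steps ``I would prove'' rather than carried out. These three verifications are not bookkeeping around the proof --- they \emph{are} the proof; everything else in the outline is soft. This is the same gap that separated the original announcement in \cite{LS78} from Butler's completion in \cite{But86}, so your plan identifies the right obstructions and the right strategy, but it should be understood as a roadmap to the known argument rather than a new or self-contained demonstration. Note also that the paper's genuinely new contribution in this direction is the different formula of Theorem~\ref{thm:charge-crystal}, which replaces charge by the major index on highest weight key tabloids and is proved by crystal-theoretic methods independent of the charge statistic; your approach does not interact with that machinery at all.
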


For example, Fig.~\ref{fig:charge} shows the seven semistandard Young tableaux of partition shape and weight $(2,2,1)$. Their charges, from left to right, are $0,1,1,2,2,3,4$, from which we compute
\[  H_{(2,2,1)}(X;t)  =  s_{(2,2,1)}  +  t s_{(3,1,1)}  +  (t + t^2) s_{(3,2)}  +  (t^2 + t^3) s_{(4,1)} + t^4 s_{(5)} . \]

\begin{figure}[ht]
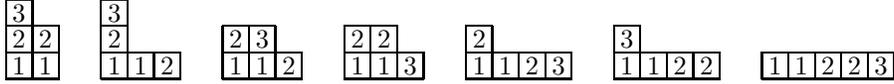

    \begin{displaymath}
      \arraycolsep=8pt
      \begin{array}{ccccccc}
        \tableau{3 \\ 2 & 2 \\ 1 & 1}   &
        \tableau{3 \\ 2 \\ 1 & 1 & 2}   &
        \tableau{\\ 2 & 3 \\ 1 & 1 & 2} &
        \tableau{\\ 2 & 2 \\ 1 & 1 & 3} &
        \tableau{\\ 2 \\ 1 & 1 & 2 & 3} &
        \tableau{\\ 3 \\ 1 & 1 & 2 & 2} &
        \tableau{\\ \\ 1 & 1 & 2 & 2 & 3}
      \end{array}
    \end{displaymath}
  \caption{\label{fig:charge}The seven semistandard Young tableaux of partition shape and weight $(2,2,1)$ used to compute $H_{(2,2,1)}(X;t)$.}
\end{figure}

Recall $E_{\mathrm{rev}(\lambda)}(X_n;q,0) = H_{\lambda}(X_n;q,0)$ and also by Eq.~\eqref{e:sym-key}, we have $\key_{\mathrm{rev}(\lambda)}(X_n) = s_{\lambda}(X_n)$. Therefore Corollary~\ref{cor:nskostka} gives a formula that we can relate to the Hall--Littlewood polynomial $H_{\lambda}(X_n;0,t)$ via the following result, proved combinatorially in \cite{Ass18}(Theorem~5.6).

\begin{theorem}[\cite{Ass18}]
  For $\lambda$ a partition of length $m$ and $\mathrm{rev}(\lambda)$ its weakly increasing rearrangement, we have
  \begin{equation}
    E_{\mathrm{rev}(\lambda)}(X_m;q,0) = \omega H_{\lambda^{\prime}}(X_m;0,q) ,
  \end{equation}
  where $\lambda^{\prime}$ denotes the conjugate (diagrammatic transpose) of $\lambda$, and $\omega$ is the symmetric function involution determined by $\omega s_{\lambda} = s_{\lambda^{\prime}}$.
  \label{thm:mac-stable}
\end{theorem}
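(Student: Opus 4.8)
The plan is to prove the identity by extracting Schur coefficients from both sides and reducing to a classical conjugation symmetry of Macdonald polynomials, and then to indicate the combinatorial route, which is the one carried out in \cite{Ass18}. Since $\mathrm{rev}(\lambda)$ is weakly increasing, the left-hand side is a \emph{symmetric} polynomial; indeed, by the recalled identification it equals $H_\lambda(X_m;q,0)$. Symmetry is what makes the passage to Schur coefficients clean: because $\key_{\mathrm{rev}(\mu)} = s_\mu$ by Eq.~\eqref{e:sym-key} and the Demazure characters form a basis of the polynomial ring, a symmetric polynomial has Demazure expansion supported entirely on antidominant (weakly increasing) indices, and the coefficient of $s_\mu$ agrees with the coefficient of $\key_{\mathrm{rev}(\mu)}$. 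So I would first record this reduction and then simply read off the two sides.

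Next I would compute the two Schur coefficients. Using $E_{\mathrm{rev}(\lambda)}(X_m;q,0) = H_\lambda(X_m;q,0)$ together with $H_\mu(X;q,t) = \sum_\nu K_{\nu,\mu}(q,t)\,s_\nu$, the coefficient of $s_\mu$ on the left is the Kostka--Macdonald coefficient $K_{\mu,\lambda}(q,0)$. On the right, expanding $H_{\lambda'}(X_m;0,q) = \sum_\nu K_{\nu,\lambda'}(q)\,s_\nu$ via the Kostka--Foulkes definition of \S\ref{sec:formulas-HL} and applying $\omega s_\nu = s_{\nu'}$ gives $\omega H_{\lambda'}(X_m;0,q) = \sum_\mu K_{\mu',\lambda'}(q)\,s_\mu$, so the coefficient of $s_\mu$ on the right is $K_{\mu',\lambda'}(q) = K_{\mu',\lambda'}(0,q)$. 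Thus the theorem is equivalent to $K_{\mu,\lambda}(q,0) = K_{\mu',\lambda'}(0,q)$ for every partition $\mu$, which is precisely Macdonald's conjugation symmetry $K_{\sigma,\tau}(q,t) = K_{\sigma',\tau'}(t,q)$ specialized at $t=0$. This yields a short conceptual proof; the only care needed is the consistent passage between symmetric functions and their truncations to $m$ variables, so that $\omega$ and the vanishing of $s_\nu$ with $\ell(\nu)>m$ are handled uniformly.

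Alternatively, and more in keeping with the combinatorial spirit of \cite{Ass18}, one can bypass Macdonald theory entirely. Specializing the Haglund--Haiman--Loehr formula (Theorem~\ref{thm:mac}) at $t=0$ collapses the product to $1$ and retains only co-inversion-free fillings, giving $E_{\mathrm{rev}(\lambda)}(X_m;q,0) = \sum_{T\in\SSKD(\mathrm{rev}(\lambda))} q^{\maj(T)} X_m^{\wt(T)}$, while on the other side the charge formula (Theorem~\ref{thm:charge}) expresses $\omega H_{\lambda'}(X_m;0,q)$ through semistandard Young tableaux of conjugate weight counted by charge. The theorem then reduces to a weight-preserving bijection between these two families matching $\maj$ with charge, with the transpose of shapes absorbing both $\omega$ and the passage $\lambda\mapsto\lambda'$, $\mu\mapsto\mu'$. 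I expect this bijection to be the main obstacle: the Haglund--Haiman--Loehr $\maj$ is a simple local statistic on key tabloids, whereas charge is defined recursively through \emph{jeu-de-taquin} and catabolism and is highly nonlocal, so matching them requires either an intermediate cyclage or crystal argument or a delicate induction on the recursive structure of charge. For a clean proof I would therefore rely on the symmetry route above, using the combinatorial route only to keep the statistics transparent.
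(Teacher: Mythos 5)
Your primary (symmetry) route is correct, but it is genuinely different from what this paper does: the paper gives no proof of Theorem~\ref{thm:mac-stable} at all, importing it from \cite{Ass18}(Theorem~5.6), whose combinatorial argument is essentially your second route. Your reduction is sound: both sides are symmetric polynomials in $x_1,\ldots,x_m$, so comparing Schur coefficients suffices (the detour through Demazure expansions and Eq.~\eqref{e:sym-key} is harmless but unnecessary, since the $s_\mu(X_m)$ with $\ell(\mu)\leq m$ already form a basis of symmetric polynomials); the left coefficient of $s_\mu$ is $K_{\mu,\lambda}(q,0)$ via the recalled identification $E_{\mathrm{rev}(\lambda)}(X_m;q,0)=H_{\lambda}(X_m;q,0)$, the right coefficient is $K_{\mu',\lambda'}(0,q)$, and their equality is Macdonald's conjugation symmetry $K_{\sigma,\tau}(q,t)=K_{\sigma',\tau'}(t,q)$, a classical consequence of the duality $\omega_{q,t}P_{\sigma}(X;q,t)=Q_{\sigma'}(X;t,q)$ in \cite{Mac95}, independent of any positivity theorem, specialized at $t=0$. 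Two caveats you should make explicit in a write-up: (i) the identification $E_{\mathrm{rev}(\lambda)}(X_m;q,0)=H_{\lambda}(X_m;q,0)$ is itself nontrivial and does \emph{not} follow from Eq.~\eqref{e:trunc-E}, which prepends zeros and sets trailing variables to zero; it rests on the Sanderson/Ion affine Demazure results (or a direct $t=0$ argument from the Haglund--Haiman--Loehr formula), and citing it that way removes any appearance of circularity; (ii) as you note, $\omega$ must be applied at the level of symmetric functions before truncating to $m$ variables. As for what each approach buys: your algebraic route is short and conceptual but imports Macdonald duality wholesale and yields no combinatorial correspondence, whereas the combinatorial route of \cite{Ass18} is the one this paper's program actually exploits --- the point of \S\ref{sec:formulas-HL} is to replace the recursive charge statistic by the local statistic $\maj$, and Theorem~\ref{thm:charge-crystal} here can be read as supplying, via crystals, exactly the $\maj$-versus-charge link that you correctly single out as the hard step of the bijective route.
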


To utilize Corollary~\ref{cor:nskostka} in the context of Theorem~\ref{thm:mac-stable}, we have the following.

\begin{lemma}
  Given a weakly increasing weak composition $b$, every connected component of the Demazure crystal on $\SSKD(b)$ is a normal crystal.
  \label{lem:full}
\end{lemma}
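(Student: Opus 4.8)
The plan is to deduce normality from the symmetry of $E_b(X_n;q,0)$ together with the uniqueness of the Demazure-character basis, rather than by a direct combinatorial analysis of Demazure deaths. First I would record, from Theorem~\ref{thm:mac-stable}, that for weakly increasing $b$ (so $b=\mathrm{rev}(\lambda)$ with $\lambda=\mathrm{sort}(b)$) the polynomial $E_b(X_n;q,0)=\omega H_{\lambda'}(X_n;0,q)$ is symmetric in $x_1,\dots,x_n$. Since the symmetric polynomials are exactly the $\mathbb{Z}[q]$-span of the Schur polynomials $s_{\mu}=\key_{\mathrm{rev}(\mu)}$ by Eq.~\eqref{e:sym-key}, and the Demazure characters $\{\key_a\}$ form a basis of the entire polynomial ring, the uniqueness of the expansion in this basis shows that $K_{a,b}(q)=0$ whenever $a$ is not weakly increasing.

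Next I would connect the coefficients $K_{a,b}(q)$ to the connected components of the graph on $\SSKD(b)$. By Theorem~\ref{thm:demazure} each component $\mathcal{C}$ is a Demazure subcrystal, hence equals some $\B_w(\lambda_{\mathcal{C}})$, whose character is the Demazure character $\key_{a_{\mathcal{C}}}$ for $a_{\mathcal{C}}$ the weight of its unique Demazure lowest weight element (Proposition~\ref{prop:Dlw}). Because the raising and lowering operators preserve $\maj$ (Lemma~\ref{lem:raising-majpreserving} and its lowering analogue), $\maj$ is constant on $\mathcal{C}$, so $\mathcal{C}$ contributes exactly $q^{\maj(\mathcal{C})}\,\key_{a_{\mathcal{C}}}$ to $E_b(X_n;q,0)$. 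Collecting components by their Demazure lowest weight recovers the formula of Corollary~\ref{cor:nskostka}, namely $K_{a,b}(q)=\sum_{\mathcal{C}:\,a_{\mathcal{C}}=a}q^{\maj(\mathcal{C})}$.

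The decisive step is then the no-cancellation observation: this last sum has only nonnegative coefficients, so $K_{a,b}(q)=0$ forces the index set $\{\mathcal{C}:a_{\mathcal{C}}=a\}$ to be empty. Combining with the first paragraph, no component can have a Demazure lowest weight $a_{\mathcal{C}}$ that fails to be weakly increasing, and hence every $a_{\mathcal{C}}$ is weakly increasing. Finally I would translate this back: by Eq.~\eqref{e:sym-key} a weakly increasing Demazure lowest weight gives $\key_{a_{\mathcal{C}}}=s_{\lambda_{\mathcal{C}}}=\mathrm{ch}(\B(\lambda_{\mathcal{C}}))$ with $\lambda_{\mathcal{C}}=\mathrm{sort}(a_{\mathcal{C}})$; since $\mathcal{C}\subseteq\B(\lambda_{\mathcal{C}})$ has the same character as the full crystal, it must equal $\B(\lambda_{\mathcal{C}})$ and so is normal. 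The main obstacle to watch is precisely this bridge between the global statement (symmetry of $E_b$ and uniqueness of the Demazure basis) and the local one (the individual components): it works only because $\maj$ is constant on components, guaranteeing that distinct contributions to $K_{a,b}(q)$ cannot cancel, and because Corollary~\ref{cor:nskostka} has already matched the components with the Demazure-lowest-weight tabloids.
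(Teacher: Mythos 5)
Your proof is correct and takes essentially the same route as the paper's: symmetry of $E_b(X_n;q,0)$ for weakly increasing $b$, nonnegativity of the Demazure coefficients (so no cancellation among components), and the fact that a symmetric Demazure character is a Schur polynomial, whence each component's character equals that of a full crystal and the component, being contained in it, must be all of it. The only differences are cosmetic: you obtain the vanishing of $K_{a,b}(q)$ for non-weakly-increasing $a$ from uniqueness of the expansion in the Demazure basis where the paper instead cites \cite{AS18}(Theorem~4.2), and you make explicit the component-level bookkeeping (constancy of $\maj$ on components via Lemma~\ref{lem:raising-majpreserving} and the identification of each component's contribution through Corollary~\ref{cor:nskostka}) that the paper's final sentence leaves implicit.
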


\begin{proof}
  When $b$ is weakly increasing of length $n$, the specialized nonsymmetric Macdonald polynomial $E_b(X_n;q,0)$ is symmetric in $x_1,\ldots,x_n$. Given a weak composition $a$ of length $n$, by \cite{AS18}(Theorem~4.2) the Demazure character $\key_a$ is symmetric in $x_1,\ldots,x_n$ if and only if $a$ is weakly increasing. By Theorem~\ref{thm:nskostka}, or equivalently by Corollary~\ref{cor:nskostka}, the coefficients in the Demazure expansion of $E_b(X_n;q,0)$ are polynomials in $q$ with nonnegative coefficients. Therefore every term that appears in the Demazure expansion of $E_b(X_n;q,0)$ must be symmetric, that is, every Demazure character that appears with nonzero coefficient is, in fact, a Schur polynomial in $n$ variables. Consequently, the corresponding crystals must be full crystals.
\end{proof}

Since normal $\gl_n$-crystals are uniquely determined by their highest weights, which also give their characters, we have a new paradigm for computing Kostka-Foulkes polynomials that utilizes the highest weight elements of the tabloid crystal together with the simple major index statistic.

\begin{theorem}
  The Kostka--Foulkes polynomials $K_{\lambda,\mu}(t)$ are given by
  \begin{equation}
    K_{\lambda,\mu}(t) = \sum_{\substack{T\in\SSKD(0^m\times\mathrm{rev}(\mu^{\prime})) \\ \wt(T) = \lambda^{\prime} \\ e_i(T) = 0 \forall i}} t^{\maj(T)} ,
    \label{e:charge-crystal}
  \end{equation}
  for any $m \geq |\mu|-\mu_1$.
  \label{thm:charge-crystal}
\end{theorem}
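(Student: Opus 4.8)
The plan is to derive Theorem~\ref{thm:charge-crystal} by combining Corollary~\ref{cor:nskostka}, the stable-limit identity of Theorem~\ref{thm:mac-stable}, and the crucial simplification afforded by Lemma~\ref{lem:full}. First I would specialize Corollary~\ref{cor:nskostka} to a weakly increasing shape $b = 0^m\times\mathrm{rev}(\mu')$, where $m \geq |\mu|-\mu_1$ guarantees enough leading zeros that the relevant Demazure characters stabilize to Schur functions. Since $b$ is weakly increasing, Lemma~\ref{lem:full} tells us that every connected component of the Demazure crystal on $\SSKD(b)$ is actually a \emph{full} normal crystal. This is the key reduction: for a normal crystal the character is governed by \emph{highest} weight elements via Eq.~\eqref{e:char-hw}, rather than by Demazure lowest weight elements as in the general formula Eq.~\eqref{e:nskostka-Dlw}. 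Thus I can replace the condition ``$T$ Demazure lowest weight with $\wt(T)=a$'' in Corollary~\ref{cor:nskostka} by the condition ``$e_i(T)=0$ for all $i$ with $\wt(T)=\lambda'$,'' since in a full crystal the unique highest weight element of each component carries the Schur label, and the major index is constant on each component by Lemma~\ref{lem:raising-majpreserving}.

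Next I would translate between the nonsymmetric and symmetric sides. By Corollary~\ref{cor:nskostka} we have $E_b(X_n;q,0) = \sum_a K_{a,b}(q)\,\key_a(X_n)$, and by Lemma~\ref{lem:full} each $\key_a$ appearing is a Schur polynomial, namely $\key_{0^m\times\mathrm{rev}(\nu)}(X_n)=s_\nu(X_n)$ by Eq.~\eqref{e:sym-key} and the stabilization Eqs.~\eqref{e:trunc-key}. Applying Theorem~\ref{thm:mac-stable} with $\mu$ in place of $\lambda$, we have $E_{\mathrm{rev}(\mu')}(X_m;q,0)=\omega H_{\mu}(X_m;0,q)$, so that matching Schur coefficients on both sides identifies $K_{a,b}(q)$ with a Kostka--Foulkes polynomial after applying the involution $\omega$. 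The conjugation bookkeeping is where the primes and the reversal in the statement originate: the $\omega$ sends $s_\nu\mapsto s_{\nu'}$, and the shape $\mathrm{rev}(\mu')$ together with the transpose turns the weight condition $\wt(T)=\lambda'$ into the Schur index $\lambda$ on the Hall--Littlewood side. I would carefully verify that the highest weight elements $T\in\SSKD(0^m\times\mathrm{rev}(\mu'))$ of weight $\lambda'$ correspond exactly to the Schur term $s_\lambda$ in $H_{\mu}(X;0,q)$, tracking the conjugation through each identification.

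The main obstacle I anticipate is the careful alignment of the conjugation and reversal conventions so that the statistic and the indices match on the nose. In particular, one must confirm that the major index $\maj(T)$ on the tabloid side corresponds precisely to the $t$-grading of $K_{\lambda,\mu}(t)$ and not to some shifted or complemented version; this requires checking that the specialization of Theorem~\ref{thm:mac-stable} at the parameter $q$ (which plays the role of $t$ here) does not introduce an $n(\mu)$-type shift, unlike the charge-versus-cocharge normalization in Definition~\ref{def:charge}. The cleanest way to pin this down is to observe that both $\maj$ on highest weight tabloids and the Kostka--Foulkes grading agree at the boundary cases (e.g.\ the unique tabloid giving $K_{\mu,\mu}(t)=1$ with $\maj=0$, matching $s_\mu$ appearing with coefficient $1$), and then invoke that the Demazure/Schur expansion coefficients determine the polynomial uniquely. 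Once the dictionary is fixed, the identity Eq.~\eqref{e:charge-crystal} follows by reading off the Schur coefficient of $H_{\mu}(X;0,t)$ from the stabilized nonsymmetric expansion, so the bulk of the argument is the indexing verification rather than any new combinatorial construction.
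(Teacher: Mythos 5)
Your proposal follows the paper's own proof essentially step for step: specialize Corollary~\ref{cor:nskostka} to the weakly increasing shape $b=0^m\times\mathrm{rev}(\mu^{\prime})$, invoke Lemma~\ref{lem:full} to trade the Demazure lowest weight condition for the highest weight condition (using that $\maj$ is constant on components and that full crystals are indexed by highest weights with Schur characters), and then match Schur coefficients through Theorem~\ref{thm:mac-stable} after applying $\omega$. The only point where you diverge is the worry about an $n(\mu)$-type grading shift, which is already settled by the statement of Theorem~\ref{thm:mac-stable} itself, so no boundary-case verification is required.
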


\begin{proof}
  Let $n=|\mu|$ and set $b = 0^{m}\times\mathrm{rev}(\mu^{\prime})$. By Lemma~\ref{lem:full}, since $b$ is weakly increasing, every component of the Demazure crystal on semistandard key tabloids is a full crystal. Thus components can be indexed by their highest weights and their characters are given by the corresponding Schur polynomials. Combining this with Theorem~\ref{thm:mac-stable} gives
  \[ \omega H_{\mu}(X_m;0,q) = E_{0^{m}\times\mathrm{rev}(\mu^{\prime})}(X_m;q,0)
  = \sum_{a \ \text{weakly inc.}} K_{a,b}(q) s_{\mathrm{rev}(a)}(X_m) . \]
  Applying $\omega$ to the expression above yields
  \[ H_{\mu}(X_m;0,q) = \sum_{a \ \text{weakly inc.}} K_{a,b}(q) \omega s_{\mathrm{rev}(a)}(X_m) = \sum_{a \ \text{weakly inc.}} K_{a,b}(q) s_{\mathrm{rev}(a)^{\prime}}(X_m) . \]
  Now fix a weakly increasing weak composition $a$ and set $\lambda = \mathrm{rev}(a)^{\prime}$. Using highest weights, Corollary~\ref{cor:nskostka} becomes
  \[  K_{\lambda,\mu}(q) = K_{a,b}(q) = \sum_{\substack{T \in \SSKD(b) \\ T \text{ highest weight} \\ \wt(T) = \mathrm{rev}(a) }} q^{\maj(T)} . \]
  The formula now follows.
\end{proof}

\begin{example}
  The seven highest weight semistandard key tabloids of shape $(0^3,2,3)$ are shown in Fig.~\ref{fig:highest}. The $q$-weight of these terms is easily determined by the major index statistic, giving
  \begin{displaymath}
    E_{(0^3,2,3)}(X;q,0)  =  \key_{(0^3,2,3)}  +  q \key_{(0^2,1,1,3)}  +  (q+q^2)\key_{(0^2,1,2,2)}  +  (q^2+q^3)\key_{(0,1,1,1,2)} + q^4 \key_{(1,1,1,1,1)}.
  \end{displaymath}
  Each of the above Demazure characters corresponds to a Schur polynomial in $x_1,\ldots,x_5$, and writing it as such we have
  \begin{displaymath}
    E_{(0^3,2,3)}(X;q,0)  =  s_{(3,2)}  +  q s_{(3,1,1)}  +  (q+q^2)s_{(2,2,1)}  +  (q^2+q^3)s_{(2,1,1,1)} + q^4 s_{(1,1,1,1,1)}.
  \end{displaymath}
  Exchanging $q$ with $t$ and conjugating each partition gives $H_{(2,2,1)}(X;t)$ computed earlier.
\end{example}

  \begin{figure}[ht]
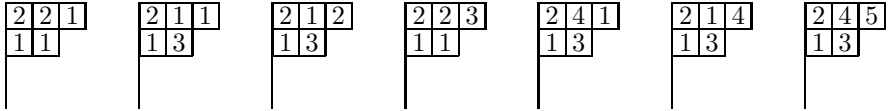

    \begin{displaymath}
      \arraycolsep=10pt
      \begin{array}{cccccccc}
        \vline\tableau{2 & 2 & 1 \\ 1 & 1 \\ & \\ & } &
        \vline\tableau{2 & 1 & 1 \\ 1 & 3 \\ & \\ & } &
        \vline\tableau{2 & 1 & 2 \\ 1 & 3 \\ & \\ & } &
        \vline\tableau{2 & 2 & 3 \\ 1 & 1 \\ & \\ & } &
        \vline\tableau{2 & 4 & 1 \\ 1 & 3 \\ & \\ & } &
        \vline\tableau{2 & 1 & 4 \\ 1 & 3 \\ & \\ & } &
        \vline\tableau{2 & 4 & 5 \\ 1 & 3 \\ & \\ & } 
      \end{array}
    \end{displaymath}
  \caption{\label{fig:highest}The highest weights for the Demazure crystal for $E_{(0^3,2,3)}(X;q,0)$.}
\end{figure}

\subsection{Explicit Demazure expansions}
\label{sec:formulas-dem}

Recall that highest weight elements of a Demazure crystal do not give the Demazure characters. Thus highest weights of the tabloid crystal do not immediately give a formula for the Demazure expansion of the specialized nonsymmetric Macdonald polynomial outside of the symmetric case resolved by Theorem~\ref{thm:charge-crystal}.

\begin{example}\label{ex:0302}
  The six highest weight semistandard key tabloids of shape $(0,3,0,2)$ are shown in Fig.~\ref{fig:highest}, indicating that the Demazure crystal has six connected components, and so the Demazure expansion of $E_{(0,3,0,2)}(X;q,0)$ has six terms. However, these tabloids do not determine the Demazure characters themselves.
\end{example}

\begin{figure}[ht]
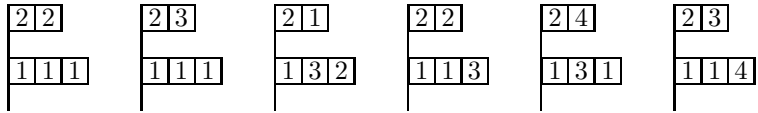

  \begin{displaymath}
    \begin{array}{c@{\hskip 2\cellsize}c@{\hskip 2\cellsize}c@{\hskip 2\cellsize}c@{\hskip 2\cellsize}c@{\hskip 2\cellsize}c}
      \vline\tableau{2 & 2 \\ & \\ 1 & 1 & 1 \\ & } &
      \vline\tableau{2 & 3 \\ & \\ 1 & 1 & 1 \\ & } &
      \vline\tableau{2 & 1 \\ & \\ 1 & 3 & 2 \\ & } &
      \vline\tableau{2 & 2 \\ & \\ 1 & 1 & 3 \\ & } &
      \vline\tableau{2 & 4 \\ & \\ 1 & 3 & 1 \\ & } &
      \vline\tableau{2 & 3 \\ & \\ 1 & 1 & 4 \\ & }
    \end{array}
  \end{displaymath}
  \caption{\label{fig:highest-Dem}The highest weights for the Demazure crystal for $E_{(0,3,0,2)}(X;q,0)$.}
\end{figure}

We can construct the Demazure lowest weights from the highest weights using Definition~\ref{def:dem-low}. Given the explicit objects, this is easy to compute.

\begin{example}
  Consider the leftmost tabloids Fig.~\ref{fig:high-low}, which are two highest weight elements both of weight $(2,2,1)$. Following Definition~\ref{def:dem-low}, we first act by $F_{[1,3]}$. For the second iteration, the top row will act with $F_{[3,3]}$ while the bottom row will act by $F_{[2,3]}$, after which both examples terminate at their respective lowest weight elements.
\end{example}

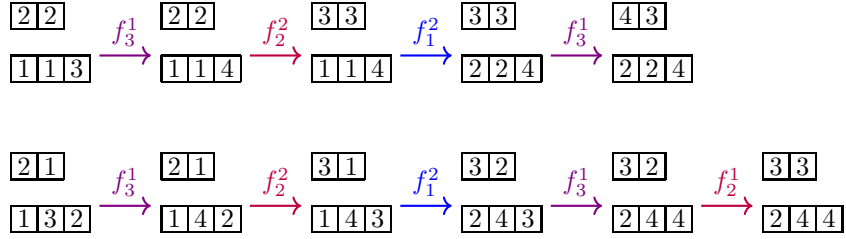
\begin{figure}[ht]
  \begin{tikzpicture}[xscale=2,yscale=2]
    \node at (0,0) (a) {\vline\tableau{2 & 1 \\ & \\ 1 & 3 & 2 \\ & }};
    \node at (1,0) (b) {\vline\tableau{2 & 1 \\ & \\ 1 & 4 & 2 \\ & }};
    \node at (2,0) (c) {\vline\tableau{3 & 1 \\ & \\ 1 & 4 & 3 \\ & }};
    \node at (3,0) (d) {\vline\tableau{3 & 2 \\ & \\ 2 & 4 & 3 \\ & }};
    \node at (4,0) (e) {\vline\tableau{3 & 2 \\ & \\ 2 & 4 & 4 \\ & }};
    \node at (5,0) (f) {\vline\tableau{3 & 3 \\ & \\ 2 & 4 & 4 \\ & }};
    \node at (0,1) (A) {\vline\tableau{2 & 2 \\ & \\ 1 & 1 & 3 \\ & }};
    \node at (1,1) (B) {\vline\tableau{2 & 2 \\ & \\ 1 & 1 & 4 \\ & }};
    \node at (2,1) (C) {\vline\tableau{3 & 3 \\ & \\ 1 & 1 & 4 \\ & }};
    \node at (3,1) (D) {\vline\tableau{3 & 3 \\ & \\ 2 & 2 & 4 \\ & }};
    \node at (4,1) (E) {\vline\tableau{4 & 3 \\ & \\ 2 & 2 & 4 \\ & }};
    \draw[thick,->,violet] (a) -- (b) node[midway,above] {$f_{3}^{1}$};
    \draw[thick,->,purple] (b) -- (c) node[midway,above] {$f_{2}^{2}$};
    \draw[thick,->,blue  ] (c) -- (d) node[midway,above] {$f_{1}^{2}$};
    \draw[thick,->,violet] (d) -- (e) node[midway,above] {$f_{3}^{1}$};
    \draw[thick,->,purple] (e) -- (f) node[midway,above] {$f_{2}^{1}$};
    \draw[thick,->,violet] (A) -- (B) node[midway,above] {$f_{3}^{1}$};
    \draw[thick,->,purple] (B) -- (C) node[midway,above] {$f_{2}^{2}$};
    \draw[thick,->,blue  ] (C) -- (D) node[midway,above] {$f_{1}^{2}$};
    \draw[thick,->,violet] (D) -- (E) node[midway,above] {$f_{3}^{1}$};
  \end{tikzpicture}
  \caption{\label{fig:high-low}Using Definition~\ref{def:dem-low} to construct the Demazure lowest weight tabloids from two highest weight tabloids.}
\end{figure}

Mapping each of the highest weight tabloids in Fig.~\ref{fig:highest-Dem} to their corresponding Demazure lowest weights results in the tabloids in Fig.~\ref{fig:lowest}. The $q$-weight of these terms is easily determined by the major index statistic, giving
  \[ E_{(0,3,0,2)}(X;q,0) = \key_{(0,3,0,2)} + q \key_{(0,3,1,1)} + q\key_{(0,2,1,2)} + q^2\key_{(0,1,2,2)} + q^2 \key_{(1,2,1,1)} + q^3 \key_{(1,1,1,2)} . \]

\begin{figure}[ht]
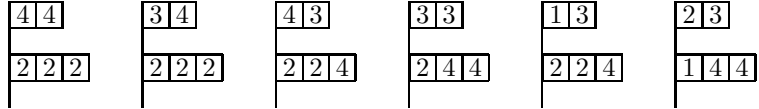

    \begin{displaymath}
      \begin{array}{c@{\hskip 2\cellsize}c@{\hskip 2\cellsize}c@{\hskip 2\cellsize}c@{\hskip 2\cellsize}c@{\hskip 2\cellsize}c}
        \vline\tableau{4 & 4 \\ & \\ 2 & 2 & 2 \\ & } &
        \vline\tableau{3 & 4 \\ & \\ 2 & 2 & 2 \\ & } &
        \vline\tableau{4 & 3 \\ & \\ 2 & 2 & 4 \\ & } &
        \vline\tableau{3 & 3 \\ & \\ 2 & 4 & 4 \\ & } &
        \vline\tableau{1 & 3 \\ & \\ 2 & 2 & 4 \\ & } &
        \vline\tableau{2 & 3 \\ & \\ 1 & 4 & 4 \\ & }
      \end{array}
    \end{displaymath}
  \caption{\label{fig:lowest}The Demazure lowest weights for the Demazure crystal for $E_{(0,3,0,2)}(X;q,0)$.}
\end{figure}

Notice that while $E_{(0^3,2,3)}(X;q,0)$ had multiplicity in its Schur expansion, the Demazure expansion of $E_{(0,3,0,2)}(X;q,0)$ is multiplicity-free. This is particularly interesting since these polynomials agree as \emph{functions} in the stable limit, i.e.
\[ \lim_{m\rightarrow\infty} E_{0^m \times (0,3,0,2)}(X;q,0) = \lim_{m\rightarrow\infty} E_{0^m \times (0^3,2,3)}(X;q,0) = \omega H_{(2,2,1)}(X;0,q) . \]
This happens precisely because, as demonstrated in the example above, the algorithm for computing the Demazure lowest weights differs for two highest weights of the same weight. Comparing expansions, we have
\[ E_{(0,3,0,2)}(X;q,0) =
\underbrace{\key_{(0,3,0,2)}}_{s_{(2,2,1)}} +
\underbrace{q \key_{(0,3,1,1)}}_{t s_{(3,1,1)}} +
\underbrace{q\key_{(0,2,1,2)} + q^2\key_{(0,1,2,2)}}_{(t + t^2)s_{(3,2)}} +
\underbrace{q^2 \key_{(1,2,1,1)} + q^3 \key_{(1,1,1,2)}}_{(t^2 + t^3)s_{(4,1)}} . \]

Summarizing this refinement, we have the following.

\begin{corollary}[\cite{Ass18}]
  Given a weak composition $b$ with column lengths $\mu$, for $m$ sufficiently large, we have
  \begin{equation}
    K_{\lambda,\mu}(t) = \sum_{\mathrm{sort}(a) = \lambda^{\prime}} K_{a,0^m\times b}(t) .
  \end{equation}
\end{corollary}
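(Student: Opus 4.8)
The plan is to evaluate the stable symmetric function $\lim_{m\to\infty}E_{0^m\times b}(X;q,0)$ in two different ways and compare Schur coefficients; throughout I identify the Kostka--Foulkes variable $t$ with the grading variable $q$. First I would record the combinatorial translation of the hypothesis: the length of column $j$ of the diagram of $b$ is $\#\{i:b_i\geq j\}$, which is exactly the $j$th column length of $\mathrm{sort}(b)$, so the condition that $b$ has column lengths $\mu$ means precisely $\mathrm{sort}(b)=\mu'$. In particular the stable limit I am about to compute depends on $b$ only through $\mu$.

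Next I would identify the limit on the symmetric side. By Eq.~\eqref{e:stable-E} specialized at $t=0$, the limit $F:=\lim_{m\to\infty}E_{0^m\times b}(x_1,x_2,\ldots;q,0)$ exists and equals $P_{\mathrm{sort}(b)}(X;q,0)=P_{\mu'}(X;q,0)$. Applying Theorem~\ref{thm:mac-stable} to the weakly increasing composition $\mathrm{rev}(\mu')$, which shares the column lengths $\mu$ and hence the same stable limit, and letting the number of variables grow, identifies $F=\omega H_{\mu}(X;0,q)$. Expanding $H_{\mu}(X;0,q)=\sum_{\lambda}K_{\lambda,\mu}(q)\,s_{\lambda}(X)$ and applying $\omega$ gives $F=\sum_{\lambda}K_{\lambda,\mu}(q)\,s_{\lambda'}(X)$, so that for each partition $\nu$ the coefficient of $s_{\nu}$ in $F$ is $K_{\nu',\mu}(q)$.

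Then I would compute the same limit through the Demazure expansion. By Corollary~\ref{cor:nskostka} (equivalently Eq.~\eqref{e:nskostka}), $E_{0^m\times b}(X;q,0)=\sum_{a}K_{a,0^m\times b}(q)\,\key_a(X)$, where each coefficient is the $\maj$-generating function of the Demazure lowest weight tabloids of weight $a$. Passing to the stable limit as $m\to\infty$, prepending zeros to the shape prepends a common zero to every index $a$ that occurs, so that once $a$ carries enough leading zeros Eq.~\eqref{e:stable-key} gives $\lim\key_a(X)=s_{\mathrm{sort}(a)}(X)$. Collecting indices by their sorted value, the coefficient of $s_{\nu}$ in $F$ equals $\sum_{\mathrm{sort}(a)=\nu}K_{a,0^m\times b}(q)$ for all sufficiently large $m$. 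Comparing the two formulas for $[s_{\nu}]F$ with $\nu=\lambda'$ yields $K_{\lambda,\mu}(q)=\sum_{\mathrm{sort}(a)=\lambda'}K_{a,0^m\times b}(q)$, which is the assertion after renaming $q$ as $t$.

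The main obstacle is the stabilization underlying the third paragraph: showing both that the indices $a$ acquire the leading zeros needed to turn $\key_a$ into a Schur function and that the grouped coefficients become independent of $m$, i.e. that $K_{a,0^m\times b}(q)=K_{0\times a,0^{m+1}\times b}(q)$. I expect this to follow from the column-local, shift-invariant definition of the crystal operators (Definitions~\ref{def:raise-tabloid} and \ref{def:lower-key}): prepending an empty bottom row to the shape is a weight- and $\maj$-preserving isomorphism of the tabloid crystal that merely raises every entry by one, so it commutes with the operators and with the maps of \S\ref{sec:tabloid-rect}. An alternative that sidesteps the limit is to use Corollary~\ref{cor:nskostka} to rewrite $\sum_{\mathrm{sort}(a)=\lambda'}K_{a,0^m\times b}(q)$ as $\sum_{u}q^{\maj(u)}$ over highest weight tabloids $u\in\SSKD(0^m\times b)$ of weight $\lambda'$, and to prove this generating function is independent of the choice of $b$ with column lengths $\mu$, whereupon Theorem~\ref{thm:charge-crystal} (the case $b=\mathrm{rev}(\mu')$) completes the argument.
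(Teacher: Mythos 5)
Your overall architecture is sound and is essentially the derivation the paper intends (the corollary is imported from \cite{Ass18} and motivated by exactly the two-sided comparison you set up): your symmetric-side computation, identifying the stable limit with $\omega H_{\mu}(X;0,q)$ via Eq.~\eqref{e:stable-E} and Theorem~\ref{thm:mac-stable}, is correct. The genuine gap is the stabilization lemma you lean on for the Demazure side, namely $K_{a,0^m\times b}(q)=K_{0\times a,0^{m+1}\times b}(q)$, together with the mechanism you propose for it. Both fail. The raising operators of Definition~\ref{def:raise-tabloid} are indeed shift-invariant, but the lowering operators of Definition~\ref{def:lower-key} are not: the Demazure death condition refers to \emph{absolute} row indices (``the leftmost unpaired $i$ is in row $i$''), so prepending an empty bottom row relaxes deaths and strictly enlarges the components as Demazure crystals. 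Your proposed map (shift the shape up and raise every entry by one) does not even send highest weight elements to highest weight elements: after incrementing, every $2$ in the image is $1$-unpaired, so $e_1$ acts nontrivially. Concretely, in the paper's own Example~\ref{ex:0302}, the component of $\SSKD(0,3,0,2)$ contributing $q^3\key_{(1,1,1,2)}$ corresponds in $\SSKD(0,0,3,0,2)$ to a component on which $f_4$ now acts (the $4$'s sit in row $3$, not row $4$, so there is no death), and then $f_3,f_2,f_1$ as well; its Demazure lowest weight descends past $(1,1,1,0,2)$ all the way to $(0,1,1,1,2)$, i.e.\ the component becomes a full crystal and even merges, at the level of indices, with the other component of highest weight $(2,1,1,1,0)$. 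So individual coefficients do \emph{not} stabilize by acquiring leading zeros; the indices get rearranged and coefficients recombine ($q^3$ becomes part of $q^2+q^3$). What is stable is only the grouped data: $\mathrm{sort}$ of each component's Demazure lowest weight always equals its highest weight, $\maj$ is constant on components, and highest weight tabloids (whose entries are bounded by $|b|$ since their weights are partitions) persist under the shift; this is why the corollary is stated for grouped sums and only for $m$ sufficiently large.

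Your fallback route is the correct repair, but as written it does not close the argument: rewriting $\sum_{\mathrm{sort}(a)=\lambda'}K_{a,0^m\times b}(q)$ as $\sum_u q^{\maj(u)}$ over highest weight tabloids of weight $\lambda'$ (via Corollary~\ref{cor:nskostka}, constancy of $\maj$ on components, and $\mathrm{sort}(\wt(Z))=\wt(u)$) is fine, but you then \emph{assert} that this generating function depends on $b$ only through its column lengths $\mu$, which is essentially the whole content of the corollary; Theorem~\ref{thm:charge-crystal} only covers the single shape $b=\mathrm{rev}(\mu')$. To finish honestly you would need to prove that independence, e.g.\ by feeding the $m$-stability of the grouped sums back into your symmetric-side identity: for $m$ large the grouped sums are constant in $m$, the truncations $E_{0^m\times b}(x_1,\ldots,x_k,0,\ldots,0;q,0)=P_{\mu'}(x_1,\ldots,x_k;q,0)$ depend only on $\mu$, and one must then show that the truncated Demazure characters $\key_{a_C}(x_1,\ldots,x_k,0,\ldots,0)$ of the components recover $s_{\nu}(x_1,\ldots,x_k)$ for $m\gg k$, so that the grouped coefficients are forced to equal the Schur coefficients $K_{\lambda,\mu}(q)$. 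That last step (a truncation statement for the Demazure characters actually occurring) is nontrivial and is missing from your proposal.
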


\subsection{Concluding remarks}
\label{sec:formulas-more}

Recall that the first results in the direction of this paper began with Sanderson \cite{San00} who showed that $E_a(X;q,0)$ is equal to a single affine Demazure character for the general linear group. Ion \cite{Ion03} generalized her result to other types using the framework of double affine Hecke algebras, and Lenart, Naito, Sagaki, Schilling and Shimozono \cite{LNSS17} gave a crystal-theoretic proof that also encompasses other types.

Our crystal-theoretic approach to the nonnegative expansion of $E_a(X;q,0)$ as a sum of finite Demazure characters for the general linear group was motivated by several factors. First, we hoped to improve upon the combinatorial formula for the expansion that came from the original proof of Assaf \cite{Ass18}, and Corollary~\ref{cor:nskostka} succeeds in that thanks to Theorem~\ref{thm:dem-low}. Second, the crystal approach gives a representation theoretic context for the nonnegativity, which suggests that an affine Demazure module should admit a finite Demazure flag, and that this can be proved using crystal theory. Third, our methods utilize tools such as crystals \cite{Kas93} and Stembridge's local characterization \cite{Ste03} that exist for other types, giving hope that our techniques can be generalized. 

While we might hope to extend our results to gain deeper understanding of the nonsymmetric Macdonald polynomials in two parameters, $E_a(X;q,t)$, the impediment here appears more daunting. The nonnegativity results and connections to representation theory and geometry in the classical symmetric case come only through plethysm, which has no known analog in the polynomial ring. Nevertheless, by considering specializations at other natural values of $t$, one can hope to gain insights to help to cross this final barrier.

%
%

\bibliographystyle{plain}
\bibliography{tabloid}

%
\appendix
%

\section{Complete example of the Demazure crystals for $E_{(0,3,0,2)}(X;q,0)$}

The following six Demazure crystals, in Fig.s~\ref{fig:demazure0311} and \ref{fig:demazure0122}, correspond to the Demazure expansion of $E_{(0,3,0,2)}(X;q,0)$ from Example~\ref{ex:0302}. 

As can be readily verified, the character of each crystal corresponds precisely to the key polynomial indexed by the weak composition equal to the lowest weight of each Demazure crystal. Hence, when taking the sum of the graded characters of all six crystals we indeed recover the nonsymmetric Macdonald polynomial $E_{(0,3,0,2)}(X;q,0)$.

\begin{figure}[ht]
\begin{tikzpicture}
\begin{scope}[shift={(-4,3)}]
\begin{scope}[shift={(0,3)}]
\draw (-.41,-.6)--(-.41,.41);
\node at (0,0)[scale=.5]{$\begin{ytableau}
2 & 4 \\
\\
1 & 3&1\\
\end{ytableau}$};
\draw [white, line width = 1mm](-.14,-.13)--(-.14,.13);
\end{scope}
\begin{scope}[shift={(-1,1.5)}]
\draw (-.41,-.6)--(-.41,.41);
\node at (0,0)[scale=.5]{$\begin{ytableau}
2 & 4 \\
\\
1 & 3&2\\
\end{ytableau}$};
\draw [white, line width = 1mm](-.14,-.13)--(-.14,.13);
\end{scope}
\begin{scope}[shift={(-1,0)}]
\draw (-.41,-.6)--(-.41,.41);
\node at (0,0)[scale=.5]{$\begin{ytableau}
2 & 4 \\
\\
1 & 3&3\\
\end{ytableau}$};
\draw [white, line width = 1mm](-.14,-.13)--(-.14,.13);
\end{scope}
\begin{scope}[shift={(0,-1.5)}]
\draw (-.41,-.6)--(-.41,.41);
\node at (0,0)[scale=.5]{$\begin{ytableau}
2 & 3 \\
\\
1 & 4&4\\
\end{ytableau}$};
\draw [white, line width = 1mm](-.14,-.13)--(-.14,.13);
\end{scope}
\draw[orange,->](-1,1)--(-1,.5);
\begin{scope}[yscale=-1]
\draw[blue, <-](-.75,-2)--(-.25,-2.5);
\end{scope}
\draw[purple, ->](-.75,-.5)--(-.25,-1);
\end{scope}
\begin{scope}[shift={(2,6)}]
\begin{scope}[shift={(-2,0)}]
\draw (-.41,-.6)--(-.41,.41);
\node at (0,0)[scale=.5]{$\begin{ytableau}
2 & 1 \\
\\
1 & 3&2\\
\end{ytableau}$};
\draw [white, line width = 1mm](-.14,-.13)--(-.14,.13);
\end{scope}
\end{scope}

\begin{scope}[shift={(2,4.5)}]
\begin{scope}[shift={(-1,0)}]
\draw (-.41,-.6)--(-.41,.41);
\node at (0,0)[scale=.5]{$\begin{ytableau}
2 & 1 \\
\\
1 &4&2\\
\end{ytableau}$};
\draw [white, line width = 1mm](-.14,-.13)--(-.14,.13);
\end{scope}
\begin{scope}[shift={(-2,0)}]
\draw (-.41,-.6)--(-.41,.41);
\node at (0,0)[scale=.5]{$\begin{ytableau}
2 & 1 \\
\\
1 & 3&3\\
\end{ytableau}$};
\draw [white, line width = 1mm](-.14,-.13)--(-.14,.13);
\end{scope}
\end{scope}

\begin{scope}[shift={(0,3)}]
\begin{scope}[shift={(2,0)}]
\draw (-.41,-.6)--(-.41,.41);
\node at (0,0)[scale=.5]{$\begin{ytableau}
2 & 1 \\
\\
1 & 4&3\\
\end{ytableau}$};
\draw [white, line width = 1mm](-.14,-.13)--(-.14,.13);
\end{scope} 
\begin{scope}[shift={(1,0)}]
\draw (-.41,-.6)--(-.41,.41);
\node at (0,0)[scale=.5]{$\begin{ytableau}
3 & 1 \\
\\
1 & 4&2\\
\end{ytableau}$};
\draw [white, line width = 1mm](-.14,-.13)--(-.14,.13);
\end{scope}
\begin{scope}[shift={(-2,0)}]
\draw (-.41,-.6)--(-.41,.41);
\node at (0,0)[scale=.5]{$\begin{ytableau}
2 & 2 \\
\\
1 & 3&3\\
\end{ytableau}$};
\draw [white, line width = 1mm](-.14,-.13)--(-.14,.13);
\end{scope}
\end{scope}

\begin{scope}[shift={(1,1.5)}]
\begin{scope}[shift={(2,0)}]
\draw (-.41,-.6)--(-.41,.41);
\node at (0,0)[scale=.5]{$\begin{ytableau}
2 & 1 \\
\\
1 & 4&4\\
\end{ytableau}$};
\draw [white, line width = 1mm](-.14,-.13)--(-.14,.13);
\end{scope} 
\begin{scope}[shift={(0,0)}]
\draw (-.41,-.6)--(-.41,.41);
\node at (0,0)[scale=.5]{$\begin{ytableau}
3 & 1 \\
\\
1 & 4&3\\
\end{ytableau}$};
\draw [white, line width = 1mm](-.14,-.13)--(-.14,.13);
\end{scope}
\begin{scope}[shift={(-1,0)}]
\draw (-.41,-.6)--(-.41,.41);
\node at (0,0)[scale=.5]{$\begin{ytableau}
3 & 1 \\
\\
2 & 4&2\\
\end{ytableau}$};
\draw [white, line width = 1mm](-.14,-.13)--(-.14,.13);
\end{scope}
\begin{scope}[shift={(-2,0)}]
\draw (-.41,-.6)--(-.41,.41);
\node at (0,0)[scale=.5]{$\begin{ytableau}
2 & 2 \\
\\
1 & 4&3\\
\end{ytableau}$};
\draw [white, line width = 1mm](-.14,-.13)--(-.14,.13);
\end{scope}
\end{scope}
\begin{scope}[shift={(3,0)}]
\draw (-.41,-.6)--(-.41,.41);
\node at (0,0)[scale=.5]{$\begin{ytableau}
3 & 1 \\
\\
1 & 4&4\\
\end{ytableau}$};
\draw [white, line width = 1mm](-.14,-.13)--(-.14,.13);
\end{scope} 
\begin{scope}[shift={(1,0)}]
\draw (-.41,-.6)--(-.41,.41);
\node at (0,0)[scale=.5]{$\begin{ytableau}
2 & 2 \\
\\
1 & 4&4\\
\end{ytableau}$};
\draw [white, line width = 1mm](-.14,-.13)--(-.14,.13);
\end{scope}
\begin{scope}[shift={(0,0)}]
\draw (-.41,-.6)--(-.41,.41);
\node at (0,0)[scale=.5]{$\begin{ytableau}
3& 1 \\
\\
2 & 4&3\\
\end{ytableau}$};
\draw [white, line width = 1mm](-.14,-.13)--(-.14,.13);
\end{scope}
\begin{scope}[shift={(-1,0)}]
\draw (-.41,-.6)--(-.41,.41);
\node at (0,0)[scale=.5]{$\begin{ytableau}
3 & 2 \\
\\
1 & 4&3\\
\end{ytableau}$};
\draw [white, line width = 1mm](-.14,-.13)--(-.14,.13);
\end{scope}

\begin{scope}[shift={(0,-4.5)}]
\draw (-.41,-.6)--(-.41,.41);
\node at (0,0)[scale=.5]{$\begin{ytableau}
3 & 3 \\
\\
2 & 4&4\\
\end{ytableau}$};
\draw [white, line width = 1mm](-.14,-.13)--(-.14,.13);
\end{scope}

\begin{scope}[shift={(1,-3)}]
\draw (-.41,-.6)--(-.41,.41);
\node at (0,0)[scale=.5]{$\begin{ytableau}
3 & 3 \\
\\
1 & 4&4\\
\end{ytableau}$};
\draw [white, line width = 1mm](-.14,-.13)--(-.14,.13);
\end{scope}
\begin{scope}[shift={(0,-3)}]
\draw (-.41,-.6)--(-.41,.41);
\node at (0,0)[scale=.5]{$\begin{ytableau}
3 & 2 \\
\\
2 & 4&4\\
\end{ytableau}$};
\draw [white, line width = 1mm](-.14,-.13)--(-.14,.13);
\end{scope}

\begin{scope}[shift={(2,-1.5)}]
\draw (-.41,-.6)--(-.41,.41);
\node at (0,0)[scale=.5]{$\begin{ytableau}
3 & 1 \\
\\
2 & 4&4\\
\end{ytableau}$};
\draw [white, line width = 1mm](-.14,-.13)--(-.14,.13);
\end{scope} 
\begin{scope}[shift={(1,-1.5)}]
\draw (-.41,-.6)--(-.41,.41);
\node at (0,0)[scale=.5]{$\begin{ytableau}
3 & 2 \\
\\
1 & 4&4\\
\end{ytableau}$};
\draw [white, line width = 1mm](-.14,-.13)--(-.14,.13);
\end{scope}
\begin{scope}[shift={(-2,-1.5)}]
\draw (-.41,-.6)--(-.41,.41);
\node at (0,0)[scale=.5]{$\begin{ytableau}
3 & 2 \\
\\
2 & 4&3\\
\end{ytableau}$};
\draw [white, line width = 1mm](-.14,-.13)--(-.14,.13);
\end{scope}

\draw[orange,->](0,5.5)--(0,5);
\draw[orange,->](1,4)--(1,3.5);
\draw[orange,->](1,2.5)--(1,2);
\draw[orange,->](3,1)--(3,.5);
\draw[orange,->](0,1)--(0,.5);
\draw[orange,->](-1,1)--(-1,.5);
\draw[orange,->](1,-.5)--(1,-1);
\draw[orange,->](1,-2)--(1,-2.5);
\draw[orange,->](0,-3.5)--(0,-4);
\draw[purple, ->](.25,5.5)--(.75,5);
\draw[purple, ->](.25,4)--(1.75,3.5);
\draw[purple, ->](-1.75,2.5)--(-1.25,2);
\draw[purple, ->](2.25,2.5)--(2.75,2);
\draw[purple, ->](-.75,1)--(.75,.5);
\draw[purple, ->](1.25,1)--(2.75,.5);
\draw[purple, ->](-.75,-.5)--(.75,-1);
\draw[purple, ->](.25,-.5)--(1.75,-1);
\draw[purple, ->](-1.75,-2)--(-.25,-2.5);
\draw[blue, ->](-.25,4)--(-1.75,3.5);
\draw[blue, ->](.75,2.5)--(.25,2);
\draw[blue, ->](1.75,2.5)--(-.75,2);
\draw[blue, ->](2.75,1)--(1.25,.5);
\draw[blue, ->](.75,1)--(.25,.5);
\draw[blue, ->](2.75,-.5)--(2.25,-1);
\draw[blue, ->](-.25,-.5)--(-1.75,-1);
\draw[blue, ->](1.75,-2)--(.25,-2.5);
\draw[blue, ->](.75,-3.5)--(.25,-4);
\begin{scope}[shift={(5,3)}]
\begin{scope}[shift={(0,3)}]
\draw (-.41,-.6)--(-.41,.41);
\node at (0,0)[scale=.5]{$\begin{ytableau}
2 & 3 \\
\\
1 & 1&4\\
\end{ytableau}$};
\draw [white, line width = 1mm](-.14,-.13)--(-.14,.13);
\end{scope}
\begin{scope}[shift={(-1,1.5)}]
\draw (-.41,-.6)--(-.41,.41);
\node at (0,0)[scale=.5]{$\begin{ytableau}
1 & 3 \\
\\
2 & 2&4\\
\end{ytableau}$};
\draw [white, line width = 1mm](-.14,-.13)--(-.14,.13);
\end{scope}
\begin{scope}[yscale=-1]
\draw[blue, <-](-.75,-2)--(-.25,-2.5);
\end{scope}
\end{scope}
\end{tikzpicture}
 \caption{\label{fig:demazure0122}The Demazure crystals for $E_{(0,3,0,2)}(X;q,0)$ on $\SSKD(0,3,0,2)$ with highest weights $(2,2,1,0)$ corresponding to $\key_{(0,1,2,2)}(X)$ (center), $(2,1,1,1)$ corresponding to $\key_{(1,1,1,2)}(X)$ (left), and $(2,1,1,1)$ corresponding to $\key_{(1,2,1,1)}(X)$ (right), with edges $f_1 \color{blue}\swarrow$, $f_2 \color{orange}\downarrow$, $f_3 \color{purple}\searrow$ defined by lowering operators.}
\end{figure}

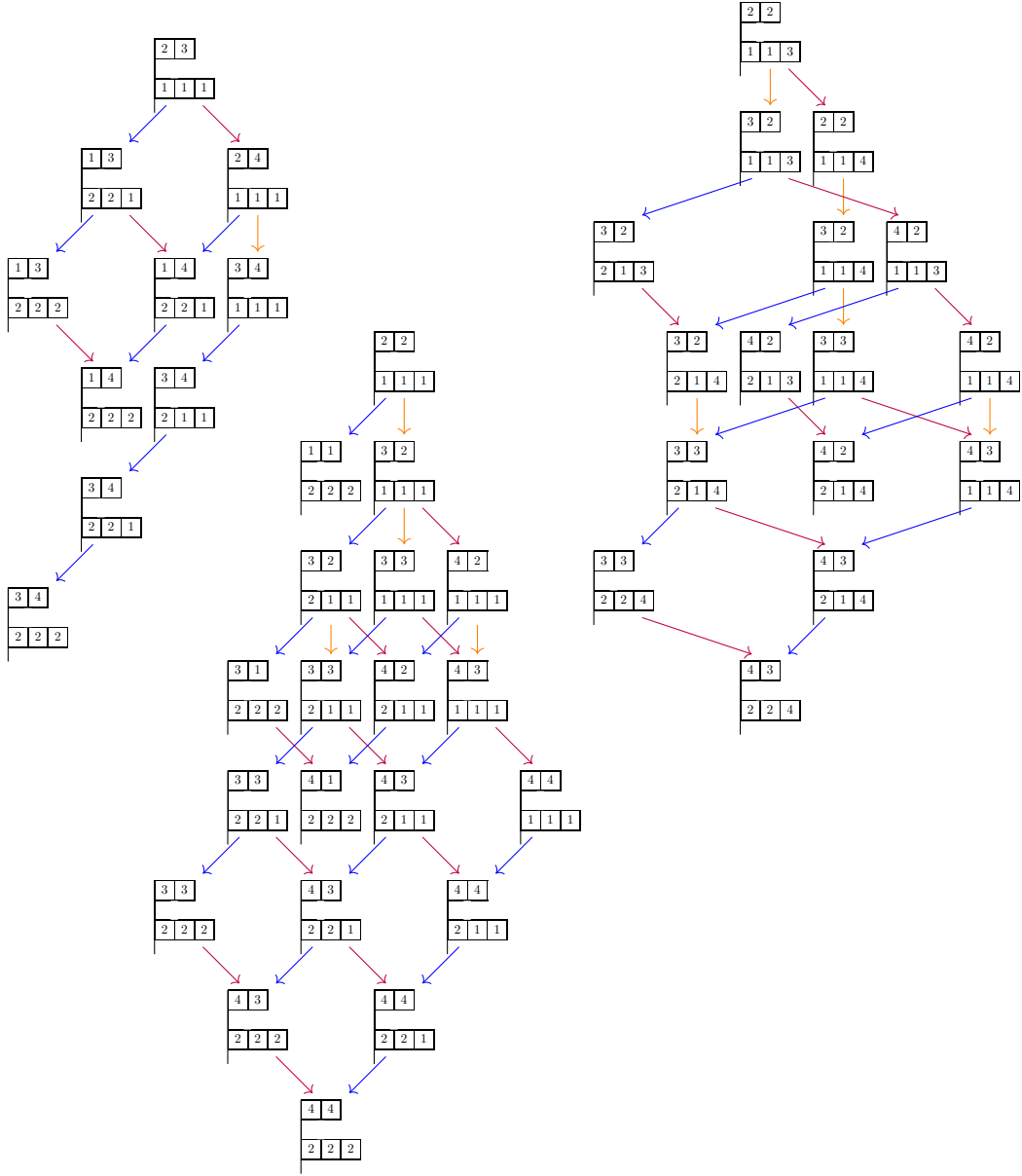
\begin{figure}[ht]
\begin{tikzpicture}

\begin{scope}[shift={(0,6)}]
\draw (-.41,-.6)--(-.41,.41);
\node at (0,0)[scale=.5]{$\begin{ytableau}
2 & 2 \\
\\
1 & 1&1\\
\end{ytableau}$};
\draw [white, line width = 1mm](-.14,-.13)--(-.14,.13);
\end{scope}
\begin{scope}[shift={(0,4.5)}]
\draw (-.41,-.6)--(-.41,.41);
\node at (0,0)[scale=.5]{$\begin{ytableau}
3 & 2 \\
\\
1 & 1&1\\
\end{ytableau}$};
\draw [white, line width = 1mm](-.14,-.13)--(-.14,.13);
\end{scope}
\begin{scope}[shift={(-1,4.5)}]
\draw (-.41,-.6)--(-.41,.41);
\node at (0,0)[scale=.5]{$\begin{ytableau}
1 & 1 \\
\\
2 & 2&2\\
\end{ytableau}$};
\draw [white, line width = 1mm](-.14,-.13)--(-.14,.13);
\end{scope}
%

\begin{scope}[shift={(-1,3)}]
\draw (-.41,-.6)--(-.41,.41);
\node at (0,0)[scale=.5]{$\begin{ytableau}
3 & 2 \\
\\
2& 1&1\\
\end{ytableau}$};
\draw [white, line width = 1mm](-.14,-.13)--(-.14,.13);
\end{scope}

\begin{scope}[shift={(0,3)}]
\draw (-.41,-.6)--(-.41,.41);
\node at (0,0)[scale=.5]{$\begin{ytableau}
3 & 3 \\
\\
1 & 1&1\\
\end{ytableau}$};
\draw [white, line width = 1mm](-.14,-.13)--(-.14,.13);
\end{scope}
\begin{scope}[shift={(1,3)}]
\draw (-.41,-.6)--(-.41,.41);
\node at (0,0)[scale=.5]{$\begin{ytableau}
4 & 2 \\
\\
1 & 1&1\\
\end{ytableau}$};
\draw [white, line width = 1mm](-.14,-.13)--(-.14,.13);
\end{scope}
%

\begin{scope}[shift={(-2,1.5)}]
\draw (-.41,-.6)--(-.41,.41);
\node at (0,0)[scale=.5]{$\begin{ytableau}
3 & 1 \\
\\
2 & 2&2\\
\end{ytableau}$};
\draw [white, line width = 1mm](-.14,-.13)--(-.14,.13);
\end{scope}
\begin{scope}[shift={(-1,1.5)}]
\draw (-.41,-.6)--(-.41,.41);
\node at (0,0)[scale=.5]{$\begin{ytableau}
3 & 3 \\
\\
2 & 1&1\\
\end{ytableau}$};
\draw [white, line width = 1mm](-.14,-.13)--(-.14,.13);
\end{scope}
\begin{scope}[shift={(1,1.5)}]
\draw (-.41,-.6)--(-.41,.41);
\node at (0,0)[scale=.5]{$\begin{ytableau}
4 & 3 \\
\\
1 & 1&1\\
\end{ytableau}$};
\draw [white, line width = 1mm](-.14,-.13)--(-.14,.13);
\end{scope}
\begin{scope}[shift={(0,1.5)}]
\draw (-.41,-.6)--(-.41,.41);
\node at (0,0)[scale=.5]{$\begin{ytableau}
4 & 2 \\
\\
2& 1&1\\
\end{ytableau}$};
\draw [white, line width = 1mm](-.14,-.13)--(-.14,.13);
\end{scope}
\begin{scope}[shift={(2,0)}]
\draw (-.41,-.6)--(-.41,.41);
\node at (0,0)[scale=.5]{$\begin{ytableau}
4 & 4 \\
\\
1 & 1&1\\
\end{ytableau}$};
\draw [white, line width = 1mm](-.14,-.13)--(-.14,.13);
\end{scope}
\draw (-.41,-.6)--(-.41,.41);
\node at (0,0)[scale=.5]{$\begin{ytableau}
4 & 3 \\
\\
2 & 1&1\\
\end{ytableau}$};
\draw [white, line width = 1mm](-.14,-.13)--(-.14,.13);
\begin{scope}[shift={(-1,0)}]
\draw (-.41,-.6)--(-.41,.41);
\node at (0,0)[scale=.5]{$\begin{ytableau}
4 & 1 \\
\\
2 & 2&2\\
\end{ytableau}$};
\draw [white, line width = 1mm](-.14,-.13)--(-.14,.13);
\end{scope}
\begin{scope}[shift={(-2,0)}]
\draw (-.41,-.6)--(-.41,.41);
\node at (0,0)[scale=.5]{$\begin{ytableau}
3 & 3 \\
\\
2 & 2&1\\
\end{ytableau}$};
\draw [white, line width = 1mm](-.14,-.13)--(-.14,.13);
\end{scope}
\begin{scope}[shift={(1,-1.5)}]
\draw (-.41,-.6)--(-.41,.41);
\node at (0,0)[scale=.5]{$\begin{ytableau}
4 & 4 \\
\\
2 & 1&1\\
\end{ytableau}$};
\draw [white, line width = 1mm](-.14,-.13)--(-.14,.13);
\end{scope}
\begin{scope}[shift={(-1,-1.5)}]
\draw (-.41,-.6)--(-.41,.41);
\node at (0,0)[scale=.5]{$\begin{ytableau}
4 & 3 \\
\\
2 & 2&1\\
\end{ytableau}$};
\draw [white, line width = 1mm](-.14,-.13)--(-.14,.13);
\end{scope}
\begin{scope}[shift={(-3,-1.5)}]
\draw (-.41,-.6)--(-.41,.41);
\node at (0,0)[scale=.5]{$\begin{ytableau}
3 & 3 \\
\\
2 & 2&2\\
\end{ytableau}$};
\draw [white, line width = 1mm](-.14,-.13)--(-.14,.13);
\end{scope}
\begin{scope}[shift={(0,-3)}]
\draw (-.41,-.6)--(-.41,.41);
\node at (0,0)[scale=.5]{$\begin{ytableau}
4 & 4 \\
\\
2 & 2&1\\
\end{ytableau}$};
\draw [white, line width = 1mm](-.14,-.13)--(-.14,.13);
\end{scope}
\begin{scope}[shift={(-2,-3)}]
\draw (-.41,-.6)--(-.41,.41);
\node at (0,0)[scale=.5]{$\begin{ytableau}
4 & 3 \\
\\
2 & 2&2\\
\end{ytableau}$};
\draw [white, line width = 1mm](-.14,-.13)--(-.14,.13);
\end{scope}
\begin{scope}[shift={(-1,-4.5)}]
\draw (-.41,-.6)--(-.41,.41);
\node at (0,0)[scale=.5]{$\begin{ytableau}
4 & 4 \\
\\
 2& 2&2\\
\end{ytableau}$};
\draw [white, line width = 1mm](-.14,-.13)--(-.14,.13);
\end{scope}
\draw[orange,->](0,5.5)--(0,5);
\draw[orange,->](0,4)--(0,3.5);
\draw[orange,->](1,2.4)--(1,2);
\draw[orange,->](-1,2.4)--(-1,2);
\draw[purple, ->](.25,4)--(.75,3.5);
\draw[purple, ->](0.25,2.5)--(.75,2);
\draw[purple, ->](-.75,2.5)--(-.25,2);
\draw[purple, ->](1.25,1)--(1.75,.5);
\draw[purple, ->](-0.75,1)--(-.25,.5);
\draw[purple, ->](-1.75,1)--(-1.25,.5);
\draw[purple, ->](-1.75,-.5)--(-1.25,-1);
\draw[purple, ->](0.25,-.5)--(0.75,-1);
\draw[purple, ->](-2.75,-2)--(-2.25,-2.5);
\draw[purple, ->](-.75,-2)--(-.25,-2.5);
\draw[purple, ->](-1.75,-3.5)--(-1.25,-4);
\draw[blue, ->](-.25,5.5)--(-.75,5);
\draw[blue, ->](-.25,4)--(-.75,3.5);
\draw[blue, ->](-.25,2.5)--(-.75,2);
\draw[blue, ->](-1.25,2.5)--(-1.75,2);
\draw[blue, ->](.75,2.5)--(.25,2);
\draw[blue, ->](.75,1)--(.25,.5);
\draw[blue, ->](-.25,1)--(-.75,.5);
\draw[blue, ->](1.75,-.5)--(1.25,.-1);
\begin{scope}[shift={(-1,-1.5)},yscale=-1]
\draw[blue, <-](0.25,2.5)--(.75,2);
\draw[blue, <-](1.25,1)--(1.75,.5);
\draw[blue, <-](-0.75,1)--(-.25,.5);
\draw[blue, <-](-1.75,-.5)--(-1.25,-1);
\draw[blue, <-](0.25,-.5)--(0.75,-1);
\draw[blue, <-](-.75,-2)--(-.25,-2.5);
\end{scope}
\begin{scope}[shift={(-3,7)}]
\begin{scope}[shift={(0,3)}]
\draw (-.41,-.6)--(-.41,.41);
\node at (0,0)[scale=.5]{$\begin{ytableau}
2 & 3 \\
\\
1 & 1&1\\
\end{ytableau}$};
\draw [white, line width = 1mm](-.14,-.13)--(-.14,.13);
\end{scope}
\begin{scope}[shift={(1,1.5)}]
\draw (-.41,-.6)--(-.41,.41);
\node at (0,0)[scale=.5]{$\begin{ytableau}
2 & 4 \\
\\
1 & 1&1\\
\end{ytableau}$};
\draw [white, line width = 1mm](-.14,-.13)--(-.14,.13);
\end{scope}
\begin{scope}[shift={(-1,1.5)}]
\draw (-.41,-.6)--(-.41,.41);
\node at (0,0)[scale=.5]{$\begin{ytableau}
1 & 3 \\
\\
2 & 2&1\\
\end{ytableau}$};
\draw [white, line width = 1mm](-.14,-.13)--(-.14,.13);
\end{scope}
\begin{scope}[shift={(1,0)}]
\draw (-.41,-.6)--(-.41,.41);
\node at (0,0)[scale=.5]{$\begin{ytableau}
3 & 4 \\
\\
1 & 1&1\\
\end{ytableau}$};
\draw [white, line width = 1mm](-.14,-.13)--(-.14,.13);
\end{scope}
\begin{scope}[shift={(0,0)}]
\draw (-.41,-.6)--(-.41,.41);
\node at (0,0)[scale=.5]{$\begin{ytableau}
1 & 4 \\
\\
2 & 2&1\\
\end{ytableau}$};
\draw [white, line width = 1mm](-.14,-.13)--(-.14,.13);
\end{scope}
\begin{scope}[shift={(-2,0)}]
\draw (-.41,-.6)--(-.41,.41);
\node at (0,0)[scale=.5]{$\begin{ytableau}
1 & 3 \\
\\
2 & 2&2\\
\end{ytableau}$};
\draw [white, line width = 1mm](-.14,-.13)--(-.14,.13);
\end{scope}
\begin{scope}[shift={(0,-1.5)}]
\draw (-.41,-.6)--(-.41,.41);
\node at (0,0)[scale=.5]{$\begin{ytableau}
3 & 4 \\
\\
2 & 1&1\\
\end{ytableau}$};
\draw [white, line width = 1mm](-.14,-.13)--(-.14,.13);
\end{scope}
\begin{scope}[shift={(-1,-1.5)}]
\draw (-.41,-.6)--(-.41,.41);
\node at (0,0)[scale=.5]{$\begin{ytableau}
1 & 4 \\
\\
2 & 2&2\\
\end{ytableau}$};
\draw [white, line width = 1mm](-.14,-.13)--(-.14,.13);
\end{scope}
%
\begin{scope}[shift={(-1,-3)}]
\draw (-.41,-.6)--(-.41,.41);
\node at (0,0)[scale=.5]{$\begin{ytableau}
3 & 4 \\
\\
2 & 2&1\\
\end{ytableau}$};
\draw [white, line width = 1mm](-.14,-.13)--(-.14,.13);
\end{scope}
\begin{scope}[shift={(-2,-4.5)}]
\draw (-.41,-.6)--(-.41,.41);
\node at (0,0)[scale=.5]{$\begin{ytableau}
3 & 4 \\
\\
2 & 2&2\\
\end{ytableau}$};
\draw [white, line width = 1mm](-.14,-.13)--(-.14,.13);
\end{scope}

\draw[orange,->](1,1)--(1,.5);
\draw[purple, ->](.25,2.5)--(.75,2);
\draw[purple, ->](-.75,1)--(-.25,.5);
\draw[purple, ->](-1.75,-.5)--(-1.25,-1);
\begin{scope}[yscale=-1]
\draw[blue, <-](-.75,-2)--(-.25,-2.5);
\draw[blue, <-](-.75,1)--(-.25,.5);
\draw[blue, <-](.25,1)--(.75,.5);
\draw[blue, <-](-1.75,-.5)--(-1.25,-1);
\draw[blue, <-](.25,-.5)--(.75,-1);
\end{scope}
\begin{scope}[yscale=-1, shift={(-1,1.5)}]
\draw[blue, <-](-.75,2.5)--(-.25,2);
\draw[blue, <-](.25,1)--(.75,.5);
\end{scope}
\end{scope}
\begin{scope}[shift={(6,6)}]
\begin{scope}[shift={(0,4.5)}]
\begin{scope}[shift={(-1,0)}]
\draw (-.41,-.6)--(-.41,.41);
\node at (0,0)[scale=.5]{$\begin{ytableau}
2 & 2 \\
\\
1 &1&3\\
\end{ytableau}$};
\draw [white, line width = 1mm](-.14,-.13)--(-.14,.13);
\end{scope}
\end{scope}
\begin{scope}[shift={(0,3)}]
\begin{scope}[shift={(0,0)}]
\draw (-.41,-.6)--(-.41,.41);
\node at (0,0)[scale=.5]{$\begin{ytableau}
2 & 2 \\
\\
1 &1&4\\
\end{ytableau}$};
\draw [white, line width = 1mm](-.14,-.13)--(-.14,.13);
\end{scope}
\begin{scope}[shift={(-1,0)}]
\draw (-.41,-.6)--(-.41,.41);
\node at (0,0)[scale=.5]{$\begin{ytableau}
3 & 2 \\
\\
1 &1&3\\
\end{ytableau}$};
\draw [white, line width = 1mm](-.14,-.13)--(-.14,.13);
\end{scope}
\end{scope}
\begin{scope}[shift={(0,1.5)}]
\begin{scope}[shift={(-3,0)}]
\draw (-.41,-.6)--(-.41,.41);
\node at (0,0)[scale=.5]{$\begin{ytableau}
3 &2 \\
\\
2 &1&3\\
\end{ytableau}$};
\draw [white, line width = 1mm](-.14,-.13)--(-.14,.13);
\end{scope}
\begin{scope}[shift={(0,0)}]
\draw (-.41,-.6)--(-.41,.41);
\node at (0,0)[scale=.5]{$\begin{ytableau}
3 & 2 \\
\\
1 &1&4\\
\end{ytableau}$};
\draw [white, line width = 1mm](-.14,-.13)--(-.14,.13);
\end{scope}
\begin{scope}[shift={(1,0)}]
\draw (-.41,-.6)--(-.41,.41);
\node at (0,0)[scale=.5]{$\begin{ytableau}
4 & 2 \\
\\
1 &1&3\\
\end{ytableau}$};
\draw [white, line width = 1mm](-.14,-.13)--(-.14,.13);
\end{scope}
\end{scope}
\begin{scope}[shift={(-2,0)}]
\draw (-.41,-.6)--(-.41,.41);
\node at (0,0)[scale=.5]{$\begin{ytableau}
3 & 2 \\
\\
2 &1&4\\
\end{ytableau}$};
\draw [white, line width = 1mm](-.14,-.13)--(-.14,.13);
\end{scope}
\begin{scope}[shift={(-1,0)}]
\draw (-.41,-.6)--(-.41,.41);
\node at (0,0)[scale=.5]{$\begin{ytableau}
4 & 2 \\
\\
2 &1&3\\
\end{ytableau}$};
\draw [white, line width = 1mm](-.14,-.13)--(-.14,.13);
\end{scope}
\begin{scope}[shift={(0,0)}]
\draw (-.41,-.6)--(-.41,.41);
\node at (0,0)[scale=.5]{$\begin{ytableau}
3 & 3 \\
\\
1 &1&4\\
\end{ytableau}$};
\draw [white, line width = 1mm](-.14,-.13)--(-.14,.13);
\end{scope}
\begin{scope}[shift={(2,0)}]
\draw (-.41,-.6)--(-.41,.41);
\node at (0,0)[scale=.5]{$\begin{ytableau}
4 & 2 \\
\\
1 &1&4\\
\end{ytableau}$};
\draw [white, line width = 1mm](-.14,-.13)--(-.14,.13);
\end{scope}
\begin{scope}[shift={(0,-1.5)}]
\begin{scope}[shift={(2,0)}]
\draw (-.41,-.6)--(-.41,.41);
\node at (0,0)[scale=.5]{$\begin{ytableau}
4 & 3 \\
\\
1 &1&4\\
\end{ytableau}$};
\draw [white, line width = 1mm](-.14,-.13)--(-.14,.13);
\end{scope}
\begin{scope}[shift={(0,0)}]
\draw (-.41,-.6)--(-.41,.41);
\node at (0,0)[scale=.5]{$\begin{ytableau}
4 & 2 \\
\\
2 &1&4\\
\end{ytableau}$};
\draw [white, line width = 1mm](-.14,-.13)--(-.14,.13);
\end{scope}
\begin{scope}[shift={(-2,0)}]
\draw (-.41,-.6)--(-.41,.41);
\node at (0,0)[scale=.5]{$\begin{ytableau}
3 & 3 \\
\\
2 &1&4\\
\end{ytableau}$};
\draw [white, line width = 1mm](-.14,-.13)--(-.14,.13);
\end{scope}
\end{scope}
\begin{scope}[shift={(0,-3)}]
\begin{scope}[shift={(0,0)}]
\draw (-.41,-.6)--(-.41,.41);
\node at (0,0)[scale=.5]{$\begin{ytableau}
4 & 3 \\
\\
2 &1&4\\
\end{ytableau}$};
\draw [white, line width = 1mm](-.14,-.13)--(-.14,.13);
\end{scope}
\begin{scope}[shift={(-3,0)}]
\draw (-.41,-.6)--(-.41,.41);
\node at (0,0)[scale=.5]{$\begin{ytableau}
3 & 3 \\
\\
2 &2&4\\
\end{ytableau}$};
\draw [white, line width = 1mm](-.14,-.13)--(-.14,.13);
\end{scope}
\end{scope}
\begin{scope}[shift={(0,-4.5)}]
\begin{scope}[shift={(-1,0)}]
\draw (-.41,-.6)--(-.41,.41);
\node at (0,0)[scale=.5]{$\begin{ytableau}
4 & 3 \\
\\
2 &2&4\\
\end{ytableau}$};
\draw [white, line width = 1mm](-.14,-.13)--(-.14,.13);
\end{scope}
\end{scope}
\draw[orange,->](-1,4)--(-1,3.5);
\draw[orange,->](0,2.5)--(0,2);
\draw[orange,->](-2,-.5)--(-2,-1);
\draw[orange,->](2,-.5)--(2,-1);
\draw[orange,->](0,1)--(0,.5);
\draw[purple, ->](-.75,4)--(-.25,3.5);
\draw[purple, ->](-.75,2.5)--(.75,2);
\draw[purple, ->](1.25,1)--(1.75,.5);
\draw[purple, ->](-2.75,1)--(-2.25,.5);
\draw[purple, ->](.25,-.5)--(1.75,-1);
\draw[purple, ->](-.75,-.5)--(-.25,-1);
\draw[purple, ->](-1.75,-2)--(-.25,-2.5);
\draw[purple, ->](-2.75,-3.5)--(-1.25,-4);
\begin{scope}[yscale=-1, shift={(0,1.5)}]
\draw[blue, <-](-.75,2.5)--(-.25,2);
\draw[blue, <-](.25,1)--(1.75,.5);
\draw[blue, <-](-2.75,1)--(-2.25,.5);
\draw[blue, <-](.25,-.5)--(1.75,-1);
\draw[blue, <-](-1.75,-.5)--(-.25,-1);
\draw[blue, <-](-1.75,-2)--(-.25,-2.5);
\draw[blue, <-](-.75,-2)--(.75,-2.5);
\draw[blue, <-](-2.75,-3.5)--(-1.25,-4);
\end{scope}
\end{scope}
\end{tikzpicture}
\caption{\label{fig:demazure0311}The Demazure crystals for $E_{(0,3,0,2)}(X;q,0)$ on $\SSKD(0,3,0,2)$ with highest weights $(3,1,1,0)$ corresponding to $\key_{(0,3,1,1)}(X)$ (top left), $(3,2,0,0)$ corresponding to $\key_{(0,3,0,2)}(X)$ (bottom center), and $(2,2,1,0)$ corresponding to $\key_{(0,2,1,2)}(X)$ (top right), where the edges $f_1 \color{blue}\swarrow$, $f_2 \color{orange}\downarrow$, $f_3 \color{purple}\searrow$ are defined by lowering operators.}
\end{figure}

\section{Detailed example of the embedding map $\embed$}

In Fig.~\ref{fig:embedExample} we present a complete example of the embedding map $\embed = \T \circ \rectify \circ \D$ acting on the entire Demazure crystal $\B_{4123}(3,1,1,0)$. The colored balls indicate the balls on which the lowering operators act. Lowering operators for $i=1,2,3$ are presented in colors blue $\color{blue}\swarrow$, yellow $\color{orange}\downarrow$, and red $\color{purple}\searrow$, respectively. 

At each step we see how the diagram map $\D$ from semistandard key tabloids to diagrams, the rectification map from diagrams to Kohnert diagrams, and the tableau map $\T$ from Kohnert diagrams to semistandard Young tableau, intertwine the crystal operators $f_i$, $\Df_i$, and $\Yf_i$, respectively. 

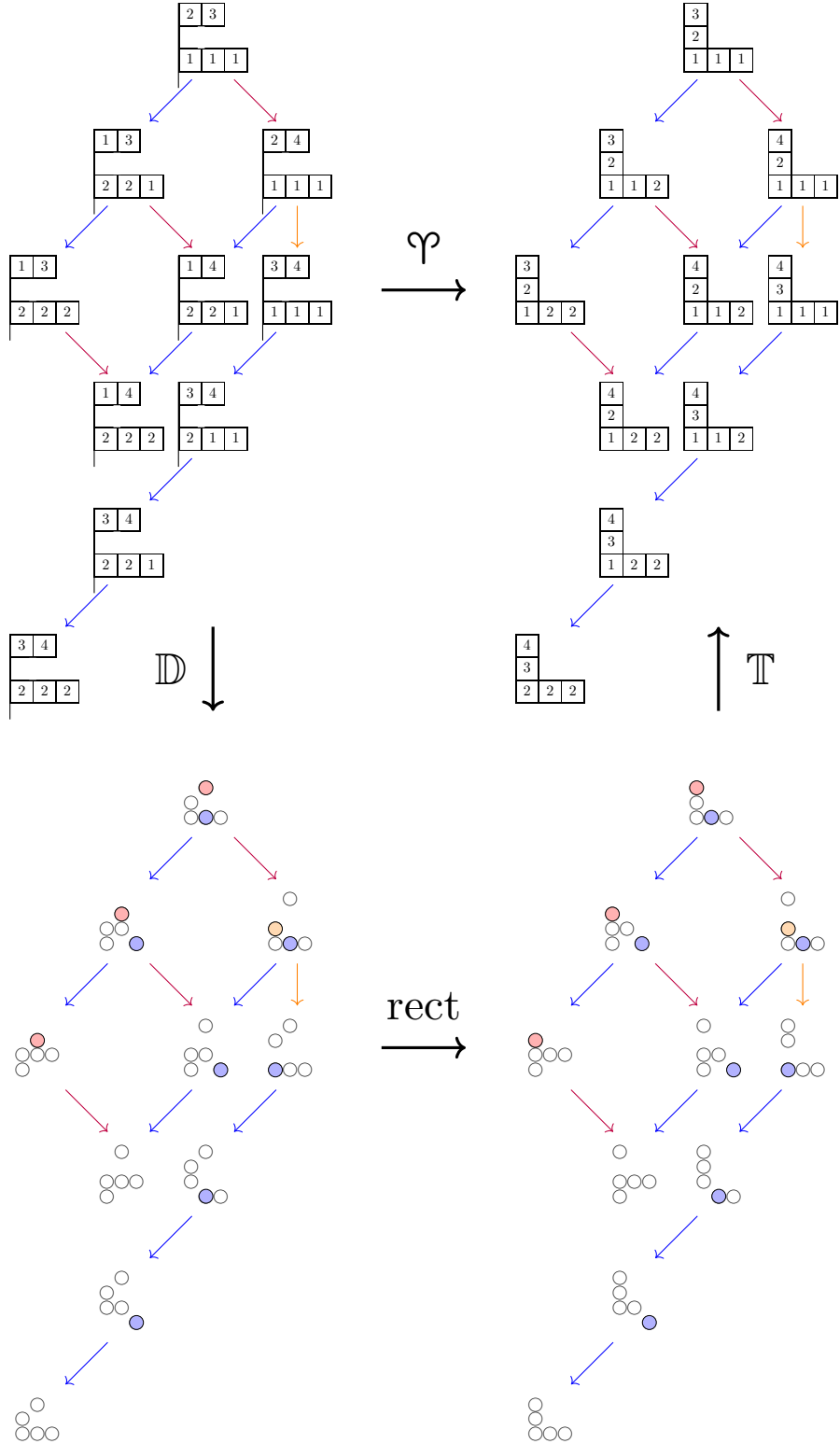
\begin{figure}[ht]
\begin{tikzpicture}[scale=1.2,every node/.style={scale=1.2}]
\begin{scope}[shift={(0,3)}]
\draw (-.41,-.6)--(-.41,.41);
\node at (0,0)[scale=.5]{$\begin{ytableau}
2 & 3 \\
\\
1 & 1&1\\
\end{ytableau}$};
\draw [white, line width = 1mm](-.14,-.13)--(-.14,.13);
\end{scope}
\begin{scope}[shift={(1,1.5)}]
\draw (-.41,-.6)--(-.41,.41);
\node at (0,0)[scale=.5]{$\begin{ytableau}
2 & 4 \\
\\
1 & 1&1\\
\end{ytableau}$};
\draw [white, line width = 1mm](-.14,-.13)--(-.14,.13);
\end{scope}
\begin{scope}[shift={(-1,1.5)}]
\draw (-.41,-.6)--(-.41,.41);
\node at (0,0)[scale=.5]{$\begin{ytableau}
1 & 3 \\
\\
2 & 2&1\\
\end{ytableau}$};
\draw [white, line width = 1mm](-.14,-.13)--(-.14,.13);
\end{scope}
\begin{scope}[shift={(1,0)}]
\draw (-.41,-.6)--(-.41,.41);
\node at (0,0)[scale=.5]{$\begin{ytableau}
3 & 4 \\
\\
1 & 1&1\\
\end{ytableau}$};
\draw [white, line width = 1mm](-.14,-.13)--(-.14,.13);
\end{scope}
\begin{scope}[shift={(0,0)}]
\draw (-.41,-.6)--(-.41,.41);
\node at (0,0)[scale=.5]{$\begin{ytableau}
1 & 4 \\
\\
2 & 2&1\\
\end{ytableau}$};
\draw [white, line width = 1mm](-.14,-.13)--(-.14,.13);
\end{scope}
\begin{scope}[shift={(-2,0)}]
\draw (-.41,-.6)--(-.41,.41);
\node at (0,0)[scale=.5]{$\begin{ytableau}
1 & 3 \\
\\
2 & 2&2\\
\end{ytableau}$};
\draw [white, line width = 1mm](-.14,-.13)--(-.14,.13);
\end{scope}
\begin{scope}[shift={(0,-1.5)}]
\draw (-.41,-.6)--(-.41,.41);
\node at (0,0)[scale=.5]{$\begin{ytableau}
3 & 4 \\
\\
2 & 1&1\\
\end{ytableau}$};
\draw [white, line width = 1mm](-.14,-.13)--(-.14,.13);
\end{scope}
\begin{scope}[shift={(-1,-1.5)}]
\draw (-.41,-.6)--(-.41,.41);
\node at (0,0)[scale=.5]{$\begin{ytableau}
1 & 4 \\
\\
2 & 2&2\\
\end{ytableau}$};
\draw [white, line width = 1mm](-.14,-.13)--(-.14,.13);
\end{scope}
%
\begin{scope}[shift={(-1,-3)}]
\draw (-.41,-.6)--(-.41,.41);
\node at (0,0)[scale=.5]{$\begin{ytableau}
3 & 4 \\
\\
2 & 2&1\\
\end{ytableau}$};
\draw [white, line width = 1mm](-.14,-.13)--(-.14,.13);
\end{scope}
\begin{scope}[shift={(-2,-4.5)}]
\draw (-.41,-.6)--(-.41,.41);
\node at (0,0)[scale=.5]{$\begin{ytableau}
3 & 4 \\
\\
2 & 2&2\\
\end{ytableau}$};
\draw [white, line width = 1mm](-.14,-.13)--(-.14,.13);
\end{scope}

\draw[orange,->](1,1)--(1,.5);
\draw[purple, ->](.25,2.5)--(.75,2);
\draw[purple, ->](-.75,1)--(-.25,.5);
\draw[purple, ->](-1.75,-.5)--(-1.25,-1);
\begin{scope}[yscale=-1]
\draw[blue, <-](-.75,-2)--(-.25,-2.5);
\draw[blue, <-](-.75,1)--(-.25,.5);
\draw[blue, <-](.25,1)--(.75,.5);
\draw[blue, <-](-1.75,-.5)--(-1.25,-1);
\draw[blue, <-](.25,-.5)--(.75,-1);
\end{scope}
\begin{scope}[yscale=-1, shift={(-1,1.5)}]
\draw[blue, <-](-.75,2.5)--(-.25,2);
\draw[blue, <-](.25,1)--(.75,.5);
\end{scope}

\begin{scope}[shift={(0,-9)}]
\draw[very thick,<-](0,4)--(0,5);
\node at (-.5,4.5) [scale=1.5]{$\D$};
\begin{scope}[shift={(0,3)}]
\node at (0,0)[scale=.5]{$\vline \cirtab{  & & & \\  & \rball & & \\  \ & & \\  \ & \bball &  \ \\}$};
\end{scope}
\begin{scope}[shift={(1,1.5)}]
\node at (0,0)[scale=.5]{$\vline \cirtab{  & \ & & \\  \\ \yball & & \\  \ & \bball &  \ \\}$};
\end{scope}
\begin{scope}[shift={(-1,1.5)}]
\node at (0,0)[scale=.5]{$\vline \cirtab{  & & & \\  & \rball & & \\  \ & \ & \\  \ &  &  \bball \\}$};
\end{scope}
\begin{scope}[shift={(1,0)}]
\node at (0,0)[scale=.5]{$\vline \cirtab{  & \ & & \\   \ & & \\ \\  \bball & \ &  \ \\}$};
\end{scope}
\begin{scope}[shift={(0,0)}]
\node at (0,0)[scale=.5]{$\vline \cirtab{  & \ & & \\  \\ \ &\ & \\  \ &  &  \bball \\}$};
\end{scope}
\begin{scope}[shift={(-2,0)}]
\node at (0,0)[scale=.5]{$\vline \cirtab{ \\  & \rball & &   \\ \ & \ & \ \\  \ &  &  \\}$};
\end{scope}
\begin{scope}[shift={(0,-1.5)}]
\node at (0,0)[scale=.5]{$\vline \cirtab{  & \ & & \\   \ & & \\ \ \\   & \bball &  \ \\}$};
\end{scope}
\begin{scope}[shift={(-1,-1.5)}]
\node at (0,0)[scale=.5]{$\vline \cirtab{  & \ & & \\  \\ \ &\ & \ \\  \ &  &  \\}$};
\end{scope}
%
\begin{scope}[shift={(-1,-3)}]
\node at (0,0)[scale=.5]{$\vline \cirtab{  & \ & & \\   \ & & \\ \ &\ & \\   &  &  \bball \\}$};
\end{scope}
\begin{scope}[shift={(-2,-4.5)}]
\node at (0,0)[scale=.5]{$\vline \cirtab{  & \ & & \\   \ & & \\ \ &\ & \ \\   &  &  \\}$};
\end{scope}

\draw[orange,->](1,1)--(1,.5);
\draw[purple, ->](.25,2.5)--(.75,2);
\draw[purple, ->](-.75,1)--(-.25,.5);
\draw[purple, ->](-1.75,-.5)--(-1.25,-1);
\begin{scope}[yscale=-1]
\draw[blue, <-](-.75,-2)--(-.25,-2.5);
\draw[blue, <-](-.75,1)--(-.25,.5);
\draw[blue, <-](.25,1)--(.75,.5);
\draw[blue, <-](-1.75,-.5)--(-1.25,-1);
\draw[blue, <-](.25,-.5)--(.75,-1);
\end{scope}
\begin{scope}[yscale=-1, shift={(-1,1.5)}]
\draw[blue, <-](-.75,2.5)--(-.25,2);
\draw[blue, <-](.25,1)--(.75,.5);
\end{scope}

\begin{scope}[shift={(6,0)}]
\draw[very thick,<-](-3,0)--(-4,0);
\node at (-3.5,0.5) [scale=1.5]{$\rectify$};
\begin{scope}[shift={(0,3)}]
\node at (0,0)[scale=.5]{$\vline \cirtab{  & & & \\  \rball & & \\  \ & & \\  \ & \bball &  \ \\}$};
\end{scope}
\begin{scope}[shift={(1,1.5)}]
\node at (0,0)[scale=.5]{$\vline \cirtab{   \ & & \\  \\ \yball & & \\  \ & \bball &  \ \\}$};
\end{scope}
\begin{scope}[shift={(-1,1.5)}]
\node at (0,0)[scale=.5]{$\vline \cirtab{  & & & \\   \rball & & \\  \ & \ & \\  \ &  &  \bball \\}$};
\end{scope}
\begin{scope}[shift={(1,0)}]
\node at (0,0)[scale=.5]{$\vline \cirtab{   \ & & \\   \ & & \\ \\  \bball & \ &  \ \\}$};
\end{scope}
\begin{scope}[shift={(0,0)}]
\node at (0,0)[scale=.5]{$\vline \cirtab{   \ & & \\  \\ \ &\ & \\  \ &  &  \bball \\}$};
\end{scope}
\begin{scope}[shift={(-2,0)}]
\node at (0,0)[scale=.5]{$\vline \cirtab{ \\   \rball & &   \\ \ & \ & \ \\  \ &  &  \\}$};
\end{scope}
\begin{scope}[shift={(0,-1.5)}]
\node at (0,0)[scale=.5]{$\vline \cirtab{  \ & & \\   \ & & \\ \ \\   & \bball &  \ \\}$};
\end{scope}
\begin{scope}[shift={(-1,-1.5)}]
\node at (0,0)[scale=.5]{$\vline \cirtab{   \ & & \\  \\ \ &\ & \ \\  \ &  &  \\}$};
\end{scope}
%
\begin{scope}[shift={(-1,-3)}]
\node at (0,0)[scale=.5]{$\vline \cirtab{   \ & & \\   \ & & \\ \ &\ & \\   &  &  \bball \\}$};
\end{scope}
\begin{scope}[shift={(-2,-4.5)}]
\node at (0,0)[scale=.5]{$\vline \cirtab{   \ & & \\   \ & & \\ \ &\ & \ \\   &  &  \\}$};
\end{scope}

\draw[orange,->](1,1)--(1,.5);
\draw[purple, ->](.25,2.5)--(.75,2);
\draw[purple, ->](-.75,1)--(-.25,.5);
\draw[purple, ->](-1.75,-.5)--(-1.25,-1);
\begin{scope}[yscale=-1]
\draw[blue, <-](-.75,-2)--(-.25,-2.5);
\draw[blue, <-](-.75,1)--(-.25,.5);
\draw[blue, <-](.25,1)--(.75,.5);
\draw[blue, <-](-1.75,-.5)--(-1.25,-1);
\draw[blue, <-](.25,-.5)--(.75,-1);
\end{scope}
\begin{scope}[yscale=-1, shift={(-1,1.5)}]
\draw[blue, <-](-.75,2.5)--(-.25,2);
\draw[blue, <-](.25,1)--(.75,.5);
\end{scope}
\end{scope}
\end{scope}

\begin{scope}[shift={(6,0)}]
\draw[very thick,<-](-3,0)--(-4,0);
\node at (-3.5,0.5) [scale=1.5]{$\embed$};
\draw[very thick,->](0,-5)--(0,-4);
\node at (.5,-4.5) [scale=1.5]{$\T$};
\begin{scope}[shift={(0,3)}]
\node at (0,0)[scale=.5]{$\begin{ytableau}
3\\
2 \\
1 & 1&1\\
\end{ytableau}$};
\end{scope}
\begin{scope}[shift={(1,1.5)}]
\node at (0,0)[scale=.5]{$\begin{ytableau}
4\\
2 \\
1 & 1&1\\
\end{ytableau}$};\end{scope}
\begin{scope}[shift={(-1,1.5)}]
\node at (0,0)[scale=.5]{$\begin{ytableau}
3\\
2 \\
1 & 1&2\\
\end{ytableau}$};
\end{scope}
\begin{scope}[shift={(1,0)}]
\node at (0,0)[scale=.5]{$\begin{ytableau}
4\\
3 \\
1 & 1&1\\
\end{ytableau}$};
\end{scope}
\begin{scope}[shift={(0,0)}]
\node at (0,0)[scale=.5]{$\begin{ytableau}
4\\
2 \\
1 & 1&2\\
\end{ytableau}$};\end{scope}
\begin{scope}[shift={(-2,0)}]
\node at (0,0)[scale=.5]{$\begin{ytableau}
3\\
2 \\
1 & 2&2\\
\end{ytableau}$};
\end{scope}
\begin{scope}[shift={(0,-1.5)}]
\node at (0,0)[scale=.5]{$\begin{ytableau}
4\\
3 \\
1 & 1&2\\
\end{ytableau}$};
\end{scope}
\begin{scope}[shift={(-1,-1.5)}]
\node at (0,0)[scale=.5]{$\begin{ytableau}
4\\
2 \\
1 & 2&2\\
\end{ytableau}$};\end{scope}
%
\begin{scope}[shift={(-1,-3)}]
\node at (0,0)[scale=.5]{$\begin{ytableau}
4\\
3 \\
1 & 2&2\\
\end{ytableau}$};
\end{scope}
\begin{scope}[shift={(-2,-4.5)}]
\node at (0,0)[scale=.5]{$\begin{ytableau}
4\\
3 \\
2 & 2&2\\
\end{ytableau}$};
\end{scope}

\draw[orange,->](1,1)--(1,.5);
\draw[purple, ->](.25,2.5)--(.75,2);
\draw[purple, ->](-.75,1)--(-.25,.5);
\draw[purple, ->](-1.75,-.5)--(-1.25,-1);
\begin{scope}[yscale=-1]
\draw[blue, <-](-.75,-2)--(-.25,-2.5);
\draw[blue, <-](-.75,1)--(-.25,.5);
\draw[blue, <-](.25,1)--(.75,.5);
\draw[blue, <-](-1.75,-.5)--(-1.25,-1);
\draw[blue, <-](.25,-.5)--(.75,-1);
\end{scope}
\begin{scope}[yscale=-1, shift={(-1,1.5)}]
\draw[blue, <-](-.75,2.5)--(-.25,2);
\draw[blue, <-](.25,1)--(.75,.5);
\end{scope}
\end{scope}
\end{tikzpicture}
\caption{Detailed example of the embedding map $\embed: \SSKD(a) \to \SSYT(\lambda)$ on the Demazure crystal $\B_{4123}(3,1,1,0)$. The colored balls indicate the ball on which the lowering operator is acting.}\label{fig:embedExample}
\end{figure}

\end{document}